\numberwithin{equation}{subsection}
\numberwithin{figure}{subsection}
\numberwithin{table}{subsection}
\par \vspace{\baselineskip}%
 \noindent \textbf{Acknowledgements.}}%
\par \vspace{\baselineskip}}
\crefname{thm}{Theorem}{Theorems}
\crefname{asm}{Assumption}{Assumptions}
\crefname{cor}{Corollary}{Corollaries}
\crefname{dfn}{Definition}{Definitions}
\crefname{fct}{Fact}{Facts}
\crefname{lem}{Lemma}{Lemmas}
\crefname{mth}{Theorem}{Theorem}
\crefname{ntn}{Notation}{Notations}
\crefname{prp}{Proposition}{Propositions}
\crefname{rmk}{Remark}{Remarks}
\crefname{eg}{Example}{Examples}
\crefname{section}{\S\!}{\S\S\!}
\crefname{subsection}{\S\!}{\S\S\!}
\crefname{subsubsection}{\S\!}{\S\S\!}
\crefname{equation}{equation}{equations}
\theoremstyle{definition}
\newtheorem{thm}{Theorem}[subsection]
\newtheorem{dfn}[thm]{Definition}
\newtheorem{lem}[thm]{Lemma}
\newtheorem{prp}[thm]{Proposition}
\newtheorem{cnj}[thm]{Conjecture}
\newtheorem{asm}[thm]{Assumption}
\newtheorem{fct}[thm]{Fact}
\newtheorem{rmk}[thm]{Remark}
\newtheorem{eg}[thm]{Example}
\newtheorem{mth}{Theorem}
\newtheorem{ntn}[thm]{Notation}
\newtheorem*{ntn*}{Notation}
\newtheorem*{rmk*}{Remark}
\newcommand{\ol}{\overline}
\newcommand{\ul}{\underline}
\newcommand{\wt}{\widetilde}
\newcommand{\sm}{\setminus}
\newcommand{\inj}{\hookrightarrow}
\newcommand{\srj}{\twoheadrightarrow}
\newcommand{\lto}{\longrightarrow}
\newcommand{\mto}{\mapsto}
\newcommand{\lmto}{\longmapsto}
\newcommand{\lmot}{\longmapsfrom}
\newcommand{\linj}{\lhook\joinrel\longrightarrow}
\newcommand{\lsrj}{\relbar\joinrel\twoheadrightarrow}
\newcommand{\xrr}[1]{\xrightarrow{\ #1 \ }}
\newcommand{\lsto}{\xrr{\sim}}
\newcommand{\lsot}{\xleftarrow{\ \sim \ }}
\newcommand{\bl}{\bullet}
\newcommand{\ep}{\epsilon}
\newcommand{\pd}{\partial}
\newcommand{\ceq}{\coloneqq} 
\newcommand{\ev}{\textup{ev}}
\newcommand{\od}{\textup{od}}
\newcommand{\cst}{(\textup{cst.})}
\newcommand{\tcl}{\textup{cl}}
\newcommand{\trd}{\textup{red}}
\newcommand{\pure}{\textup{pure}}
\newcommand{\txdR}{\textup{dR}}
\newcommand{\PBW}{\textup{PBW}}
\newcommand{\tsum}{{\textstyle \sum}}
\newcommand{\tprd}{{\textstyle \prod}}
\newcommand{\tboplus}{{\textstyle \bigoplus}}
\newcommand{\hf}{\frac{1}{2}}
\newcommand{\thf}{\tfrac{1}{2}}
\newcommand{\Zt}{\bbZ_2}
\newcommand{\oo}{\ol{1}} 
\newcommand{\oz}{\ol{0}} 
\newcommand{\Osc}{O_{\textup{sc}}}
\newcommand{\vac}{\ket{0}} 
\newcommand{\sld}{\slashed{\pd}} 
\newcommand{\bbC}{\mathbb{C}}
\newcommand{\bbD}{\mathbb{D}}
\newcommand{\bbN}{\mathbb{N}}
\newcommand{\bbR}{\mathbb{R}}
\newcommand{\bbZ}{\mathbb{Z}}
\newcommand{\clS}{\mathcal{S}}
\newcommand{\frg}{\mathfrak{g}}
\newcommand{\frm}{\mathfrak{m}}
\newcommand{\frp}{\mathfrak{p}}
\newcommand{\shO}{\mathscr{O}}
\newcommand{\scH}{\mathscr{H}}
\newcommand{\scL}{\mathscr{L}}
\newcommand{\scW}{\mathscr{W}}
\newcommand{\GK}{\textup{GK}}
\newcommand{\sfC}{\mathsf{C}}
\newcommand{\sfM}{\mathsf{M}}
\newcommand{\SAb}{\mathsf{SAb}}
\DeclareMathOperator{\res}{res}
\DeclareMathOperator{\Mod}{\mathsf{Mod}}
\DeclareMathOperator{\Sets}{\mathsf{Sets}}
\DeclareMathOperator{\SCom}{\mathsf{SCom}}
\DeclareMathOperator{\SMod}{\mathsf{SMod}}
\DeclareMathOperator{\TSCom}{\mathsf{TopSCom}}
\DeclareMathOperator{\gr}{gr}
\DeclareMathOperator{\id}{id}
\DeclareMathOperator{\GL}{GL}
\DeclareMathOperator{\Ann}{Ann}
\DeclareMathOperator{\Aut}{Aut}
\DeclareMathOperator{\Asc}{Aut^{sc}}
\DeclareMathOperator{\Cok}{Cok}
\DeclareMathOperator{\Der}{Der}
\DeclareMathOperator{\Dsc}{Der^{sc}_0}
\DeclareMathOperator{\End}{End}
\DeclareMathOperator{\iEnd}{\ul{End}}
\DeclareMathOperator{\SF}{SF}
\DeclareMathOperator{\spn}{span}
\DeclareMathOperator{\Hom}{Hom}
\DeclareMathOperator{\iHom}{\ul{Hom}}
\DeclareMathOperator{\Ker}{Ker}
\DeclareMathOperator{\Lie}{Lie}
\DeclareMathOperator{\Sym}{Sym}
\DeclareMathOperator{\fcS}{\mathcal{S}}
\DeclareMathOperator{\Sing}{Sing}
\DeclareMathOperator{\Spec}{Spec}
\DeclareMathOperator{\Supp}{Supp}
\DeclareMathOperator{\SSup}{SS}
\DeclareMathOperator{\SBFM}{SS^{\textup{BFM}}}
\newcommand{\abs}[1]{\left|{#1}\right|}
\newcommand{\tabs}[1]{\lvert{#1}\rvert}
\newcommand{\dbr}[1]{\llbracket #1 \rrbracket} 
\newcommand{\dpr}[1]{(\!( #1 )\!)} 
\newcommand{\fvf}[1]{\mathfrak{X}_{#1}^1} 
\newcommand{\pdd}[1]{\tfrac{\pd}{\pd #1}}
\newcommand{\rst}[2]{\left.#1\right|_{#2}}
\newcommand{\df}[2]{(#1)^\downarrow_{#2}}
\newcommand{\dR}[2]{\Omega_{#1}^{#2}} 
\newcommand{\tdR}[1]{\dR{#1}{\bl}} 
\newcommand{\fdR}[1]{\dR{#1}{1}} 
\newcommand{\HS}[2]{\mathrm{HS}_{#1}^{#2}} 
\newenvironment{sbm}{\left[\begin{smallmatrix}}{\end{smallmatrix}\right]}
\begin{document}

\title{Li filtrations of SUSY vertex algebras}
\author{Shintarou Yanagida}
\date{2021.11.10} 
\keywords{vertex algebras, SUSY vertex algebras, Li filtration, vertex Poisson algebras,
associated schemes}
\address{Graduate School of Mathematics, Nagoya University.
Furocho, Chikusaku, Nagoya, Japan, 464-8602}.
\email{yanagida@math.nagoya-u.ac.jp}
\thanks{The author is supported by supported by JSPS KAKENHI Grant Number 19K03399, %
 and also by the JSPS Bilateral Program %
 ``Elliptic algebras, vertex operators and link invariant".}

\begin{abstract}
Any vertex algebra has a canonical decreasing filtration, called Li filtration,
whose associated graded space has a natural structure of a vertex Poisson algebra.
In this note, we introduce an analogous filtration for any SUSY vertex algebra, which was 
introduced by Heluani and Kac as a superfield formalism of a supersymmetric vertex algebra.
We prove that the associated graded superspace of our filtration
has a structure of SUSY vertex Poisson algebras.
We also introduce and discuss related notions, such as Zhu's $C_2$-Poisson superalgebras, 
associated superschemes and singular supports, for SUSY vertex algebras.
\end{abstract}

\maketitle
{\small \tableofcontents}

\setcounter{section}{-1}
\section{Introduction}\label{s:intro}

\subsection*{Li filtration}

A basic philosophy in the theory of vertex algebras is that they are quantized objects.
This phrase can be understood in various ways,
and one of them is given by the work of Haisheng Li \cite{Li}.
He introduced a canonical decreasing filtration for an arbitrary vertex algebra $V$, 
nowadays called the \emph{Li filtration} of $V$.
The associated graded space has a natural structure of a vertex Poisson algebra 
\cite[Chap.\ 16]{FBZ}, i.e., a combined structure of a commutative vertex algebra
and a vertex Lie algebra.
A vertex Poisson algebra can be considered as a Poisson object in the world of vertex algebras,
and in this sense, the existence of the Li filtration shows that 
we can view any vertex algebra as a quantization of the associated graded vertex Poisson algebra.

For explicitness, let us recall the definition of the Li filtration (see also \cref{ss:NWLif}).
Let $V$ be a vertex algebra, and for $a \in V$ we denote by $a_{(n)} \in \End V$ 
the $(n)$-operator in the expansion
$Y(a,z)=\sum_{n \in \bbZ}z^{-n-1}a_{(n)}$ of the state-field correspondence.
Then the Li filtration of $V$ is a decreasing sequence of subspaces
\[
 V=E_0(V) \supset E_1(V) \supset \dotsb \supset E_n(V) \supset \dotsb
\]
consisting of 
\begin{align}\label{eq:intro-evLif}
 E_n(V) \ceq \spn\left\{
 a^1_{(-1-k_1)} \dotsm a^r_{(-1-k_r)} b \, \Bigg|
 \begin{array}{l}
  r \in \bbZ_{>0}, \, a^i,b \in V, \, k_i \in \bbN \\
  \text{satisfying $k_1+\dotsb+k_r \ge n$}
 \end{array}
 \right\}.
\end{align}

Besides the philosophical reason explained above, 
there have been increasing interest in and importance of  
the Li filtration and the associated graded vertex Poisson algebra.
In particular, there have been increasing studies with a geometric viewpoint
where one sees the spectrum of the associated graded vertex Poisson algebra
as a vertex algebra (a.k.a.\ chiral, or semi-infinite) analogue of a Poisson scheme.
Such geometric studies have a strong point that the complicated structure of 
the original vertex algebra can be treated with less complexity 
by using Poisson geometry arguments or by making appropriate analogue of them.
Among various studies, let us here only name the work of Arakawa and Moreau \cite{AM},
where the notion of chiral symplectic cores is introduced and studied
as a vertex Poisson analogue of symplectic leaves in Poisson varieties.

\subsection*{SUSY vertex algebras}

This note is written under the influence of those Poisson-geometric studies of vertex algebras.
We focus on \emph{SUSY vertex algebras} introduced by Heluani and Kac \cite{HK}.
It is a superfield formulation of vertex algebras equipped with supersymmetry.
The detailed explanation will be given in \S \ref{s:SUSY}, but here let us give a brief account.
There are two classes of structures, \emph{$N_W=N$ SUSY vertex algebras} 
and \emph{$N_K=N$ SUSY vertex algebras}, for a positive integer $N$.
Both structures are given by a linear superspace $V$, an element $\vac \in V$
and a linear operator $Y(\cdot,Z)$ on $V$ whose value for $a \in V$ is 
taken in a \emph{superfield} of the form
\begin{align}\label{eq:intro-jJ}
 Y(a,Z) = \sum_{(j|J)} Z^{j|J} a_{(j|J)}, \quad 
 Z^{j|J} \ceq z^j \zeta^{j_1} \dotsb \zeta^{j_r}, \quad a_{(j|J)} \in \End V.
\end{align}
Here $Z=(z,\zeta^1,\dotsc,\zeta^N)$ denotes a \emph{supervariable}
with an even $z$ and odd $\zeta^i$'s,
and the index $(j|J)$ runs over integers $j \in \bbZ$ and ordered subsets 
$J=\{j_1<\dotsb<j_r\}$ of the ordered set $[N] \ceq \{1<\dotsb<N\}$.
$V$ is called the superspace of states, $\vac$ is called the vacuum,
and $Y(\cdot,Z)$ is called the state-superfield correspondence.
These should satisfy the vacuum axiom and the locality axiom,
similarly as in the even case (i.e., the case of ordinary vertex algebras).

The difference between $N_W=N$ and $N_K=N$ cases lies in the translation invariance.
An $N_W=N$ SUSY vertex algebra is equipped with an even endomorphism $T$ 
and odd endomorphism $S^i$'s for $i \in [N]$ which satisfy
\[
 [T,Y(a,Z)] = \pd_z Y(a,Z), \quad [S^i,Y(a,Z)] = \pd_{\zeta^i} Y(a,Z).
\]
Here $[\cdot,\cdot]$ denotes the supercommutator 
(see \S \ref{ss:super} for a formal definition).
On the other hand, an $N_K=N$ SUSY vertex algebra is equipped with 
odd operations $S_K^i$'s for $i \in [N]$ satisfying
\[
 [S_K^i,Y(a,Z)] = D^i_Z Y(a,Z), \quad D_Z^i \ceq \pd_{\zeta^i}+\zeta^i \pd_z.
\]
In this $N_K=N$ case, we automatically have $(S_K^1)^2=\dotsb=(S_K^N)^2$
corresponding to $(D_Z^i)^2=\pd_z$,
and setting $T \ceq (S_K^i)^2$, we have $[T,Y(a,Z)]=\pd_z Y(a,Z)$.

Similarly as the superfield formalism in supersymmetric quantum field theories,
a SUSY vertex algebra has a compact description of the structure.
Besides of such formal convenience, it has also much importance.
Here we name only a few studies which have geometric flavor: 
Heluani's construction \cite{H1} of chiral algebras over super curves 
and superconformal curves, his another work \cite{H2} 
of SUSY structure on the chiral de Rham complex of a Calabi-Yau manifold,
and the study of super-modular property of SUSY conformal blocks for $N_W=1$ case 
by Heluani and Van Ekeren \cite{HV}.

\subsection*{Summary of results}

Now we can explain the main theme of this note.
For any $N_W=N$ or $N_K=N$ SUSY vertex algebra $V$,
we introduce a canonical decreasing filtration 
\[
 V=E_0(V) \supset E_1(V) \supset \dotsb \supset E_n(V) \supset \dotsb
\]
where, using the $(j|J)$-operator in \eqref{eq:intro-jJ}, we set 
\begin{align}\label{eq:intro-SUSYLif}
 E_n(V) \ceq \spn\left\{
 a^1_{(-1-k_1|K_1)} \dotsm a^r_{(-1-k_r|K_r)} b \, \Bigg|
 \begin{array}{l}
  r \in \bbZ_{>0}, \, a^i,b \in V, \, k_i \in \bbN, \, K_i \subset [N] \\
  \text{satisfying $k_1+\dotsb+k_r \ge n$}
 \end{array}
 \right\}.
\end{align}
We call it the \emph{Li filtration} of $V$ (\cref{dfn:NWLif,dfn:NKLif}).

Our definition is just neglecting the odd index $J$ and 
using the even case definition \eqref{eq:intro-evLif}.
Despite of its simple-mindedness, we need some careful arguments on SUSY vertex algebras to 
obtain the following SUSY analogue of Li's theorem in \cite[Theorem 2.12]{Li}.

\begin{mth}[{\cref{thm:NW2.12,thm:NK2.12}}]\label{mth:1}
Let $V$ be an $N_W=N$ (resp.\ $N_K=N$) SUSY vertex algebra, and $E_n = E_n(V)$
be the linear sub-superspaces of $V$ in \eqref{eq:intro-SUSYLif}.
Then the associated graded space 
\[
 \gr_E V \ceq \tboplus_{n \in \bbN} E_n/E_{n+1}
\]
has the following structure of an $N_W=N$ (resp.\ $N_K=N$) SUSY vertex Poisson algebra
(see \cref{dfn:NWP,dfn:NKP}). Let $a \in E_r$ and $b \in E_s$.
\begin{itemize}[nosep]
\item 
The commutative multiplication $\cdot$ is 
\[
 (a+E_{r+1}) \cdot (b+E_{s+1}) \ceq a_{(-1|N)}b+E_{r+s+1}.
\] 

\item
The unit is $1 \ceq \vac+E_1$.

\item
The even operator $\pd$ is $\pd(a+E_{r+1}) \ceq T a + E_{r+2}$,
and the odd operator $\delta^i$ for $i \in [N]$ is
$\delta^i(a+E_{r+1}) \ceq S^i a + E_{r+2}$
(resp.\ the odd operator $\sld^i$ is
$\sld^i(a+E_{r+1}) \ceq S_K^i a + E_{r+2}$).

\item
The SUSY vertex Lie structure $Y_-$ is 
\[
 Y_-(a+E_{r+1},Z)(b+E_{s+1}) \ceq 
 \sum_{(j|J), \, j \ge 0} Z^{-1-j|[N] \sm J} (a_{(j|J)}b+E_{r+s-j+1}).
\]
\end{itemize}
\end{mth}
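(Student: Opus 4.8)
The plan is to reduce the SUSY statement to the known even case (Li's \cite[Theorem 2.12]{Li}), leveraging the fact that the filtration \eqref{eq:intro-SUSYLif} is defined by neglecting the odd index. First I would verify that each of the four proposed operations is \emph{well-defined} on the associated graded $\gr_E V$, i.e.\ independent of the choice of representatives. For the commutative product, this means checking that if $a \in E_{r+1}$ then $a_{(-1|N)}b \in E_{r+s+1}$, and symmetrically in $b$; this should follow from the definition of $E_n$ together with the skew-symmetry and associativity (Borcherds-type) identities for SUSY vertex algebras, which presumably are established in \S\ref{s:SUSY}. The key combinatorial point is that the operator $a_{(-1|N)}$ carries the ``top'' odd index $J=[N]$ and even index $-1$, so it raises the filtration degree by exactly $0$ (contributing $k=0$) while preserving parity, matching the even-case bookkeeping. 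The well-definedness of $\pd$, $\delta^i$ (resp.\ $\sld^i$) requires that $T$, $S^i$ (resp.\ $S_K^i$) map $E_{r}$ into $E_{r}$ and more precisely shift the filtration so that the induced map lands in $E_{r+2}/E_{r+2}$-graded piece one step up; this uses the translation-invariance axioms $[T,Y]=\pd_z Y$ etc.\ to express $T a$, $S^i a$ in terms of the $(j|J)$-operators.

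Next I would establish the axioms of an $N_W=N$ (resp.\ $N_K=N$) SUSY vertex Poisson algebra (\cref{dfn:NWP}, resp.\ \cref{dfn:NKP}) for these induced operations. The commutativity, unitality, and derivation properties of $\cdot$ with respect to $\pd$, $\delta^i$ (resp.\ $\sld^i$) should follow by transporting the corresponding (quasi-)identities for $Y(\cdot,Z)$ to the graded level, where the quantum corrections drop into higher filtration pieces and hence vanish in $\gr_E V$. For the SUSY vertex Lie bracket $Y_-$, I would check that its structure constants $a_{(j|J)}b$ for $j\ge 0$ descend correctly; note the index shift from $(j|J)$ to $(-1-j|[N]\sm J)$ and the degree shift by $-j$, which is exactly the SUSY analogue of the even-case formula $Y_-(a,z)b=\sum_{j\ge 0}z^{-1-j}(a_{(j)}b)$. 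The Leibniz and skew-symmetry axioms for $Y_-$ would be derived from the SUSY Borcherds identities, again using that the commutator corrections $[a_{(j|J)},b_{(-1-k|L)}]$ for $j\ge 0$ preserve or raise filtration degree appropriately.

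The main obstacle I anticipate is the careful sign and index bookkeeping peculiar to the superfield formalism: the odd indices $J, K_i \subset [N]$ interact through the expansion \eqref{eq:intro-jJ}, and the complementation $[N]\sm J$ appearing in $Y_-$ must be tracked through every application of a Borcherds-type identity, with Koszul signs arising from permuting odd supervariables and odd operators. In particular, proving that the product $a_{(-1|N)}b$ gives a \emph{commutative} multiplication on $\gr_E V$ will require showing that $a_{(-1|N)}b - (-1)^{|a||b|} b_{(-1|N)}a \in E_{r+s+1}$, which is the SUSY skew-symmetry modulo lower-order terms, and the sign $(-1)^{|a||b|}$ must be reconciled with the sign produced by moving the full odd index $[N]$ past $a$ or $b$. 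I expect the cleanest route is to isolate, once and for all, a SUSY version of Li's commutator and associativity estimates (of the form $a_{(j|J)}E_n \subset E_{n-j}$ for $j \le 0$, and $a_{(j|J)}E_n \subset E_{n}$ for $j \ge 0$), and then verify each Poisson axiom by matching its even-case proof index-by-index, letting the odd index $J$ ride along passively except where the complementation $[N]\sm J$ and the top class $N$ in the product force a sign or degree adjustment.
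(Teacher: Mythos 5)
Your overall strategy coincides with the paper's: mimic Li's even-case proof, reduce the theorem to filtration estimates for the $(j|J)$-operators established with SUSY Borcherds-type identities, and then transport the structure to $\gr_E V$ (in the paper this is packaged as \cref{prp:NW2.6} resp.\ \cref{prp:Li:2.6}, verified for the Li filtration in \cref{lem:NW2.89,lem:NW2.10,lem:NW2.11} and their $N_K$ analogues). The problem is that the two estimates you propose to ``isolate once and for all'' are false as stated, and they are the load-bearing step. For $j\ge 1$ the inclusion $a_{(j|J)}E_n \subset E_n$ fails already in the even case with $n=1$: for $b_{(-2)}c \in E_1$ one has $a_{(1)}\bigl(b_{(-2)}c\bigr) = b_{(-2)}a_{(1)}c + (a_{(0)}b)_{(-1)}c + (a_{(1)}b)_{(-2)}c$, and the middle term generically lies only in $E_0$; the correct statement is $a_{(j|J)}E_n(M) \subset E_{n-j}(M)$ for $j\ge 0$ (\cref{lem:NW2.10}, \cref{lem:Li:2.10}). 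For $j=-1-k\le -1$ your claim reads $a_{(-1-k|K)}E_n \subset E_{n+k+1}$, which fails already for $k=0$ since the $(-1|N)$-product does not raise the filtration degree; the correct inclusion is $a_{(-1-k|K)}E_n(M) \subset E_{n+k}(M)$, i.e.\ $E_{n-j-1}(M)$.

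Moreover, even the corrected one-sided estimates do not suffice: for the operations in the theorem to be well defined with the stated degree shifts --- e.g.\ the coefficient $a_{(j|J)}b + E_{r+s-j+1}$ of $Y_-$ must lie in $E_{r+s-j}/E_{r+s-j+1}$ --- you need the bilinear estimate $u_{(l|L)}m \in E_{r+s-l}(M)$ for $u \in E_r(V)$, $m \in E_s(M)$, $l \ge 0$, together with $u_{(l|L)}m \in E_{r+s-l-1}(M)$ for all $l$, which is what handles the product $(-1|N)$. This is \cref{lem:NW2.11} resp.\ \cref{lem:Li:2.11}, proved by induction on $r$: one writes $u = a_{(-2-i|I)}b$ with $b \in E_{r-i}$ and expands $(a_{(-2-i|I)}b)_{(l|L)}m$ by the SUSY \emph{iterate} formula. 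That formula is exactly the ingredient you cannot presume from the literature: the SUSY commutator formulas are in \cite{HK}, but the SUSY iterate formulas are not, and the paper proves them (\cref{lem:NWiter,lem:NKiter}) precisely for this purpose --- with genuinely different binomials $(Z-W)^{k|K}$ versus $(Z-W)_K^{k|K}$ and different sign bookkeeping in the $N_W$ and $N_K$ cases. So your plan needs two concrete repairs: replace your estimates by the correct ones above (including the bilinear version), and supply proofs of the iterate formulas before invoking ``associativity''.
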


Once we have obtained the Li filtration of SUSY vertex algebras,
we can discuss various related notions which are already established in the even case,
and can expect to develop a Poisson-geometric theory of SUSY vertex algebras.
In this note, as a first step toward such a geometric theory, 
we study a SUSY analogue of \emph{associated schemes} and \emph{singular supports} 
introduced by Arakawa \cite{A12}.
Our definitions and the arguments are straightforward analogue of loc.\ cit.
As a result, we have the following SUSY analogue of the equivalence between 
the $C_2$-cofiniteness and the lisse condition \cite[Theorem 3.3.3]{A12}.

\begin{mth}[\cref{thm:A12:3.3.3}]\label{mth:2}
Let $V$ be an $N_W=N$ or $N_K=N$ SUSY vertex algebra which is finitely strongly generated 
and has a lower-bounded grading (see \cref{asm:A12} for the detail). Then we have
\[
 \text{$V$ is $C_2$-cofinite (\cref{dfn:C2cof})} \iff 
 \text{$V$ is lisse (\cref{dfn:lisse})}.
\] 
\end{mth}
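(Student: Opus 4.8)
The plan is to follow the strategy of Arakawa \cite[Theorem 3.3.3]{A12}, the essential new ingredient being that, because $\gr_E V$ is supercommutative, every odd element squares to zero and is therefore nilpotent, so that all dimensions in sight are detected by the even, reduced part. Write $R_V \ceq V/C_2(V)$ for the $C_2$-Poisson superalgebra of \cref{dfn:C2cof}, $X_V \ceq \iSpec R_V$ for the associated superscheme, and $\SSup(V) \ceq \iSpec(\gr_E V)$ for the singular support, where $\gr_E V$ carries the SUSY vertex Poisson structure furnished by \cref{mth:1}. Under \cref{asm:A12} the superalgebra $R_V$ is finitely generated, so it is finite-dimensional if and only if it is Artinian, i.e.\ if and only if $\dim X_V = 0$; hence $C_2$-cofiniteness is equivalent to $\dim X_V = 0$. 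Since lisse-ness is by definition $\dim \SSup(V) = 0$ (\cref{dfn:lisse}), the whole theorem reduces to the geometric equivalence $\dim X_V = 0 \iff \dim \SSup(V) = 0$.

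One inequality is immediate and yields lisse $\Rightarrow$ $C_2$-cofinite. The grading $\gr_E V = \tboplus_{n} E_n/E_{n+1}$ has $E_0/E_1 \cong R_V$ as its degree-zero piece, and $\tboplus_{n \ge 1} E_n/E_{n+1}$ is an ideal for the commutative product $a_{(-1|N)}b$ whose quotient is $R_V$. Thus $X_V \inj \SSup(V)$ is a closed subscheme and $\dim X_V \le \dim \SSup(V)$, so $\dim \SSup(V) = 0$ forces $\dim X_V = 0$.

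For the reverse implication $C_2$-cofinite $\Rightarrow$ lisse I would argue as in the even case, passing to the reduced, even part. Using \cref{mth:1} I would first show that $\gr_E V$ is generated, as a supercommutative algebra equipped with the even derivation $\pd$ and the odd derivations $\delta^i$ (resp.\ $\sld^i$), by its bottom piece $R_V = E_0/E_1$; this exhibits $\gr_E V$ as a quotient of the SUSY arc algebra of $X_V$. To bound $\dim \SSup(V)$ I would then reduce: as the nilradical of $\gr_E V$ contains every odd element, $(\SSup(V))_\trd$ is the spectrum of a reduced, purely even algebra. Assuming $\dim X_V = 0$, so that $(X_V)_\trd$ is a finite set of points and $R_V$ is finite-dimensional, I would show that the finitely many defining relations of $R_V$, differentiated under $\pd$ and the $\delta^i$ (resp.\ $\sld^i$), force every arc coordinate to be nilpotent; this is the SUSY counterpart of the statement that the arc space of a $0$-dimensional scheme is $0$-dimensional, the even instance of which I would borrow from \cite{A12}. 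Consequently $(\SSup(V))_\trd$ is a finite set of points, i.e.\ $\dim \SSup(V) = 0$, and $V$ is lisse.

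The two conventions are treated uniformly, the only difference being the replacement of the odd derivations $\delta^i$ by the $\sld^i$ (coming from $S^i_K$, with $T = (S^i_K)^2$) in the $N_K=N$ case. The hard part, and the genuinely SUSY-specific point, is the nilpotency claim in the reverse implication: one must control the \emph{even} elements produced by the odd derivations — such as $\sld^i\theta$ for an odd generator $\theta$, or $\delta^i\delta^j x$ for an even $x$ with $i \ne j$ — which survive the passage to the reduced scheme and are not, a priori, accounted for by the ordinary $\pd$-jets of $R_V$. Establishing that finite-dimensionality of $R_V$ nonetheless renders all of these nilpotent in $\gr_E V$, so that the reduced SUSY arc space of the $0$-dimensional superscheme $X_V$ stays $0$-dimensional, is where careful bookkeeping of the odd indices $K_i$ in \eqref{eq:intro-SUSYLif} and of the SUSY vertex Poisson relations of \cref{mth:1} will be indispensable.
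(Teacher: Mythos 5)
Your reduction of the theorem to the equivalence $\dim (X_V)_{\trd}=0 \iff \dim \SSup(V)_{\trd}=0$, and your proof of (lisse $\Rightarrow$ $C_2$-cofinite) via the closed embedding $X_V \inj \SSup(V)$ cut out by the ideal $\tboplus_{n\ge1}E_n/E_{n+1}$, are correct; the latter is an equivalent variant of the paper's argument, which instead realizes $X_V$ as the scheme-theoretic image $p_0(\SSup(V))$ (\cref{lem:A12:3.3.1}). The genuine gap is in the converse, and it is exactly the step you defer to ``careful bookkeeping'': the statement you propose to prove there --- that the reduced superjet space of the $0$-dimensional superscheme $X_V$ is $0$-dimensional --- is \emph{false}. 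Take $R \ceq \Lambda_k[\theta]$, a two-dimensional superalgebra, and $N=1$. Then $R_\infty \cong \Lambda_k[\theta^{[n|0]} \mid n \in \bbN]$ and $R^O \cong \Lambda_k[\theta^{[n|0]}] \otimes_k k[\theta^{[n|1]} \mid n \in \bbN]$: the Hasse--Schmidt jets $\theta^{[n|0]}$ are odd, but their de Rham images $\theta^{[n|1]}$ are \emph{even} and satisfy no relations (every jet of $\theta\cdot\theta$ contains an odd factor), so $(\Spec R^O)_{\trd} \cong \Spec k[\theta^{[n|1]} \mid n \in \bbN]$ has infinite Krull dimension. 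Functorially, $\Hom_{\SCom k}(R^O,K)\cong\Hom_{\TSCom k}(R,K\otimes_k O)\cong K\dbr{z}$ via $\theta \mapsto f(z)\zeta$: the odd coordinate $\zeta$ of the superdisk absorbs $\theta$, and since $K\otimes_k O$ is not reduced, Arakawa's even-case argument (arcs take values in the \emph{domain} $K\dbr{z}$, hence kill nilpotents) does not superize. Consequently no differentiation of the defining relations of $R_V$ can render the variables $\theta^{[n|1]}$ nilpotent in $(R_V)^O$ --- they genuinely are not --- and this is not a corner case: for the Neveu--Schwarz minimal series (\cref{eg:NS-RV}) one has $\ol{\tau}\neq 0$ odd in $R_V$, and the same computation shows $(\Spec (R_V)^O)_{\trd}$ is infinite-dimensional even though $V$ is $C_2$-cofinite. (The paper's own proof of this direction asserts precisely this false claim, so your proposal mirrors the paper here; both need the repair below.)

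What saves the theorem are relations in the kernel of the surjection $\Phi\colon (R_V)^O \srj \gr_E V$ of \cref{prp:A12:2.5.1}, invisible in the superjet algebra: in the Neveu--Schwarz case, for instance, $\Phi$ sends the free even variable $\ol{\tau}^{[0|1]}$ to $\ol{S_K\tau}=2\ol{\nu}$, which \emph{is} nilpotent in $R_V$. The systematic way to exploit this is to observe that $S^i a = \pm\, a_{(-1|N \sm e_i)}\vac$ (resp.\ $S_K^i a = a_{(-1|N \sm e_i)}\vac$), so by \cref{lem:NW2.11} (resp.\ \cref{lem:Li:2.11}) the induced odd operators on $\gr_E V$ have filtration degree $0$, not $+1$; in particular they map $R_V=E_0/E_1$ into itself. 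Since they are odd derivations commuting with $\pd$, the $\pd$-differential subalgebra of $\gr_E V$ generated by $R_V$ is automatically stable under them, and the generation argument of \cref{lem:Li:4.2} then shows that $\gr_E V$ is generated by $R_V$ under $\pd$ \emph{alone}. Hence $\gr_E V$ is a quotient of the even Hasse--Schmidt jet algebra $(R_V)_\infty$ (no de Rham factors), and for that algebra the $0$-dimensionality claim does hold for superalgebras: every point valued in an even domain is a continuous map $R_V \to K\dbr{z}$, which kills both the odd part and the nilpotents of $R_V$, so finite-dimensionality of $R_V$ forces $(\Spec (R_V)_\infty)_{\trd}$, and with it its closed subscheme $\SSup(V)_{\trd}=\Spec(\gr_E V)_{\trd}$, to be $0$-dimensional. (Equivalently, one can check that the SUSY Li filtration coincides with Li's filtration of the underlying vertex superalgebra, since each $a_{(j|J)}$ is an ordinary mode of some $S^J a$, and quote the even-case theorem directly.) Without an argument of this kind, the implication ($C_2$-cofinite $\Rightarrow$ lisse) in your proposal remains unproved.
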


\subsection*{Organization}

Let us explain the organization of this note.
\cref{s:pre} is a preliminary part.
The first \cref{ss:super} summarizes the super notation used throughout this note.
The next \cref{ss:sder} is also a notational preliminary, focusing on the derivations
and the de Rham complex in the super setting. We remark that there are several notations
for the super de Rham complex in literature, and the aim is to choose and fix one of them.
The third \cref{ss:arc} is largely based on recollections, 
but has some new terminologies and lemmas.
There we give SUSY analogue of the infinite jet algebras (or arc algebras),
calling them the superjet algebras and the superconformal jet algebras, 
corresponding to $N_W=N$ and $N_K=N$ cases, respectively.
They will be the foundation of our study of commutative SUSY vertex algebras and 
SUSY vertex Poisson algebras in the following sections.

A large part of \cref{s:SUSY} gives recollection 
on the theory of SUSY vertex algebras established in \cite{HK}.
After introducing the superfield notation in \cref{ss:sf}, we explain the definition 
and basic properties of SUSY vertex algebras in \cref{ss:NW} and \cref{ss:NK},
dividing the $N_W=N$ case and $N_K=N$ case, respectively.
We also give SUSY analogue of Borcherds' iterate formula (\cref{lem:NWiter,lem:NKiter}),
which will be used in the proof of \cref{mth:1}.
The following \cref{ss:com} starts our original study.
There we study commutative SUSY vertex algebras, and show that 
the superjet algebra (resp.\ the superconformal algebra)
has a natural structure of a commutative $N_W=N$ (resp.\ $N_K=N$) SUSY vertex algebra.

\cref{s:Lif} is the main body of this note, introducing the Li filtration of
SUSY vertex algebras and showing \cref{mth:1}.
As a preliminary, we give in \cref{ss:VP} an exposition of SUSY vertex Poisson algebras.
They are introduced in \cite{HK}, using the language of SUSY Lie conformal algebras.
For the following analysis, we give a restatement in terms of SUSY vertex Lie algebras.
We also introduce the level 0 SUSY vertex algebra structure on the superjet and 
superconformal jet algebras (\cref{prp:NWlv0,prp:NKlv0}), which will be the foundation 
of the study of associated superschemes and singular supports in  \cref{s:ss}.
The $N_W=N$ case of \cref{mth:1} is shown in \cref{ss:NWLif}, 
and the $N_K=N$ case is in \cref{ss:NKLif}.
The strategy is just mimicking Li's argument for the even case \cite[\S 2]{Li},
which is based on Borcherds' commutator formula and iterate formula.
Although the SUSY analogues of commutator formula are already given in \cite{HK},
we cannot find the SUSY iterate formula in literature,
which is the reason for preparing \cref{lem:NWiter,lem:NKiter}.

\cref{s:ss} is an application of the previous \cref{mth:1}, and more or less 
a straightforward SUSY analogue of Arakawa's work \cite{A12}, as mentioned above.
In \cref{ss:C2}, we introduce the $C_2$-Poisson superalgebra (\cref{prp:C2}) and 
the associated superscheme (\cref{dfn:X_V}) of an $N_W=N$ or $N_K=N$ SUSY vertex algebra, 
and study their basic properties (e.g.\ \cref{prp:A12:2.5.1}). One remark is that 
there appears Poisson superalgebras of parity $N \bmod 2$ (\cref{dfn:sP}).
\cref{ss:ss} gives a study of singular supports, and shows \cref{mth:2}.

\subsection*{Notation and terminology}

Here we list the notations and terminology used throughout in this paper. 
\begin{itemize}[nosep]
\item
For a set $S$, we denote by $\abs{S}= \# S$ its cardinality,
and for $r,s \in S$, we denote by $\delta_{r,s}$ the Kronecker delta.

\item
We denote by $\bbN = \bbZ_{\ge 0} \ceq \{0,1,2,\dotsc\}$ the set of non-negative integers.

\item
For $\alpha,\beta \in \bbC$, we denote $\alpha \ge \beta$ if $\alpha-\beta \in \bbR_{\ge 0}$.

\item
The binomial coefficient is defined to be 
$\binom{n}{m} \ceq \frac{1}{m!} n(n-1)\dotsb(n-m+1)$ for $m \in \bbN$ and
some indeterminate or number $n$.

\item
A ring or an algebra means a unital and associative one unless otherwise stated.
A ring action on its module is denoted by the period ``$.$'', i.e., for a left module $M$ 
over a ring $R$, we denote by $r.m$ the action of $r \in R$ on $m \in M$.

\item
For a linear (super)space $V$ over a field and a subset $S \subset V$,
we denote by $\spn S$ the subspace of $V$ linearly spanned of $S$.

\item 
For a category $\sfC$, we denote $X \in \sfC$ to mean that $X$ is an object of $\sfC$.

\item
$\Sets$ denotes the category of sets.
\end{itemize}

\addtocontents{toc}{\protect\setcounter{tocdepth}{2}}
\section{Super preliminary}\label{s:pre}

\subsection{Super notation}\label{ss:super}

We start with delivering notations for super objects.
We follow \cite[Chap.\ 3]{Man2} and \cite[1.1]{KV2} with slight modifications on symbols.

\begin{ntn}\label{ntn:super}
Here is the first set of notations and terminologies:
\begin{itemize}[nosep]
\item
We denote by $\Zt \ceq \bbZ/2\bbZ = \{\oz,\oo\}$ 
the \emph{parity} or the \emph{supergrading}.

\item
A \emph{super abelian group} or a \emph{supermodule} means a $\Zt$-graded module 
\[
 M = M_{\oz} \oplus M_{\oo}.
\]
For each $\gamma \in \Zt$, an element $v \in M_{\gamma}$ is called \emph{of pure parity}, 
and for such $v$, we denote $p(v) \ceq \gamma$.
We denote by $M_{\pure} \ceq M_{\oz} \sqcup M_{\oo}$ the set of elements of pure parity.
An element $v \in M_{\oz}$ is called \emph{even},
and $v \in M_{\oo}$ is called \emph{odd}.
If $M=M_{\oz}$, then $M$ is called \emph{purely even},
and if $M=M_{\oo}$, then $M$ is called \emph{purely odd}.

\item
A \emph{morphism $M \to N$ of supermodules} is 
a module homomorphism which preserves the $\Zt$-gradings.
The category of supermodules is denoted by $\SAb$.
We denote by $\Pi\colon \SAb \to \SAb$ the parity change functor:
$(\Pi M)_{\oz}=M_{\oo}$ and $(\Pi M)_{\oo}=M_{\oz}$.
It is involutive: $\Pi^2 = \id$.

\item
The category $\SAb$ of supermodules is a symmetric monoidal category 
with monoidal structure $\otimes$ given by the standard graded tensor product.
The symmetry transformation is given by 
\begin{align}\label{eq:kos-sgn}
 M \otimes N \to N \otimes M, \quad m \otimes n \mapsto (-1)^{p(m) p(n)} n \otimes m.
\end{align}
We always consider $\SAb$ as the category equipped with this symmetric monoidal structure.

\item
A \emph{supering} $R$ is an associative ring object in $\SAb$.
The \emph{supercommutator} in $R$ is defined by 
\begin{align}\label{eq:sbra}
 [r,s] \ceq r s - (-1)^{p(r) p(s)} s r
\end{align}
for $r,s \in R$ of pure parity, and extended by linearity.
It satisfies the \emph{super Jacobi rule}
\[
 (-1)^{p(r)p(t)}[r,[s,t]]+(-1)^{p(s)p(r)}[s,[t,r]]+(-1)^{p(t)p(s)}[t,[r,s]]=0 
 \quad (r,s,t \in R_{\pure}).
\]

\item
A \emph{commutative superring} is a commutative associative ring object in $\SAb$,
i.e., a superring whose supercommutator always vanishes.
A \emph{morphism $R \to S$ of commutative superrings} is a morphism in $\SAb$
respecting the ring structures.
Given such a morphism $R \to S$ of commutative superrings, 
we call $S$ a \emph{superalgebra over $R$} or an \emph{$R$-superalgebra}.
Commutative $R$-superalgebras form a category, which is denoted by $\SCom R$.
\end{itemize}
\end{ntn}

Hereafter we always assume:
\begin{itemize}[nosep]
\item
In every commutative superring $R$, the element $2$ is invertible. 
\end{itemize}
As a consequence, if $r \in R_{\oo}$, then $r^2=\frac{1}{2}[r,r]=0$, 
so all odd elements are nilpotent.

\begin{ntn*}
Next we introduce terminologies for modules over a commutative superring $R$.
\begin{itemize}[nosep]
\item 
A \emph{left $R$-supermodule $L$} is a supermodule with a left module structure 
over the underlying ring of $R$ such that the module structure 
$R \times L \to L$, $r \otimes l \mapsto r.l$ is a morphism in $\SAb$.

A \emph{right $R$-supermodule} $M$ is similarly defined, 
and can be regarded as a left $R$-supermodule by 
$r.m \ceq (-1)^{p(r)p(m)}m.r$ for pure elements $r \in R$ and $m \in M$.
Conversely, a left $R$-supermodule is regarded as a right $R$-supermodule
by the same correspondence.
Henceforth we call a left $R$-supermodule just by an \emph{$R$-supermodule}.

\item
$R$-supermodules form a category, which is denoted by $\SMod R$.
For $M,N \in \SMod R$, a \emph{morphism} $M \to N$ of $R$-supermodules 
is an $R$-linear map preserving the parity.
The category $\SMod R$ inherits the symmetric monoidal structure $\otimes_R$ 
from that of $\SAb$.

\item
If $R=k$ is a field, then a $k$-supermodule is called a \emph{linear $k$-superspace}.
\end{itemize}
\end{ntn*}

Let $R$ be a commutative superring.
The category $\SMod R$ of $R$-supermodules has a natural $\Zt$-graded enhancement.
Let $M$ and $N$ be $R$-supermodules.
We denote by $\Hom_{\Mod R}(M,N)$ the module of homomorphisms $M \to N$ 
of the underlying ring of $R$. 
It is an $R$-supermodule with 
\[
 \Hom_{\Mod R}(M,N) = \Hom_{\Mod R}(M,N)_{\oz} \oplus \Hom_{\Mod R}(M,N)_{\oo},
\]
where $\Hom_{\Mod R}(M,N)_{\oz}$ and $\Hom_{\Mod R}(M,N)_{\oo}$ denote 
the submodule of parity preserving and of parity changing homomorphisms, respectively,
and the $R$-action is given by $(r.f)(m) \ceq r.f(m)$ 
for $r \in R$, $f \in \Hom_{\Mod R}(M,N)$ and $m \in M$.
Note that $\Hom_{\Mod R}(M,N)_{\oz}=\Hom_{\SMod R}(M,N)$, 
the morphism set of the category $\SMod R$.

\begin{ntn*}
We call an element of $\Hom_{\Mod R}(M,N)$ an \emph{$R$-homomorphism}.
\end{ntn*}

The $R$-supermodule $\Hom_{\Mod R}(M,N)$ is indeed an \emph{internal hom}
in the symmetric monoidal category $(\SMod R, \otimes_R)$.
To stress this point, we denote 
\begin{align}\label{eq:iHom}
 \iHom_{\SMod R}(M,N) := \Hom_{\Mod R}(M,N) \in \SMod R.
\end{align}
The definition of internal hom claims that we have 
\[
 \Hom_{\SMod R}(L,\iHom_{\SMod R}(M,N)) \cong \Hom_{\SMod R}(L \otimes_R M, N)
\]
for any $L,M,N \in \SMod R$.


The parity change functor $\Pi$ on $\SAb$ naturally lifts to $\SMod R$.
For $M \in \SMod$, the object $\Pi M$ is defined as in $\SAb$,
ant the $R$-supermodule structure is given by  
\begin{align}\label{eq:PiM}
 r.(\Pi m) \ceq (-1)^{p(r)} \Pi(r.m) \quad (r \in R \text{ of pure parity}, \  m \in M),
\end{align}
where $\Pi m \in \Pi M$ is the element corresponding to $m \in M$.
For a morphism $f \in \Hom_{\SMod R}(M,N)$, 
we define $f^{\Pi}\colon \Pi M \to \Pi N$ 
to be the morphism which agree with $f$ as a map of sets.
Here the symbol $f^{\Pi}$ is borrowed from \cite[Chap.\ 3, \S 1.5]{Man2}. 
This construction gives the parity change functor $\Pi\colon \SMod R \to \SMod R$.

For $f \in \Hom_{\Mod R}(M,N)$, we define $\Pi f \in \Hom_{\Mod R}(M, \Pi N)$ and 
$f \Pi \in \Hom_{\Mod R}(\Pi M,N)$ by
\begin{align}\label{eq:Pi-f-Pi}
 (\Pi f)(m) \ceq \Pi(f(m)), \quad (f \Pi)(\Pi m) \ceq f(m) \quad 
 (m \in M).
\end{align}
Then we have $f^{\Pi} = \Pi f \Pi$. 
More generally, we have the following isomorphisms of $S$-modules:
\begin{align}\label{eq:Pi-Hom}
\begin{array}{ccccc}
 \iHom_{\SMod R}(\Pi M,N) &\lsot &\iHom_{\SMod R}(M,N) &\lsto &\iHom_{\SMod R}(S,\Pi N), \\
            f \Pi &\lmot &f            &\lmto &\Pi f.
\end{array}
\end{align}

\subsection{Super derivations and the de Rham complex}\label{ss:sder}

Next, we introduce derivations in super setting.

\begin{dfn}[{c.f.\ \cite[Chap.\ 3, \S 2.8]{Man2}}]\label{dfn:der}
Let $R \to A$ be a morphism of commutative superrings, and $M$ be an $A$-supermodule.
\begin{enumerate}[nosep]
\item 
An \emph{$R$-derivation $D: A \to M$ of parity $q \in \Zt$} 
is an $R$-homomorphism $D \in \Hom_{\Mod R}(A,M)_q$ of parity $q$ 
(where $A,M$ are regarded as supermodules) 
such that $D(r)=0$ for $r \in R$, and that the super Leibniz rule holds
for $a,b \in A$ of pure parity:
\[
 D(a b) = D(a).b + (-1)^{p(a) q} a.D(b).
\]
An $R$-derivation of parity $\oz$ (resp.\ of parity $\oo$) is  also called 
an \emph{even $R$-derivation} (resp.\ an \emph{odd $R$-derivation}).

\item
The set of $R$-derivations $A \to M$ of parity $p$ is denoted by $\Der_R(A,M)_p$,
which is a module.
We also define a supermodule $\Der_R(A,M)$ by
\[
 \Der_R(A,M) \ceq \Der_R(A,M)_{\oz} \oplus \Der_R(A,M)_{\oo}.
\]
\end{enumerate}
\end{dfn}

Note that for an even $R$-derivation $D\colon A \to M$, the maps $\Pi D\colon A \to \Pi M$ 
and $D\Pi\colon \Pi A \to M$ given in \eqref{eq:Pi-f-Pi} are odd $R$-derivations.
Conversely, if $D$ is an odd $R$-derivation, then $\Pi D$ and $D \Pi$ are even $R$-derivations.
These are checked by using the $R$-supermodule structure \eqref{eq:PiM} of $\Pi M$.

\begin{ntn}\label{ntn:vf1}
In the case $M=A$, we denote 
\[
 \fvf{A/R} = \Der_R(A) \ceq \Der_R(A,A),
\]
which is a Lie superalgebra in terms of the supercommutator \eqref{eq:sbra}.
\end{ntn}

The supermodule $\Der_R(A,M)$ is an $A$-supermodule, 
since for $a,b,c \in A$ and $D \in \Der_R(A,M)$ of pure parity, 
we have $p(a)+p(D)=p(a D)$ and 
\[
 (a D)(b c) = a\bigl(D(b).c+(-1)^{p(b) p(D)}b.D(c)\bigr) 
            = (a D)(b).c+(-1)^{p(b) p(a D)}b.(a D)(c).
\]
In particular, we have a functor 
\[
 \Der_R(A,\cdot)\colon \SMod A \lto \SMod A.
\]
As in the non-super case \cite[Tag 00RM, 10.131 Differentials]{SP}, 
this functor is represented by the \emph{supermodule of K\"ahler differentials}.
In the super case, there are two versions.
For the definition, recall that a \emph{free supermodule over a commutative superring $R$} is 
a supermodule which is a module over the underlying algebra of $R$ with basis of pure parity.
See \cite[Chap.\ 3, \S 1.6]{Man2} for the detail.

\begin{dfn}\label{dfn:dR1}
Let $\varphi\colon R \to A$ be a morphism of commutative superrings.
Consider the free $A$-supermodules
\[
 E \ceq \bigoplus_{r \in R_{\pure}} A[r] \oplus 
 \bigoplus_{a,b \in S_{\pure}} A[(a,b)] \oplus 
 \bigoplus_{f,g \in S_{\pure}} A[(f,g)], \quad 
 F \ceq \bigoplus_{a \in S_{\pure}} A[a], 
\]
where $R_{\pure} \ceq R_{\oz} \sqcup R_{\oo}$ and $A_{\pure} \ceq A_{\oz} \sqcup A_{\oo}$
as sets (see \cref{ntn:super}), and the parity is defined by $p([(a,b)]) \ceq p(a)+p(b)$,
$p([r]):=p(r)$ and $p([a]) \ceq p(a)$.
We define two morphisms 
\[
 \psi_{\ev}, \psi_{\od}\colon E \lto F
\]
of $A$-supermodules by the following rules: 
\begin{align}\label{eq:Omega:map}
\begin{split}
&[r] \lmto [\varphi(r)], \quad [(a,b)] \lmto [a+b]-[a]-[b], \\
&[(f,g)] \lmto 
 \begin{cases}
 [f g]-           f[g]-(-1)^{p(f) p(g)}g[f] & (\text{for } \psi_{\ev}) \\
 [f g]-(-1)^{p(f)}f[g]-(-1)^{p(f) p(g)}g[f] & (\text{for } \psi_{\od})
 \end{cases}.
\end{split}
\end{align}
Then we define $A$-supermodules 
\[
 \fdR{A/R,\ev} \ceq \Cok \psi_{\ev}, \quad 
 \fdR{A/R,\od} \ceq \Cok \psi_{\od},
\]
and define maps 
\[
 d_{\ev}\colon A \lto \fdR{A/R,\ev}, \quad 
 d_{\od}\colon A \lto \fdR{A/R,\od},
\]
to be the ones sending $a$ to the class of $[a]$.
We call $\fdR{A/R,\ev}$ and $\fdR{A/R,\od}$ 
the \emph{$A$-supermodules of K\"aler differentials of $A$ over $R$},
and call $d_{\ev}$ and $d_{\od}$ the \emph{universal differentials}. 
\end{dfn}

The module $\fdR{A/R,\ev}$ (resp.\ $\fdR{A/R,\od}$) is spanned by elements 
of the form $a.d_{\ev} b$ (resp.\ $a.d_{\od} b$) with $a,b \in A$, respectively.
The rules \eqref{eq:Omega:map} yield
\begin{align*}
&d_{\ev}(r) = 0, & &d_{\ev}(a+b) = d_{\ev} a + d_{\ev} b, & 
&d_{\ev}(a b) = d_{\ev} a.b + a.d_{\ev} b\\
&d_{\od}(r) = 0, & &d_{\od}(a+b) = d_{\od} a + d_{\od} b, & 
&d_{\od}(a b) = d_{\od} a.b + (-1)^{p(a)}a.d_{\od} b. 
\end{align*}
By \cref{dfn:der}, $d_{\ev}$ and $d_{\od}$ are an even and odd $R$-derivation, 
respectively. We also have the natural isomorphism of $A$-supermodules
\begin{align}\label{eq:Omega-eo}
 \fdR{A/R,\od} \lsto \Pi \fdR{A/R,\ev}, \quad 
 a.d_{\ev} b \lmto \Pi(a.d_{\od}b) \quad (a,b \in A)
\end{align}
Here we used the symbol $\Pi(a)$ for $a \in A$ in the same meaning 
as in \eqref{eq:PiM} and \eqref{eq:Pi-f-Pi}.
Then, by \eqref{eq:Pi-Hom}, we have an isomorphism
$\iHom_{\SMod R}(A,\fdR{A/R,\ev}) \cong \iHom_{\SMod R}(A,\fdR{A/R,\od})$,
which corresponds to the identity $d_{\od}=\Pi d_{\ev}$.

The following statement is an analogue of the universality of the module of 
K\"ahler differentials in the non-super case \cite[Tag 00RM, Lemma 10.131.3]{SP}.

\begin{lem}
Let $R \to A$ be a morphism of commutative superrings. 
Then we have isomorphisms of $A$-supermodules
\begin{align}\label{eq:Omega-univ}
\begin{array}{ccccc}
 \iHom_{\SMod A}(\fdR{A/R,\od},M) &\lsot 
&\iHom_{\SMod A}(\fdR{A/R,\ev},M) &\lsto &\Der_R(A,M), \\
\phi \Pi &\lmot &\phi &\lmto &\phi \circ d_{\ev},
\end{array}
\end{align}
which are functorial with respect to $M \in \SMod A$.
\end{lem}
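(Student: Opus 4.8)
The plan is to establish the right-hand isomorphism $\iHom_{\SMod A}(\fdR{A/R,\ev},M)\sto\Der_R(A,M)$, $\phi\mapsto\phi\circ d_{\ev}$, directly from the explicit presentation of $\fdR{A/R,\ev}$, and then to obtain the left-hand isomorphism formally. Indeed, once the right-hand map is in place, the isomorphism \eqref{eq:Omega-eo} identifies $\fdR{A/R,\od}$ with $\Pi\fdR{A/R,\ev}$, while the left arrow of \eqref{eq:Pi-Hom} (applied with $R$ replaced by $A$) gives $\iHom_{\SMod A}(\fdR{A/R,\ev},M)\sto\iHom_{\SMod A}(\Pi\fdR{A/R,\ev},M)$, $\phi\mapsto\phi\Pi$; composing these yields the left-hand isomorphism, and the relation $d_{\od}=\Pi d_{\ev}$ noted after \eqref{eq:Omega-eo} guarantees that the resulting triangle commutes. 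Thus the whole content is the super analogue of the classical universal property, and I would model the argument on \cite[Tag 00RM, Lemma 10.131.3]{SP}.

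For the forward map I would first check that $\phi\circ d_{\ev}$ is genuinely an $R$-derivation of the same parity $q$ as $\phi$. Since $d_{\ev}$ is an even $R$-derivation and $\phi$ is $A$-linear, the composite is $R$-linear and vanishes on the image of $R$, so the only real point is the parity-$q$ Leibniz rule of \cref{dfn:der}. Rewriting the defining relation of $\fdR{A/R,\ev}$ (the third family in \eqref{eq:Omega:map}) in left-action form as $d_{\ev}(fg)=f.d_{\ev}g+(-1)^{p(f)p(g)}g.d_{\ev}f$ and applying $\phi$, using that an internal-hom element of parity $q$ commutes past a scalar $c$ with the Koszul sign $(-1)^{qp(c)}$, I obtain precisely $D(fg)=D(f).g+(-1)^{p(f)q}f.D(g)$ after converting the right action $D(f).g$ back to a left action. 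The assignment $\phi\mapsto\phi\circ d_{\ev}$ is visibly $A$-linear and parity preserving, hence a morphism in $\SMod A$.

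For the inverse I would use the free presentation $E\xrightarrow{\psi_{\ev}}F\to\fdR{A/R,\ev}\to0$. Given $D\in\Der_R(A,M)_q$, define an $A$-linear map $\wt D\colon F\to M$ of parity $q$ on the basis by $[a]\mapsto D(a)$; this is well defined because $F$ is free on $\{[a]\}_{a\in A_{\pure}}$ and the parities match. I then verify $\wt D\circ\psi_{\ev}=0$ on each of the three families of \eqref{eq:Omega:map}: the family $[r]\mapsto[\varphi(r)]$ is annihilated because $D$ vanishes on $R$, the family $[(a,b)]$ because $D$ is additive, and the family $[(f,g)]$ because $\wt D(\psi_{\ev}[(f,g)])=D(fg)-(-1)^{qp(f)}f.D(g)-(-1)^{p(f)p(g)+qp(g)}g.D(f)$ vanishes by the same parity-$q$ Leibniz identity as above. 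Hence $\wt D$ factors through a unique $A$-homomorphism $\phi_D\colon\fdR{A/R,\ev}\to M$ of parity $q$ with $\phi_D\circ d_{\ev}=D$. The two constructions are mutually inverse: $\phi_D\circ d_{\ev}=D$ by construction, while any $\phi$ is determined by its values $\phi(d_{\ev}a)$ since the $d_{\ev}a$ generate $\fdR{A/R,\ev}$, so $\phi_{\phi\circ d_{\ev}}=\phi$. Naturality in $M$ is immediate, as both directions are given by post-composition.

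I expect the only genuine obstacle to be the Koszul sign bookkeeping. The module $\fdR{A/R,\ev}$ is built from the \emph{even} Leibniz relation, yet it must represent derivations of both parities, so the argument closes up only if the internal hom is read with the convention that an odd homomorphism acquires a sign when commuted past a scalar. Tracking this sign is exactly what makes the coefficient $(-1)^{p(f)p(g)}$ in $\psi_{\ev}$ agree with the derivation's $(-1)^{p(f)q}$ after the right-to-left action conversion; once that single identity is checked, everything else is routine.
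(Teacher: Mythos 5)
Your proposal is correct and takes essentially the same route as the paper: the right-hand isomorphism is obtained from the free presentation $E \xrightarrow{\psi_{\ev}} F \to \fdR{A/R,\ev} \to 0$ (injectivity, plus the fact that the induced map $[D]$ on $F$ kills the image of $\psi_{\ev}$ exactly when $D$ is an $R$-derivation of either parity), and the left-hand isomorphism is deduced formally from \eqref{eq:Pi-Hom} together with \eqref{eq:Omega-eo} and $d_{\od}=\Pi d_{\ev}$. The only difference is one of detail: you make explicit the Koszul-sign convention for odd elements of the internal hom, which is indeed what makes the even relation in $\psi_{\ev}$ represent derivations of both parities and which the paper's terse proof leaves implicit.
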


\begin{proof}
The left isomorphism is given by \eqref{eq:Pi-Hom} and \eqref{eq:Omega-eo}.
As for the right isomorphism, 
we can easily find that the map $\phi \mapsto \phi \circ d_{\ev}$ 
is an injective morphism of $A$-supermodules.
Given any $D \in \Der_R(A,M)$, we have the induced map 
$[D]\colon \bigoplus_{a \in A_{\pure}} A[a] \to M$, 
where we used the symbols in \cref{dfn:dR1}.
Then the condition for $D$ to be an even or odd $R$-derivation is equivalent to the one
that $[D]$ annihilates the image of the map \eqref{eq:Omega:map}.
\end{proof}

\begin{ntn*}
Hereafter, if no confusion may arise, we simply denote 
\[
 (\fdR{A/R},d) \ceq (\fdR{A/R,\ev},d_{\ev}).
\]
\end{ntn*}

Let us give another description of the $A$-supermodule $\fdR{A/R}$ as 
the first order neighborhood of the diagonal 
$\Delta_{A/R} \subset \Spec A \times_{\Spec R} \Spec A = \Spec A \otimes_R A$,
which is also an analogous statement as the non-super case.


\begin{lem}[{c.f.\ \cite[Tag 00RM, Lemma 10.131.13]{SP}}]
Let $R \to A$ be a morphism of commutative superrings,
and $I \ceq \Ker(A \otimes_R A \to A)$ be the kernel of the multiplication map.
Then the map 
\[
 \fdR{A/R} \lto I/I^2, \quad a d b \lmto (a \otimes b - ab \otimes 1 \bmod I^2)
\]
is an isomorphism of $A$-supermodules.
\end{lem}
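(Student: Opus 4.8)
The plan is to realize the map of the statement as the one induced by an even derivation via the universal property \eqref{eq:Omega-univ}, and then to construct a two-sided inverse coming from a square-zero extension. Throughout, recall that $A \otimes_R A$ is a commutative superring under the graded product $(x \otimes y)(x' \otimes y') = (-1)^{p(y)p(x')} xx' \otimes yy'$, and that $I/I^2$ is an $A$-supermodule on which the two $A$-actions through the left and right tensor factors agree (because their difference $a \otimes 1 - 1 \otimes a$ lies in $I$ and hence annihilates $I/I^2$).

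\textbf{Forward map.} First I would check that $D\colon A \to I/I^2$, $a \mapsto (1 \otimes a - a \otimes 1 \bmod I^2)$, is a well-defined even $R$-derivation. Its image lands in $I$ since the multiplication map sends $1 \otimes a - a \otimes 1$ to $0$, and $D$ kills $R$ by $R$-balancedness, $\varphi(r) \otimes 1 = 1 \otimes \varphi(r)$. The essential point is that $D(ab) - D(a).b - a.D(b)$ is the class of the product $(1 \otimes a - a \otimes 1)(1 \otimes b - b \otimes 1)$, which lies in $I^2$ and therefore vanishes in $I/I^2$; here the super-commutativity $ba = (-1)^{p(a)p(b)} ab$ of $A$ is used to cancel the terms supported on $A \otimes 1$. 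By the universal property \eqref{eq:Omega-univ}, $D$ factors uniquely as $D = \psi \circ d$ for an $A$-supermodule morphism $\psi\colon \fdR{A/R} \to I/I^2$, and by construction $\psi(a\,db) = (a \otimes b - ab \otimes 1 \bmod I^2)$, so $\psi$ is exactly the map in the statement.

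\textbf{Inverse map.} To invert $\psi$ I would use the square-zero extension $A \oplus \fdR{A/R}$, the commutative $R$-superalgebra whose underlying supermodule is $A \oplus \fdR{A/R}$ and whose product is $(a,\omega)(a',\omega') \ceq (aa',\, a.\omega' + (-1)^{p(\omega)p(a')} a'.\omega)$. Define $\Theta\colon A \otimes_R A \to A \oplus \fdR{A/R}$ by $a \otimes a' \mapsto (aa', a\,da')$. I would verify that $\Theta$ is $R$-balanced (using $d(\varphi(r) a') = \varphi(r)\,da'$, since $d$ kills $R$), parity-preserving, and a morphism of superrings; the last is the sign-heavy identity $\Theta((a \otimes b)(a' \otimes b')) = \Theta(a \otimes b)\,\Theta(a' \otimes b')$, which reduces to the Leibniz rule $d(bb') = db.b' + b\,db'$ together with the matching Koszul signs. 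Since the composite of $\Theta$ with the projection to $A$ equals the multiplication map, $\Theta(I) \subseteq \fdR{A/R}$; and because $\fdR{A/R}$ squares to zero in $A \oplus \fdR{A/R}$, we have $\Theta(I^2) = 0$. Hence $\Theta$ descends to an $A$-supermodule morphism $\phi\colon I/I^2 \to \fdR{A/R}$.

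\textbf{Mutual inverse and main obstacle.} Finally I would compare $\psi$ and $\phi$ on generators: $\phi(\psi(a\,db)) = \Theta(a \otimes b) - \Theta(ab \otimes 1)$ has $\fdR{A/R}$-component $a\,db - 0 = a\,db$, while $\psi(\phi(1 \otimes b - b \otimes 1)) = \psi(db) = (1 \otimes b - b \otimes 1 \bmod I^2)$. As these elements generate the respective $A$-supermodules, it follows that $\phi = \psi^{-1}$, proving $\psi$ is an isomorphism. The main obstacle is purely the Koszul sign bookkeeping: one must track consistently the sign in the graded product of $A \otimes_R A$, the right-action convention $m.a = (-1)^{p(m)p(a)}a.m$ on $\fdR{A/R}$ from \cref{ss:super}, and the sign in the square-zero product, and then confirm that $\Theta$ is genuinely a morphism of commutative superrings rather than merely additive. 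Once the conventions of \cref{ss:super} are fixed, each sign cancels exactly as in the non-super case, so this verification is the only substantive work.
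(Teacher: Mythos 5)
Your proof is correct, and it is exactly what the paper intends: the paper's proof consists of the single remark that the non-super argument (the Stacks Project one, via the derivation $a \mapsto 1 \otimes a - a \otimes 1$ and the inverse built from the square-zero extension $A \oplus \fdR{A/R}$) works with minor changes, and your write-up is precisely that argument with the Koszul signs tracked explicitly.
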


\begin{proof}
The argument in the non-super case works with minor changes.
\end{proof}

Finally we explain the de Rham complex in super setting.
See also \cite[Chap.\ 3, \S 2.5, \S 4.4]{Man2}.

\begin{ntn}\label{ntn:tdR}
Let $R$ be a commutative superring.
\begin{itemize}[nosep]
\item 
For an $R$-supermodule $M$, we denote by $\Sym_R^{\bl} M$ 
the \emph{symmetric superalgebra of $M$ over $R$}.
It is a commutative $R$-superalgebra equipped with $\bbN$-grading expressed by $\bl$.

\item
Let $\fdR{A/R}$ be the supermodule of K\"ahler differentials 
for a commutative $R$-superalgebra $A$.
The \emph{de Rham complex of $A$ over $R$} is the complex defined to be  
\[
 \tdR{A/R} \ceq \bigl(\Sym_A^{\bl}(\Pi \fdR{A/R}),d_{\txdR}\bigr),
\]
where the \emph{de Rham differential} $d_{\txdR}\colon \dR{A/R}{p} \to \dR{A/R}{p+1}$ 
is induced by the odd universal differential 
\[
 \Pi d_{\ev}=d_{\od}\colon A \lto \Pi \fdR{A/R,\ev}=\fdR{A/R,\od}
\] 
in a similar fashion as in the non-super case.
If we focus on the $A$-superalgebra structure, we denote $\dR{A/R}{}$ and call it
the \emph{de Rham superalgebra of $A$ over $R$}.
\end{itemize}
\end{ntn}

Using the terminology of \cite[1.1.16]{BD}, we can say that 
$\Sym_R^{\bl} M$ is an $\bbN$-graded superalgebra, and $\tdR{A/R}$ is a dg $A$-superalgebra.
Henceforth we also call $\tdR{A/R}$ the de Rham dg superalgebra of $A$ over $R$.


$\dR{A/R}{}$ enjoys the universal property in \cref{fct:SdR} below.

\begin{ntn}
Let $\Lambda_R[\zeta]$ be the exterior algebra of one variable $\zeta$ over $R$.
In other words, it is a commutative $R$-superalgebra consisting of the elements 
of the form $r+s\zeta$ for $r,s \in R$ with $\zeta$ an odd element, 
and the multiplication is given by 
$(r+s\zeta)(r'+s'\zeta)=r r'+(r s'+(-1)^{(p(s)+1)p(r')}r' s)\zeta$.
\end{ntn}

\begin{fct}[{\cite[Proposition 3.3]{KV2}}]\label{fct:SdR}
For a morphism $R \to A$ of commutative superrings, the de Rham superalgebra 
$\dR{A/R}{}$ represents the functor 
$\Hom_{\SCom R}(A,\cdot \otimes_R \Lambda_R[\zeta])\colon \SCom R \to \Sets$.
In other words, there is a functorial bijection
\[
 \Hom_{\SCom R}(\dR{A/R}{},B) = \Hom_{\SCom R}(A, B \otimes_R \Lambda_R[\zeta]).
\]
\end{fct}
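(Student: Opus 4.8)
The plan is to identify both sides of the claimed bijection with one and the same set of pairs $(f,g)$, where $f$ is an $R$-superalgebra homomorphism $A \to B$ and $g$ is an odd $R$-derivation $A \to B$ relative to the $A$-supermodule structure on $B$ induced by $f$, and then to verify that these identifications are natural in $B$. It is worth stressing at the outset that the de Rham differential plays no role in the universal property itself: the statement concerns only the $A$-superalgebra structure of $\dR{A/R}{}=\Sym_A^{\bl}(\Pi\fdR{A/R})$, and $d_{\od}$ reappears only at the end, as the universal odd derivation producing the tautological element of the represented functor.

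First I would analyse the target side. Writing a morphism $\phi \in \Hom_{\SCom R}(A, B\otimes_R\Lambda_R[\zeta])$ as $\phi(a)=f(a)+g(a)\zeta$ with $f,g\colon A\to B$, parity preservation of $\phi$ forces $f$ to be even and $g$ to be odd, while $\phi|_R$ landing in the constants $B$ forces $g|_R=0$ and $R$-linearity of $f$. Using the multiplication rule of $\Lambda_R[\zeta]$ together with the Koszul sign incurred when commuting the odd $\zeta$ past elements of $B$, I would expand $\phi(a)\phi(b)$ and compare $\zeta$-coefficients with $\phi(ab)$. This should yield exactly that $f$ is an $R$-superalgebra homomorphism and that $g$ obeys the odd Leibniz rule of \cref{dfn:der} with respect to the structure $B_f$ on $B$ induced by $f$. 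Thus the target side is in bijection with $\coprod_{f}\Der_R(A,B_f)_{\oo}$, the coproduct of the sets of odd $R$-derivations over all $R$-superalgebra maps $f$.

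Second I would analyse the source side using the universal properties already in hand. A morphism $\dR{A/R}{}\to B$ in $\SCom R$ restricts on the degree-zero part $A$ to an $R$-superalgebra map $f$, which makes $B$ into an $A$-superalgebra, and the remaining data is an $A$-superalgebra map $\Sym_A^{\bl}(\Pi\fdR{A/R})\to B_f$. By the universal property of the symmetric superalgebra this is the same as an $A$-supermodule map $\Pi\fdR{A/R}\to B_f$. Applying \eqref{eq:Omega-eo} and the parity-reversing isomorphism \eqref{eq:Pi-Hom}, and then the universal property \eqref{eq:Omega-univ} of the K\"ahler differentials, such a map corresponds precisely to an odd $R$-derivation $A\to B_f$. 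Hence the source side is also in bijection with $\coprod_{f}\Der_R(A,B_f)_{\oo}$; the two descriptions agree, and naturality in $B$ is immediate since every step is functorial. The universal element is then $a\mapsto a+(d_{\od}a)\zeta$, which is where the odd universal differential enters.

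The main obstacle I anticipate is the sign bookkeeping in the second step. One must check that the odd Leibniz rule emerging from multiplicativity of $\phi$ matches, sign for sign, the odd-derivation convention of \cref{dfn:der}, equivalently the twisted rule $d_{\od}(ab)=d_{\od}a.b+(-1)^{p(a)}a.d_{\od}b$ recorded from \eqref{eq:Omega:map}. This is delicate because two sources of signs interact, namely the nonstandard product on $\Lambda_R[\zeta]$ and the Koszul sign from moving $\zeta$ past odd elements of $B$, and because \eqref{eq:Pi-Hom} interchanges even and odd morphisms, so one must track carefully that the $A$-supermodule map $\Pi\fdR{A/R}\to B_f$ produced on the source side is even exactly when the corresponding derivation $A\to B_f$ is odd. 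Once these signs are reconciled, the bijection and its naturality in $B$ follow at once.
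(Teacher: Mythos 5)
Your proposal is correct, but note that the paper itself gives no proof of this statement: it is imported as a \lq\lq Fact\rq\rq\ from Kapranov--Vasserot \cite[Proposition 3.3]{KV2}, so there is no in-paper argument to compare against. Your route --- decomposing a morphism $A \to B \otimes_R \Lambda_R[\zeta]$ into an even algebra map $f$ plus an odd component $g$, and matching this against $\Hom_{\SCom R}(\dR{A/R}{},B)$ via the universal property of $\Sym_A^{\bl}$, the parity-reversing identification \eqref{eq:Pi-Hom}, and the universal property \eqref{eq:Omega-univ} of $\fdR{A/R}$ --- is the natural and essentially standard argument, and all the ingredients you invoke are indeed available in \cref{ss:super,ss:sder}. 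One caveat on the point you yourself flag: with the convention $\phi(a)=f(a)+g(a)\zeta$ literally as written, multiplicativity of $\phi$ yields $g(ab)=f(a)g(b)+(-1)^{p(a)p(b)}f(b)g(a)$, which is \emph{not} verbatim the odd Leibniz rule of \cref{dfn:der}; it becomes so only after the sign twist $g(a)\mapsto(-1)^{p(a)}g(a)$, or equivalently after writing $\phi(a)=f(a)+\zeta g(a)$ with $\zeta$ on the left. Since the two descriptions are canonically in bijection, this does not affect the conclusion, but if you write the proof out you should commit to the left-$\zeta$ convention (or record the twist explicitly), and state the universal element accordingly as $a \mapsto a + \zeta\, d_{\od}a$. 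With that reconciliation made, both sides are identified functorially with pairs $(f, D)$ of an $R$-superalgebra map $f\colon A \to B$ and an odd $R$-derivation $D \in \Der_R(A,B_f)_{\oo}$, and the bijection follows.
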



The construction $A \mto \dR{A/R}{}$ is functorial, so we can introduce:

\begin{ntn}
Following \cite{KV2}, we call the functor the \emph{de Rham spectrum functor} and denote 
\[
 \clS(A) \ceq \dR{A/R}{}.
\] 
\end{ntn}

We can apply the functor $\clS$ repeatedly, and the obtained superalgebra 
$\clS^N(A)$ for $N \in \bbZ_{>0}$ is equipped with the de Rham differentials 
$d_{\txdR}^i$ for $i=1,\dotsc,N$. These are composable, 
and satisfies the anti-commutation relation $[d_{\txdR}^i,d_{\txdR}^j]=0$. 
For later reference, we introduce:

\begin{ntn}\label{ntn:dRJ}
For an ordered set $J=\{j_1<\dotsb<j_r\} \subset \{1<2<\dotsb<N\}$, 
we define $d_{\txdR}^J\colon A \to \clS^N(A)$ by 
$d_{\txdR}^J \ceq d_{\txdR}^{j_1} \dotsm d_{\txdR}^{j_r}$.
\end{ntn}

We have an obvious multi-variable analogue of \cref{fct:SdR}.

\begin{lem}\label{lem:SdR}
Let $\Lambda_R[\zeta^1,\dotsc,\zeta^N]$ be exterior algebra of $N$ variables over $R$.
Then the $N$-times application of the de Rham spectrum functor $\clS^N(A)$
represents the functor 
$\Hom_{\SCom R}(A,? \otimes_R \Lambda_R[\zeta^1,\dotsc,\zeta^N])$.
In other words, we have a functorial bijection
\[
 \Hom_{\SCom R}\bigl(\clS^N(A),B\bigr) \lsto 
 \Hom_{\SCom R}\bigl(A,B \otimes_R \Lambda_R[\zeta^1,\dotsc,\zeta^N]\bigr).
\]
\end{lem}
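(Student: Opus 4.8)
The plan is to argue by induction on $N$, using \cref{fct:SdR} both as the base case and as the engine of the inductive step. For $N=1$ there is nothing to prove: the asserted bijection is exactly the functorial isomorphism of \cref{fct:SdR}, since $\clS^1(A)=\clS(A)$ and $\Lambda_R[\zeta^1]=\Lambda_R[\zeta]$.

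For the inductive step, suppose the claim holds for $N-1$. First I would record the identification of commutative $R$-superalgebras
\[
 \Lambda_R[\zeta^1,\dotsc,\zeta^N] \cong
 \Lambda_R[\zeta^1,\dotsc,\zeta^{N-1}] \otimes_R \Lambda_R[\zeta^N],
\]
which holds because each $\zeta^i$ is odd, so the Koszul symmetry \eqref{eq:kos-sgn} of the graded tensor product automatically reproduces the anticommutation relations $\zeta^i\zeta^j=-\zeta^j\zeta^i$ and $(\zeta^i)^2=0$ of the exterior algebra. Writing $\clS^N(A)=\clS(\clS^{N-1}(A))$ and applying \cref{fct:SdR} to the $R$-superalgebra $\clS^{N-1}(A)$ with target $B$, then the inductive hypothesis to $A$ with target $B \otimes_R \Lambda_R[\zeta^N]$, I obtain the chain of functorial bijections
\begin{align*}
 \Hom_{\SCom R}\bigl(\clS^N(A),B\bigr)
 &\lsto \Hom_{\SCom R}\bigl(\clS^{N-1}(A), B \otimes_R \Lambda_R[\zeta^N]\bigr) \\
 &\lsto \Hom_{\SCom R}\bigl(A, (B \otimes_R \Lambda_R[\zeta^N]) \otimes_R \Lambda_R[\zeta^1,\dotsc,\zeta^{N-1}]\bigr) \\
 &\lsto \Hom_{\SCom R}\bigl(A, B \otimes_R \Lambda_R[\zeta^1,\dotsc,\zeta^N]\bigr),
\end{align*}
where the last bijection uses the associativity and symmetry of $\otimes_R$ together with the displayed identification of exterior algebras. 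Composing these gives the desired isomorphism.

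The argument is routine, and the only points demanding care are bookkeeping rather than genuine obstacles: one must check that each intermediate coefficient object $B \otimes_R \Lambda_R[\zeta^N]$ is again a commutative $R$-superalgebra (so that \cref{fct:SdR} applies), and that every bijection in the chain is natural in $B$, so that the composite is functorial as claimed. Naturality in $B$ follows because each step is either an instance of \cref{fct:SdR} (functorial by hypothesis) or induced by a fixed isomorphism of the coefficient superalgebra. The main thing to watch is that the symmetry isomorphism reordering the two exterior factors introduces the correct Koszul signs; but since these signs are built into the monoidal structure of $\SAb$ via \eqref{eq:kos-sgn}, they cause no difficulty.
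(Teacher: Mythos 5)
Your proof is correct, and it is essentially the argument the paper has in mind: the paper states \cref{lem:SdR} without proof as the ``obvious multi-variable analogue'' of \cref{fct:SdR}, and your induction on $N$ --- writing $\clS^N(A)=\clS(\clS^{N-1}(A))$, applying \cref{fct:SdR} once, invoking the inductive hypothesis with coefficient superalgebra $B \otimes_R \Lambda_R[\zeta^N]$, and identifying $\Lambda_R[\zeta^1,\dotsc,\zeta^N]$ with the super tensor product of the smaller exterior algebras --- is exactly how that omitted argument goes, with the Koszul-sign and naturality checks correctly accounted for.
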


\subsection{Superjet and superconformal jet algebras}\label{ss:arc}

We give a brief recollection on \emph{infinite jet algebras}, which we mean the spectrum 
of infinite jet affine schemes in the sense of 
\cite[\S 2,\S 3]{EM}, \cite{V} and \cite[2.3.2, 2.3.3]{BD}. 
We present it in super setting, following \cite[\S 4.2]{KV2}.
We also recall the notion of \emph{superconformal vector fields} or \emph{SUSY structure}
on super curves.
In this subsection, the base field $k$ is assumed to be of characteristic $0$.

\subsubsection{Superjet algebras}

As before, let $R$ be a commutative superring, 
and denote by $\SCom R$ the category of commutative $R$-superalgebras.
Recall that in \cref{dfn:dR1} we used free $R$-supermodules.
In the next \cref{dfn:HS}, 
we use the \emph{polynomial $R$-superalgebra} $R[x_j \mid j \in J]$, 
where the generators $x_j$ are of pure parity and commutative.
It can also be called the \emph{free commutative $R$-superalgebra} generated by $x_j$'s.
We also use the set $A_{\pure} \ceq A_{\oz} \sqcup A_{\oo}$ of elements of pure parity.

\begin{dfn}[{\cite[Definition 1.3]{V}}]\label{dfn:HS}
Let $A \in \SCom R$, and $f\colon R \to A$ be the structure morphism.
For $m \in \bbN \sqcup \{\infty\}$, define a commutative $A$-superalgebra 
$\HS{A/R}{m}$ to be the quotient of the polynomial $A$-superalgebra
\[
 A[x^{(i)} \mid x \in A_{\pure}, \, i=1,\dotsc,m]
\]
by the ideal $I$ generated by the following terms:
\begin{align*}
                             f(r)^{(i)} \quad &(  r \in R, \, i=1,\dotsc,m), \\
 (x+y)^{(i)}-x^{(i)}-y^{(i)}            \quad &(x,y \in A, \, i=1,\dotsc,m), \\
 (x y)^{(i)}-\sum_{j+k=i}x^{(j)}y^{(k)} \quad &(x,y \in A, \, i=0,\dotsc,m),
\end{align*}
where we denote $x^{(0)} \ceq x$ for $x \in A$,
and if $m=\infty$ we interpret the range ``$i=1,\dotsc,m$'' as $i \in \bbZ_{>0}$.
We call $\HS{A/R}{m}$ the ($A$-superalgebra of) \emph{Hasse-Schmidt derivations}.
We also define $R$-homomorphisms $d_i$ ($i=0,\dotsc,m$) by 
\[
 d_i\colon A \lto \HS{A/R}{m}, \quad x \lmto (x^{(i)} \bmod I),
\]
and call them the \emph{universal derivations}.
\end{dfn}

\begin{rmk}\label{rmk:HS}
A few remarks are in order.
\begin{enumerate}[nosep]
\item 
We have $\HS{A/R}{0} \cong A$, and $\HS{A/R}{1} \cong \Sym_A \fdR{A/R}$,
the symmetric $A$-superalgebra of the module $\fdR{A/R}$ of K\"ahler differentials,
forgetting the $\bbN$-grading in \cref{ntn:tdR}.

\item
$\HS{A/R}{m}$ is a commutative $R$-superalgebra by the structure morphism $f\colon R \to A$.

\item
$d_1$ an even derivation of the $R$-superalgebra $A$ (\cref{dfn:der}).
If the base field $k$ is of characteristic $0$, 
then $d_n = \frac{1}{m!}d_1^n$ for $n \in \bbN$.
\end{enumerate}
\end{rmk}

The Hasse-Schmidt derivations $\HS{A/R}{m}$ represents the functor of $m$-th jets,
explained in the following \cref{fct:HS}.

\begin{ntn*}
Let $R$ be a commutative superring, and $z$ be an even indeterminate.
\begin{itemize}[nosep]
\item
We regard the formal power series ring $R\dbr{z}$ as 
\[
 R\dbr{z} = \varprojlim_{m \in \bbN} R[z]/(z^{m+1}),
\]
and as a complete topological superring with respect to the $z$-adic topology.

\item
We denote by $\TSCom R$ the category of topological commutative $R$-superalgebras
and continuous morphisms.
In particular, we have $R\dbr{z} \in \TSCom R$.
\end{itemize}
\end{ntn*}

\begin{fct}[{\cite[Corollary 1.8]{V}}]\label{fct:HS}
For $A \in \SCom R$ and $m \in \bbN \cup \{\infty\}$, the Hasse-Schmidt derivations 
$\HS{A/R}{m}$ represents the functor $\SCom R \to \Sets$ given by
\[
 \begin{cases}
  \Hom_{ \SCom R}\bigl(A,? \otimes_R R[z]/(z^{m+1})\bigr) & (m \in \bbN) \\
  \Hom_{\TSCom R}\bigl(A,? \otimes_R R[z]/(z^{m+1})\bigr) & (m=\infty)
 \end{cases}.
\]
More precisely, we have a functorial bijection 
\[
 \Hom_{\SCom R} \bigl(\HS{A/R}{m},B\bigr) \lto 
 \Hom_{\SCom R} \bigl(A,B[z]/(z^{m+1})\bigr), \quad 
 \phi \lmto \bigl(x \mto \tsum_{i=0}^m z^i \phi(d_i x)\bigr)
\]
in the case $m \in \bbN$, and a similar one given by 
$\phi \mapsto \bigl(x \mapsto \sum_{i=0}^\infty z^i \phi(d_i x)\bigr)$
in the case $m=\infty$.
\end{fct}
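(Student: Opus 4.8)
The plan is to prove the asserted functorial bijection directly from the presentation of $\HS{A/R}{m}$ in \cref{dfn:HS} as a quotient of a polynomial superalgebra, adapting the classical (non-super) computation of \cite{V} while tracking parities. Fix $B \in \SCom R$ and treat first the case $m \in \bbN$. I would introduce the candidate map
\[
 \Phi\colon \Hom_{\SCom R}\bigl(\HS{A/R}{m},B\bigr) \lto
 \Hom_{\SCom R}\bigl(A,B[z]/(z^{m+1})\bigr), \quad
 \Phi(\phi)(x) \ceq \tsum_{i=0}^m z^i \phi(d_i x),
\]
together with a candidate inverse $\Psi$ defined as follows: given $\psi$, expand $\psi(x)=\sum_{i=0}^m z^i \psi_i(x)$ with $\psi_i\colon A \to B$ the $z^i$-coefficient, and let $\Psi(\psi)$ be the $R$-superalgebra morphism sending $A$ to $B$ via $\psi_0$ and each polynomial generator $x^{(i)}$ to $\psi_i(x)$.

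First I would verify that $\Phi(\phi)$ is a well-defined morphism in $\SCom R$. Parity is preserved because $z$ is even, so $z^i\phi(d_i x)$ carries the parity of $x$; and the three families of relations generating the ideal $I$ translate termwise into the three properties needed of $\Phi(\phi)$. The relation $f(r)^{(i)}=0$ ($i\ge 1$) gives $R$-linearity, the additive relation gives additivity, and the convolution relation gives multiplicativity. The multiplicativity step is where the super bookkeeping concentrates: expanding $\Phi(\phi)(x)\,\Phi(\phi)(y)=\sum_k z^k \sum_{a+b=k}\phi(d_a x)\phi(d_b y)$, the evenness of $z$ means that moving $z^{b}$ past $\phi(d_a x)$ introduces no Koszul sign, so this matches $\Phi(\phi)(xy)=\sum_k z^k\phi\bigl((xy)^{(k)}\bigr)$ exactly against the sign-free convolution relation $(xy)^{(k)}=\sum_{a+b=k}x^{(a)}y^{(b)}$ imposed in \cref{dfn:HS}.

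Next I would check that $\Psi$ is well-defined, that is, that the assignment $x^{(i)}\mapsto \psi_i(x)$ (together with $\psi_0$ on $A$) on the generators of the polynomial $A$-superalgebra annihilates $I$. This again reduces to the same three families: $\psi_i(f(r))=0$ for $i\ge 1$ because $\psi$ is an $R$-morphism and $R$ lands in the constant-term subalgebra $B \subset B[z]/(z^{m+1})$; additivity and multiplicativity of each $\psi_i$ follow by reading off the $z^i$-coefficient of the additivity and multiplicativity of $\psi$, again using that $z$ is even. Each $\psi_i$ preserves parity for the same reason. The two composites are then the identity by inspection on generators, via $\phi(d_i x)=\phi(x^{(i)} \bmod I)$, and functoriality in $B$ is immediate since $\Phi$ and $\Psi$ are given by post-composition.

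Finally, for $m=\infty$ I would pass to the topological setting, replacing $B[z]/(z^{m+1})$ by $B\dbr{z}=\varprojlim_{m} B[z]/(z^{m+1})$ and $\Hom_{\SCom R}$ by $\Hom_{\TSCom R}$. The same formulas define $\Phi$ and $\Psi$ with now infinite sums converging $z$-adically, and the bijection follows by passing to the projective limit of the finite-level statements, noting that continuity of a morphism $A \to B\dbr{z}$ is precisely the statement that its $z$-coefficients assemble into a compatible system of the sort produced above. The only genuine point requiring care---rather than a real obstacle---is the sign analysis in the multiplicativity step; the whole argument rests on the observation that the jet variable $z$ is even, which is exactly what lets the defining relations of \cref{dfn:HS} be written without Koszul signs and match super-multiplicativity on the nose.
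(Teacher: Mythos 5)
The first thing to say is that the paper does not prove this statement at all: it is stated as a \emph{Fact} quoted verbatim from \cite[Corollary 1.8]{V} (the paper's convention is that Fact environments are imported from the literature), so there is no internal proof to compare yours against. What you have written is the missing verification, and it is correct; it is also, in essence, the standard generators-and-relations argument for the non-super case, transplanted with parity bookkeeping. Your two maps $\Phi(\phi)(x)=\sum_i z^i\phi(d_i x)$ and the coefficientwise inverse $\Psi$ are exactly the maps one writes down over a plain ring, and your key observation---that the jet variable $z$ is even, so no Koszul signs arise when reordering powers of $z$ past coefficients, which is what lets the sign-free convolution relation $(xy)^{(i)}=\sum_{j+k=i}x^{(j)}y^{(k)}$ of \cref{dfn:HS} match super-multiplicativity in $B[z]/(z^{m+1})$ on the nose---is precisely the point the paper leaves implicit when it asserts that Vojta's result holds in the super setting. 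The citation buys the paper brevity; your argument buys an explicit confirmation that the super transposition is sound.

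Three minor points, none fatal. First, you never check unitality of $\Phi(\phi)$; this is immediate since $1_A=f(1_R)$, so $1^{(i)}\in I$ for $i\ge 1$ already by the first family of relations. Second, the polynomial algebra has generators $x^{(i)}$ only for $x\in A_{\pure}$, so the additive and multiplicative relations should be read for pure elements (with $x,y$ of equal parity in the additive family); your checks are unaffected, but $\Psi$ should be described as defined on generators indexed by pure $x$. Third, for $m=\infty$ your projective-limit reduction is legitimate because $\HS{A/R}{\infty}=\colim_m \HS{A/R}{m}$ and $B\dbr{z}=\varprojlim_m B[z]/(z^{m+1})$, giving $\Hom_{\SCom R}(\HS{A/R}{\infty},B)\cong\varprojlim_m\Hom_{\SCom R}(\HS{A/R}{m},B)\cong\Hom_{\SCom R}(A,B\dbr{z})$; just note that the continuity clause is essentially vacuous here (any morphism from discrete $A$ is continuous), and the real content of the topological formulation is that $?\otimes_R R\dbr{z}$ in the represented functor must be read as the completed tensor product $B\dbr{z}$, which is how you (correctly) read it.
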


Combining \cref{lem:SdR} and \cref{fct:HS}, we have \cref{prp:sj} below.
We use the following notations.

\begin{ntn}\label{ntn:sd}
Let $N \in \bbZ_{>0}$, and $k$ be a field of characteristic $0$.
We denote by $[N]$ the ordered set $\{1<2<\dotsb<n\}$.
\begin{itemize}[nosep]
\item
for $A \in \SCom k$ we denote
\begin{align}\label{eq:Ainf}
 A_\infty \ceq \HS{A/k}{\infty}
\end{align}
and call it the (\emph{infinite}) \emph{jet algebra} of $A$.

\item
Let $Z=(z,\zeta^1,\dotsc,\zeta^N)$ be a set of indeterminates 
with $z$ even and $\zeta^j$ odd which are (super-) commutative in each other.
We call $Z$ an \emph{$1|N$-supervariable}. 
For $j \in \bbZ$ and an ordered subset $J=\{j_1<\dotsb<j_r\} \subset [N]$, 
we denote
\[
 Z^{j|J} \ceq z^j \zeta^J = z^j \zeta^{j_1} \dotsm \zeta^{j_r}.
\]

\item
Let $\frm \ceq (Z)$ be the ideal of the polynomial superalgebra $k[Z] \in \SCom k$
generated by $z$ and $\zeta^i$'s.
The natural projections $k[Z]/\frm^{n+1} \to k[Z]/\frm^{n}$ 
form a filtered system, and we denote 
\[
 O = O^{1|N} \ceq k\dbr{Z} = \varprojlim_{n \in \bbN} k[Z]/\frm^{n+1},
\]
regarding it as a complete topological $k$-superalgebra by $\frm$-adic topology.
\end{itemize}
\end{ntn}

\begin{prp}\label{prp:sj}
Let $N \in \bbZ_{>0}$, $k$ be a field of characteristic $0$, 
and $Z=(z,\zeta^1,\dotsc,\zeta^N)$ be a supervariable.
For $A \in \SCom k$, the commutative superalgebra 
\[
 A^O = A^{k\dbr{Z}} \ceq \clS^N(A_\infty)
\]
represents the functor
$\Hom_{\TSCom k}(A,\cdot \otimes_k O)\colon \SCom k \to \Sets$.
More precisely, we have a functorial bijection
\begin{align}\label{eq:sj-bij}
 \Hom_{\SCom k}(A^O,B) \lsto 
 \Hom_{\TSCom k}(A,B \otimes_k O), \quad 
 \phi \lmto \sum_{n \in \bbN, \, J \subset [N]} Z^{n|J} \phi \circ d_{n|J},
\end{align}
where $d_{n|J}\colon A \to A^O$ is the composition 
\begin{align}\label{eq:sj:d}
 d_{n|J} \ceq d_{\txdR}^J \circ d_n
\end{align}
of the universal differentials $d_n\colon A \to A_\infty$ 
and the product of the de Rham differentials $d_{\txdR}^J$ in \cref{ntn:dRJ}.
We call $\clS^N(A_\infty)$ the (\emph{infinite}) \emph{$1|N$-superjet algebra} of $A$.
\end{prp}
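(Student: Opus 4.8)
The plan is to deduce the claimed representability by chaining together the two adjunctions already at our disposal, \cref{lem:SdR} for the de Rham spectrum functor $\clS^N$ and \cref{fct:HS} for the Hasse--Schmidt derivations, after recording a convenient splitting of the target superring $O$. Since $2$ is invertible, each odd generator satisfies $(\zeta^i)^2=0$, so every formal power series in the $\zeta^i$ terminates, and there is an isomorphism of topological commutative $k$-superalgebras
\[
 O = k\dbr{Z} \cong k\dbr{z} \otimes_k \Lambda_k[\zeta^1,\dotsc,\zeta^N],
\]
the $\frm$-adic topology on the left matching the $z$-adic topology on the first factor (the exterior factor being finite-dimensional over $k$). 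Hence for any $B \in \SCom k$ there is a natural identification $B \otimes_k O \cong \bigl(B \otimes_k \Lambda_k[\zeta^1,\dotsc,\zeta^N]\bigr) \otimes_k k\dbr{z}$ in $\TSCom k$.

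Next I would assemble the chain. Applying \cref{lem:SdR} with $A_\infty$ in the role of the base object gives
\[
 \Hom_{\SCom k}\bigl(\clS^N(A_\infty),B\bigr) \cong
 \Hom_{\SCom k}\bigl(A_\infty, B \otimes_k \Lambda_k[\zeta^1,\dotsc,\zeta^N]\bigr),
\]
whose explicit form, obtained by iterating the single-variable case \cref{fct:SdR} (where the $\zeta$-linear part is the odd de Rham differential $d_{\od}=d_{\txdR}$), sends $\phi$ to $u \mapsto \sum_{J \subset [N]} \zeta^J \phi(d_{\txdR}^J u)$. Writing $B' \ceq B \otimes_k \Lambda_k[\zeta^1,\dotsc,\zeta^N] \in \SCom k$ and letting $\phi'$ denote the image of $\phi$ under this first bijection, \cref{fct:HS} with $R=k$ and $m=\infty$ gives
\[
 \Hom_{\SCom k}(A_\infty, B') \cong
 \Hom_{\TSCom k}\bigl(A, B' \otimes_k k\dbr{z}\bigr),
\]
sending $\phi'$ to $x \mapsto \sum_{n \in \bbN} z^n \phi'(d_n x)$. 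Composing these and using the splitting of the previous paragraph for the target produces a bijection between $\Hom_{\SCom k}(A^O,B)$ and $\Hom_{\TSCom k}(A, B \otimes_k O)$, functorial in $B$ because each of the two bijections is.

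It remains to read off the explicit formula. Tracing $\phi$ through both steps, for $x \in A$ one obtains
\[
 x \lmto \sum_{n \in \bbN} z^n \Bigl(\sum_{J \subset [N]} \zeta^J \phi(d_{\txdR}^J d_n x)\Bigr)
       = \sum_{n \in \bbN, \, J \subset [N]} Z^{n|J}\, \phi(d_{\txdR}^J d_n x),
\]
and the definitions $Z^{n|J}=z^n \zeta^J$ together with $d_{n|J}=d_{\txdR}^J \circ d_n$ from \eqref{eq:sj:d} turn this into the map $\phi \mapsto \sum_{n,J} Z^{n|J}\, \phi \circ d_{n|J}$ of \eqref{eq:sj-bij}. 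The step I expect to demand the most care is precisely this final bookkeeping: one must track the Koszul signs incurred when the odd symbols $\zeta^i$ and the odd differentials $d_{\txdR}^i$ are moved past one another, and check that the parities are consistent---the total parity of $Z^{n|J}\phi(d_{n|J}x)$ being $p(x)$, as required for a morphism of superalgebras. The remaining points, that the intermediate completions and graded tensor products are compatible with the two adjunctions, are routine.
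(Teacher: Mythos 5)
Your proposal is correct and follows exactly the route the paper takes: the paper's entire justification is the sentence ``Combining \cref{lem:SdR} and \cref{fct:HS}, we have \cref{prp:sj}'', i.e.\ precisely your chain of the two adjunctions through the splitting $O \cong k\dbr{z} \otimes_k \Lambda_k[\zeta^1,\dotsc,\zeta^N]$, with the formula $\phi \mapsto \sum_{n,J} Z^{n|J}\,\phi \circ d_{n|J}$ falling out of the composition as you describe.
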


The next \cref{lem:dnJ} describes a natural basis of the superalgebra $A^O$.
We omit the proof.

\begin{lem}\label{lem:dnJ}
In the situation of \cref{prp:sj}, we denote 
\[
 a^{[n|J]} \ceq d_{n|J}(a) \in A^O \quad (a \in A, \, n \in \bbN, \, J \subset [N]).
\]
Then $A^O$ is linearly spanned by the elements of the form
\[
 a_1^{[n_1|J_1]} \dotsm a_r^{[n_r|J_r]}  \quad 
 (a_i \in A, \, n_i \in \bbN, \, J_i \subset [N]).
\]
\end{lem}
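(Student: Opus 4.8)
The plan is to combine the two representability results, \cref{prp:sj} and the underlying structure of the iterated de Rham spectrum, so that the spanning claim for $A^O$ becomes a purely algebraic consequence of how $A^O$ is built as $\clS^N(A_\infty)$. First I would unwind the definition \eqref{eq:sj:d} of $d_{n|J}=d_{\txdR}^J\circ d_n$ and observe that $A^O$ is generated as a commutative $k$-superalgebra by all elements of the form $d_{\txdR}^J(d_n(a))=a^{[n|J]}$. This should follow by tracking generators through the two constructions: by \cref{fct:HS} (with \cref{rmk:HS}), the jet algebra $A_\infty=\HS{A/k}{\infty}$ is generated over $k$ by the universal derivatives $d_n(a)$ for $a \in A$, $n \in \bbN$; and by \cref{fct:SdR} together with \cref{lem:SdR}, each application of the de Rham spectrum functor $\clS$ adjoins exactly the image of one de Rham differential, so that $\clS^N(A_\infty)$ is generated over $A_\infty$ by the elements $d_{\txdR}^J(u)$ for $u$ a generator of $A_\infty$ and $J \subset [N]$.

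The key step is to verify that the de Rham differentials distribute over the generators in the expected way, namely that it suffices to apply $d_{\txdR}^J$ to the $A_\infty$-generators $d_n(a)$ rather than to arbitrary elements of $A_\infty$. This uses the Leibniz rule for the de Rham differential (\cref{ntn:tdR}): since each $d_{\txdR}^i$ is an odd derivation, applying $d_{\txdR}^J$ to a product of generators expands, up to Koszul signs, into a sum of products in which $d_{\txdR}$-factors are distributed among the factors. Concretely, for a monomial $d_{n_1}(a_1)\dotsm d_{n_r}(a_r)$ in $A_\infty$, repeated use of the super Leibniz rule rewrites $d_{\txdR}^J$ of this monomial as a signed sum of products of terms $d_{\txdR}^{J_i}(d_{n_i}(a_i))=a_i^{[n_i|J_i]}$ with $J_1 \sqcup \dotsb \sqcup J_r = J$ (each $d_{\txdR}^i$ landing on exactly one factor, since $(d_{\txdR}^i)^2=0$ kills repeats). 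This is exactly the asserted spanning form.

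The main obstacle I expect is bookkeeping rather than anything conceptual: one must be careful that the de Rham differentials $d_{\txdR}^i$ are genuinely derivations of the full iterated superalgebra $\clS^N(A_\infty)$ (not merely of $A_\infty$), and that they mutually anticommute, so that the notation $d_{\txdR}^J=d_{\txdR}^{j_1}\dotsm d_{\txdR}^{j_r}$ in \cref{ntn:dRJ} is well-defined and the Leibniz expansion produces only ordered index sets $J_i$. This is guaranteed by the remark following \cref{ntn:dRJ} that the $d_{\txdR}^i$ are composable and satisfy $[d_{\txdR}^i,d_{\txdR}^j]=0$, but one must invoke it explicitly. A secondary subtlety is that $\clS$ produces a symmetric superalgebra, so $\clS^N(A_\infty)$ is generated by $A_\infty$ together with the $d_{\txdR}^J$-images, and one closes the argument by noting that an arbitrary element is a polynomial in generators of $A_\infty$ and the $d_{\txdR}^i$ applied to them; expanding every $d_{\txdR}$ via Leibniz and re-collecting yields precisely the claimed monomials $a_1^{[n_1|J_1]}\dotsm a_r^{[n_r|J_r]}$.

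Since the statement asks only for a linear spanning set and not for a basis, I would not need to prove linear independence; the freeness of the polynomial and symmetric constructions used in \cref{dfn:HS}, \cref{dfn:dR1} and \cref{ntn:tdR} would give that for free, which is presumably why the authors remark that the proof is omitted.
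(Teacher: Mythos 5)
Your proof is correct, and it is exactly the routine argument the paper suppresses (the paper explicitly omits the proof of this lemma): $\HS{A/k}{\infty}$ is generated by the $d_n(a)$, each application of $\clS$ adjoins the de Rham images of the generators, and the super Leibniz rule distributes $d_{\txdR}^J$ over monomials so that every element lands in the span of the $a_1^{[n_1|J_1]}\dotsm a_r^{[n_r|J_r]}$. Two cosmetic points only: the generation statements you need follow directly from \cref{dfn:HS} and \cref{ntn:tdR} (quotient of a polynomial algebra, resp.\ $\Sym_A(\Pi\fdR{A/k})$) rather than from the representability facts \cref{fct:HS} and \cref{fct:SdR} you cite, and the appeal to $(d_{\txdR}^i)^2=0$ is superfluous since the indices in $J$ are already pairwise distinct, so no repeated differential can hit a single factor.
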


Let us denote $e_i \ceq \{i\} \subset [N]$ for each $i \in [N]$.
Then, the operations $d_{1|0}$ and $d_{0|e_i}$ on $A^O$ are even and odd derivations,
respectively. Let us record this observation as:

\begin{lem}\label{lem:sj-TS}
The superalgebra $A^O$ has an even derivation $T$ and odd derivations $S^i$ for $i \in [N]$ 
determined by 
\[
 T(a^{[n|J]}) \ceq (n+1)a^{[n+1|J]}, \quad
 S^i(a^{[n|J]}) \ceq \sigma(e_i,J) a^{[n|J \sm e_i]}.
\]
Here $\sigma(e_i,J) \in \{\pm1,0\}$ is determined by the following rule:
If $i \in J$, then $\sigma(e_i,J) \ceq 0$.
Otherwise, let $e_i \cup J$ be the reordered set of the union, 
and determine $\sigma(e_i,J)$ by $\zeta^i \zeta^J = \sigma(e_i,J) \zeta^{e_i \cup J}$.
\end{lem}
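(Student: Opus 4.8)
The plan is to construct the even operator $T$ and the odd operators $S^i$ as honest derivations of the $k$-superalgebra $A^O=\clS^N(A_\infty)$, and only afterwards to read off their values on the elements $a^{[n|J]}=d_{n|J}(a)$. The order matters: by \cref{lem:dnJ} the $a^{[n|J]}$ generate $A^O$ as a $k$-algebra, so once existence is secured, a derivation is determined by its values on them, which yields both the stated formulas and the uniqueness implicit in the word ``determined''. The two ingredients I would use are the de Rham differentials $d_{\txdR}^i$ already attached to $\clS^N(A_\infty)$ (for the $S^i$) and the canonical translation derivation of the jet algebra $A_\infty$ (for $T$).

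For the odd operators I would simply set $S^i\ceq d_{\txdR}^i$, the $i$-th de Rham differential of $\clS^N(A_\infty)$, which is an odd derivation by construction and obeys the anticommutation relations $[d_{\txdR}^i,d_{\txdR}^j]=0$ recorded before \cref{ntn:dRJ} (in particular $(d_{\txdR}^i)^2=0$). Using $a^{[n|J]}=d_{\txdR}^J d_n(a)$ from \eqref{eq:sj:d}, I would compute $S^i(a^{[n|J]})=d_{\txdR}^i d_{\txdR}^J d_n(a)$. When $i\in J$, moving $d_{\txdR}^i$ through the anticommuting factors of $d_{\txdR}^J$ produces a repeated $(d_{\txdR}^i)^2=0$, so the expression vanishes; when $i\notin J$, the same anticommutativity gives $d_{\txdR}^i d_{\txdR}^J=\sigma(e_i,J)\,d_{\txdR}^{e_i\cup J}$, where the reordering sign is exactly the one in $\zeta^i\zeta^J=\sigma(e_i,J)\zeta^{e_i\cup J}$, since the $d_{\txdR}^i$ satisfy the same sign rule as the odd $\zeta^i$. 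This reproduces the claimed formula (with $e_i\cup J$ the reordered union) and simultaneously explains the vanishing case.

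For the even operator I would first produce a derivation $T_0$ of $A_\infty=\HS{A/k}{\infty}$ with $T_0(d_n a)=(n+1)d_{n+1}a$. This is the translation derivation of the jet algebra: applying \cref{fct:HS} with the even vector field $\pd_z$ on $k\dbr{z}$ (equivalently the substitution $z\mapsto z+\ve$ over the even dual numbers $k[\ve]/(\ve^2)$) gives an algebra map $A_\infty\to A_\infty\otimes_k k[\ve]/(\ve^2)$ lifting the identity, hence an even derivation, and reading off the coefficient of $\ve$ yields $T_0(d_n a)=(n+1)d_{n+1}a$; compare \cref{rmk:HS}. I would then extend $T_0$ through the de Rham functor to an even derivation $T$ of $\clS^N(A_\infty)$ by the Lie-derivative prescription $T(d_{\txdR}b)\ceq d_{\txdR}(T_0 b)$; this is well defined and satisfies $[T,d_{\txdR}^i]=0$ for every $i$, the latter checked on the generators $b$ and $d_{\txdR}b$ and then propagated by the derivation property. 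Since $T$ commutes with $d_{\txdR}^J$ and restricts to $T_0$ on $A_\infty$, I obtain $T(a^{[n|J]})=d_{\txdR}^J T_0(d_n a)=(n+1)d_{\txdR}^J d_{n+1}a=(n+1)a^{[n+1|J]}$, as claimed.

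The step I expect to be most delicate is the sign bookkeeping for the $S^i$. In the structural construction above it reduces to confirming that the sign incurred by reordering the anticommuting operators $d_{\txdR}^i,d_{\txdR}^{j_1},\dots,d_{\txdR}^{j_r}$ into increasing order is exactly $\sigma(e_i,J)$; this is clean because no factor is ever moved past $d_n(a)$, so the parity of $a$ plays no role. As an independent cross-check I would derive both $T$ and the $S^i$ uniformly from \cref{prp:sj}, letting the even field $\pd_z$ and the odd fields $\pd_{\zeta^i}$ on $O=k\dbr{Z}$ act on the universal element $a\mapsto\sum_{n,J}Z^{n|J}a^{[n|J]}$ and transporting through the bijection \eqref{eq:sj-bij}; there the full Koszul signs (including the case when $a$ is odd) do appear, and verifying that they collapse to $\sigma(e_i,J)$ is the genuinely fiddly computation.
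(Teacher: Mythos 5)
Your proposal is correct and is essentially the paper's own argument: the paper records \cref{lem:sj-TS} as an immediate observation that $d_{1|0}$ (the jet derivation $d_1$ extended to $A^O$) and $d_{0|e_i}=d_{\txdR}^i$ are even and odd derivations of $A^O$, respectively, which are precisely the operators $T$ and $S^i$ you construct and then evaluate on the algebra generators $a^{[n|J]}$ of \cref{lem:dnJ}. One remark: your computation gives $S^i(a^{[n|J]})=\sigma(e_i,J)\,a^{[n|J\cup e_i]}$ (zero when $i\in J$), and this is indeed the intended statement --- the superscript $[n|J\sm e_i]$ printed in the lemma is a typo, as one sees from the definition of $\sigma(e_i,J)$ given there and from the parallel formula \eqref{eq:csj:Sact}.
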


For later reference, let us introduce the notion of differential superalgebras.

\begin{dfn}\label{dfn:dsa}
The following notions are defined over some base superring $R$, but we suppress it.
\begin{enumerate}[nosep]
\item
A \emph{differential superalgebra} is a pair $(A,D)$ of 
a commutative superalgebra $A$ and a derivation on $A$.
A \emph{morphism} $(A,D) \to (B,D')$ of differential superalgebras is naturally defined.

\item
A subset $I \subset A$ \emph{generates} $(A,D)$ if every element of $A$ can be written 
as a polynomial of $D^r a$ with $r \in \bbN$ and $a \in A$.
\end{enumerate}
\end{dfn}

We have a multi-derivation analogue of these notions: 
A tuple $(A,D_i)$ of a commutative superalgebra $A$ and derivations $D_i$'s 
is also called a differential superalgebra. 
Then, by \cref{lem:sj-TS,lem:dnJ}, we have:

\begin{lem}\label{lem:sj-dsa}
For any $A \in \SCom k$, $(A^O,T,S^i)$ is a differential superalgebra, 
and it is generated by $A$.
\end{lem}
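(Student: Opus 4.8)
The plan is to verify the two assertions in \cref{lem:sj-dsa} separately: that $(A^O, T, S^i)$ is a differential superalgebra, and that it is generated by $A$. For the first assertion, essentially all the work has already been done. By \cref{lem:sj-TS}, the operators $T$ and $S^i$ are derivations of $A^O$ (even and odd respectively), so the pair (or rather the tuple) $(A^O, T, S^i)$ is a differential superalgebra in the multi-derivation sense recorded just after \cref{dfn:dsa}. Thus the first part is immediate from \cref{lem:sj-TS} together with the fact, established in \cref{prp:sj}, that $A^O$ is a commutative superalgebra.

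For the generation statement, the point is to show that every element of $A^O$ is a polynomial in the elements $T^r (S^{J}) a$ for $r \in \bbN$, ordered subsets $J \subset [N]$, and $a \in A$, where $S^J$ denotes the appropriate product of the odd derivations $S^i$. First I would recall from \cref{lem:dnJ} that $A^O$ is linearly spanned by monomials $a_1^{[n_1|J_1]} \dotsm a_r^{[n_r|J_r]}$ with $a_i \in A$, $n_i \in \bbN$, $J_i \subset [N]$, where $a^{[n|J]} = d_{n|J}(a)$. Hence it suffices to show that each generator $a^{[n|J]}$ lies in the differential subalgebra generated by $A$ under $T$ and the $S^i$. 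The key computation is to apply the formulas of \cref{lem:sj-TS} to express $a^{[n|J]}$ in terms of $T$ and $S^i$ applied to $a^{[0|\emptyset]} = a$. Concretely, using $S^i(a^{[n|J]}) = \sigma(e_i,J)\, a^{[n|J \sm e_i]}$ one peels off the odd indices in $J$ one at a time, and using $T(a^{[n|J]}) = (n+1) a^{[n+1|J]}$ one raises the even index $n$; since $k$ has characteristic $0$, the integer factors $(n+1)$ and the signs $\sigma(e_i,J) = \pm 1$ are invertible, so the process can be inverted to recover $a^{[n|J]}$ from $a$.

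More precisely, I would first produce $a^{[0|J]}$ by applying the relevant product $S^{j_r} \dotsm S^{j_1}$ (in the order that successively removes the largest element of $J$, say) to $a^{[0|[N]]}$ or, more directly, observe that applying the odd derivations to $a = a^{[0|\emptyset]}$ does not increase $|J|$; so instead I would build up from $a$ by noting that each $a^{[n|J]}$ arises by applying $T^n$ to $a^{[0|J]}$ up to the nonzero scalar $n!$, and that $a^{[0|J]}$ is reached from some $a^{[0|J']}$ with $|J'| = |J|+1$ by a single $S^i$. Running this downward recursion on $|J|$ starting from the full index set, together with the upward recursion on $n$ via $T$, exhibits every $a^{[n|J]}$ as a scalar multiple of an iterated application of $T$ and the $S^i$ to $a \in A$. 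This shows $A$ generates $(A^O, T, S^i)$ in the sense of \cref{dfn:dsa}(2), completing the proof.

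The only mild subtlety, and the step I would be most careful about, is the bookkeeping of the odd indices: the signs $\sigma(e_i, J)$ and the vanishing $\sigma(e_i, J) = 0$ when $i \in J$ mean one must apply the $S^i$ in a consistent order and only remove indices actually present in $J$, so that each step multiplies by a genuinely invertible scalar $\pm 1$ rather than by $0$. Since this is purely a matter of tracking the combinatorics already encoded in \cref{lem:sj-TS}, I expect no real obstacle, which is consistent with the lemma being stated as a direct consequence of \cref{lem:sj-TS,lem:dnJ}.
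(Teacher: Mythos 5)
The first half of your argument --- that $(A^O,T,S^i)$ is a differential superalgebra --- is fine and is exactly what the paper intends (the lemma is stated as an immediate consequence of \cref{lem:sj-TS,lem:dnJ}). The generation argument, however, has a genuine gap: it is circular. Taking the printed formula $S^i(a^{[n|J]}) = \sigma(e_i,J)\,a^{[n|J \sm e_i]}$ at face value, the operators $S^i$ only ever \emph{remove} odd indices, and your proposed fix is a downward recursion on $\abs{J}$ in which $a^{[0|J]}$ is obtained by applying a single $S^i$ to some $a^{[0|J']}$ with $\abs{J'}=\abs{J}+1$, ``starting from the full index set''. But the seed $a^{[0|[N]]}$ of that recursion is not an element of $A$, and under index-removing operators it can never be reached from $A$: starting from $a=a^{[0|\emptyset]}$, the operators $T$ and $S^i$ produce only polynomials in the elements $a^{[n|\emptyset]}$, so the differential subalgebra generated by $A$ would miss every $a^{[n|J]}$ with $J \neq \emptyset$, and the lemma would in fact be \emph{false}. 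Your recursion expresses the elements you want in terms of elements you have not yet shown to be available.

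The resolution is that the formula you are quoting from \cref{lem:sj-TS} is misprinted: $S^i$ is the derivation extending the de Rham differential $d_{0|e_i}=d_{\txdR}^i$, so it \emph{adds} the index $i$ rather than removing it, i.e.\ $S^i(a^{[n|J]}) = \sigma(e_i,J)\,a^{[n|e_i \cup J]}$, vanishing precisely when $i \in J$. Several internal checks confirm this: the rule $\sigma(e_i,J)=0$ for $i\in J$ only makes sense for a union (a removal formula should instead vanish when $i \notin J$); the removal version would give $S^i(a^{[n|\emptyset]}) = \pm a^{[n|\emptyset]}$, contradicting that $S^i$ is odd; and in the superconformal analogue \eqref{eq:csj:Sact} the operator $S_K^i$ adds $e_i$ when $i \notin J$. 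Moreover, the commutative SUSY VA structure $Y(a,Z)=e^{Z\nabla}a$ of \cref{prp:NWsj} forces $T^n S^{j_r}\dotsm S^{j_1}a$ to be proportional to $a^{[n|J]}$. With the corrected formula the generation statement is immediate and non-circular: for $J=\{j_1<\dotsb<j_r\}$ one has $T^n S^{j_r}\dotsm S^{j_1}a = \pm\, n!\, a^{[n|J]}$, the scalar being invertible since $k$ has characteristic $0$; combining this with the spanning statement of \cref{lem:dnJ} proves the lemma.
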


Now the following assertion is easily shown:

\begin{lem}\label{lem:sj-gen}
Let $A,B \in \SCom k$, and assume the following conditions.
\begin{itemize}[nosep]
\item $B$ contains $A$ as a sub-superalgebra.
\item $B$ has an even derivation $\pd$ and $N$ odd derivations $\sld^i$ for $i \in [N]$.
\item The differential superalgebra $(B,\pd,\sld^i)$ is generated by $A$.
\end{itemize} 
Then the injection $A \inj B$ can be extended to a surjective morphism 
$(A^O,T,S^i) \srj (B,\pd,\sld^i)$ of differential superalgebras.
\end{lem}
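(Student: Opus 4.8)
The strategy is to build $\phi$ by hand on the $A$-algebra generators $a^{[n|J]}$ of $A^O$ provided by \cref{lem:dnJ}, obtaining well-definedness from the representability of $A^O$ in \cref{prp:sj} rather than by verifying the defining relations directly. Uniqueness is immediate from \cref{lem:sj-dsa}: since $(A^O,T,S^i)$ is generated by $A$ as a differential superalgebra, any morphism of differential superalgebras extending $A\inj B$ is determined on all $T$- and $S^i$-iterates of elements of $A$, hence on the spanning set $\{a^{[n|J]}\}$, hence everywhere. So there is at most one candidate, and by \cref{lem:sj-TS} it must send $a^{[n|J]}$ to $\tfrac{1}{n!}\pd^{n}\sld^{j_1}\cdots\sld^{j_r}(a)$ up to the Koszul signs $\sigma(e_i,J)$.

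For existence I would extend $\pd$ and the $\sld^i$ to $O$-linear derivations of $B\otimes_k O$ and form the even derivation $D\ceq z\pd+\sum_{i\in[N]}\zeta^i\sld^i$. Because $z$ and the $\zeta^i$ lie in the augmentation ideal $\frm\subset O$, the operator $D$ is topologically nilpotent, so $\exp(D)$ is a well-defined continuous automorphism of $B\otimes_k O$; restricting it along $A\inj B$ yields a continuous $k$-superalgebra homomorphism $\Psi\colon A\to B\otimes_k O$. By \cref{prp:sj} this $\Psi$ corresponds to a unique superalgebra homomorphism $\phi\colon A^O\to B$ with $\Psi(a)=\sum_{n,J}Z^{n|J}\phi(a^{[n|J]})$. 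Extracting the $Z^{0|\emptyset}$-coefficient gives $\phi(a^{[0|\emptyset]})=a$, so $\phi$ indeed extends the inclusion; multiplicativity and well-definedness come for free from the universal property, which is the main payoff of this route.

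It then remains to check that $\phi$ respects the derivations and is surjective. Surjectivity is the easy half: the image of $\phi$ contains every $\phi(a^{[n|J]})$, hence every $\pd^{n}\sld^{J}(a)$, and these generate $B$ by hypothesis. For compatibility, since $T,\pd$ and $S^i,\sld^i$ are derivations and $\phi$ is multiplicative, it suffices to verify $\phi\circ T=\pd\circ\phi$ and $\phi\circ S^i=\sld^i\circ\phi$ on the algebra generators $a^{[n|J]}$ and then extend by the Leibniz rule. The even identity is straightforward from \cref{lem:sj-TS}: $T$ sends $a^{[n|J]}$ to $(n+1)a^{[n+1|J]}$, which under $\phi$ matches the single application of $\pd$ that turns $\tfrac{1}{n!}$ into $\tfrac{1}{(n+1)!}$ while raising the power of $\pd$.

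The odd identity $\phi\circ S^i=\sld^i\circ\phi$ is the one genuinely delicate point and the main obstacle. Here $S^i$ inserts the index $i$ into $J$ with the sign $\sigma(e_i,J)$, whereas $\sld^i\circ\phi$ prepends an outer $\sld^i$; matching the two requires moving $\sld^i$ into its sorted position and commuting it past $\pd^{n}$, which is exactly where one uses that $\pd$ and the $\sld^i$ satisfy the same (anti)commutation relations as $T$ and the $S^i$ on $A^O$, namely $[\pd,\sld^i]=0$ and $[\sld^i,\sld^j]=0$; equivalently, that $\exp(D)$ factors as $\exp(z\pd)\prod_{i}(1+\zeta^i\sld^i)$. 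I would therefore organize the computation around these relations and track the Koszul signs $\sigma(e_i,J)$ carefully, the remainder of the argument being formal.
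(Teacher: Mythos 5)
The paper offers no proof of this lemma at all (it is introduced with ``the following assertion is easily shown''), so your proposal can only be judged on its merits; your route through the universal property of \cref{prp:sj} combined with \cref{lem:sj-dsa} and the generation hypothesis is surely the argument the author intends, and its skeleton is sound: well-definedness comes for free from representability, the $Z^{0|\emptyset}$-coefficient shows $\phi$ extends the inclusion, the intertwining identities reduce to the generators $a^{[n|J]}$ because both sides are $\phi$-derivations, and surjectivity follows from generation.

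However, there is one genuine gap, and you touch it without resolving it. At the crux you ``use that $\pd$ and the $\sld^i$ satisfy the same (anti)commutation relations as $T$ and the $S^i$, namely $[\pd,\sld^i]=0$ and $[\sld^i,\sld^j]=0$.'' These relations are \emph{not} hypotheses of the lemma: it only demands an even derivation $\pd$ and odd derivations $\sld^i$, with no compatibility among them. And they cannot be dispensed with. Concretely, without them the summands $z\pd$ and $\zeta^i\sld^i$ of your $D$ do not commute, $\exp(D)$ does not factor as $\exp(z\pd)\prod_i(1+\zeta^i\sld^i)$, and the coefficient of $Z^{n|J}$ in $\exp(D)(a\otimes 1)$ is a symmetrization over all orderings of the word $\pd^n\sld^{j_1}\dotsm\sld^{j_r}$ rather than the word itself; already for $N=1$, $n=1$ one finds $\phi(a^{[1|\{1\}]})=\pm\tfrac12(\pd\sld^1+\sld^1\pd)a$, while intertwining would force $\pm\sld^1\pd a$, a discrepancy of $\tfrac12[\pd,\sld^1]a$. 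Nor can any other construction succeed: on $A^O$ one has $[T,S^i]=0$, $[S^i,S^j]=0$ and $(S^i)^2=0$, so any surjective morphism of differential superalgebras transports these identities to $B$. The statement is therefore false as literally written: take $A=B=k[x,\theta]$, $\pd\ceq 0$ and $\sld^1\ceq\pd_\theta+\theta\pd_x$, which satisfies every stated hypothesis but has $(\sld^1)^2=\pd_x\neq 0$, so no surjection intertwining $S^1$ with $\sld^1$ exists.

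The resolution is that the intended hypothesis --- satisfied in the paper's only application, \cref{prp:A12:2.5.1}, where $B=\gr_E V$ for an $N_W=N$ SUSY VA --- is that $\pd,\sld^1,\dotsc,\sld^N$ supercommute, mirroring $T,S^i$ on $A^O$; compare \cref{lem:scj-gen}, where the $N_K$ analogue does list the corresponding relations of \cref{lem:STL} as an explicit assumption. Your proof is correct once these relations are added to the hypotheses, and you deserve credit for locating exactly where they enter; but as written it silently supplies assumptions the statement does not grant, so you should state explicitly that you are strengthening (indeed correcting) the hypotheses rather than invoking the relations as if they were available.
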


Finally, let us introduce finite superjet algebras.

\begin{prp}\label{prp:fsj}
For a supervariable $Z=(z,\zeta^1,\dotsc,\zeta^N)$ and $m \in \bbN$, we denote
\[
 O_m = O^{1|N}_m \ceq k[Z]/(z^{m+1}).
\]
Then, for $A \in \SCom k$, the commutative superalgebra
\[
 A^{O_m} = A^{k[Z]/(z^{m+1})} \ceq \clS^N(\HS{A/k}{m})
\]
satisfies the universal property
\[
 \Hom_{\SCom k}(A^{O_m},B) \lsto \Hom_{\SCom k}(A,B \otimes_k O_m), \quad 
 \phi \lmto \sum_{0 \le n \le m, \, J \subset [N]} Z^{n|J} \phi \circ d_{n|J},
\]
where $d_{n|J}$ is given by the same formula as \eqref{eq:sj:d}.
We call $A^{O_m}$ the \emph{$m$-th superjet algebra} of $A$.
\end{prp}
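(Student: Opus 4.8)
The plan is to derive the universal property by chaining the two representability results already established, exactly mirroring the proof of \cref{prp:sj} but with $m$ finite, so that no topological completion is needed. First I would record the relevant decomposition of the truncation superalgebra. Since $z$ is even while each $\zeta^i$ is odd (hence squares to zero), the polynomial superalgebra $k[Z]$ factors as $k[z] \otimes_k \Lambda_k[\zeta^1,\dotsc,\zeta^N]$, and passing to the quotient by $(z^{m+1})$ gives
\[
 O_m \cong \bigl(k[z]/(z^{m+1})\bigr) \otimes_k \Lambda_k[\zeta^1,\dotsc,\zeta^N].
\]
Consequently, for any $B \in \SCom k$ there is a natural isomorphism in $\SCom k$,
\[
 B \otimes_k O_m \cong \bigl(B \otimes_k \Lambda_k[\zeta^1,\dotsc,\zeta^N]\bigr)[z]/(z^{m+1}),
\]
and because $z$ is even this reordering of the tensor factors produces no Koszul signs.

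Next I would apply the two adjunctions in turn. Using \cref{lem:SdR} for the $N$-fold de Rham spectrum functor applied to $\HS{A/k}{m}$, we obtain a functorial bijection
\[
 \Hom_{\SCom k}\bigl(\clS^N(\HS{A/k}{m}),B\bigr) \lsto
 \Hom_{\SCom k}\bigl(\HS{A/k}{m}, B \otimes_k \Lambda_k[\zeta^1,\dotsc,\zeta^N]\bigr).
\]
Then, applying the finite-$m$ case of \cref{fct:HS} with the commutative $k$-superalgebra $B \otimes_k \Lambda_k[\zeta^1,\dotsc,\zeta^N]$ in place of $B$, we get
\[
 \Hom_{\SCom k}\bigl(\HS{A/k}{m}, B \otimes_k \Lambda_k[\zeta^1,\dotsc,\zeta^N]\bigr) \lsto
 \Hom_{\SCom k}\bigl(A, (B \otimes_k \Lambda_k[\zeta^1,\dotsc,\zeta^N])[z]/(z^{m+1})\bigr).
\]
This is the purely algebraic statement of \cref{fct:HS}, so $\SCom k$ suffices and no $z$-adic completion intervenes. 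Composing these two bijections with the isomorphism of the first step identifies $\Hom_{\SCom k}(A^{O_m},B)$ with $\Hom_{\SCom k}(A, B \otimes_k O_m)$, and functoriality in $B$ is inherited from the two constituent bijections.

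The remaining task, which is the only genuinely delicate point, is to check that the composite bijection is given explicitly by $\phi \mapsto \tsum_{0 \le n \le m, \, J \subset [N]} Z^{n|J}\, \phi \circ d_{n|J}$ with $d_{n|J} = d_{\txdR}^J \circ d_n$. I would track a morphism $\phi\colon A^{O_m} \to B$ through the two adjunction maps: \cref{lem:SdR} contributes the coefficients of the odd monomials $\zeta^J$ via the iterated de Rham differentials $d_{\txdR}^J$ of \cref{ntn:dRJ}, while \cref{fct:HS} contributes the coefficients of $z^n$ via the universal derivations $d_n$; the factorization $d_{n|J} = d_{\txdR}^J \circ d_n$ of \eqref{eq:sj:d} is exactly the bookkeeping of these two contributions, and the range $0 \le n \le m$ is forced by $z^{m+1}=0$ in $O_m$. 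The main obstacle is thus solely the sign and ordering bookkeeping in matching the Koszul conventions of the de Rham side with those of the Hasse--Schmidt side; since $z$ is even this is harmless for the $z$-grading, and the $\zeta$-signs are precisely those fixed by \cref{lem:SdR} and \cref{ntn:dRJ}, so the verification reduces to the same computation that establishes the infinite case \cref{prp:sj}.
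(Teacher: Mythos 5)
Your proof is correct and follows exactly the route the paper intends: the paper states \cref{prp:fsj} as the finite analogue of \cref{prp:sj}, which it obtains by ``combining \cref{lem:SdR} and \cref{fct:HS}'', and your chaining of the de Rham spectrum adjunction with the finite-$m$ Hasse--Schmidt representability, together with the verification that the composite bijection is $\phi \mapsto \sum_{0 \le n \le m,\, J \subset [N]} Z^{n|J}\,\phi \circ d_{n|J}$ via the factorization $d_{n|J}=d_{\txdR}^J \circ d_n$, is precisely that argument (made more explicit than the paper itself, which omits the proof).
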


Corresponding to the truncation map $O \to O_m$, $z^n \mapsto 0$ for $n \ge m+1$, we have 
a morphism of commutative superalgebras $\HS{A/k}{m} \to \HS{A/k}{\infty}=A_\infty$,
which induces another morphism 
\begin{align}\label{eq:fsj-sj}
 A^{O_m} \lto A^O.
\end{align}
We call it the \emph{projection} of superjet algebras.

\subsubsection{Superconformal jet algebras}\label{sss:scj}

Let us recollect the \emph{super Riemann surface} structure or 
the \emph{SUSY structure} on the superdisk $O=O^{1|N}$.
We denote the topological Lie superalgebra of $k$-derivations on $O$ (\cref{ntn:vf1}) as 
\begin{align}\label{eq:DerO}
 \Der O \ceq \Der_k(O) = \fvf{O/k}.
\end{align}

By \cite[\S 6.2]{FBZ} and \cite[\S 3.1.2]{H1}, the set of automorphisms
\[
 \Aut O \ceq \Aut_{\TSCom k}(O)
\]
is a group superscheme over $k$, and the set of $k$-valued points is described as 
\[
 (\Aut O)(k) = \Bigl\{ Z=(z,\zeta^i) \mapsto 
 \Bigl(\sum_{n+\abs{J}\ge1} a_{n,J} Z^{n|J}, 
       \sum_{n+\abs{J}\ge1} b^i_{n,J} Z^{n|J} \Bigr) \, | \, 
 \begin{sbm}a_{1,0} & a_{0,e_i} \\ b^i_{1,0} & b^i_{0,e_i}\end{sbm} \in \GL_k(1|N)\Bigr\}.
\]
The topological Lie superalgebra of $\Aut O$ is denoted by
\[
 \Der_0 O  \ceq \Lie(\Aut O),
\]
which is linearly compact and has a topological basis 
$\{z^n \pd_z,z^n \pd_{\zeta^i} \mid n \ge 1, i \in [N]\} \cup 
 \{z^m \zeta^i \pd_z, z^m \zeta \pd_{\zeta^i} \mid m \ge 0,  i \in [N]\}$.
It is actually a Lie sub-superalgebra of $\Der O$ given in \eqref{eq:DerO}.

\begin{dfn}\label{dfn:Dsc}
Consider the even differential 1-form on $O$ 
\[
 \omega_Z \ceq d z + \sum_{i=1}^N \zeta^i d \zeta^i \in (\Omega^1_{A/R})_{\oo},
\]
where $d=d_{\ev}$ denotes the even universal differential (\cref{ntn:tdR}).
We define $\Asc O$ to be the subgroup of $\Aut O$ consisting of automorphisms 
preserving $\omega_Z$ up to multiplication by a function. 
Its element is called a \emph{superconformal transformation} of $\bbD$.
The topological Lie superalgebra of $\Asc O$ is denoted by $\Dsc O \ceq \Lie \Asc O$,
and its element is called a \emph{superconformal vector field}.
\end{dfn}

Let us recall basic properties of the Lie superalgebra $\Dsc O$, following 
\cite[\S 3.1.3]{H1} and \cite[Example 2.12]{HK}.
For $i \in [N]$, we define an odd derivation $D_Z^i$ of $O$ by
\begin{align}\label{eq:DZ}
 D_Z^i \ceq \pd_{\zeta} + \zeta \pd_z^i.
\end{align}
Then $\Dsc O$ is equal to the Lie subalgebra of $\Der_0 O$ 
\[
 \Dsc O = \{W \in \Der_0 O \mid [W,D_Z^i] = f D_Z^i, \ \exists f \in O\}.
\]
Also, $\Dsc O$ consists of the derivations of the form
\[
 D^f = f \pd_z+\frac{1}{2}(-1)^{p(f)}\tsum_{i=1}^N (D_Z^i f) D_Z^i
\]
with $f \in O$ of pure parity.
In particular, it contains 
\begin{align*}
&l_n \ceq -z^{n+1} \pd_z-\tfrac{n+1}{2}z^n \tsum_{i=1}^N \zeta^i \pd_{\zeta^i} 
& &(n \in \bbN), \\
&g_r^j \ceq -z^{r+\hf}(\pd_{\zeta^i}-\zeta^i \pd_z) + 
 \bigl(r+\thf\bigr)z^{r-\hf}\zeta^j \tsum_{i=1}^N \zeta^i \pd_{\zeta^i} 
& &(r \in \bbN+\thf, \, j \in [N]).
\end{align*}
In the case $N=1$, the elements $l_n$ ($n \in \bbN$) and $g_r \ceq g_r^1$ ($r \in \hf+\bbN$) 
are topological generators of $\Dsc O$, subject to the relations
\begin{align*}
 [l_m,l_n] = (m-n)l_{m+n}, \quad 
 [l_n,g_r] = (\tfrac{n}{2}-r) g_{n+r}, \quad 
 [g_r,g_s] = 2l_{r+s}.
\end{align*}
Thus, in the case $N=1$, $\Dsc O$ is a Lie sub-superalgebra of 
the \emph{Neveu-Schwarz algebra} of central charge $0$ (see \eqref{eq:NS}).


Recall that the superjet algebra $A^O$ enjoys a universal property: 
\[
 \Hom_{\SCom k}(A^O,k) \lsto \Hom_{\SCom k}(A,O).
\]
Now we would like to find the odd derivations $S_K^i$ on $A^O$ which corresponds to 
the action of $D_Z^i=\pd_{\zeta^i}+\zeta^i \pd_z$ on $O$. 
Let $\phi(Z)=\sum_{(n|J), \, n \ge 0}Z^{n|J}\phi_{n|J}$ be the series corresponding to 
an element $\phi \in \Hom_{\SCom k}(A,O)$. Then, we have
\[
 (D_Z^i \phi)(Z) = 
 \sum_{(n|J), \, n \ge 0, \, i \notin J} Z^{n|J} \sigma(e_i,J) \phi_{n|J \cup e_i}
+\sum_{(n|J), \, n \ge 0, \, i \in J} Z^{n|1} (n+1) 
 \sigma(e_i,J \sm e_i) \phi_{n+1|J \sm e_i},
\]
where we used the same symbols as in \cref{lem:sj-TS}.
This calculation indicates that the desired $S_K^i$ is given by 
\begin{align}\label{eq:csj:Sact}
 S_K^i(a^{[n|J]}) \ceq \begin{cases} \sigma(e_i,J) a^{[n|J \cup e_i]} & (i \notin J) \\
 (n+1) \sigma(e_i,J \sm e_i) a^{[n+1|J \sm e_i]} & (i \in J)\end{cases}.
\end{align}
Using this action twice, we see that the square $T_K=(S_K^i)^2$ is independent of $i$,
and its action is given by 
\begin{align}\label{eq:csj:Tact}
 T_K(a^{[n|J]}) = (n+1)a^{[n+1|J]}.
\end{align}
Then, we immediately have:

\begin{lem}\label{lem:STL}
The operators $S_K^i$'s and $T_K$ form a Lie superalgebra with commutation relation
\[
 [S_K^i,S_K^j] = 2\delta_{i,j}T_K, \quad [T_K,S_K^i]=0.
\]
\end{lem}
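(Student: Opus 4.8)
The plan is to verify both bracket relations by direct computation on the elements $a^{[n|J]}$. The structural reduction I would use first is that $T_K$ is an even derivation and each $S_K^i$ is an odd derivation of $A^O$, so the supercommutator $[S_K^i,S_K^j]$ is again an even derivation and $[T_K,S_K^i]$ an odd one; since $2\delta_{i,j}T_K$ is likewise an even derivation and $0$ is trivially a derivation, each asserted identity equates two derivations of $A^O$. By \cref{lem:dnJ} the elements $a^{[n|J]}$ (with $a\in A$, $n\in\bbN$, $J\subset[N]$) generate $A^O$ as a superalgebra, so it suffices to verify the identities after applying both sides to an arbitrary $a^{[n|J]}$, where I can use the explicit formulas \eqref{eq:csj:Sact} and \eqref{eq:csj:Tact}.

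The relation $[T_K,S_K^i]=0$ and the diagonal case of the first relation are then immediate. For $i=j$ the supercommutator of the odd operator $S_K^i$ with itself is $[S_K^i,S_K^i]=2(S_K^i)^2=2T_K$, by the very definition $T_K\ceq(S_K^i)^2$ together with \eqref{eq:csj:Tact}. For $[T_K,S_K^i]=T_KS_K^i-S_K^iT_K$, I would apply both composites to $a^{[n|J]}$, distinguishing the cases $i\notin J$ and $i\in J$; in each case $T_KS_K^i$ and $S_K^iT_K$ produce the same basis element with the same scalar, because the sign factor $\sigma(e_i,\cdot)$ is independent of $n$ and the integer prefactors $(n+1)$ and $(n+2)$ enter symmetrically in the two orders.

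The substance is the off-diagonal case $i\neq j$ of the first relation, where I must show that the anticommutator $S_K^iS_K^j+S_K^jS_K^i$ annihilates every $a^{[n|J]}$. Here I would split into the four cases according to whether $i$ and $j$ lie in $J$, apply \eqref{eq:csj:Sact} twice, and observe that in each case the two orders produce the same basis element with the same integer prefactor but with the two sign factors $\sigma(e_i,\cdot)\sigma(e_j,\cdot)$ attached in the opposite order. Everything then funnels through the single sign identity
\[
 \sigma(e_i,J)\,\sigma(e_j,J\cup e_i)=-\sigma(e_j,J)\,\sigma(e_i,J\cup e_j) \qquad (i\neq j,\ i,j\notin J),
\]
which I would derive by expanding $\zeta^i\zeta^j\zeta^J$ in two ways from the defining relation $\zeta^i\zeta^J=\sigma(e_i,J)\zeta^{e_i\cup J}$ of \cref{lem:sj-TS} together with $\zeta^i\zeta^j=-\zeta^j\zeta^i$. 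The only genuine bookkeeping is that in the mixed case ($i\notin J$, $j\in J$) and the doubly occupied case ($i,j\in J$) this identity must be applied with $J$ replaced by $J\sm e_j$ or $J\sm\{i,j\}$, and one invokes that all the $\sigma$-values occurring are $\pm1$ to convert the derived relation into the precise product of signs that appears. This sign-matching is the sole step requiring care, and it is where I expect the main (though modest) obstacle to lie.
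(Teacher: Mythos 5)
Your proposal is correct and is essentially the paper's own argument: the paper states the lemma as an immediate consequence of the explicit formulas \eqref{eq:csj:Sact} and \eqref{eq:csj:Tact}, i.e.\ exactly the generator-by-generator verification you carry out. Your additional observations (reduction to the generators $a^{[n|J]}$ via the derivation property, and the sign identity $\sigma(e_i,J)\,\sigma(e_j,J\cup e_i)=-\sigma(e_j,J)\,\sigma(e_i,J\cup e_j)$) are precisely the bookkeeping the paper leaves implicit, and they check out.
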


Recalling \cref{dfn:dsa}, we find that 
$(A^O,S_K^i)$ is a differential superalgebra for any $A \in \SCom k$.
Consulting \cref{dfn:Dsc}, we may name:

\begin{dfn}\label{dfn:scj}
For $A \in \SCom k$, the differential superalgebra $(A^O,S_K^i)$ is called 
the \emph{superconformal jet algebra} of $A$, and denoted by $A^{\Osc}$.
\end{dfn}

By \cref{lem:dnJ}, we have:

\begin{lem}\label{lem:scj-dsa}
For any $A \in \SCom k$, the superconformal jet algebra $A^{\Osc}$ 
is generated by $A$ as a differential superalgebra.
\end{lem}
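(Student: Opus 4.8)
The plan is to combine the spanning set of \cref{lem:dnJ} with the explicit formulas \eqref{eq:csj:Sact} and \eqref{eq:csj:Tact} to show that the differential subalgebra of $A^{\Osc}$ generated by $A$ under the odd derivations $S_K^i$ already contains every generator $a^{[n|J]}$. Since a generated differential subalgebra is by definition closed under multiplication (\cref{dfn:dsa}), and \cref{lem:dnJ} exhibits $A^O$ as the span of products of the $a^{[n|J]}$, it suffices to realize each single $a^{[n|J]}$ (for $a \in A$, $n \in \bbN$, $J \subset [N]$) as a scalar multiple of an iterated $S_K^i$-derivative of $a$.

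I would produce such a realization in two stages. First, to build up the odd index set $J=\{j_1<\dotsb<j_s\}$ from $a = a^{[0|\emptyset]}$: applying $S_K^{j_1},\dotsc,S_K^{j_s}$ successively, at the $k$-th step the current index set is $\{j_1,\dotsc,j_{k-1}\}$, which omits $j_k$, so the first branch of \eqref{eq:csj:Sact} inserts $j_k$ with the sign $\sigma(e_{j_k},\{j_1,\dotsc,j_{k-1}\})$. The composite thus gives $\varepsilon\,a^{[0|J]}$ with $\varepsilon=\prod_{k}\sigma(e_{j_k},\{j_1,\dotsc,j_{k-1}\})\in\{\pm1\}$. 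Second, to raise the even level I would iterate $T_K=(S_K^i)^2$, which is well defined for any $i$ by \cref{lem:STL}: from \eqref{eq:csj:Tact} one computes $T_K^{\,n}(a^{[0|J]})=n!\,a^{[n|J]}$, and since $k$ has characteristic $0$ the factorial is invertible. Combining the two stages,
\[
 a^{[n|J]} = \tfrac{1}{\varepsilon\, n!}\,(S_K^i)^{2n}\,S_K^{j_s}\dotsm S_K^{j_1}(a),
\]
which is a scalar multiple of an iterated $S_K^i$-derivative of $a \in A$, hence lies in the generated subalgebra.

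Feeding this back through \cref{lem:dnJ} shows that the generated differential subalgebra is all of $A^{\Osc}$, which proves the lemma. This mirrors the $N_W$ argument behind \cref{lem:sj-dsa}, the only structural novelty being that here a single odd derivation $S_K^i$ does double duty—inserting an odd index through its first branch and, via its square $T_K$, raising the even level—so that $A$ generates under the $S_K^i$ alone. I do not expect a genuine obstacle; the one point requiring care is the sign bookkeeping for the factors $\sigma(e_{j_k},\{j_1,\dotsc,j_{k-1}\})$, but these are always $\pm1$ (and in particular nonzero) precisely because at each insertion step the index being added is absent, so no invertibility problem can arise.
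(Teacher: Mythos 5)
Your proof is correct and follows essentially the same route as the paper: the paper's (very terse) proof is precisely the combination of the spanning set in \cref{lem:dnJ} with the action formulas \eqref{eq:csj:Sact} and \eqref{eq:csj:Tact}, which you have simply spelled out, including the sign bookkeeping and the observation that $T_K=(S_K^i)^2$ recovers the even level with the invertible factor $n!$.
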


Then we have the following analogous of \cref{lem:sj-gen}:

\begin{lem}\label{lem:scj-gen}
Let $A,B \in \SCom k$, and assume the following conditions.
\begin{itemize}[nosep]
\item $B$ contains $A$ as a linear sub-superalgebra.
\item $B$ has $N$ odd derivations $\sld^i$ ($i \in [N]$)
      which form the Lie superalgebra in \cref{lem:STL}.
\item The differential superalgebra $(B,\sld^i)$ is generated by $A$.
\end{itemize} 
Then the injection $A \inj B$ can be extended to a surjective morphism 
$(A^O,S_K^i) \srj (B,\sld^i)$ of differential superalgebras.
\end{lem}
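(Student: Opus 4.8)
The plan is to mimic the proof of \cref{lem:sj-gen}, building the extension on generators and checking well-definedness against the defining relations of $A^{\Osc}$. First I would record that, by \cref{lem:scj-dsa} together with \cref{lem:dnJ}, the algebra $A^{\Osc}=A^O$ is linearly spanned by products of the generators $a^{[n|J]}=d_{n|J}(a)$ with $a\in A$, $n\in\bbN$, $J\subset[N]$, and that by the explicit action \eqref{eq:csj:Sact}--\eqref{eq:csj:Tact} each such generator equals a nonzero scalar multiple of a word $S_K^{i_1}\dotsm S_K^{i_m}(a)$: applying $S_K^i$ with $i\notin J$ adjoins $i$ to $J$ (up to the sign $\sigma(e_i,J)=\pm1$), while $T_K=(S_K^i)^2$ raises $n$ by one (up to the nonzero factor, invertible since $k$ has characteristic $0$). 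Thus $A^{\Osc}$ is generated, as a commutative superalgebra, by the orbit of $A$ under the $S_K^i$.

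Next I would define $\psi\colon A^{\Osc}\to B$ by declaring $\psi|_A$ to be the given inclusion $A\inj B$ and imposing $\psi\circ S_K^i=\sld^i\circ\psi$ for every $i\in[N]$, extended multiplicatively; on a generator this means $\psi(a^{[n|J]})\ceq c\,\sld^{i_1}\dotsm\sld^{i_m}(a)$ whenever $a^{[n|J]}=c\,S_K^{i_1}\dotsm S_K^{i_m}(a)$. The content of the lemma is the well-definedness of $\psi$, which I would verify against the presentation of $A^{\Osc}$ coming from \cref{dfn:HS} and the iterated de Rham construction of \cref{prp:sj}. There are two families of relations to respect. The first are the linearity and Leibniz relations in the variable $a$ inherited from $A_\infty$ and from the de Rham differentials; these are preserved because $a\mapsto a$ is an algebra map on $A$ and each $\sld^i$ is a derivation on $B$, so that $\sld^i$-words applied to a product expand by Leibniz exactly as the terms $(xy)^{(i)}=\sum_{j+k=i}x^{(j)}y^{(k)}$ prescribe. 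The second family records that a given generator may be written by several different words; any two such differ by the commutation relations $[S_K^i,S_K^j]=2\delta_{i,j}T_K$ and $[T_K,S_K^i]=0$ of \cref{lem:STL}, and by hypothesis the $\sld^i$ on $B$ satisfy exactly these relations, so the two corresponding elements of $B$ agree. Hence $\psi$ is well-defined and, by construction, a morphism of differential superalgebras intertwining $S_K^i$ with $\sld^i$.

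Finally, surjectivity is immediate: the image $\psi(A^{\Osc})$ contains $\psi(A)=A$ and satisfies $\sld^i(\psi(x))=\psi(S_K^i x)\in\psi(A^{\Osc})$, so it is a sub-superalgebra of $B$ stable under all $\sld^i$ and containing $A$; since $(B,\sld^i)$ is generated by $A$ as a differential superalgebra, $\psi(A^{\Osc})=B$. The main obstacle is the well-definedness step: one must be sure that the only identifications among the words $S_K^{i_1}\dotsm S_K^{i_m}(a)$ are consequences of the \cref{lem:STL} relations together with the Leibniz rule, so that no further relation of $A^{\Osc}$ fails to hold in $B$. I expect this to follow cleanly from the description of $A^{\Osc}$ as $\clS^N(A_\infty)$, since there the generators $a^{[n|J]}$ are, by \cref{lem:dnJ}, a genuine spanning set subject precisely to the linearity and Leibniz relations, with the passage to the $S_K^i$-word presentation governed by \cref{lem:STL}.
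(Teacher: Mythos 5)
The paper never actually proves this lemma (it states the $N_W$ analogue \cref{lem:sj-gen} as ``easily shown'' and this one as ``analogous''), so your argument has to stand on its own, and as written it has a genuine gap at exactly the point you flag at the end. Your reduction of the generators to $S_K$-words and your surjectivity paragraph are both correct; the problem is well-definedness. Your construction of $\psi$ needs the claim that $A^{\Osc}$ is \emph{presented}, as a commutative superalgebra, by the symbols $a^{[n|J]}$ subject only to linearity and Leibniz relations, so that killing those relations in $B$ suffices. But \cref{lem:dnJ}, which you cite for this, only says that the $a^{[n|J]}$ and their products \emph{span} $A^O$; it says nothing about what the relations among them are. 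Extracting the presentation from $A^O=\clS^N(A_\infty)$ is not automatic: at each de Rham stage the new generators are symbols $d_{\txdR}(c)$ for \emph{all} $c$ in the previous algebra, and one must eliminate the redundant ones (for composite $c$) via Leibniz and check that no further relations survive — this elimination is precisely the content your proof assumes. Separately, your justification of independence of the chosen word (``any two such words differ by the commutation relations of \cref{lem:STL}'') is not valid as stated: an equality $c\,w(a)=c'\,w'(a)$ is an identity between elements of $A^{\Osc}$, not an identity in the enveloping algebra of the Lie superalgebra of \cref{lem:STL}, and two words can agree on a particular $a^{[n|J]}$ without being equal in that enveloping algebra. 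The repair for this second point is to define $\psi$ only through the canonical normal form $a^{[n|J]}=\tfrac{1}{n!}T_K^n S_K^{j_1}\dotsm S_K^{j_r}(a)$, pushing all consistency into the relation check — but that brings you back to the first, unproven, presentation claim.

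Both problems disappear if you route the argument through the universal property \cref{prp:sj} instead of through a presentation. From $\sld^i$ and $\pd\ceq(\sld^i)^2$ on $B$, form the Taylor-type series $\Phi(a)\ceq e^{z\pd}(1+\zeta^1\sld^1)\dotsm(1+\zeta^N\sld^N)\,a \in B\otimes_k O$ (with orderings and Koszul signs fixed once and for all). Each factor is an algebra homomorphism because $\pd$ is an even and $\sld^i$ an odd derivation and $\operatorname{char}k=0$, so $\Phi\in\Hom_{\TSCom k}(A,B\otimes_k O)$; by \cref{prp:sj} it corresponds to a morphism of superalgebras $\psi\colon A^O\to B$ with $\psi\circ d_{n|J}$ equal to the $Z^{n|J}$-coefficient of $\Phi$, and in particular $\psi|_A$ is the given inclusion. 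The intertwining $\psi\circ S_K^i=\sld^i\circ\psi$ then needs to be verified only on the generators $a^{[n|J]}$, since both sides are odd $\psi$-derivations and the generators generate $A^O$ as an algebra; on generators it is exactly the computation the paper performs to derive \eqref{eq:csj:Sact}, with $D_Z^i$ replaced by $\sld^i$, and it is here — and only here — that the hypotheses $(\sld^i)^2=\pd$ and $[\sld^i,\sld^j]=0$ for $i\neq j$ are used. Your surjectivity argument then applies verbatim.
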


\section{SUSY vertex algebras}\label{s:SUSY}

This section gives a recollection of SUSY vertex algebras introduced in \cite{HK}.
We also give a few preliminary observation on commutative SUSY vertex algebras.
We will work over a fixed field $k$ of characteristic $0$.
Also we fix a positive integer $N$, and denote the ordered set $\{1<\dotsb<N\}$ by $[N]$.

\subsection{Superfields}\label{ss:sf}

Here we explain the formalism of superfields used throughout the rest of this note.
The original reference is \cite[\S 2.6]{HK}.

Let us continue to use \cref{ntn:sd}. So $Z=(z,\zeta^1,\dotsc,\zeta^N)$ is 
a $1|N$-supervariable with even $z$ and odd $\zeta^i$'s, and denote 
\[
 Z^{j|J} \ceq z^j \zeta^J = z^j \zeta^{j_1}\zeta^{j_2}\dotsm \quad 
 (j \in \bbZ, \, J =\{j_1,j_2,\dotsc\} \subset [N]).
\]
We also use a simplified notation
\[
 Z^{j|N} \ceq Z^{j|[N]} = z^j \zeta^1 \dotsm \zeta^N.
\]
Below we will repeatedly use the sign notation given in \cite[\S 3.1.1]{HK}.
For disjoint ordered sets $I,J \subset [N]$, we define $\sigma(I,J) \in \{\pm 1\}$ by
\begin{align}\label{eq:sigma}
 \zeta^I \zeta^J = \sigma(I,J) \zeta^{I \cup J},
\end{align}
where $I \cup J$ denotes the re-ordered set.
If $I \cap J \neq \emptyset$, then we define $\sigma(I,J) \ceq 0$.
Also, for $i \in [N]$ and $J \subset [N]$, we use the following abbreviations.
\begin{align}\label{eq:sigma2}
 e_i \ceq \{i\}, \quad N \sm J \ceq [N] \sm J.
\end{align}
For example, we denote $\sigma(N \sm J,e_i) \ceq \sigma([N] \sm J,\{i\})$.

For a linear superspace $V$, we define $V\dbr{Z}$ and $V\dbr{Z^{\pm1}}$ 
to be the linear spaces of series with coefficients in $V$:
\[
 V\dbr{Z} \ceq \Biggl\{\sum_{n \in \bbN, \, J \subset [N]} Z^{n|J} v_{n|J} 
                       \ \Bigg| \ v_{n|J} \in V \Biggr\},  \quad
 V\dbr{Z^{\pm1}} \ceq \Biggl\{\sum_{n \in \bbZ, \, J \subset [N]} Z^{j|J} v_{j|J} 
                              \ \Bigg| \ v_{j|J} \in V \Biggr\}.
\]
Here the summations are possibly infinite, and the index $J$ runs over 
ordered subset in $[N]$.
These are linear superspaces by setting the parity as 
$p(Z^{j|J}v) \ceq \abs{J}+p(v) \bmod 2$ for $v \in V$ of pure parity. We also denote 
\[
 V\dpr{Z}\ceq 
 \Biggl\{\sum_{j \in \bbZ, \, J \subset [N]} Z^{j|J} v_{j|J} \in V\dbr{Z^{\pm1}} 
         \ \Bigg| \ v_{j|J}=0 \ \forall j \ll 0\Biggr\}.
\]

Let $\iEnd(V) \ceq \iHom(V,V)$ be the internal end in the category of linear superspaces
(see \eqref{eq:iHom}). In particular, it is a linear superspace. 
An \emph{$\iEnd(V)$-valued superfield} of $1|N$-supervariable $Z$ is a series of the form
\begin{align}\label{eq:(j|J)-sum}
 a(Z) = \sum_{(j|J)} Z^{-1-n|N \sm J} a_{(j|J)} \quad (a_{(j|J)} \in \iEnd(V)),
\end{align}
where the indices run over the range $j \in \bbZ$, $J \subset [N]$, 
and $N \sm J \ceq [N] \setminus J$, 
such that for each $v \in V$ we have $a(Z)v \in V\dpr{Z}$.
We denote the linear superspace of $\iEnd(V)$-valued superfields of supervariable $Z$ by 
\[
 \SF(V,Z) = \SF(V).
\]

Next, we recall the locality of superfields \cite[\S 3.1.2]{HK}.
Let us given two (super-)commuting $1|N$-supervariables $Z=(z,\zeta^1,\dotsc,\zeta^N)$ 
and $W=(w,\omega_1,\dotsc,\omega_N)$, and let $A$ be a superalgebra. 
An \emph{$A$-valued formal distribution in two variables} $Z$ and $W$ 
is a (possibly infinite) series of the form
\[
 a(Z,W) = \sum_{(i|I),(j|J)} Z^{i|I} W^{j|J} a_{i|I,j|J} \quad (a_{i|I,j|J} \in A)
\]
where the running index means $i,j\in\bbZ$ and $I,J \subset [N]$, and we wrote 
$Z^{i|I} W^{j|J} \ceq z^i \zeta^I w^j \omega^J = z^i w^j \zeta^I \omega^J$.
We denote by $A\dbr{Z^{\pm1},W^{\pm1}}$ 
the linear superspace of $A$-valued formal distributions.
A formal distribution $a(Z,W) \in A\dbr{Z^{\pm1},W^{\pm1}}$ is called \emph{local} 
if there exists $n \in \bbN$ such that 
\begin{align}\label{eq:loc}
 (z-w)^n a(Z,W) = 0.
\end{align}
Note that the left hand side is well-defined in $A\dbr{Z^{\pm1},W^{\pm1}}$.

\subsection{$N_W=N$ SUSY vertex algebras}\label{ss:NW}

In this subsection, we cite from \cite[\S 3]{HK} the notion of $N_W=N$ SUSY vertex algebras.
We continue to use the notation in \S \ref{ss:sf}. In particular, 
$Z=(z,\zeta^i)=(z,\zeta^1,\dotsc,\zeta^N)$, and $W=(w,\omega^i)$ are $1|N$-supervariables.

\subsubsection{Definition and basic properties}

\begin{dfn}[{\cite[Definition 3.3.1]{HK}}]\label{dfn:NW}
An \emph{$N_W=N$ SUSY vertex algebra} (\emph{$N_W=N$ SUSY VA} for short) 
is a data $(V,\vac,T,S^i,Y)$ consisting of 
\begin{itemize}[nosep]
\item a linear superspace $V=V_{\oz} \oplus V_{\oo}$, called the \emph{state superspace},
\item an even vector $\vac \in V_{\oz}$, called the \emph{vacuum},
\item an even operator $T \in \End(V)_{\oz}$,
\item odd operators $S^i \in \End(V)_{\oo}$ for $i \in [N]$, and 
\item an even linear map $Y(\cdot,Z)\colon V \to \SF(V,Z)$,
      called the \emph{state-superfield correspondence},
\end{itemize}
which satisfies the following axioms.
\begin{itemize}[nosep]
\item \emph{Vacuum axiom}: For any $a \in V$ and $i \in [N]$, we have 
\begin{align}\label{eq:NWvac}
 Y(a,Z)\vac=a+O(Z), \quad T\vac=S^i\vac=0,
\end{align}
where $O(Z)$ is an element of $V\dbr{Z}$ which vanishes at $Z=0$ (i.e., $z=\zeta^i=0$).
\item \emph{Translation invariance}: For each $a \in V$, we have
\begin{align}\label{eq:NWtrs}
 [T,Y(a,Z)] = \pd_z Y(a,Z), \quad 
 [S,Y(a,Z)] = \pd_{\zeta^i} Y(a,Z) \quad (i \in [N]),
\end{align}
where we denoted $\pd_z \ceq \pdd{z}$ and $\pd_{\zeta^i} \ceq \pdd{\zeta^i}$,
and used the supercommutator \eqref{eq:sbra}.
\item \emph{Locality axiom}: For any $a,b \in V$, there exists an $n \in \bbN$ such that 
\begin{align}\label{eq:NWloc}
 (z-w)^n [Y(a,Z),Y(b,W)]=0.
\end{align}
The identity is regarded as that of $(\End V)\dbr{Z^{\pm1},W^{\pm1}}$ (see \eqref{eq:loc}).
\end{itemize}
We abbreviate $V=(V,\vac,T,S^i,Y)$ if no confusion may arise.
\end{dfn}

For $a \in V$, $j \in \bbZ$ and an ordered subset $J \subset [N]$, 
we define the \emph{Fourier mode} or the \emph{$(j|J)$-operator}
$a_{(j|J)} \in \End(V)$ by the expansion 
\begin{align}\label{eq:(n|J)}
 Y(a,Z) = \sum_{(j|J)} Z^{-1-j|N \sm J} a_{(j|J)},
\end{align}
where we used similar symbols as in \eqref{eq:(j|J)-sum}.
We also denote $a_{(j|N)} \ceq a_{(j|[N])}$.

Using $(j|J)$-operators, the vacuum axiom \eqref{eq:NWvac} is equivalent to 
\begin{align}\label{eq:NWvac2}
 a_{(-1|N)}\vac = a, \quad a_{(n|J)}\vac = 0 \quad (n \in \bbN, \, J \subset [N]),
\end{align}
and by \cite[(3.3.1.5)]{HK}, the translation invariance \eqref{eq:NWtrs} is equivalent to 
\begin{align}\label{eq:NWrec}
 [T,a_{(j|J)}] = -j a_{(j-1|J)}, \quad 
 [S^i,a_{(j|J)}] = \begin{cases} \sigma(N \sm J,e_i) a_{(j|J \sm e_i)} & (i \in J) \\ 
                                 0 & (i \notin J) \end{cases},
\end{align}
where we used the sign $\sigma$ in \eqref{eq:sigma} and the abbreviations \eqref{eq:sigma2}.

As in the even case, the locality axiom \eqref{eq:NWloc} implies 
operator product expansion (OPE for short). To explain it, let us cite some notation 
on formal delta functions for $N_W=N$ SUSY VAs from \cite[\S 2.3]{HK}.
Let $Z=(z,\zeta^i)$ and $W=(w,\omega^i)$ be two commuting $1|N$-supervariables.
We denote by 
\begin{align}\label{eq:izw}
 i_{z,w}\colon k\dpr{Z,W} \linj k\dpr{Z}\dpr{W}, \quad  
 i_{w,z}\colon k\dpr{Z,W} \linj k\dpr{W}\dpr{Z}
\end{align}
the embeddings obtained by the expansions with respect to $\frac{w}{z}$ 
(in the domain $\abs{z}>\abs{w}$),
and with respect to $\frac{z}{w}$ (in the domain $\abs{z}<\abs{w}$), respectively.
For $j\in\bbZ$ and an ordered subset $J=\{j_1<\dotsb<j_r\} \subset [N]$, we set
\begin{align}\label{eq:Z-W}
 (Z-W)^{j|J} \ceq (z-w)^j (\zeta-\omega)^J 
 = (z-w)^j(\zeta^{j_1}-\omega^{j_1})\dotsm(\zeta^{j_r}-\omega^{j_r}).
\end{align}
The formal $\delta$-function for $N_W=N$ case is given by 
\begin{align}\label{eq:NWdelta}
 \delta(Z,W) \ceq (i_{z,w}-i_{w,z})(Z-W)^{-1|N} 
 = (i_{z,w}-i_{w,z})\frac{(\zeta-\omega)^N}{z-w}
 = (i_{z,w}-i_{w,z})\frac{(\zeta^1-\omega^1)\dotsm(\zeta^N-\omega^N)}{z-w}.
\end{align}
For $n \in \bbN$ and $J = \{j_1<\dotsb<j_r\} \subset [N]$, let
\begin{align}\label{eq:NWD}
 \pd_W^{n|J} \ceq 
 \pd_w^n \pd_{\omega^{j_1}} \dotsm \pd_{\omega^{j_r}}, \quad 
 \pd_W^{(n|J)} \ceq \frac{(-1)^{\binom{r+1}{2}}}{j!}\pd_W^{j|J}.
\end{align}
Then, by \cite[\S 3.1.4 (4)]{HK}, we have 
\begin{align}\label{eq:NWpddel}
 \pd_W^{(n|J)}\delta(Z,W) = (i_{z,w}-i_{w,z})(Z-W)^{-1-j|N \sm J}.
\end{align}
Finally, for $f(Z) = \sum_{(j|J)} Z^{j|J} f_{j|J} \in V\dbr{Z^{\pm1}}$, we denote
\begin{align}\label{eq:res}
 \res_Z f(Z) \ceq f_{-1|N}.
\end{align}
Then, by \cite[Lemma 3.1.3]{HK}, any local distribution $a(Z,W)$ 
is decomposed in the following finite sum.
\begin{align}\label{eq:NWexp}
 a(Z,W) = \sum_{(j|J), \, j \ge 0} \bigl(\pd_W^{(j|J)}\delta(Z,W)\bigr)c_{j|J}(W), \quad
 c_{j|J}(W) \ceq \res_Z (Z-W)^{j|J} a(Z,W).
\end{align}

Now, let $V$ be an $N_W=N$ SUSY VA, and $a, b \in V$ of pure parity.
Then the locality axiom \eqref{eq:NWloc} and the expansion \eqref{eq:NWexp} imply
the following OPE formula \cite[Theorem 3.3.8 (5)]{HK}:
\begin{align}\label{eq:NWope}
 [Y(a,Z),Y(b,W)] = 
 \sum_{(j|J), \, j \ge 0} \bigl(\pd_Z^{(j|J)}\delta(Z,W)\bigr) Y(a_{(j|J)}b,W).
\end{align}

As in the even case, an $N_W=N$ SUSY VA $V$ is called \emph{strongly generated} 
by a subset $\{a^i \mid i \in I\} \subset V$ if $V$ is spanned by the elements 
of the form $a^{i_1}_{(-n_1|J_1)} \dotsm a^{i_r}_{(-n_r|J_r)} \vac$
with $r \in \bbN$,  $n_i \in \bbZ_{\ge 1}$ and $J_i \subset [N]$.

We also have the notion of a \emph{module} over an $N_W=N$ SUSY VA $V$.
We omit the detailed definition, and only give the notation: Let $M$ be a $V$-module.
For $a \in V$ and  $m \in M$, we denote the $V$-action on $M$ by 
\begin{align}\label{eq:YM}
 Y^M(a,Z)m = \sum_{(j|J)} Z^{-1-j|N \sm J} a^M_{(j|J)} m \in M\dpr{Z}.
\end{align}

\subsubsection{Conformal $N_W=N$ SUSY vertex algebras}\label{sss:cNW}

We cite from \cite[\S 5]{HK} some examples of $N_W=N$ SUSY vertex algebras.
To explain those examples, we use the $\Lambda$-bracket introduced by \cite{HK},
which efficiently encodes OPE \eqref{eq:NWope}.

\begin{ntn}[{\cite[\S 3.2.1]{HK}}]\label{ntn:NWLam}
Let $V$ be an $N_W=N$ SUSY VA. 
\begin{itemize}[nosep]
\item 
We denote the commutative superalgebra freely generated by even $\lambda$ and 
odd $\chi^i$ ($i \in [N]$) as 
\[
 \scL_W = k[\Lambda] = k[\lambda,\chi^1,\dotsc,\chi^N].
\]

\item
For $m \in \bbN$ and $M =\{m_1<\dotsb<m_r\} \subset [N]$, we denote 
$\Lambda^{m|M} \ceq \lambda^m \chi^M = \lambda^m \chi^{m_1} \dotsm \chi^{m_r}$.

\item
Finally, for $a,b \in V$, we define $[a_\Lambda b] \in \scL_W \otimes_k V$ by 
\[
 [a_\Lambda b] = \sum_{(j|J), \, j \ge 0} \sigma(J,N \sm J)
 (-1)^{\binom{\abs{J}+1}{2}} \frac{1}{j!} \Lambda^{j|J} a_{(j|J)}b, \quad 
 \Lambda^{j|J} \ceq  \lambda^j \chi^{j_1} \dotsm \chi^{j_r}
\]
We call $[a_\Lambda b]$ the \emph{$\Lambda$-bracket}.
\end{itemize}
\end{ntn}

In the rest of this \cref{sss:cNW}, we assume the base field $k=\bbC$.

\begin{eg}[{\cite[Example 5.1]{HK}}]
$V(\scW_N)$ is an $N_W=N$ SUSY vertex algebra 
strongly generated by an element $L$ of parity $N \bmod 2$ 
and elements $Q^i$, $i \in [N]$, of parity $N+1 \bmod 2$ whose OPEs are
\begin{align}\label{eq:WNope}
 [L_\Lambda L] = (T+2\lambda) L, \quad 
 [Q^i_\Lambda Q^j] = (S^i+\chi^i)Q^j-\chi^j Q^i, \quad 
 [L_\Lambda Q^i] = (T+\lambda)Q^i+(-1)^N \chi^i L.
\end{align}
If $N=1,2$, then it admits a central extension. 
For $N=1$, it is given by 
\begin{align}\label{eq:WN1}
 [L_\Lambda L] = (T+2\lambda) L, \quad 
 [Q_\Lambda Q] =  S Q+\frac{\lambda \chi}{3}C, \quad 
 [L_\Lambda Q] = (T+\lambda)Q-\chi L+\frac{\lambda^2}{6}C. 
\end{align}
For $N=2$, we have a central extension 
\begin{align}\label{eq:WN2}
\begin{split}
&[L_\Lambda L] = (T+2\lambda) L, \quad 
 [L_\Lambda Q^i] = (T+\lambda)Q^i+\chi^i L \\
&[Q^i_\Lambda Q^j] = S^i Q^j, \quad 
 [Q^1_\Lambda Q^2] = (S^1+\chi^1)Q^2-\chi^2 Q^1+\frac{\lambda}{6}{C}.
\end{split}
\end{align}
\end{eg}

$V(\scW_N)$ is an $N_W=N$ SUSY analogue of Virasoro vertex algebra.
Recall that an even vertex algebra is called conformal if it has an element
which generates Virasoro vertex algebra. The following is its $N_W=N$ SUSY analogue.

\begin{dfn}[{First part of \cite[Definition 5.2]{HK}}]\label{dfn:cNW}
An $N_W=N$ SUSY VA $(V,\vac,T,S^i,Y)$ over $\bbC$ is called \emph{conformal} 
if it has elements $\nu,\tau^1,\dotsc,\tau^N$ satisfying the following conditions.
\begin{itemize}[nosep]
\item 
The superfields $L(Z) \ceq Y(\nu,Z)$ and $Q^i(Z) \ceq Y(\tau^i,Z)$ 
satisfy the OPEs \eqref{eq:WNope} (or the central extension \eqref{eq:WN1}, \eqref{eq:WN2}).

\item
We have $\nu_{(0|0)}=2T$ and $\tau^i_{(0|0)}=S^i$.

\item
The operator $\nu_{(1|0)}$ is semisimple and the eigenvalues $d$ are bounded below,
i.e., there is a finite subset $\{d_1,\dotsc,d_s\} \subset \bbC$ 
such that $d \in d_i+\bbR_{\ge 0}$ for some $i$.
\end{itemize}
We call the elements $\nu$ and $\tau^i$ the \emph{conformal elements}.
\end{dfn}

%

For later use, let us also cite:

\begin{dfn}[{Second part of \cite[Definition 5.2]{HK}}]\label{dfn:scNW}
An $N_W=N$ SUSY VA $V$ is called \emph{strongly conformal} if it is conformal and 
the operators $\nu_{(1|0)}$ and $\sum_{i=1}^N \sigma(e_i,N \sm e_i) \tau^i_{(0|e_i)}$
have integer eigenvalues.
We call the eigenvalue $\Delta$ of $\nu_{(1|0)}$ the \emph{conformal weight}.
\end{dfn}

As mentioned in \cite{HK} and extensively studied in \cite{H1},
for a strongly conformal $N_W=N$ SUSY VA $V$, we can construct a vector bundle 
on an arbitrary super Riemann surface whose fiber is $V$. 

\subsubsection{Identities in $N_W=N$ SUSY vertex algebras}

In this part, we explain basic identities valid in $N_W=N$ SUSY vertex algebras,
In particular, we give analogue for Borcherds' commutator formula (\cref{fct:NWcmt})
and iterate formula (\cref{lem:NWiter}), which will be used for the discussion of
Li filtration in \cref{ss:NWLif} and \cref{ss:NKLif}.

Throughout this subsection, $V=(V,\vac,T,S^i,Y)$ denotes an $N_W=N$ SUSY VA.
For a supervariable $Z=(z,\zeta^i)$, we denote
\[
 Z \nabla \ceq z T + \tsum_{i=1}^N \zeta^i S^i.
\]
We also use the formal exponential $e^X = \exp X \ceq \sum_{n \in \bbN}\frac{1}{n!}X^n$.

Using these relations and following the argument of the even (or non-super) case 
in \cite[3.1.1--3.1.7]{FBZ}, we can show the following statements.

\begin{fct}[{\cite[Proposition 3.3.6 (1), Theorem 3.3.8 (3), (4), Corollary 3.3.9]{HK}}]
\label{fct:NW-1}
For $a \in V$ and $i \in [N]$, we have the following.
\begin{enumerate}[nosep]
\item 
$Y(a,Z)\vac = e^{Z \nabla}a = e^{z T}(1+\zeta^1 S^1)\dotsm(1+\zeta^N S^N)a$.
\item
Using the supercommutator, we have 
\begin{align}\label{eq:lem-NW}
 Y(T a,Z) = \pd_z Y(a,Z) = [T,Y(a,Z)], \quad 
 Y(S^i a,Z) = \pd_{\zeta^i} Y(a,Z) = [S^i,Y(a,Z)].
\end{align}
In terms of the Fourier modes, we have
\begin{align}\label{eq:NW-TSa}
 (T a)_{(j|J)} = -j a_{(j-1|J)}, \quad 
 (S^i a)_{(j|J)} = \sigma(e_i,N \sm J) a_{(j|J \setminus e_i)}.
\end{align}
\end{enumerate}
\end{fct}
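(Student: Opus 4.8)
The plan is to adapt the even-case arguments of \cite[3.1.1--3.1.7]{FBZ} to the $1|N$-supervariable setting, deriving everything from the vacuum axiom \eqref{eq:NWvac2}, the translation relations \eqref{eq:NWrec}, and a SUSY reconstruction (uniqueness) theorem. For \textbf{Part (1)}, I would set $F(Z) \ceq Y(a,Z)\vac$. The creativity part of the vacuum axiom \eqref{eq:NWvac} gives $F \in V\dbr{Z}$ together with $F(0)=a$ (evaluation at $z=\zeta^i=0$). Applying the translation invariance \eqref{eq:NWtrs} to $\vac$ and using $T\vac=S^i\vac=0$ yields the first-order system $\pd_z F = T F$ and $\pd_{\zeta^i} F = S^i F$ for $i \in [N]$. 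Since $F$ is a power series, this system has a unique solution with the prescribed initial value, namely $F(Z)=e^{Z\nabla}a$. To reach the factorized form I would first record that $T,S^1,\dotsc,S^N$ pairwise (super)commute, i.e.\ $[T,S^i]=0$ and $[S^i,S^j]=0$; these follow from \eqref{eq:NWrec} by the super Jacobi identity (equivalently, from integrability of the two partial derivatives above), using that an operator commuting with all modes $a_{(j|J)}$ and annihilating $\vac$ must vanish. Granting these, $e^{Z\nabla}=e^{zT}\prod_{i=1}^N e^{\zeta^i S^i}=e^{zT}\prod_{i=1}^N(1+\zeta^i S^i)$, each factor truncating because $(\zeta^i S^i)^2=-(\zeta^i)^2(S^i)^2=0$.

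For \textbf{Part (2)}, the two rightmost equalities in \eqref{eq:lem-NW} are the translation invariance \eqref{eq:NWtrs} itself, so only $Y(Ta,Z)=\pd_z Y(a,Z)$ and $Y(S^i a,Z)=\pd_{\zeta^i}Y(a,Z)$ need argument. I would prove each as an equality of superfields by a uniqueness argument: both sides are $\iEnd(V)$-valued superfields local with respect to every $Y(b,W)$ (derivatives and (anti)commutators of local superfields remain local), so it suffices to check agreement after applying to $\vac$. This is immediate from the two partial-derivative identities of Part (1): $\pd_z Y(a,Z)\vac = T Y(a,Z)\vac = T e^{Z\nabla}a = e^{Z\nabla}(Ta) = Y(Ta,Z)\vac$ using $[T,Z\nabla]=0$, and likewise $\pd_{\zeta^i}Y(a,Z)\vac = S^i Y(a,Z)\vac = e^{Z\nabla}(S^i a) = Y(S^i a,Z)\vac$ using $[S^i,Z\nabla]=0$. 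The SUSY reconstruction theorem of \cite{HK} then promotes these to equalities of superfields.

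Finally, the mode identities \eqref{eq:NW-TSa} follow by extracting the coefficient of $Z^{-1-j|N\sm J}$ from the two field identities just proved: applying $\pd_z$ to the expansion \eqref{eq:(n|J)} shifts $j \mapsto j-1$ with factor $-j$, giving $(Ta)_{(j|J)}=-j\,a_{(j-1|J)}$, while $\pd_{\zeta^i}$ removes $\zeta^i$ from $\zeta^{N\sm J}$ precisely when $i\notin J$ and produces the sign $\sigma(e_i,N\sm J)$, giving $(S^i a)_{(j|J)}=\sigma(e_i,N\sm J)a_{(j|J\sm e_i)}$. I expect the main obstacle to be neither the logical skeleton (a faithful copy of the even case) nor the PDE solving, but the disciplined bookkeeping of the Koszul signs and of the combinatorial function $\sigma$ throughout—in particular verifying that $\pd_{\zeta^i}$ acting on $Z^{-1-j|N\sm J}$ reproduces exactly $\sigma(e_i,N\sm J)$ under the paper's conventions, and confirming that the super-uniqueness theorem applies verbatim for $1|N$-supervariables.
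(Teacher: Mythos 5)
Your proposal is correct and takes essentially the same route as the paper: the paper does not prove this statement itself but states it as a Fact cited from \cite{HK}, remarking only that it follows ``the argument of the even (or non-super) case in \cite[3.1.1--3.1.7]{FBZ}'', which is exactly the adaptation you carry out (the PDE system coming from the vacuum and translation axioms, a Goddard-type uniqueness theorem for the superfield identities, and coefficient extraction for the mode formulas). Your extra care in deriving $[T,S^i]=[S^i,S^j]=0$ from \eqref{eq:NWrec}---commutativity that is not explicitly listed in \cref{dfn:NW}---is precisely what is needed for $e^{Z\nabla}$ to factor and for the uniqueness argument to run, so nothing essential is missing.
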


Next, we recall that operators $T$ and $S^i$ can be regarded 
as even and odd derivations, respectively.

\begin{fct}[{\cite[Corollary 3.3.9]{HK}}]\label{fct:NW:Sder} 
For $a,b \in V$ of pure parity, $j \in \bbZ$, $J \subset [N]$ and $i \in [N]$, we have 
\[
 T(a_{(j|J)}b) = (T a)_{(j|J)}b+a_{(j|J)}(T b), \quad 
 S^i(a_{(j|J)}b) = (-1)^{N \sm J}\bigl((S^i a)_{(j|J)}b+(-1)^{p(a)} a_{(j|J)}(S^i b)\bigr)
\]
with $(-1)^{N \sm J} \ceq (-1)^{N - \abs{J}}$.
\end{fct}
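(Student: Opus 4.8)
The plan is to reduce the entire statement to the two mode-level identities already recorded—the commutation relations \eqref{eq:NWrec} and the expressions \eqref{eq:NW-TSa} for the modes $(Ta)_{(j|J)}$ and $(S^i a)_{(j|J)}$—together with the Koszul-sign bookkeeping for the parity of $a_{(j|J)}$. The common starting point is the elementary rearrangement of the supercommutator \eqref{eq:sbra}: for an operator $X \in \End(V)$ of pure parity one has, as endomorphisms of $V$,
\[
 X \circ a_{(j|J)} = [X,a_{(j|J)}] + (-1)^{p(X)p(a_{(j|J)})}\, a_{(j|J)} \circ X ,
\]
so that evaluating both sides on $b$ gives $X(a_{(j|J)}b) = [X,a_{(j|J)}]b + (-1)^{p(X)p(a_{(j|J)})}a_{(j|J)}(Xb)$. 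Thus the whole claim comes down to identifying the commutator $[X,a_{(j|J)}]$ with the mode $(Xa)_{(j|J)}$ up to a sign, for $X=T$ and $X=S^i$.

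The even case is immediate. Setting $X=T$, so that $p(T)=\oz$ and the Koszul sign disappears, the relation $[T,a_{(j|J)}]=-j\,a_{(j-1|J)}$ from \eqref{eq:NWrec} coincides with $(Ta)_{(j|J)}=-j\,a_{(j-1|J)}$ from \eqref{eq:NW-TSa}; both equal the same endomorphism. Substituting into the rearrangement above yields $T(a_{(j|J)}b)=(Ta)_{(j|J)}b+a_{(j|J)}(Tb)$ at once.

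For the odd case I first pin down the parity of $a_{(j|J)}$. Since $Y(\cdot,Z)$ is even and, by the convention $p(Z^{j|J}v)=\abs{J}+p(v)$, the monomial $Z^{-1-j|N \sm J}$ in the expansion \eqref{eq:(n|J)} carries parity $N-\abs{J}$, every nonzero term forces $p(a_{(j|J)}) \equiv p(a)+N-\abs{J} \pmod 2$, hence $(-1)^{p(a_{(j|J)})}=(-1)^{p(a)}(-1)^{N-\abs{J}}=(-1)^{p(a)}(-1)^{N\sm J}$. Next I reconcile the two $\sigma$-signs appearing in \eqref{eq:NWrec} and \eqref{eq:NW-TSa}: from the antisymmetry $\sigma(I,J)=(-1)^{\abs{I}\abs{J}}\sigma(J,I)$ implicit in \eqref{eq:sigma} one gets $\sigma(N\sm J,e_i)=(-1)^{N-\abs{J}}\sigma(e_i,N\sm J)$, so that $[S^i,a_{(j|J)}]=(-1)^{N\sm J}(S^i a)_{(j|J)}$ whenever $i\in J$. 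When $i\notin J$ both sides vanish—the commutator by \eqref{eq:NWrec} and the mode because $\sigma(e_i,N\sm J)=0$ for non-disjoint sets—so this sign identity holds for every $J$.

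Finally I assemble the pieces: taking $X=S^i$ in the rearrangement and inserting both $[S^i,a_{(j|J)}]=(-1)^{N\sm J}(S^i a)_{(j|J)}$ and $(-1)^{p(a_{(j|J)})}=(-1)^{p(a)}(-1)^{N\sm J}$ gives
\[
 S^i(a_{(j|J)}b) = (-1)^{N\sm J}(S^i a)_{(j|J)}b + (-1)^{p(a)}(-1)^{N\sm J}a_{(j|J)}(S^i b),
\]
and factoring out $(-1)^{N\sm J}$ produces the asserted super-Leibniz rule. The only genuine obstacle is the sign bookkeeping—correctly computing $p(a_{(j|J)})$ and the precise relation between $\sigma(N\sm J,e_i)$ and $\sigma(e_i,N\sm J)$—since once these are settled the derivation property drops out mechanically, with the degenerate case $i\notin J$ handled by the one-line vanishing check noted above.
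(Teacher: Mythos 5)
Your proof is correct. It cannot be compared line-by-line with the paper's, because the paper gives none: the statement is recorded as a Fact imported from \cite[Corollary 3.3.9]{HK}. Your argument is therefore a self-contained alternative, derived entirely from formulas the paper does display: you rewrite the supercommutator \eqref{eq:sbra} as $X a_{(j|J)} = [X,a_{(j|J)}] + (-1)^{p(X)p(a_{(j|J)})} a_{(j|J)} X$ and then match $[X,a_{(j|J)}]$ against $(Xa)_{(j|J)}$ using \eqref{eq:NWrec} and \eqref{eq:NW-TSa}. All three sign ingredients check out: $p(a_{(j|J)}) \equiv p(a)+N-\abs{J} \bmod 2$ because $Y$ is even and $p(Z^{j|J}v)=\abs{J}+p(v)$; $\sigma(N \sm J,e_i) = (-1)^{N-\abs{J}}\sigma(e_i,N \sm J)$ since the single odd variable $\zeta^i$ moves past $\abs{N \sm J}$ odd variables; and for $i \notin J$ both $[S^i,a_{(j|J)}]$ and $(S^i a)_{(j|J)}$ vanish, the latter because $\sigma(e_i,N\sm J)=0$ when $i \in N \sm J$. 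Hence $[S^i,a_{(j|J)}] = (-1)^{N \sm J}(S^i a)_{(j|J)}$ for every $J$, and both Leibniz rules follow by substitution, the $T$ case being immediate since $[T,a_{(j|J)}]=(Ta)_{(j|J)}=-ja_{(j-1|J)}$. What this buys is a verification internal to the paper: the Fact becomes a mechanical consequence of \eqref{eq:NWrec} and \eqref{eq:NW-TSa} rather than an appeal to the chain of results in \cite{HK}. The one caveat is that \eqref{eq:NW-TSa} is itself quoted partly from the same Corollary 3.3.9 of \cite{HK}, so your derivation establishes equivalence with those mode identities rather than an independent proof from the axioms alone.
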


Let us also recall the skew-symmetry of the state-superfield correspondence.

\begin{fct}[{\cite[Proposition 3.3.12]{HK}}]\label{fct:NWskew}
For $a,b \in V$ of pure parity, we have 
\[
 Y(a,Z)b = (-1)^{p(a) p(b)} e^{Z \nabla}Y(b,-Z)a.
\]
\end{fct}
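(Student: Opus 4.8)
The plan is to mimic the even-case proof of skew-symmetry (as in \cite[3.1.1--3.1.7]{FBZ}), adapting it to the $N_W=N$ setting by combining the locality axiom \eqref{eq:NWloc}, the vacuum-axiom identity $Y(a,Z)\vac=e^{Z\nabla}a$ recorded in \cref{fct:NW-1}, and a conjugation (translation) formula for the state-superfield correspondence, throughout keeping careful track of the super-signs.

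First I would establish the conjugation formula
\[
 e^{W\nabla}\,Y(a,Z)\,e^{-W\nabla} = Y(a,Z+W),
\]
where the right-hand side is the Taylor expansion in $W$ implementing the shift $z\mapsto z+w$, $\zeta^i\mapsto\zeta^i+\omega^i$. This follows by exponentiating translation invariance \eqref{eq:NWtrs}: one checks $[W\nabla,Y(a,Z)]=\bigl(w\pd_z+\sum_i\omega^i\pd_{\zeta^i}\bigr)Y(a,Z)$, the point being that passing the odd variable through the odd operator gives $[\omega^i S^i,Y(a,Z)]=\omega^i[S^i,Y(a,Z)]=\omega^i\pd_{\zeta^i}Y(a,Z)$ with no stray sign; iterating the adjoint action then reproduces the translation operator. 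Since $T$ and the $S^i$ supercommute one also finds $[Z\nabla,W\nabla]=0$, so that $e^{-W\nabla}e^{Z\nabla}=e^{(Z-W)\nabla}$.

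Next I would apply the locality relation $(z-w)^n[Y(a,Z),Y(b,W)]=0$ to $\vac$, for $n$ sufficiently large. Using $Y(b,W)\vac=e^{W\nabla}b$ and $Y(a,Z)\vac=e^{Z\nabla}a$, the sign $(-1)^{p(a)p(b)}$ produced by the super-commutator (the superfield $Y(a,Z)$ has parity $p(a)$), and the conjugation formula to slide the exponentials to the left, the two sides become
\[
 (z-w)^n\,e^{W\nabla}Y(a,Z-W)b = (-1)^{p(a)p(b)}(z-w)^n\,e^{Z\nabla}Y(b,W-Z)a.
\]
Multiplying on the left by $e^{-W\nabla}$ and combining exponentials via $e^{-W\nabla}e^{Z\nabla}=e^{(Z-W)\nabla}$, both sides become functions of $U\ceq Z-W$ alone:
\[
 (z-w)^n\,Y(a,U)b = (-1)^{p(a)p(b)}(z-w)^n\,e^{U\nabla}Y(b,-U)a.
\]
For $n$ large the finitely many negative powers of $z-w$ are cleared, so the expansions $i_{z,w}$ and $i_{w,z}$ agree, both sides lie in $V\dbr{U}$, and cancelling $(z-w)^n$ and renaming $U$ as $Z$ yields the claim.

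The hard part will be the sign bookkeeping rather than any conceptual difficulty. I must confirm that the total parity of the superfield $Y(a,Z)$ is exactly $p(a)$, so that the super-commutator in locality contributes precisely $(-1)^{p(a)p(b)}$, and that moving each odd variable $\omega^i$ past the odd operator $S^i$ produces the clean translation operator $w\pd_z+\sum_i\omega^i\pd_{\zeta^i}$ without extraneous signs. A secondary technical point is the correct interpretation of $Y(a,Z-W)$ and $Y(b,W-Z)$ as the respective $i_{z,w}$- and $i_{w,z}$-expansions, together with the justification that multiplying by $(z-w)^n$ removes the expansion ambiguity before the final cancellation.
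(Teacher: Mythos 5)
Your argument is correct and coincides with the approach the paper itself takes: the paper records this skew-symmetry as a Fact quoted from \cite[Proposition 3.3.12]{HK}, having noted just before \cref{fct:NW-1} that such identities follow ``by the argument of the even (or non-super) case in \cite[3.1.1--3.1.7]{FBZ}'', which is exactly the locality--vacuum--conjugation proof you outline (locality applied to $\vac$, the identity $Y(a,Z)\vac=e^{Z\nabla}a$, and $e^{W\nabla}Y(a,Z)e^{-W\nabla}=Y(a,Z+W)$), with the super-sign bookkeeping handled correctly. The one hypothesis you use silently, namely $[T,S^i]=[S^i,S^j]=0$ (needed for $e^{-W\nabla}e^{Z\nabla}=e^{(Z-W)\nabla}$), does hold in the $N_W=N$ setting: by the translation axiom these supercommutators supercommute with every superfield and annihilate the vacuum, hence vanish on all of $V$.
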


Now, recall Borcherds' commutator formula in the even case 
\cite[(2.1)]{Li}, \cite[\S 3.3.10, p.56]{FBZ}:
\begin{align}\label{eq:cmt-ev}
 [a_{(l)},b_{(m)}] = \sum_{j \in \bbN} \binom{l}{j} (a_{(j)}b)_{(l+m-j)}.
\end{align}
We have the following $N_W=N$ SUSY analogue.

\begin{fct}[{\cite[Proposition 3.3.18, (3.3.4.13)]{HK}}]\label{fct:NWcmt}
For $a,b \in V$, $l \in \bbZ$ and $L \subset [N]$, we have 
\[
 [a_{(l|L)},Y(b,W)] = \sum_{(j|J), \, j \ge 0} 
 (-1)^{\abs{J}\abs{L}+\abs{J}N+\abs{L}N} \pd_W^{(j|J)}W^{l|L}Y(a_{(j|J)},W)
\]
with $\pd_W^{(j|J)}$ given in \eqref{eq:NWD}.
In terms of Fourier modes, we have
\begin{align}\label{eq:NWcmt}
 [a_{(l|L)},b_{(m|M)}] = \sum_{\substack{(j|J), \\ j \ge 0, \, J \supset L \cap M}} 
 (-1)^\alpha \wt{\sigma} \cdot \binom{l}{j}
 (a_{(j|J)}b)_{(l+m-j|M \cup (L \sm J))},
\end{align}
where we denoted
\begin{align*}
 \alpha &\ceq (p(a)+N-\abs{L})(N-\abs{M}) + (\abs{L}-\abs{J})(n-\abs{J}), \\
 \wt{\sigma} &\ceq \sigma(J,N \sm J) \sigma(L,N \sm L) \sigma(J, L \sm J) 
                   \sigma(L \sm J,(N \sm M) \sm (L \sm J)).
\end{align*}
\end{fct}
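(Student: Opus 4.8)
Since the result is attributed to \cite{HK}, the plan is to reconstruct it by transporting the even-case derivation of \eqref{eq:cmt-ev} from \cite[3.1.1--3.1.7]{FBZ} to the $N_W=N$ setting, using the OPE formula \eqref{eq:NWope} and the formal $\delta$-function \eqref{eq:NWdelta} in place of their classical counterparts, and tracking the Koszul and reordering signs produced by the odd variables. First I would recover a single Fourier mode in the left slot: by the expansion \eqref{eq:(n|J)} and the residue convention \eqref{eq:res}, the term $Z^{-1-l|N\sm L}a_{(l|L)}$ is singled out by $\res_Z Z^{l|L}(\cdot)$, and since $Z^{l|L}\cdot Z^{-1-l|N\sm L} = \sigma(L,N\sm L)\,Z^{-1|N}$ one gets $a_{(l|L)} = \sigma(L,N\sm L)\,\res_Z Z^{l|L}Y(a,Z)$. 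Already one of the four factors of $\wt{\sigma}$ appears here, which is a useful sanity check. Applying $\sigma(L,N\sm L)\,\res_Z Z^{l|L}(\cdot)$ to \eqref{eq:NWope} and moving $Z^{l|L}$ past the odd field $Y(b,W)$ with the corresponding sign reduces everything to the single integral $\res_Z Z^{l|L}\,\pd_Z^{(j|J)}\delta(Z,W)$.

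That residue is the super analogue of integration by parts for $\delta(z,w)$, and computing it is the technical heart of the first step. I would first convert $\pd_Z^{(j|J)}$ into $\pd_W^{(j|J)}$ acting on $\delta(Z,W)$ (a sign, since $\delta$ depends on $Z-W$ by \eqref{eq:NWpddel}), then commute $\res_Z$ past $\pd_W^{(j|J)}$ and evaluate $\res_Z Z^{l|L}\delta(Z,W)$, which returns $W^{l|L}$ up to a reordering sign. Each transfer of an odd derivative $\pd_{\omega^i}$ across odd monomials contributes $\pm1$, and their product is exactly $(-1)^{\abs{J}\abs{L}+\abs{J}N+\abs{L}N}$; this, together with the field $Y(a_{(j|J)}b,W)$ coming from the OPE, yields the first display.

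To pass to the Fourier-mode identity \eqref{eq:NWcmt}, I would expand $Y(a_{(j|J)}b,W)$ by \eqref{eq:(n|J)} and apply the monomial operator $\pd_W^{(j|J)}W^{l|L}$ term by term. The even part $\pd_w^{(j)}w^l$ produces the binomial coefficient $\binom{l}{j}$ as in \eqref{eq:cmt-ev}, and the even index shifts $l$ to $l+m-j$. The odd derivatives $\pd_{\omega^i}$ with $i\in J$ acting on $\omega^L$ and on the $\omega$-content of the expansion fix the surviving index set to $M\cup(L\sm J)$ and force the support condition $J\supset L\cap M$: an $\omega^i$ with $i\in L\cap M$ can only survive if it is removed by a derivative, i.e. $i\in J$. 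Reordering the monomials $\zeta^J$, $\zeta^{L\sm J}$ and $\omega^M$ into canonical order via \eqref{eq:sigma} produces the remaining three factors $\sigma(J,N\sm J)$, $\sigma(J,L\sm J)$ and $\sigma(L\sm J,(N\sm M)\sm(L\sm J))$ of $\wt{\sigma}$, and collecting these with the parity of $a$ and the sign from the first display gives the exponent $\alpha$.

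I expect the sign bookkeeping to be the only real obstacle: the combinatorial shape of \eqref{eq:NWcmt} is dictated by the even formula \eqref{eq:cmt-ev}, but both $\wt{\sigma}$ and $\alpha$ must be matched factor by factor, and every reordering of odd variables---in the mode extraction, in the super integration by parts, and in the final normalization---contributes a sign that is easy to misplace. I would keep $p(a)$, the cardinalities $\abs{J},\abs{L},\abs{M}$, and the complements $N\sm J$, $N\sm M$ explicit throughout and cross-check against \eqref{eq:sigma}; the $N$-dependent terms in $\alpha$ serve as a good consistency check, since they record the shift built into the dual-index convention $N\sm J$ of the superfield expansion \eqref{eq:(n|J)}.
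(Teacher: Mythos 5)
First, a point of reference: the paper itself contains no proof of this statement --- it is a \emph{Fact}, quoted verbatim from \cite[Proposition 3.3.18, (3.3.4.13)]{HK} --- so there is no in-paper argument to compare yours against. The closest thing is the paper's proof of its new iterate formula (\cref{lem:NWiter}), which uses exactly the technique you propose: multiply the OPE \eqref{eq:NWope} by suitable monomials and take super-residues. Your ingredients are sound up to the first display: the mode-extraction formula $a_{(l|L)}=\sigma(L,N\sm L)\res_Z Z^{l|L}Y(a,Z)$ is correct, $\res_Z Z^{l|L}\delta(Z,W)=\pm W^{l|L}$, and $\res_Z$ commutes with $W$-derivatives, so $\res_Z Z^{l|L}\pd_Z^{(j|J)}\delta(Z,W)=\pm\,\pd_W^{(j|J)}\bigl(W^{l|L}\bigr)$, a \emph{monomial}.

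The gap is in your passage to the Fourier-mode identity \eqref{eq:NWcmt}, precisely at the step you only wave at. The factor $\pd_W^{(j|J)}W^{l|L}$ produced by the residue computation is the differentiated monomial $\pm\binom{l}{j}W^{l-j|L\sm J}$; it is \emph{not} a differential operator that goes on to act "on the $\omega$-content of the expansion" of $Y(a_{(j|J)}b,W)$, as your third step asserts. If you let the $\pd_{\omega^i}$, $i\in J$, hit the field's expansion by Leibniz, the surviving index set is no longer $M\cup(L\sm J)$ and the claimed formula does not come out. Read correctly, the monomial $\pd_W^{(j|J)}W^{l|L}$ vanishes unless $J\subset L$, so a careful execution of your own method forces the summation range $L\cap M\subset J\subset L$: the lower constraint arises exactly as in your heuristic, from $W^{m|M}\,W^{l-j|L\sm J}\neq0$, but the upper constraint $J\subset L$ is equally forced and is missing both from your sketch and from the formula as printed. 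This is not pedantry: formula \eqref{eq:NWcmt} as stated is actually false without it. At $N=1$, writing $0,1$ for $\emptyset,\{1\}$, take $L=\emptyset$, $M=\{1\}$; the printed sum then contains the terms with $J=\{1\}$, whose coefficient is $(-1)^{\alpha}\wt{\sigma}\binom{l}{j}=\binom{l}{j}\neq0$, whereas reducing the $N_W=1$ SUSY VA to its underlying vertex superalgebra (where $a_{(j|0)}=(S a)_{(j|1)}$ by \eqref{eq:NW-TSa}) and applying the ordinary Borcherds formula gives
\[
 [a_{(l|0)},b_{(m|1)}]=\sum_{j\ge0}\binom{l}{j}(a_{(j|0)}b)_{(l+m-j|1)},
\]
with no $J=\{1\}$ terms at all. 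So the statement needs the extra constraint $J\subset L$ (and the stray "$n$" in $\alpha$ should be $N$); one can check that with this restriction the printed signs $\alpha,\wt{\sigma}$ do come out right. Since you never actually compute $\alpha$ and $\wt{\sigma}$ but only "expect" the bookkeeping to match, your sketch cannot detect this: carried out honestly, your method proves the corrected identity, not the one you set out to prove, and as written it is not yet a proof of either.
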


Note that the $J=1$ part of the last formula coincides 
with the even case formula \eqref{eq:cmt-ev}.

Finally, let us discuss an $N_W=N$ SUSY analogue of Borcherds' iterate formula.
The even case is 
\begin{align}\label{eq:ev-iter}
 (a_{(k)} b)_{(l)}  = 
 \sum_{j \ge 0} (-1)^j \binom{k}{j}(a_{(k-j)}b_{(l+j)}-(-1)^k b_{(k+l-j)}a_{(j)})
\end{align}
for $a,b$ of an even vertex algebra and $k,l \in \bbZ$
(see \cite[(2.2)]{Li} and \cite[\S 3.3.10, p.56]{FBZ}).
Here and hereafter, we denote $a_{(k)}b_{(l)}u \ceq a_{(k)}(b_{(l)}u)$ for $u$ 
of a vertex algebra module, and in a similar way for the super case.

\begin{lem}\label{lem:NWiter}
Let $V$ be an $N_W=N$ SUSY VA and $M$ be a $V$-module.
For $a,b \in V$ of pure parity, $u \in M$, $k,l\in\bbZ$ and $K,L \subset [N]$, we have
\begin{align*}
 (a_{(k|K)}b)_{(l|L)}u = \sum_{\substack{(j|J), \\  j \ge 0, \, J \subset K}} 
 (-1)^{j+\alpha} \wt{\sigma} \cdot \binom{k}{j} \Bigl(
   (-1)^{(p(a)+N-\tabs{J}) \tabs{N \sm J'}} a_{(k-j|J)}b_{(l+j|J')}u \\
  -(-1)^{k+p(a)p(b)+(p(b)+N-\tabs{J})\tabs{N \sm J'}} 
    b_{(k+l-j|J')} a_{(j|J)}u\Bigr)
\end{align*}
with $J' \ceq L \cup (K \sm J)$ and 
\begin{align*}
 \alpha \ceq \abs{K \sm J}(1+\abs{N \sm J}), \quad
 \wt{\sigma} \ceq \sigma(J,K \sm J) \sigma(J,N \sm J) \sigma(L,K \sm J) \sigma(J',N \sm J').
\end{align*}
\end{lem}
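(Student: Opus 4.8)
The plan is to derive the $N_W=N$ iterate formula by the same route as the even case \eqref{eq:ev-iter}: express the $(l|L)$-operator of the iterate $a_{(k|K)}b$ as a residue against the state-superfield correspondence, and then split that residue using the two expansions $i_{z,w}$ and $i_{w,z}$ from \eqref{eq:izw}. Concretely, I would start from the OPE/associativity package already available for $N_W=N$ SUSY VAs and write $Y(a_{(k|K)}b,W)$ as a residue in $Z$ of a product of $Y(a,Z)$ and $Y(b,W)$ against an appropriate power $(Z-W)^{k|K}$. The key identity is that the superfield attached to the iterate is obtained by the normally-ordered-type contour manipulation, exactly as in \cite[\S 3.3]{FBZ}, but now carrying the odd variables $\zeta^i,\omega^i$ along. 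The formal $\delta$-function machinery \eqref{eq:NWdelta}, the derivative formula \eqref{eq:NWpddel}, and the decomposition \eqref{eq:NWexp} will let me reduce every step to residues of monomials $Z^{\bullet|\bullet}$.

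After setting up the residue expression, the main body of the argument is to extract the $(l|L)$-operator by applying $\res_Z (Z-W)^{k|K}$ and then comparing coefficients of $W^{-1-l|N\sm L}$. The difference of the two embeddings $i_{z,w}-i_{w,z}$ produces precisely the two terms in the stated formula: the first, $a_{(k-j|J)}b_{(l+j|J')}u$, coming from the region $\abs{z}>\abs{w}$, and the second, $b_{(k+l-j|J')}a_{(j|J)}u$, from $\abs{z}<\abs{w}$, with the overall $(-1)^k$-type flip reflecting the reordering of the two superfields. The binomial $\binom{k}{j}$ and the sign $(-1)^j$ arise from expanding $(z-w)^{k}$ via $i_{z,w}$ and $i_{w,z}$ respectively, just as in \eqref{eq:ev-iter}; the even part of the computation is therefore essentially identical to Borcherds' iterate formula.

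The genuinely new work — and the \textbf{main obstacle} — is bookkeeping the odd variables, i.e.\ producing the sign $(-1)^{j+\alpha}$ and the product of Koszul signs $\wt\sigma = \sigma(J,K\sm J)\,\sigma(J,N\sm J)\,\sigma(L,K\sm J)\,\sigma(J',N\sm J')$, together with the two parity-dependent signs $(-1)^{(p(a)+N-\tabs J)\tabs{N\sm J'}}$ and $(-1)^{k+p(a)p(b)+(p(b)+N-\tabs J)\tabs{N\sm J'}}$. Each factor of $\sigma$ tracks one reordering of the $\zeta/\omega$-monomials: $\sigma(J,K\sm J)$ from splitting $\zeta^K = \sigma(J,K\sm J)\,\zeta^J\zeta^{K\sm J}$ when the $(j|J)$-term is peeled off; $\sigma(J,N\sm J)$ from the conventions in the definition \eqref{eq:(n|J)} of the Fourier modes (note the $N\sm J$ slot); $\sigma(L,K\sm J)$ from assembling the composite index $J'=L\cup(K\sm J)$; and $\sigma(J',N\sm J')$ from re-expressing the output mode in the $N\sm J'$ convention. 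The parity-dependent factors come from supercommuting $Y(a,Z)$ past $Y(b,W)$ (hence the $p(a)p(b)$) and from moving odd $\zeta$'s past the odd operator coefficients. My strategy for controlling these is to carry out the residue computation \emph{once}, tracking every transposition explicitly on monomials $Z^{\bullet|\bullet}W^{\bullet|\bullet}$, and to cross-check consistency against two boundary cases: first, restricting to $J=\emptyset$ (equivalently the purely even sector) must recover \eqref{eq:ev-iter}, and second, the formula must be compatible under $T$ and $S^i$ with \eqref{eq:NW-TSa} and \cref{fct:NW:Sder}. Since the commutator formula \cref{fct:NWcmt} already records exactly this family of signs in a closely related computation, I would model the sign conventions on \eqref{eq:NWcmt} and verify that the iterate and commutator formulas are mutually consistent via the standard Borcherds identity relating them, which provides a strong internal check that no sign has been dropped.
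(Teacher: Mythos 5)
Your proposal follows essentially the same route as the paper's proof: the paper also starts from the OPE \eqref{eq:NWope}, multiplies by $f(Z,W)=W^{l|L}(Z-W)^{k|K}$, and takes residues, so that the two orderings $a(Z)b(W)$ and $(-1)^{p(a)p(b)}b(W)a(Z)$ on the left yield the two terms of the formula while $\res_W\res_{Z-W}$ collapses the right-hand side to $(a_{(k|K)}b)_{(l|L)}$, with the signs $\wt\sigma$ arising from exactly the monomial reorderings you identify (splitting $(\zeta-\omega)^K$, the $N\sm J$ mode convention, assembling $J'$, and re-expressing the output mode). Your consistency checks against \eqref{eq:ev-iter} and the commutator formula are additional safeguards not in the paper, but the core argument is the same.
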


\begin{proof}
Let $a, b \in V$ be elements of pure parity, and recall the OPE \eqref{eq:NWope}:
\[
 [Y(a,Z),Y(b,W)] = \sum_{(j|J), \, j \ge 0} 
 \bigl(\pd_Z^{j|J}\delta(Z,W)\bigr) Y(a_{(j|J)}b,W).
\]
Following the argument in the even case \cite[\S 3.3.10]{FBZ}, 
we multiply both sides of this equality by some 
$f(Z,W) \in k\bigl[Z^{1|J},W^{1|J},(Z-W)^{-1|J} \mid J \subset [N]\bigr]$.
Then, using \eqref{eq:NWpddel}, we have
\begin{align}\label{eq:NWiter}
\begin{split}
& \res_W \res_Z f(Z,W) a(Z) b(W) - (-1)^{p(a)p(b)} \res_W \res_Z f(Z,W) b(W) a(Z) \\
&=\res_W \res_{Z-W} \sum_{(j|J), \, j \ge 0} f(Z,W) (Z-W)^{-1-j|N \sm J}Y(a_{(j|J)}b,W).
\end{split}
\end{align}
To show \cref{lem:NWiter}, we set $f(Z,W) \ceq W^{l|L} (Z-W)^{k|K}$.
Let us calculate the first term in the left hand side. We have
\begin{align*}
  \res_Z (Z-W)^{k|K} a(Z) 
&=\res_Z (z-w)^k (\zeta-\omega)^K \sum_{(j|J)} z^{-1-j}\zeta^{N \sm J} a_{(j|J)} \\
&=\res_Z \Bigl[(z-w)^k 
  \cdot \sum_{M \subset K} (-1)^{\abs{K \sm M}} \sigma(M,K \sm M) \zeta^M \omega^{K \sm M} 
  \cdot \sum_{(j|J)} z^{-1-j}\zeta^{N \sm J} a_{(j|J)}\Bigr] \\
&=\res_Z \Bigl[\sum_{M \subset K} \sum_{(j|J)} (-1)^{\beta} \tau \cdot 
  (z-w)^k z^{-1-j} \zeta^{M \cup(N \sm J)} \omega^{K \sm M} a_{(j|J)}\Bigr] 
\end{align*}
with $\beta \ceq \abs{K \sm M}+\abs{K \sm M} \abs{N \sm J}$ and 
$\tau \ceq \sigma(M,K \sm M) \sigma(M,N \sm J)$.
By definition of $\res_Z$, the terms with $J = M$ survive, and 
\begin{align*}
&\res_Z (Z-W)^{k|K} a(Z) = \sum_{\substack{(j|J), \\ j \ge 0, \, J \subset K}} 
 (-1)^{\beta+k-j} \tau \cdot \binom{k}{j} w^{k-j} \omega^{K \sm J}a_{(j|J)}. 
\end{align*}
Then we have
\begin{align}
\nonumber
& \res_W \res_Z W^{l|L} (Z-W)^{k|K} a(Z) b(W) \\
\nonumber
&=\res_W \sum_{\substack{(j|J), \\ j \ge 0, \, J \subset K}}
 (-1)^{\beta+k-j} \tau \cdot \binom{k}{j} W^{l|L} w^{k-j} \omega^{K \sm J}a_{(j|J)}
 \sum_{(p|P)} W^{-1-p|N \sm P} b_{(p|P)} \\
\label{eq:NWiter:R1}
&=\sum_{\substack{(j|J), \\ j \ge 0, \, J \subset K}} 
  (-1)^{k-j+\beta} \upsilon \cdot \binom{k}{j} 
  (-1)^{(p(a)+N-\tabs{J}) \tabs{N \sm J'}} a_{(j|J)} b_{(l+k-j|J')}
\end{align}
with $J' \ceq L \cup (K \sm J)$ and 
\[
 \beta = \abs{K \sm J}+\abs{K \sm J} \abs{N \sm J} = \alpha, \quad 
 \upsilon \ceq \tau \cdot \sigma(L,K \sm J) \sigma(J',N \sm J') = \wt{\sigma}.
\]
Replacing $(j,k-j) \mapsto (k-j,j)$, we have $k-j+\alpha \mapsto j+\alpha$,
and \eqref{eq:NWiter:R1} is equal to the first term in the right hand side of the statement.
By a similar calculation, we see the second terms in \eqref{eq:NWiter} and the statement coincide.
For the right hand side of \eqref{eq:NWiter}, we have by \eqref{eq:NWpddel} that 
$\res_{Z-W} \pd_W^{(j|J)}\delta(Z,W) = \delta_{j,0} \delta_{J,\emptyset}$, which yields
\[
 \res_W \res_{Z-W} \sum_{(j|J)} W^{l|L}(Z-W)^{k|K} (Z-W)^{-1-j|1-J}Y(a_{(j|J)}b,W)
=(a_{(k|K)}b)_{(l|L)}.
\]
Hence we have the consequence.
\end{proof}

\subsection{$N_K=N$ SUSY vertex algebras}\label{ss:NK}

In this subsection, we recall the notion of $N_K=N$ SUSY vertex algebras 
introduced in \cite[\S 4]{HK}.
Throughout this subsection, we fix a positive integer $N$.

\subsubsection{Definition and basic properties}

\begin{ntn}[{\cite[4.1]{HK}}]
For a $1|N$-supervariable $Z=(z,\zeta^1,\dotsc,\zeta^N)$ and $i \in [N]$, 
we define odd derivations $D_Z^i$ and $\ol{D}_Z^i$ by 
\[
      D_Z^i \ceq \pd_{\zeta^i}+\zeta^i\pd_{z}, \quad 
 \ol{D}_Z^i \ceq \pd_{\zeta^i}-\zeta^i\pd_{z}.
\]
\end{ntn}

We have $(D_Z^i)^2=\pd_{z}$ and $(\ol{D}_Z^i)^2=-\pd_{z}$. 

\begin{dfn}[{\cite[Definition 4.13]{HK}}]\label{dfn:NK}
An \emph{$N_K=N$ SUSY vertex algebra} is a data $(V,\vac,S_K^i,Y)$ consisting of 
\begin{itemize}[nosep]
\item a linear superspace $V=V_{\oz} \oplus V_{\oo}$, called the \emph{state superspace},
\item an even element $\vac \in V_{\oz}$, called the \emph{vacuum}, 
\item $N$ odd operators $S_K^i \in \End(V)_{\oo}$ ($i \in [N]$),
\item an even linear map $Y(\cdot,Z)\colon V \to \SF(V,Z)$, 
      called the \emph{state-superfield correspondence},
\end{itemize}
which satisfies the following axioms.
\begin{enumerate}[nosep]
\item 
Vacuum axiom: $Y(a,Z)\vac=a+O(Z)$ for all $a \in V$, and $S\vac=0$.

\item 
Translation invariance: For all $a \in V$, we have 
\begin{align}\label{eq:NK-trs}
 [S_K^i,Y(a,Z)] = \ol{D}^i_Z Y(a,Z).
\end{align}
\item 
Locality axiom: For all $a,b \in V$, there is an $n \in \bbN$ such that
$(z-w)^n [Y(a,Z),Y(b,W)]=0$.
\end{enumerate}
We abbreviate $(V,\vac,S_K^i,Y)$ to $V$ if no confusion may arise.
Also, we often abbreviate the word ``vertex algebra'' to VA.
\end{dfn}


As in the $N_W=N$ case, we define the \emph{Fourier mode} or the \emph{$(j|J)$-operator}
$a_{(j|J)} \in \End(V)$ for $a \in V$, $j \in \bbZ$ and $J \subset [N]$ 
by the expansion \eqref{eq:(n|J)}.
Then the vacuum axiom is equivalent to the same formula as in \eqref{eq:NWvac2},
and by \cite[(4.6.1)]{HK}\footnote{There is a typo in the second half of \cite[(4.6.1)]{HK}.}, 
the translation invariance \eqref{eq:NK-trs} is equivalent to 
\begin{align}\label{eq:NKrecS}
 [S_K^i,a_{(j|J)}] = 
 \begin{cases} \sigma(N \sm J,e_i) a_{(j|J \sm e_i)} & (i \in J) \\
 -\sigma(N \sm (J \cup e_i),e_i) j a_{(j-1|J \cup e_i)} & (i \notin J) \end{cases}.
\end{align}
We have $[S_K^i,[S_K^i,a_{(j|J)}]] = -j a_{(j-1|J)}$ for every $i \in [N]$, and 
the super Jacobi identity implies that 
\[
 T \ceq \thf [S_K^i,S_K^i] = (S_K^i)^2 \in \iEnd(V)_{\oz}
\]
gives a well-defined even operator which satisfies 
\begin{align}\label{eq:NKrecT}
 [T,a_{(j|J)}] = -j a_{(j-1|J)} \quad (j \in \bbZ, \, J \subset [N]), 
 \quad \text{ i.e., } \ [T,Y(a,Z)] = \pd_z Y(a,Z).
\end{align}
The operators $S_K^i$'s and $T$ satisfy the commutation relation
\[
 [S_K^i,S_K^j] = 2\delta_{i,j}T, \quad [T,S_K^i]=0,
\]
which is the Lie superalgebra in \cref{lem:STL}.

Now let us explain the OPE formula for $N_K=N$ SUSY VAs, following \cite[\S 4.1, \S 4.2]{HK}. 
It involves a new delta function which is different from the $N_W=N$ case \eqref{eq:NWdelta}.
Let $Z=(z,\zeta^1,\dotsc,\zeta^N)$ and $W=(w,\omega^1,\dotsc,\omega^N)$ 
be two commuting $1|N$-supervariables, 
and let $i_{z,w}\colon k\dpr{Z,W} \inj k\dpr{Z}\dpr{W}$ and 
$i_{w,z}\colon k\dpr{Z,W} \inj k\dpr{W}\dpr{Z}$ be the embeddings in \eqref{eq:izw}.
For $j\in\bbZ$ and $J \subset [N]$, we set
\begin{align}\label{eq:NKZ-W}
 (Z-W)_K^{j|J} \ceq 
 \bigl(z-w-\tsum_{i=1}^N \zeta^i \omega^i \bigr)^j \bigl(\zeta-\omega\bigr)^J.
\end{align}
Then the formal $\delta$-function for the $N_K=N$ case is defined to be
\[
 \delta_K(Z,W) \ceq (i_{z,w}-i_{w,z})(Z-W)_K^{-1|N} 
 = (i_{z,w}-i_{w,z})\frac{\zeta-\omega}{z-w-\tsum_{i=1}^N \zeta^i \omega^i}
 = (i_{z,w}-i_{w,z})\frac{\zeta-\omega}{z-w},
\]
where in the last equality we used 
$(Z-W)_K^{-1|0}=\frac{1}{z-w-\sum_i \zeta^i \omega^i}=\frac{1+\sum_i \zeta^i \omega^i}{z-w}$.

The binomial \eqref{eq:NKZ-W} behaves with respect to the odd derivations 
$D_W^i \ceq \pd_{\zeta^i} + \zeta^i \pd_w$ in the same way as in the even case.
For $n \in \bbN$ and $J = \{j_1<\dotsb<j_r\} \subset [N]$, let
\begin{align}\label{eq:NKD}
 D_W^{n|J} \ceq \pd_w^n D_W^{j_1} \dotsm D_W^{j_r}, \quad 
 D_W^{(n|J)} \ceq (-1)^{\binom{r+1}{2}} \tfrac{1}{n!}D_W^{n|J}.
\end{align}
Then, by \cite[Lemma 4.1]{HK}, we have 
\begin{align}\label{eq:NKDdel}
 D_W^{(n|J)}\delta_K(Z,W) = (i_{z,w}-i_{w,z})(Z-W)_K^{-1-j|N \sm J}.
\end{align}
Also, by \cite[Lemma 4.4]{HK}, 
any local distribution $a(Z,W)$ is decomposed as 
\begin{align}\label{eq:NKexp}
 a(Z,W) = \sum_{(j|J), \, j \ge 0} \bigl(D_W^{(j|J)}\delta_K(Z,W)\bigr)c_{j|J}(W), \quad
 c_{j|J}(W) \ceq \res_Z (Z-W)_K^{j|J} a(Z,W),
\end{align}
where $\res_Z$ is defined in \eqref{eq:res}.

Now, by \cite[Theorem 4.16 (4)]{HK}, the OPE formula for an $N_K=N$ SUSY VA is 
\begin{align}\label{eq:NKope}
 [Y(a,Z),Y(b,W)] = 
 \sum_{(j|J), \, j \ge 0} \bigl(D_Z^{j|J}\delta_K(Z,W)\bigr) Y(a_{(j|J)}b,W).
\end{align}

We also have the notion of a \emph{module} $M$ over an $N_K=N$ SUSY VA $V$.
We will use a similar notation for the $V$-action on $M$ 
as in the $N_K=N$ case \eqref{eq:YM}.

\subsubsection{Conformal $N_K=N$ SUSY vertex algebras}

Here we give some examples of $N_K=N$ SUSY vertex algebras.
We use the following $\Lambda$-bracket to encode OPE \eqref{eq:NKope}, 
which is different from the one for $N_W=N$ case (\cref{ntn:NWLam}).

\begin{ntn}[{\cite[(3.3.4.6)]{HK}}]\label{ntn:NKLam}
Let $V$ be an $N_K=N$ SUSY VA. 
We define the \emph{$\Lambda$-bracket} $[a_\Lambda b]$ for $a, b \in V$ as follows.
\begin{itemize}
\item 
Let $\scL_K$ be the (non-commutative) superalgebra generated by even $\lambda$ and 
odd $\chi^i$ ($i \in [N]$) subject to the commutation relation 
\[
 [\lambda,\chi^i]=0, \quad  [\chi^i,\chi^j]=-2\lambda \delta_{i,j}.
\]

\item
For $m \in \bbN$ and $M =\{m_1<\dotsb<m_r\} \subset [N]$, we denote 
$\Lambda^{m|M} \ceq \lambda^m \chi^M = \lambda^m \chi^{m_1} \dotsm \chi^{m_r}$.

\item
Finally, for $a,b \in V$, we define $[a \Lambda b] \in \scL_K \otimes_k V$ by 
\[
 [a_\Lambda b] \ceq \sum_{(m|M), \, m \ge 0} 
  \sigma(M,N \sm M) (-1)^{\binom{\abs{M}+1}{2}} \frac{1}{m!} \Lambda^{m|M} a_{(m|M)}b.
\]
\end{itemize}
\end{ntn}

Let us start with a standard example of $N_K=1$ SUSY VA, 
which comes from an $N=1$ superconformal vertex superalgebra. 
In the $N_K=1$ case, a supervariable is denoted by $Z=(z,\zeta)$,
and a superfield is expressed as 
\begin{align*}
 Y(a,Z) &= \sum_{j \in \bbN, \, J=0,1} Z^{-1-j|1-J} a_{(j|J)}
         = \sum_{j \in \bbN, \, J=0,1} z^{-1-j}\zeta^{1-J} a_{(j|J)} \\
        &= \sum_{j \in \bbN} z^{-1-j} a_{(j|1)}+\sum_{j \in \bbN} z^{-1-j}\zeta a_{(j|0)}.
\end{align*}
Also, we denote the odd operator by $S_K$, and its square by $T$.

\begin{eg}\label{eg:NS}
Let $V$ be an $N=1$ superconformal vertex superalgebra in the sense of \cite[p.180]{K}.
Thus, it is a vertex superalgebra $(V,\vac,T,Y^{\tcl})$ over $\bbC$ together with 
an odd element $\tau \in V$ whose Fourier modes in 
$Y^{\tcl}(\tau,z) = \sum_{r \in \bbZ+\hf} z^{-r-\frac{3}{2}} G_r$ 
satisfy the following conditions.
\begin{enumerate}[nosep]
\item $G_{-\hf}\tau= 2\nu$ with $\nu$ a Virasoro element,
i.e., an even element $\nu \in V$ whose Fourier modes in 
$Y^{\tcl}(\nu,z)=\sum_{n \in \bbZ}z^{-n-2}L_n$ satisfy 
$L_{-1}=T$, $L_0\tau=\frac{3}{2}\tau$ and $L_0$ being diagonalizable.

\item
$G_{\frac{3}{2}}\tau=\frac{2}{3}c \vac$ for some $c \in \bbC$.

\item
$G_r \tau =0$ for $k>\frac{3}{2}$. 
\end{enumerate}
We call $\tau$ the \emph{Neveu-Schwarz element} of $V$ 
(which is called the $N=1$ superconformal element in \cite[Definition 5.9]{K}).  

The above Fourier modes form the \emph{Neveu-Schwarz algebra} of central charge $c$.
It is a Lie superalgebra with central extension 
which is generated by odd $G_r$ ($r \in \bbZ+\hf$) and even $L_n$ ($n \in \bbZ$)
subject to the following commutation relations.
\begin{align}\label{eq:NS}
\begin{split}
&[G_r,G_s] = 2L_{r+s}+\tfrac{1}{3}(r^2-\tfrac{1}{4})\delta_{r+s,0}c, \qquad 
 [G_r,L_n] = (r-\tfrac{n}{2})G_{n+r}, \\ 
&[L_m,L_n] = (m-n) L_{m+n} + \delta_{m+n,0}\tfrac{m^3-m}{12}c. 
\end{split}
\end{align}

Such $V$ has a structure $(\vac,S_K,Y)$ of $N_K=1$ SUSY VA.
The vacuum $\vac$ is the same one, the state-superfield correspondence $Y$ is given by 
\[
 Y(a,Z) \ceq Y^{\tcl}(a,z) + \zeta Y^{\tcl}(G_{-\hf}a,z)
\]
for $a \in V$, and the odd operator is given by $S_K \ceq G_{-\hf}=\tau_{(0|1)}$.
\end{eg}

\begin{eg}\label{eg:NS2}
As a special case in \cref{eg:NS}, we have the \emph{Neveu-Schwarz SUSY vertex algebra}
whose state superspace $V$ is the linear superspace of polynomials 
$\bbC[S_K^n \tau \mid n \in \bbN]$ and the vacuum is given by $\vac = 1$.
The state-superfield correspondence for the Neveu-Schwarz element $\tau$ is 
\[
 Y(\tau,Z) = \sum_{r \in \bbZ+\hf}z^{-r-\frac{3}{2}}G_r + 
            2\sum_{n \in \bbZ}z^{-n-2}\zeta L_n,
\]
and the Fourier modes 
\begin{align}\label{eq:tau_op}
 \tau_{(j|1)} = G_{j-\hf}, \quad \tau_{(j|0)} = 2 L_{j-1} \quad (j \in \bbZ)
\end{align}
satisfy the commutation relations \eqref{eq:NS}.
We call $\nu \ceq \frac{1}{2}S_K \tau \in V$ the Virasoro element of $V$.
Using the $\Lambda$-bracket (\cref{ntn:NKLam}), the OPE of $Y(\tau,Z)$ is summarized as 
\[
 [\tau_\Lambda \tau] = (2 T+\chi S_K+3\lambda)\tau+\frac{c}{3} \lambda^2 \chi \vac.
\]
For later reference, we give some formulas.
\begin{align}\label{eq:tau-nu}
 \tau = \tau_{(-1|1)}\vac = G_{-\frac{3}{2}}\vac, \quad 
  \nu = \thf (S_K \tau)_{(-1|1)}\vac = \thf \tau_{(-1|0)}\vac = L_{-2}\vac.
\end{align}
In the latter calculation, we used the recursion \eqref{eq:NK-TSa} given below.
\end{eg}

The above \cref{eg:NS} can be extended to higher $N_K$ cases.

\begin{dfn}[{First part of \cite[Definition 5.6]{HK}}]\label{dfn:cNK}
For $N \in \{1,2,3,4\}$, an $N_K=N$ SUSY VA $(V,\vac,S_K^i,Y)$ is called 
\emph{conformal} if it has an element $\tau \in V$ of parity $N \bmod 2$, 
satisfying the following conditions.
\begin{itemize}[nosep]
\item 
Using \cref{ntn:NKLam} of the $\Lambda$-bracket, the OPE of $Y(\tau,Z)$ is given by
\[
 [\tau_\Lambda \tau] = \bigl(2t+(4-N)\lambda+\tsum_{i=1}^N \chi^i S_K^i\bigr)\tau
 + \begin{cases}
    \frac{c}{3}\lambda^{3-N}\chi^N \vac & (N \le 3) \\ \lambda c \vac & (N=4)
   \end{cases}.
\] 
\item
Using \eqref{eq:sigma2}, we have 
\[
 \tau_{(0|0)}=2T, \quad \tau_{(0|e_i)}=\sigma(N \sm e_i,e_i) S_K^i.
\]

\item
The operator $\tau_{(1|0)}$ is semisimple and the eigenvalues $d$ are bounded below,
i.e., there is a finite subset $\{d_1,\dotsc,d_s\} \subset \bbC$ 
such that $d \in d_i+\bbR_{\ge 0}$ for some $i$.
\end{itemize}
We call $\tau$ the \emph{conformal element}.
\end{dfn}

\begin{rmk*}
The condition $N \le 4$ comes from the fact that there is no central extension 
of the Lie superalgebra corresponding to the $\Lambda$-bracket
$[\tau_\Lambda \tau] = (2t+(4-N)\lambda+\tsum_{i=1}^N \chi^i S_K^i)\tau$.
\end{rmk*}

For later use, let us also cite:

\begin{dfn}[{Second part of \cite[Definition 5.6]{HK}}]\label{dfn:scNK}
An $N_K=N$ SUSY VA $V$ is called \emph{strongly conformal}
if it is conformal and satisfies the following conditions.
\begin{itemize}[nosep]
\item $\tau_{(1|0)}$ has integer eigenvalues.
\item If $N=2$, then the operator $\sqrt{-1}\tau_{(0|N)}$ has integer eigenvalues.
\end{itemize}
\end{dfn}

As in the $N_W=N$ case (\cref{dfn:scNW}), the strongly conformal condition guarantees 
us to construct a vector bundle on an arbitrary super Riemann surface whose fiber is $V$. 
See \cite{H1} for the detail.

\subsubsection{Identities in $N_K=N$ SUSY vertex algebras}

We explain basic identities for $N_K=N$ case..
Throughout this part, $V=(V,\vac,S_K^i,Y)$ denotes an $N_K=N$ SUSY VA, and 
\[
 Z \nabla \ceq z T + \tsum_{i=1}^N \zeta^i S_K^i
\]
for the operators $T=(S_K^i)^2$ and $S_K^i$ of $V$.

The following is an $N_K=N$ analogue of \cref{fct:NW-1}.

\begin{fct}[{\cite[Proposition 4.15 (1), Theorem 4.16 (3), Corollary 4.18]{HK}}]
For $a \in V$, we have: 
\begin{enumerate}[nosep]
\item 
$Y(a,Z)\vac = e^{Z \nabla}a
 = e^{z T}(1+\zeta^1 S_K^1) \dotsm (1+\zeta^N S_K^N)a$.
\item
$Y(T a,Z)=\pd_z Y(a,Z)$ and $Y(S_K^i a,Z)=D_Z^i Y(a,Z)=(\pd_{\zeta^i}+\zeta^i \pd_z)Y(a,Z)$.
In terms of the Fourier modes, we have
\begin{align}\label{eq:NK-TSa}
 (T a)_{(j|J)} = -j a_{(j-1|J)}, \quad 
 (S_K^i a)_{(j|J)} = 
 \begin{cases} \sigma(N \sm J,e_i)a_{(j|J \setminus e_i)} & (i \in J) \\
 -j \sigma(N \sm (J \cup e_i),e_i)a_{(j-1|J \cup e_i)} & (i \notin J) \end{cases}.
\end{align}
\end{enumerate}
\end{fct}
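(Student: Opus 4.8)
Since the statement is quoted from \cite{HK}, let me describe how I would reconstruct it, following the even-case development in \cite[3.1.1--3.1.7]{FBZ} and the $N_W=N$ analogue \cref{fct:NW-1}. The plan is to prove the exponential formula (1) first and then bootstrap the field identities (2) from it.

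For (1), I would start with the purely algebraic factorization $e^{Z\nabla}a = e^{zT}(1+\zeta^1 S_K^1)\dotsm(1+\zeta^N S_K^N)a$. Writing $Z\nabla = zT+\sum_i\zeta^i S_K^i$, the summands pairwise commute: $[T,S_K^i]=0$ gives $[zT,\zeta^j S_K^j]=0$, and for $i\ne j$ one has $[\zeta^i S_K^i,\zeta^j S_K^j]=-\zeta^i\zeta^j[S_K^i,S_K^j]=0$ because $[S_K^i,S_K^j]=2\delta_{i,j}T$. Hence $e^{Z\nabla}=e^{zT}\prod_i e^{\zeta^i S_K^i}$, and each factor truncates, $e^{\zeta^i S_K^i}=1+\zeta^i S_K^i$, since $(\zeta^i S_K^i)^2=-(\zeta^i)^2(S_K^i)^2=0$. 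To then prove $Y(a,Z)\vac=e^{Z\nabla}a$, I set $f(Z)\ceq Y(a,Z)\vac$; by the vacuum axiom $f\in V\dbr{Z}$ with $f(0)=a$, and applying $S_K^i$ while using $S_K^i\vac=0$ and the translation invariance $[S_K^i,Y(a,Z)]=\ol{D}_Z^i Y(a,Z)$ yields $S_K^i f=\ol{D}_Z^i f$, whence $Tf=\pd_z f$ on squaring. A short computation shows $g(Z)\ceq e^{Z\nabla}a$ solves the same first-order system with $g(0)=a$; comparing coefficients of $Z^{n|J}$ inductively (the recursions in $n$ and in $J$ determine every coefficient from $f(0)$) forces $f=g$.

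For (2), I would deduce the field identities from (1) via the super analogue of the uniqueness (reconstruction) theorem, which is the step I expect to be the main obstacle and which would need to be set up in the $N_K=N$ setting (or cited from \cite{HK}). By (1) and the centrality of $T$, $Y(Ta,Z)\vac=e^{Z\nabla}(Ta)=\pd_z e^{Z\nabla}a=\pd_z Y(a,Z)\vac$; and using $(1+\zeta^i S_K^i)S_K^i a=S_K^i a+\zeta^i Ta$ one checks $Y(S_K^i a,Z)\vac=e^{Z\nabla}(S_K^i a)=D_Z^i e^{Z\nabla}a=D_Z^i Y(a,Z)\vac$ (note that $D_Z^i$, not $\ol{D}_Z^i$, emerges here). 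Since $\pd_z Y(a,Z)$ and $D_Z^i Y(a,Z)$ remain superfields local with respect to every $Y(b,W)$ and agree on $\vac$ with $Y(Ta,Z)$ and $Y(S_K^i a,Z)$ respectively, uniqueness upgrades these to the field identities $Y(Ta,Z)=\pd_z Y(a,Z)$ and $Y(S_K^i a,Z)=D_Z^i Y(a,Z)$.

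Finally I would read off the Fourier modes. Expanding $Y(a,Z)=\sum_{(j|J)}Z^{-1-j|N\sm J}a_{(j|J)}$, the coefficient of $Z^{-1-j|N\sm J}$ in $\pd_z Y(a,Z)$ gives $(Ta)_{(j|J)}=-j\,a_{(j-1|J)}$, while applying $D_Z^i=\pd_{\zeta^i}+\zeta^i\pd_z$ and sorting monomials into the case $i\in J$ (only the $\pd_{\zeta^i}$-part survives, producing $a_{(j|J\sm e_i)}$) and $i\notin J$ (only the $\zeta^i\pd_z$-part survives, producing $-j\,a_{(j-1|J\cup e_i)}$) yields the two branches. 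Beyond the uniqueness theorem, the second difficulty will be controlling the Koszul signs $\sigma$ generated by reordering the odd variables; in particular one must verify that the formula for $(S_K^i a)_{(j|J)}$ extracted from $D_Z^i$ coincides with the commutator recursion $[S_K^i,a_{(j|J)}]$ of \eqref{eq:NKrecS} coming from $\ol{D}_Z^i$, the apparent sign discrepancy between $D_Z^i$ and $\ol{D}_Z^i$ being exactly absorbed by the parity signs incurred when expanding the supercommutator.
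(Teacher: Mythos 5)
Your reconstruction is correct and takes essentially the same route as the paper's source: the paper states this Fact without proof, citing [HK, Proposition 4.15, Theorem 4.16, Corollary 4.18], whose arguments are exactly the FBZ-style chain you give — the exponential formula $Y(a,Z)\vac = e^{Z\nabla}a$ from the vacuum axiom plus translation invariance and a coefficient-recursion uniqueness argument, then a Goddard-type uniqueness theorem to upgrade the identities on $\vac$ to field identities, then direct mode extraction. You also correctly isolate the one genuine $N_K$ subtlety, namely that $D_Z^i$ (not $\ol{D}_Z^i$) appears in $Y(S_K^i a,Z)=D_Z^i Y(a,Z)$ while $[S_K^i,Y(a,Z)]=\ol{D}_Z^i Y(a,Z)$, with the discrepancy in the mode formulas absorbed by Koszul signs — precisely the point the paper flags in the remark following this Fact.
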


\begin{rmk}
Some comments are in order.
\begin{enumerate}[nosep]
\item 
As noted in \cite[Remark 4.17]{HK} and \cite[Remark 3.3]{H2}, we have 
\[
 Y(T a,Z)=\pd_z Y(a,Z)=[T,Y(a,Z)], \quad 
 Y(S_K^i a,Z) = D_Z^i Y(a,Z) \neq \ol{D}_Z^i Y(a,Z) =[S_K^i,Y(a,Z)].
\]
The inequality $Y(S_K^i a,Z) \neq [S_K^i,Y(a,Z)]$ should be 
contrasted with the $N_W=N$ case \eqref{eq:lem-NW}.

\item
By the equivalent form \eqref{eq:NWvac2} of the vacuum axiom, 
we can recover the odd operators $S_K^i$ from $Y$ in the way
\begin{align}\label{eq:NK:S}
 S_K^i a = (S_K^i a)_{(-1|N)} \! \vac = a_{(-1|N \sm e_i)} \! \vac.
\end{align}
Thus the operators $S_K^i$ can be omitted from \cref{dfn:NK}.
Similarly, $T$ is recovered as 
\[
 T a = (T a)_{(-1|N)} \! \vac = a_{(-2|N)} \! \vac,
\]
which is a reminiscent of the even case formula $T a = a_{(-2)} \! \vac$.
\end{enumerate}
\end{rmk}

Next we recall that the odd operator $S_K^i$ can be regarded as an derivation.

\begin{fct}[{\cite[Corollary 4.18]{HK}}]\label{fct:Sder} 
For $a,b \in V$ of pure parity and $i \in [N]$, we have 
\[
 S_K^i(a_{(j|J)}b) = 
 (-1)^{N \sm J}\bigl((S_K^i a)_{(j|J)}b+(-1)^{p(a)} a_{(j|J)}(S_K^i b)\bigr)
\]
with $(-1)^{N \sm J} \ceq (-1)^{N - \abs{J}}$.
\end{fct}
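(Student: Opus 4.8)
The statement is the $N_K=N$ counterpart of \cref{fct:NW:Sder}, and the plan is to prove it by the same mechanism: extract the coefficient of $Z^{-1-j|N\sm J}$ from the translation-invariance identity \eqref{eq:NK-trs}, i.e.\ $[S_K^i,Y(a,Z)]=\ol{D}_Z^i Y(a,Z)$, after applying both sides to $b$. Since $Y(\cdot,Z)$ is even, the monomial $Z^{-1-j|N\sm J}$ carries parity $N-\abs{J}$, so the Fourier mode $a_{(j|J)}$ is an operator of parity $p(a)+N-\abs{J}$. Working purely with operators on $V$, the definition of the supercommutator then gives
\[
 S_K^i\circ a_{(j|J)} = [S_K^i,a_{(j|J)}] + (-1)^{p(a)+N-\abs{J}}\,a_{(j|J)}\circ S_K^i,
\]
and applying this to $b$ already isolates the second term $(-1)^{N-\abs{J}+p(a)}a_{(j|J)}(S_K^i b)$ of the claimed formula.

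It then remains to identify $[S_K^i,a_{(j|J)}]$ with $(-1)^{N-\abs{J}}(S_K^i a)_{(j|J)}$. First I would substitute the explicit commutator \eqref{eq:NKrecS} and compare it with the Fourier mode \eqref{eq:NK-TSa} of $S_K^i a$: both are supported on the same basis vectors, namely $a_{(j|J\sm e_i)}$ when $i\in J$ and $a_{(j-1|J\cup e_i)}$ when $i\notin J$. For $i\in J$ the two expressions differ only by transposing $e_i$ past $N\sm J$ inside the sign $\sigma$, i.e.\ by the factor $(-1)^{N-\abs{J}}$; extracting this common factor produces the prefactor $(-1)^{N \sm J}$ and, together with the second term, yields
\[
 S_K^i(a_{(j|J)}b) = (-1)^{N-\abs{J}}\bigl((S_K^i a)_{(j|J)}b + (-1)^{p(a)}a_{(j|J)}(S_K^i b)\bigr).
\]

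The delicate point—and the reason this is not a verbatim transcription of the $N_W=N$ argument—is the sign bookkeeping in the $i\notin J$ case. In the $N_W=N$ setting the single operator $\pd_{\zeta^i}$ governs both $[S^i,Y(a,Z)]$ and $Y(S^i a,Z)$, so the commutator and the Fourier mode of $S^i a$ are literally proportional. In the $N_K=N$ setting the translation law uses $\ol{D}_Z^i=\pd_{\zeta^i}-\zeta^i\pd_z$, whereas $Y(S_K^i a,Z)=D_Z^i Y(a,Z)$ uses $D_Z^i=\pd_{\zeta^i}+\zeta^i\pd_z$, and $\ol{D}_Z^i\neq D_Z^i$—this is exactly the inequality $Y(S_K^i a,Z)\neq[S_K^i,Y(a,Z)]$ flagged in the Remark following \eqref{eq:NK-TSa}. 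Since $\ol{D}_Z^i-D_Z^i=-2\zeta^i\pd_z$ affects precisely the $i\notin J$ terms (those in which the $z$-degree shifts from $j$ to $j-1$), the main obstacle will be to check that the Koszul sign incurred by moving the odd operator $S_K^i$ past the odd monomial $\zeta^{N\sm J}$ conspires with this $\ol{D}/D$ discrepancy to reproduce one and the same overall factor $(-1)^{N-\abs{J}}$. I would therefore treat $i\in J$ and $i\notin J$ as separate cases, carefully tracking the reordering signs $\sigma$ and the shift in $j$, before recombining them into the uniform formula.
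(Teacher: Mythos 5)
The paper itself gives no proof to compare against: \cref{fct:Sder} is imported verbatim from \cite[Corollary 4.18]{HK}. Judged on its own terms, your opening step is correct: since $Y$ is even, $a_{(j|J)}$ has parity $p(a)+N-\abs{J}$, so the tautological identity $S_K^i(a_{(j|J)}b)=[S_K^i,a_{(j|J)}]b+(-1)^{p(a)+N-\abs{J}}a_{(j|J)}(S_K^ib)$ reproduces the second term of the claim exactly, and the entire content of the Fact is the operator identity $[S_K^i,a_{(j|J)}]=(-1)^{N-\abs{J}}(S_K^ia)_{(j|J)}$, as you say.

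Your justification of that identity, however, has a genuine gap. First, the comparison you invoke is a misreading: \eqref{eq:NKrecS} and \eqref{eq:NK-TSa} are printed \emph{identically} in this paper --- both have $\sigma(N\sm J,e_i)\,a_{(j|J\sm e_i)}$ for $i\in J$ and $-j\,\sigma(N\sm(J\cup e_i),e_i)\,a_{(j-1|J\cup e_i)}$ for $i\notin J$ --- so there is no transposition of $e_i$ past $N\sm J$ to extract; literal substitution gives $[S_K^i,a_{(j|J)}]=(S_K^ia)_{(j|J)}$ and hence a formula off from \cref{fct:Sder} by $(-1)^{N\sm J}$ on the first term. Second, and more seriously, the check you defer (that the Koszul sign ``conspires'' with the $\ol{D}/D$ discrepancy to give one uniform factor) cannot succeed under any bookkeeping convention. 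Since $\ol{D}_Z^i-D_Z^i=-2\zeta^i\pd_z$, and since the $\pd_{\zeta^i}$ part of either derivative contributes only to the modes $(m|M)$ with $i\in M$ while the $\zeta^i\pd_z$ part contributes only to those with $i\notin M$, the coefficients of $\ol{D}_Z^iY(a,Z)$ and of $D_Z^iY(a,Z)$ agree on modes with $i\in M$ and are \emph{negatives} of each other on modes with $i\notin M$; this is pure formal calculus, independent of conventions. By contrast, the sign relating the operator bracket $[S_K^i,a_{(j|J)}]$ to the $(j|J)$-coefficient of the superfield bracket $[S_K^i,Y(a,Z)]$ arises from moving the odd operator $S_K^i$ past $\zeta^{N\sm J}$, so it depends only on $\abs{J}$ and not on whether $i\in J$. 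Combining these two facts, the ratio of $[S_K^i,a_{(j|J)}]$ to $(S_K^ia)_{(j|J)}$ necessarily differs by a relative factor $-1$ between the cases $i\in J$ and $i\notin J$, so no \emph{uniform} prefactor $(-1)^{N-\abs{J}}$ can emerge from your scheme: it can match \cref{fct:Sder} in at most one of the two cases, and $i\notin J$ is exactly where it breaks. (What this analysis really shows is that \eqref{eq:NKrecS} and \eqref{eq:NK-TSa} cannot both be correct as printed --- one of them carries a sign typo of precisely this kind --- so an honest proof of \cref{fct:Sder} must be carried out within the sign conventions of \cite{HK} itself, not by comparing these two displayed equations of the present paper.)
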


Let us also recall the skew-symmetry of the state-superfield correspondence.

\begin{fct}[{\cite[Proposition 3.3.12, \S 4.8]{HK}}]\label{fct:skew}
For $a,b \in V$ of pure parity, we have 
\[
 Y(a,Z)b = (-1)^{p(a) p(b)} e^{Z \nabla}Y(b,-Z)a.
\]
\end{fct}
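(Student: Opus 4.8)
The plan is to reproduce, in the $N_K=N$ setting, the classical derivation of skew-symmetry from \emph{creativity}, \emph{locality}, and \emph{translation covariance}, exactly as in the even case \cite[\S 3.3]{FBZ} and in the already-recorded $N_W=N$ version \cref{fct:NWskew}. The three inputs are the creation property $Y(c,X)\vac=e^{X\nabla}c$ from the Fact just above, the locality axiom of \cref{dfn:NK}, and a covariance formula identifying conjugation of $Y(a,Z)$ by $e^{W\nabla}$ with a shift of the supervariable.

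First I would integrate the translation-invariance relations $[T,Y(a,Z)]=\pd_z Y(a,Z)$ and $[S_K^i,Y(a,Z)]=\ol{D}_Z^i Y(a,Z)$ from \eqref{eq:NK-trs} to obtain a covariance statement of the form $Y(a,Z)e^{W\nabla}=e^{W\nabla}Y(a,Z\ominus W)$, where $Z\ominus W$ is the shift under which $(Z-W)_K$ of \eqref{eq:NKZ-W} is the difference, namely $z\mapsto z-w-\tsum_i\zeta^i\omega^i$ and $\zeta^i\mapsto\zeta^i-\omega^i$. The main obstacle lies precisely here. Because $[S_K^i,S_K^j]=2\delta_{i,j}T$, the operators $T,S_K^i$ span a Heisenberg-type Lie superalgebra, so $e^{W\nabla}$ cannot be split naively and must be handled through the Baker--Campbell--Hausdorff formula; one must also check that the inverse of $Z=(z,\zeta^i)$ under this group law is $-Z=(-z,-\zeta^i)$, which holds since $z+(-z)+\tsum_i\zeta^i(-\zeta^i)=0$ as odd squares vanish. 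A further delicacy, flagged in the Remark following \eqref{eq:NK-TSa}, is that $[S_K^i,Y(a,Z)]$ is governed by $\ol{D}_Z^i=\pd_{\zeta^i}-\zeta^i\pd_z$, whereas the creation formula encodes $S_K^i$ through $D_Z^i=\pd_{\zeta^i}+\zeta^i\pd_z$; reconciling these opposite sign conventions inside a single computation is exactly what forces the $N_K$ group law in place of the naive additive one, and is the crux of the argument.

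With covariance in hand, I would apply locality to the vacuum. Choosing $n$ with $(z-w)^n[Y(a,Z),Y(b,W)]\vac=0$ and inserting $Y(b,W)\vac=e^{W\nabla}b$, $Y(a,Z)\vac=e^{Z\nabla}a$ gives
\[
 (z-w)^n Y(a,Z)e^{W\nabla}b = (-1)^{p(a)p(b)}(z-w)^n Y(b,W)e^{Z\nabla}a .
\]
Using covariance to pull $e^{W\nabla}$ and $e^{Z\nabla}$ to the left converts each side into $(z-w)^n e^{W\nabla}Y(a,Z\ominus W)b$ and $(-1)^{p(a)p(b)}(z-w)^n e^{Z\nabla}Y(b,W\ominus Z)a$; after multiplying by $(z-w)^n$ both sides are honest power series in $W$, so one may substitute $W=0$. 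Then $e^{W\nabla}\to 1$, the shift $Z\ominus W\to Z$ on the left and $W\ominus Z\to -Z$ on the right, and cancelling the surviving $z^n$ yields exactly $Y(a,Z)b=(-1)^{p(a)p(b)}e^{Z\nabla}Y(b,-Z)a$. An alternative I would keep in reserve is to transport the proved $N_W=N$ skew-symmetry \cref{fct:NWskew} through the comparison of \cite[\S 4.8]{HK}; but since the $N_K$ translation uses $\ol{D}_Z^i$ rather than $\pd_{\zeta^i}$, this passage is not purely formal and ultimately reduces to the same covariance computation identified above as the principal difficulty.
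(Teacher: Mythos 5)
Your proposal is correct and follows essentially the same route as the source of this statement: the paper itself gives no proof, importing it as a Fact from \cite[Proposition 3.3.12, \S 4.8]{HK}, and the argument there (as in the even case \cite[\S 3.3]{FBZ} and in \cref{fct:NWskew}) is exactly your combination of locality applied to the vacuum, the creation formula $Y(b,W)\vac=e^{W\nabla}b$, and conjugation covariance for the $N_K$ super-translation group law, with the locality exponent $n$ enlarged as needed since $(z-w-\tsum_i\zeta^i\omega^i)^{-1}$ differs from $(z-w)^{-1}$ only by nilpotent corrections. The subtleties you flag --- the Heisenberg-type relations $[S_K^i,S_K^j]=2\delta_{i,j}T$ forcing the group law $(z,\zeta)\cdot(w,\omega)=(z+w+\tsum_i\zeta^i\omega^i,\,\zeta^i+\omega^i)$ matching $(Z-W)_K$ of \eqref{eq:NKZ-W}, the discrepancy between $D_Z^i$ and $\ol{D}_Z^i$, and $-Z$ being the group inverse --- are precisely the points where the $N_K$ case departs from the even one, and your handling of them is sound.
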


Next, we have the following $N_K=N$ SUSY analogue of Borcherds' commutator formula
$ [a_{(l)},b_{(m)}] = \sum_{j \in \bbN} \binom{l}{j} (a_{(j)}b)_{(l+m-j)}$ 
in \eqref{eq:cmt-ev}.

\begin{fct}[{\cite[Proposition 3.3.18, (4.3.3), \S 4.10]{HK}}]\label{fct:NKcmt}
For $a,b \in V$, $l \in \bbZ$ and $L \subset [N]$, we have 
\[
 [a_{(l|L)},Y(b,W)] = \sum_{(j|J), \, j \ge 0} 
 (-1)^{\abs{J}\abs{L}+\abs{J}N+\abs{L}N} D_W^{(j|J)}W^{l|L}Y(a_{(j|J)},W),
\]
where $D_W^{(j|J)}$ is given in \eqref{eq:NKD}.
In terms of Fourier modes, we have
\begin{align}\label{eq:NKcmt}
 [a_{(l|L)},b_{(m|M)}] = \sum_{\substack{(j|J), \, j \ge 0, \\ M \cap (J \Delta L)=\emptyset}} 
 (-1)^{\alpha+\beta} \wt{\sigma} \cdot \frac{\df{l}{j+\#(J \sm L)}}{j!} 
 (a_{(j|J)}b)_{(l+m-j-\#(J \sm L)|M \cup (J \Delta L))},
\end{align}
where we denoted $\df{l}{n} \ceq l(l-1)\dotsm(l-n+1)$ for $n \in \bbN$,  
$J \Delta L \ceq (J \sm L) \cup (L \sm J)$ and 
\begin{align*}
 \alpha &\ceq  (p(a)+N-\abs{L})(N-\abs{M}), \quad 
   \beta \ceq \abs{J}(N-\abs{L})+\abs{L} N+\binom{\abs{J \cap L}}{2}+\binom{\abs{J}+1}{2}, \\
 \wt{\sigma} &\ceq
 \sigma\bigl(J \Delta L, N \sm (M \cup (J \Delta L))\bigr)
 \sigma(J,N \sm J) \sigma(L,N \sm L)
 \sigma(J \sm  L,J \cap L) \sigma(J \cap L,L \sm  J)  \sigma(J \sm  L,L \sm  J).
\end{align*}
\end{fct}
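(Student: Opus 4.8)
The plan is to mirror the even-case derivation of Borcherds' commutator formula \eqref{eq:cmt-ev}, which the paper follows for the $N_W=N$ analogue \cref{fct:NWcmt}, by extracting a single Fourier mode from one factor of the $N_K=N$ OPE \eqref{eq:NKope}. First I would record how a mode is recovered from a superfield: from the expansion \eqref{eq:(n|J)} and the residue convention \eqref{eq:res}, the only term of $Z^{l|L}Y(a,Z)$ contributing to $\res_Z$ is the one with $J=L$ and $j=l$, and reordering $\zeta^L\zeta^{N\sm L}=\sigma(L,N\sm L)\zeta^{[N]}$ gives $\res_Z Z^{l|L}Y(a,Z)=\sigma(L,N\sm L)\,a_{(l|L)}$. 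I would then apply $\sigma(L,N\sm L)\res_Z Z^{l|L}(\cdot)$ to both sides of \eqref{eq:NKope}. On the left, commuting the residue and the odd monomial $\zeta^L$ past the $W$-factor $Y(b,W)$ produces a Koszul sign (in $\abs{L}$, $N$, $\abs{M}$ and $p(a)$) and yields $[a_{(l|L)},Y(b,W)]$; this is where the exponent $\alpha$ will originate.

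The core of the argument is then the evaluation, for each $(j|J)$, of $\res_Z Z^{l|L}D_Z^{(j|J)}\delta_K(Z,W)$. I would first pass from the $Z$-derivatives in \eqref{eq:NKope} to $W$-derivatives using the delta-function symmetry (the $N_K$ analogue of $\partial_z\delta=-\partial_w\delta$), absorbing the normalization and the sign $(-1)^{\binom{\abs{J}+1}{2}}$ of \eqref{eq:NKD}; since the resulting $D_W^{j|J}$ acts only on $W$ it may be pulled outside $\res_Z$. The reproducing property of $\delta_K$ recorded in \eqref{eq:NKDdel} and \eqref{eq:NKexp}, which samples $Z^{l|L}$ at $Z=W$, then collapses the residue to $D_W^{(j|J)}W^{l|L}$, up to the overall sign $(-1)^{\abs{J}\abs{L}+\abs{J}N+\abs{L}N}$. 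This gives the first, superfield, form of the statement.

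To reach the Fourier-mode form \eqref{eq:NKcmt} I would expand $D_W^{(j|J)}W^{l|L}$ explicitly. The essential point is that each factor $D_W^{i}=\partial_{\omega^i}+\omega^i\partial_w$ is twisted and odd: for $i\in J\cap L$ only the $\partial_{\omega^i}$ piece survives (removing $\omega^i$, no change in the $w$-power), while for $i\in J\sm L$ only the $\omega^i\partial_w$ piece survives (adjoining $\omega^i$ and lowering the $w$-power by one). Tracking the even derivative $\partial_w^j$ as well, the $w$-monomial becomes $w^{l-j-\#(J\sm L)}$ with coefficient $\df{l}{j+\#(J\sm L)}/j!$, and the surviving $\omega$-monomial is $\omega^{J\Delta L}$. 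Multiplying by $Y(a_{(j|J)}b,W)=\sum_{(m|M)}W^{-1-m|N\sm M}(a_{(j|J)}b)_{(m|M)}$ and comparing coefficients of $W^{-1-m|N\sm M}$ then forces $(J\Delta L)\cap M=\emptyset$ and the inner index $M\cup(J\Delta L)$, and matching the $w$-powers gives the mode $(a_{(j|J)}b)_{(l+m-j-\#(J\sm L)|M\cup(J\Delta L))}$, exactly as in \eqref{eq:NKcmt}.

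The main obstacle I anticipate is the sign bookkeeping. The factor $\wt\sigma$ is the accumulated product of reordering signs from three sources: extracting $a_{(l|L)}$, permuting $\omega^{J\Delta L}$ against $\omega^{N\sm(M\cup(J\Delta L))}$ when forming $\omega^{N\sm M}$, and the internal reorderings produced as the $\partial_{\omega^i}$ ($i\in J\cap L$) act in turn, which is the origin of the $\binom{\abs{J\cap L}}{2}$ summand in $\beta$. I would manage these by tracking all parities additively in $\Zt$ and applying the cocycle identities for $\sigma$ in \eqref{eq:sigma} at every transposition, exactly as must already be done for the $N_W$ formula \cref{fct:NWcmt}. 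As consistency checks I would specialize to $J\cap L=\emptyset$ and compare with \cref{fct:NWcmt}, and further to $N=0$ to recover the even formula \eqref{eq:cmt-ev}. An alternative route worth noting is to transfer \cref{fct:NWcmt} directly: since $\delta_K$ is the twist of the $N_W$ delta obtained by replacing $z-w$ with $z-w-\sum_i\zeta^i\omega^i$ and $\partial_W^i$ with $D_W^i$, repeating the $N_W$ computation under this substitution should yield \eqref{eq:NKcmt}, with the extra exponent $\beta$ and the falling factorial $\df{l}{j+\#(J\sm L)}$ replacing the binomial precisely recording the twist.
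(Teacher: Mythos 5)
First, a structural point: \cref{fct:NKcmt} is a \emph{Fact} in this paper's conventions --- it is quoted from \cite{HK} (Proposition 3.3.18, (4.3.3), \S 4.10) and no proof is given in the paper, so there is no in-paper argument to compare yours against. The natural benchmarks are the derivation in \cite{HK} and the paper's own proofs of the iterate formulas \cref{lem:NWiter,lem:NKiter}, which use exactly the technique you propose: pair the OPE \eqref{eq:NKope} against a test monomial and take residues. Your outline is that standard derivation with $f(Z,W)=Z^{l|L}$, and its structural content checks out: $\res_Z Z^{l|L}Y(a,Z)=\sigma(L,N\sm L)\,a_{(l|L)}$; the left-hand side assembles into $[a_{(l|L)},Y(b,W)]$; and your case analysis of $D_W^{(j|J)}W^{l|L}$ (only $\pd_{\omega^i}$ survives for $i\in J\cap L$, only $\omega^i\pd_w$ for $i\in J\sm L$) correctly forces the constraint $M\cap(J\Delta L)=\emptyset$ and produces the coefficient $\df{l}{j+\#(J\sm L)}/j!$, the mode shift $l+m-j-\#(J\sm L)$, and the odd index $M\cup(J\Delta L)$ of \eqref{eq:NKcmt}.

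Two caveats. First, everything in \eqref{eq:NKcmt} beyond this shape sits in $\beta$ and $\wt{\sigma}$, and your proposal only promises that bookkeeping rather than doing it; moreover your one concrete sign claim is misattributed. The Koszul sign $(-1)^{(N-\abs{L})p(b)}$ picked up when the $Z$-monomial crosses $Y(b,W)$ is exactly absorbed by the supercommutator convention, since $p(a_{(l|L)})=p(a)+N-\abs{L}$ --- which is why the superfield form of the Fact carries no $p(a)$-dependent sign at all; $\alpha$ instead arises only in the passage to Fourier modes, when $a_{(l|L)}$ is commuted past the monomial $\omega^{N\sm M}$ to extract $[a_{(l|L)},b_{(m|M)}]$ from $[a_{(l|L)},Y(b,W)]$. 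Other steps also carry signs that are easy to miss: already for $N=1$ one finds $\res_Z Z^{l|1}\delta_K(Z,W)=-W^{l|1}$ while $\res_Z Z^{l|0}\delta_K(Z,W)=+W^{l|0}$, and the conversion of $D_Z$-derivatives of $\delta_K$ into $D_W$-derivatives is not a uniform flip but depends on $j$ and $\abs{J}$ (the odd operators $D_Z^i$, $D_W^k$ anticommute among themselves). Until these are carried through, you have the formula only up to an undetermined sign for each $(j|J)$, which is strictly weaker than the stated Fact. Second, your proposed consistency check --- specialize to $J\cap L=\emptyset$ and compare with \cref{fct:NWcmt} --- cannot work: the $N_W$ and $N_K$ formulas belong to different delta functions and are not specializations of one another (the mode index is $l+m-j$ in \eqref{eq:NWcmt} but $l+m-j-\#(J\sm L)$ in \eqref{eq:NKcmt}, and the odd indices $M\cup(L\sm J)$ and $M\cup(J\Delta L)$ disagree). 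The meaningful check is the one the paper records right after the Fact: for $N_K=1$, the $J=1$ part of $[a_{(l|1)},b_{(m|1)}]$ reproduces the even formula \eqref{eq:cmt-ev}.
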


In the $N_K=1$ case, \eqref{eq:NKcmt} reduces to 
\begin{align*}
&[a_{(l|0)},b_{(m|0)}] = \sum_{j \in \bbN, \, J=0,1} (-1)^{\binom{J}{2}} 
 \frac{\df{l}{(j+J)}}{j!} (a_{(j|J)}b)_{(l+m-j-J|J)} \times (-1)^{p(a)+1}, \\
&[a_{(l|0)},b_{(m|1)}] = \sum_{j \in \bbN, \, J=0,1} (-1)^{\binom{J}{2}} 
 \frac{\df{l}{(j+J)}}{j!} (a_{(j|J)}b)_{(l+m-j-J|1)}, \\
&[a_{(l|1)},b_{(m|0)}] = \sum_{j \in \bbN, \, J=0,1} (-1)^{J+1} 
 \binom{l}{j} (a_{(j|J)}b)_{(l+m-j|1-J)} \times (-1)^{p(a)}, \\
&[a_{(l|1)},b_{(m|1)}] = \sum_{j \in \bbN, \, J=0,1} (-1)^{J+1} 
 \binom{l}{j} (a_{(j|J)}b)_{(l+m-j|1)}
\end{align*}
for $a,b \in V$ of pure parity and $m,n \in \bbZ$.
Note that the $J=1$ part of $[a_{(l|1)},b_{(m|1)}]$ coincides 
with the even case formula \eqref{eq:cmt-ev}.

Finally, let us discuss an $N_K=N$ SUSY analogue of Borcherds' iterate formula.
Recall the even case formula \eqref{eq:ev-iter}:
$(a_{(k)} b)_{(l)}  = 
 \sum_{j \ge 0} (-1)^j \binom{k}{j}(a_{(k-j)}b_{(l+j)}-(-1)^k b_{(k+l-j)}a_{(j)})$.

\begin{lem}\label{lem:NKiter}
Let $V$ be an $N_K=N$ SUSY VA, and $W$ be a $V$-module.
For $a,b \in V$ of pure parity, $w \in W$, $k,l\in\bbZ$ and $K,L \subset [N]$, we have
\begin{align*}
 (a_{(k|K)}b)_{(l|L)}m = \sum_{(j|J), \, j \ge 0} \sum_{M \subset J \cap K} 
 &(-1)^{j+\alpha} \wt{\sigma} \cdot \binom{k}{j,\abs{J \sm M}} \\
 \Bigl(
 &(-1)^{(p(a)+N-\tabs{J}) \tabs{N \sm L'}} a_{(k-j-\#(J \sm M)|J)}b_{(l+j|L')}w \\
-&(-1)^{k+p(a)p(b)+(p(b)+N-\tabs{J})\tabs{N \sm L'}} 
   b_{(l+k-j-\#(J \sm M)|L')} a_{(j|J)}w \Bigr),
\end{align*}
where $\binom{k}{i,j} \ceq \binom{k}{i}\binom{k-i}{j}$, 
$L' \ceq L \cup (J \sm M) \cup (K \sm M)$ and 
\begin{align*}
&\alpha \ceq \binom{\abs{J \sm M}+1}{2}+
 \abs{J \sm M}(\abs{M}+\abs{N \sm J})+\abs{K \sm M}(1+\abs{N \sm J})
 +\abs{N \sm J} \abs{N \sm L'}, \\
&\wt{\sigma} \ceq 
 \sigma(M, K \sm M) \sigma(J \sm M, M) \sigma (J, N \sm J) \sigma(J \sm M,K \sm M)
 \sigma(L,(J \sm M) \cup (K \sm M)) \sigma(L', N \sm L').
\end{align*}
\end{lem}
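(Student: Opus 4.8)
The plan is to mimic the $N_W=N$ argument of \cref{lem:NWiter} step for step, replacing each ingredient by its $N_K=N$ counterpart. First I would start from the $N_K$ OPE \eqref{eq:NKope}, multiply both sides by a rational test function $f(Z,W)$ assembled from the $N_K$ binomials $(Z-W)_K^{j|J}$, and apply $\res_W \res_Z$. Using the key identity \eqref{eq:NKDdel} for $D_W^{(j|J)}\delta_K(Z,W)$ together with the decomposition \eqref{eq:NKexp}, this produces the $N_K$ analogue of \eqref{eq:NWiter}:
\[
 \res_W \res_Z f\, a(Z)b(W) - (-1)^{p(a)p(b)} \res_W \res_Z f\, b(W)a(Z)
 = \res_W \res_{Z-W} \sum_{(j|J),\, j \ge 0} f\, (Z-W)_K^{-1-j|N \sm J}\, Y(a_{(j|J)}b,W).
\]
As in the even case I would then specialize $f(Z,W) \ceq W^{l|L}(Z-W)_K^{k|K}$. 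For the right-hand side, the same residue extraction as in \cref{lem:NWiter}, now carried out with \eqref{eq:NKDdel} and \eqref{eq:NKexp}, collapses the sum to the single surviving term $(a_{(k|K)}b)_{(l|L)}w$.

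The genuinely new work is on the left-hand side, specifically the computation of $\res_Z (Z-W)_K^{k|K}a(Z)$. The difference from the $N_W$ case lies in the twisted even factor appearing in \eqref{eq:NKZ-W}, namely
\[
 (Z-W)_K^{k|K} = \bigl(z-w-\tsum_{i=1}^N \zeta^i \omega^i\bigr)^k (\zeta-\omega)^K.
\]
Here the correction terms $\zeta^i \omega^i$ are even, mutually commuting, and square to zero, so $\bigl(z-w-\tsum_i \zeta^i\omega^i\bigr)^k$ may be expanded by the ordinary multinomial theorem: one chooses a subset $M$ of indices to carry the $\zeta^i\omega^i$ factors (contributing $\zeta^M\omega^M$) and distributes the remaining power of $(z-w)$. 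Taking $\res_Z$ and matching odd degrees forces $M \subset J \cap K$ and, after the binomial expansion of $(z-w)^{k-\abs{M}}$, produces exactly the multinomial coefficient $\binom{k}{j,\abs{J\sm M}}$ and the inner sum $\sum_{M \subset J \cap K}$ recorded in the statement. Substituting this into the left-hand side and performing $\res_W$ against $b(W) = \sum_{(p|P)} W^{-1-p|N \sm P} b_{(p|P)}$ yields the first family of terms $a_{(k-j-\#(J\sm M)|J)}b_{(l+j|L')}w$ with $L' = L \cup (J\sm M) \cup (K\sm M)$; the second, $b$-first, family arises from the parallel computation of the other residue and carries the extra factor $(-1)^{k+p(a)p(b)+\cdots}$.

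The main obstacle is purely combinatorial: reconciling the Koszul reordering signs generated when the odd factors $\zeta^M\omega^M$, $(\zeta-\omega)^K$ and the field modes are brought into standard order with the six-fold product $\wt{\sigma}$ and the exponent $\alpha$ in the statement. This bookkeeping is heavier than in \cref{lem:NWiter} precisely because the twisting term $\tsum_i\zeta^i\omega^i$ mixes even and odd degrees, so the multinomial expansion and the sign tracking are entangled rather than independent. I would organize it by first fixing the signs in the purely even multinomial expansion, then peeling off the $\sigma$-factors one reordering at a time, and finally cross-checking the resulting sign conventions against both \cref{lem:NWiter} and the even reduction \eqref{eq:ev-iter} as consistency tests.
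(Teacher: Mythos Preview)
Your proposal is correct and follows essentially the same route as the paper's proof: start from the $N_K$ OPE \eqref{eq:NKope}, derive the residue identity with the test function $f(Z,W)=W^{l|L}(Z-W)_K^{k|K}$, and expand the twisted factor $(z-w-\tsum_i\zeta^i\omega^i)^k$ to produce the extra inner sum over $M\subset J\cap K$ and the multinomial coefficient. Two small points: the residue on the right-hand side must be $\res_{(Z-W)_K}$ (with respect to the $N_K$ coordinates $(z-w-\tsum_j\zeta^j\omega^j,\zeta^i-\omega^i)$), not the naive $\res_{Z-W}$; and the subset $M$ in the statement arises from the expansion of $(\zeta-\omega)^K$ rather than from the multinomial expansion of the even factor (the latter subset is $J\sm M$), though this is only a labeling issue that sorts itself out once you match odd degrees under $\res_Z$.
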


\begin{proof}
The strategy is the same as in \cref{lem:NWiter}.
Recall the OPE for the $N_K=N$ case \eqref{eq:NKope}:
\[
 [Y(a,Z),Y(b,W)] = \sum_{(j|J), \, j \ge 0} \bigl(D_Z^{j|J}\delta(Z,W)\bigr) Y(a_{(j|J)}b,W).
\]
For each $f \in k[Z^{1|J},W^{1|J},(Z-W)_K^{-1|J} \mid J \subset [N]]$,
it yields the equality 
\begin{align}\label{eq:NKiter}
\begin{split}
& \res_W \res_Z f(Z,W) a(Z) b(W) - (-1)^{p(a)p(b)} \res_W \res_Z f(Z,W) b(W) a(Z) \\
&=\res_W \res_{(Z-W)_K} \sum_{(j|J), \, j \ge 0} f(Z,W) (Z-W)^{-1-j|N \sm J}Y(a_{(j|J)}b,W),
\end{split}
\end{align}
where $\res_{(Z-W)_K}$ means $\res_{(z-w-\sum_{j=1}^N \zeta^j \omega^j,\zeta^i-\omega^i)}$.
We apply this equality for
\[
 f(Z,W) \ceq W^{l|L} (Z-W)_K^{k|K} 
 = W^{l|L} (z-w-\tsum_{i=1}^N \zeta^j \omega^j)^k (\zeta-\omega)^K.
\]
The first term in the left hand side of \eqref{eq:NKiter} is 
\begin{align*}
  \res_Z (Z-W)_K^{k|K} a(Z) 
&=\res_Z (z-w-\tsum_{i=1}^N \zeta^i \omega^i)^k (\zeta-\omega)^K 
  \sum_{(j|J)} z^{-1-j}\zeta^{N \sm J} a_{(j|J)} \\
&=\res_Z \Bigl[\sum_{I \subset [N]} (-1)^{\binom{\abs{I}+1}{2}} \binom{k}{\abs{I}} 
  (z-w)^{k-\abs{I}} \zeta^I \omega^I \\
&\hspace{3em}
  \cdot \sum_{M \subset K} (-1)^{\abs{K \sm M}} \sigma(M,K \sm M) \zeta^M \omega^{K \sm M} 
  \cdot \sum_{(j|J)} z^{-1-j}\zeta^{N \sm J} a_{(j|J)}\Bigr] \\
&=\res_Z \Bigl[\sum_{I \subset [N]} \sum_{M \subset K} \sum_{(j|J)} 
  (-1)^{\beta} \tau \binom{k}{\abs{I}} 
  (z-w)^{k-\abs{I}} z^{-1-j} \zeta^{I \cup M \cup(N \sm J)} \omega^{I \cup (K \sm M)} 
  a_{(j|J)}\Bigr] 
\end{align*}
with $\beta \ceq \binom{\abs{I}+1}{2}+\abs{K \sm M}
+\abs{I}\abs{M}+(\abs{I}+\abs{K \sm M})\abs{N \sm J}$ and 
$\tau \ceq \sigma(M,K \sm M) \sigma(I,M) \sigma(I \cup M,N \sm J) \sigma(I,K \sm M)$.
By definition of $\res_Z$, the terms with $J = I \sqcup M$ survive, and 
\begin{align*}
&\res_Z (Z-W)_K^{k|K} a(Z) = \sum_{(j|J)} \sum_{M \subset J \cap K} 
 (-1)^{\gamma} \tau \binom{k}{\abs{J \sm M}} \binom{k - \abs{J \sm M}}{j} 
 w^{k-\abs{J \sm M}-j} \omega^{(J \sm M)\cup(K \sm M)}a_{(j|J)}, \\
&\gamma \ceq \beta+k-\abs{J \sm M}-j
        = k-j+\binom{\abs{J \sm M}}{2}+\abs{J \sm M}(\abs{M}+\abs{N \sm J})+
          \abs{K \sm M}(1+\abs{N \sm J}), \\
&\tau = \sigma(M,K \sm M)\sigma(J \sm M,M)\sigma(J,N \sm J)\sigma(J \sm M,K \sm M).
\end{align*}
Then we have
\begin{align}
\nonumber
& \res_W \res_Z W^{l|L} (Z-W)_K^{k|K} a(Z) b(W) \\
\nonumber
&=\res_W \sum_{(j|J)} \sum_{M \subset J \cap K} 
  (-1)^{\gamma} \tau \binom{k}{j,\abs{J \sm M}} W^{l|L} w^{k-j-\abs{J \sm M}} 
  \omega^{(J \sm M)\cup(K \sm M)} a_{(j|J)} \sum_{(p|P)} W^{-1-p|N \sm P} b_{(p|P)} \\
\label{eq:NKiter:R1}
&=\sum_{(j|J)} \sum_{M \subset J \cap K} (-1)^{\gamma+\abs{N \sm J} \tabs{N \sm L'}} 
  \upsilon \binom{k}{j,\abs{J \sm M}} (-1)^{(p(a)+N-\tabs{J}) \tabs{N \sm L'}}
  a_{(j|J)} b_{(l+k-j-\#(J \sm M)|L')}, \\
\nonumber
& L' \ceq L \cup (J \sm M) \cup (K \sm M), \quad 
  \upsilon \ceq \tau \cdot \sigma(L,(J \sm M) \cup (K \sm M)) \sigma(L',N \sm L') = \wt{\sigma}.
\end{align}
Replacing $k-j-\abs{J \sm M}$ by $j$ and vice-a-versa, 
we have $\gamma+\abs{N \sm J} \abs{N \sm L'} \mapsto j+\alpha$,
and \eqref{eq:NKiter:R1} is equal to the first term in the right hand side of the statement.
By a similar calculation, we see the second term in \eqref{eq:NKiter} and the statement coincide.
For the right hand side of \eqref{eq:NKiter}, we have by \eqref{eq:NKDdel} that 
$\res_{(Z-W)_K} D_W^{(j|J)}\delta(Z,W) = \delta_{j,0} \delta_{J,\emptyset}$, which yields
\[
 \res_W \res_{(Z-W)_K} \sum_{(j|J)} W^{l|L}(Z-W)_K^{k|K} (Z-W)_K^{-1-j|1-J}Y(a_{(j|J)}b,W)
=(a_{(k|K)}b)_{(l|L)}.
\]
Hence we have the consequence.
\end{proof}

\subsection{Commutative SUSY vertex algebras}\label{ss:com}

Hereafter, the word ``SUSY vertex algebra" means an $N_W=N$ or $N_K=N$ SUSY vertex algebra.

\subsubsection{SUSY analogue of Borcherds' equivalence}

Recall that, in the even case, a \emph{commutative} vertex algebra is equivalent to 
a unital commutative algebra with derivation \cite[\S 4]{B}, \cite[\S 1.4]{FBZ}.
The following \cref{lem:comW,lem:comK} are analogue of this fact for 
$N_W=N$ and $N_K=N$ SUSY vertex algebras, respectively.

\begin{dfn}
A SUSY VA $V$ is \emph{commutative} 
if we have $n=0$ for any $a,b \in V$ in the locality axiom, i.e.,
$[Y(a,Z),Y(b,W)]=0$.
\end{dfn}

Similarly as in the even case, we have the following restatement:

\begin{lem}\label{lem:com}
A SUSY VA $V$ is commutative if and only if 
$Y(a,Z) \in (\End V)\dbr{Z}$ for any $a \in V$.
\end{lem}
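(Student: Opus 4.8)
The plan is to reduce everything to a power count in the even variable, using the observation that $Y(a,Z)=\sum_{(j|J)}Z^{-1-j|N\sm J}a_{(j|J)}$ lies in $(\End V)\dbr{Z}$ \emph{precisely} when $a_{(j|J)}=0$ for all $j\ge 0$: the monomial $Z^{-1-j|N\sm J}=z^{-1-j}\zeta^{N\sm J}$ has nonnegative $z$-exponent iff $j\le-1$, and the exterior factor $\zeta^{N\sm J}$ is always polynomial, so membership in $\dbr{Z}$ is entirely a condition on the $z$-exponents. Both implications will use only the vacuum axiom \eqref{eq:NWvac2}, the creation formula $Y(a,Z)\vac=e^{Z\nabla}a\in V\dbr{Z}$ (\cref{fct:NW-1}(1) in the $N_W=N$ case and its cited $N_K=N$ counterpart), and the locality axiom; since all three hold verbatim in both settings, I would give one uniform argument.

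For the ``if'' direction, suppose $Y(a,Z)\in(\End V)\dbr{Z}$ for every $a$. Then for pure $a,b$ neither $Y(a,Z)$ nor $Y(b,W)$ has negative powers of $z$ or $w$, so for each $v\in V$ the two composites $Y(a,Z)Y(b,W)v$ and $Y(b,W)Y(a,Z)v$ land in the \emph{same} space $V\dbr{Z,W}$; hence their supercommutator is an element of $(\End V)\dbr{Z,W}$. The locality axiom gives $(z-w)^n[Y(a,Z),Y(b,W)]=0$ for some $n$, and $z-w$ is a non-zero-divisor on $(\End V)\dbr{Z,W}$: this reduces to the classical fact that $z-w$ is regular on $(\End V)\dbr{z,w}$, since the exterior variables split the module into $\zeta^I\omega^J$-components on which $z-w$ acts separately. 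Therefore the supercommutator vanishes and $V$ is commutative.

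For the ``only if'' direction, assume $V$ commutative, so $[Y(a,Z),Y(b,W)]=0$ read in $(\End V)\dbr{Z^{\pm1},W^{\pm1}}$. Applying this to $\vac$ yields the literal identity $Y(a,Z)Y(b,W)\vac=\pm\,Y(b,W)Y(a,Z)\vac$ in $V\dbr{Z^{\pm1},W^{\pm1}}$, the sign being the Koszul sign, which is irrelevant to the ensuing count. By \eqref{eq:NWvac2} only the modes $b_{(j|J)}\vac$ with $j\le-1$ survive, so the left side has only nonnegative powers of $w$; symmetrically the right side has only nonnegative powers of $z$. Comparing the coefficient of each monomial $Z^{i|I}W^{n|K}$ then forces both sides into $V\dbr{Z,W}$, for a monomial with $i<0$ can occur only on the left yet must match the right, where it is absent. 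Finally I would extract the $W^{0|\emptyset}$-coefficient (``set $W=0$''); since $Y(b,W)\vac=b+O(W)$ with $Y(b,W)\vac|_{W=0}=b_{(-1|N)}\vac=b$, this gives $Y(a,Z)b\in V\dbr{Z}$ for all $b$, i.e.\ $Y(a,Z)\in(\End V)\dbr{Z}$.

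The main obstacle is the bookkeeping in the ``only if'' step: one must ensure that $Y(a,Z)Y(b,W)\vac$ and $Y(b,W)Y(a,Z)\vac$ are genuinely elements of one common space $V\dbr{Z^{\pm1},W^{\pm1}}$, so that the vanishing of the supercommutator is a coefficient-wise identity rather than an equality between differently completed expansions. This is exactly what the locality axiom with $n=0$ guarantees, interpreted in $(\End V)\dbr{Z^{\pm1},W^{\pm1}}$ as in \eqref{eq:loc}; once that is granted, the support analysis of the even exponents and the passage $W\mapsto 0$ are routine and parallel to the even-case treatment in \cite[\S 1.4]{FBZ}.
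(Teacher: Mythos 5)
Your proof is correct and follows essentially the same route as the paper's: for ``only if'' you apply the vanishing supercommutator to $\vac$, use the vacuum axiom to control the $z$- and $w$-supports, and set $W=0$ to recover $Y(a,Z)b\in V\dbr{Z}$; for ``if'' you observe both composites lie in $(\End V)\dbr{Z,W}$ and cancel $(z-w)^n$ since $z-w$ is a non-zero-divisor there. The extra bookkeeping you supply (the mode criterion for membership in $\dbr{Z}$, the $\zeta^I\omega^J$-component splitting for regularity of $z-w$) is a harmless elaboration of what the paper leaves implicit.
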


\begin{proof}
The argument is quite similar to that for the even case \cite[\S 1.4]{FBZ},
but let us write down it for completeness.
Assume that $V$ is commutative. Then, for any $a,b \in V$ of pure parity, we have 
\begin{align}\label{eq:scv}
 Y(a,Z)b = \rst{\vphantom{1^{(}}Y(a,Z)Y(b,W)\vac}{W=0} 
         = \rst{(-1)^{p(a)p(b)}Y(b,W)Y(a,Z)\vac}{W=0}.
\end{align}
The right hand side has no negative powers of $z$ by the vacuum axiom, 
and hence $Y(a,Z)b \in V\dbr{Z}$. Thus we have $Y(a,Z) \in (\End V)\dbr{Z}$.

Conversely, assume $Y(a,Z) \in (\End V)\dbr{Z}$ for any $a \in V$.
Then, for any $a,b \in V$ of pure parity, both of the series 
$f_1(Z,W) \ceq Y(a,Z)Y(b,W)$ and $f_2(Z,W) \ceq Y(b,W)Y(a,Z)$ are in $(\End V)\dbr{Z,W}$.
Now the locality axiom says that there exists $n \in \bbN$ such that 
$(z-w)^n f_1(Z,W) = (z-w)^n (-1)^{p(a)p(b)} f_2(Z,W)$.
The last equality yields $f_1(Z,W)=(-1)^{p(a)p(b)} f_2(Z,W)$,
and thus $V$ is commutative.
\end{proof}

\begin{lem}\label{lem:comW}
Let $(V,\vac,T,S^i,Y)$ be a commutative $N_W=N$ SUSY VA.
Then $V$ is a commutative $k$-superalgebra with multiplication 
$a \cdot b \ceq a_{(-1|N)} b$, unit $\vac$, 
even derivation $T$, and odd derivations $S^i$ (see \cref{dfn:der}).
Conversely, given a commutative $k$-superalgebra $V$ with multiplication $\cdot$, 
unit $1$, even and odd derivations $T$ and $S^i$, 
we have a commutative $N_W=1$ SUSY VA $(V,1,T,S^i,Y)$ with 
\begin{align}\label{eq:NW-scom}
 Y(a,Z) \ceq e^{Z \nabla} a = \Biggr(\sum_{n \ge 0} \frac{1}{n!} z^n T^n\Biggr) 
 (1+\zeta^1 S^1) \dotsm (1+\zeta^N S^N) (a \cdot ?) \in (\End V)\dbr{Z},
\end{align}
where the symbol $a \cdot ?$ denotes the multiplication operator. 
\end{lem}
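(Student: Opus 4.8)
The plan is to prove the two directions separately, following the even-case equivalence of \cite[\S 1.4]{FBZ} but feeding in the $N_W=N$ tools assembled above. For the forward direction I would first note that commutativity together with \cref{lem:com} gives $Y(a,Z)\in(\End V)\dbr{Z}$, so that $a_{(j|J)}=0$ whenever $j\ge0$ (the monomial $Z^{-1-j|N\sm J}$ carries $z^{-1-j}$, which must be a nonnegative power). Supercommutativity of $a\cdot b\ceq a_{(-1|N)}b$ is then read off from skew-symmetry \cref{fct:NWskew}: extracting the $Z$-constant term ($z=\zeta^i=0$) of $Y(a,Z)b=(-1)^{p(a)p(b)}e^{Z\nabla}Y(b,-Z)a$ yields $a_{(-1|N)}b=(-1)^{p(a)p(b)}b_{(-1|N)}a$. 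The unit axiom is then immediate: $a\cdot\vac=a_{(-1|N)}\vac=a$ by the vacuum axiom \eqref{eq:NWvac2}, and $\vac\cdot a=(-1)^{p(\vac)p(a)}a_{(-1|N)}\vac=a$ by supercommutativity, since $\vac$ is even.

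The substantive point in the forward direction is associativity, which I would extract from the iterate formula \cref{lem:NWiter} specialized to $k=l=-1$ and $K=L=N$. Then $J'=L\cup(K\sm J)=N$ for every $J$, the binomial is $\binom{-1}{j}=(-1)^j$, and two simplifications occur: the second ($b\,a$) family of terms contains $a_{(j|J)}c$ with $j\ge0$ and so vanishes by commutativity, while in the first family $b_{(-1+j|N)}c$ survives only for $j=0$. With $j=0$ the sign factor is $\wt{\sigma}=\sigma(J,N\sm J)^2\,\sigma(N,N\sm J)\,\sigma(N,\emptyset)$, whose factor $\sigma(N,N\sm J)$ vanishes unless $N\sm J=\emptyset$; hence only $J=N$ contributes, with all signs and binomials (including the remaining $(-1)^{(p(a)+N-\tabs{J})\tabs{N\sm J'}}=1$, as $J'=N$) equal to $1$. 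The formula therefore collapses to $(a_{(-1|N)}b)_{(-1|N)}c=a_{(-1|N)}(b_{(-1|N)}c)$, which is associativity. Finally, that $T$ is an even and each $S^i$ an odd derivation for $\cdot$ is exactly \cref{fct:NW:Sder} at $(j|J)=(-1|N)$, where $(-1)^{N\sm N}=1$, reproducing the Leibniz rules of \cref{dfn:der}.

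For the converse I would set $Y(a,Z)\ceq L_{e^{Z\nabla}a}$, the operator of left multiplication by $e^{Z\nabla}a=e^{zT}(1+\zeta^1S^1)\dotsm(1+\zeta^NS^N)a$. Since this manifestly lies in $(\End V)\dbr{Z}$, \cref{lem:com} makes $V$ commutative and the locality axiom holds with $n=0$; the vacuum axiom reduces to $Y(a,Z)\vac=(e^{Z\nabla}a)\cdot\vac=e^{Z\nabla}a=a+O(Z)$ together with $T\vac=S^i\vac=0$, which hold because derivations annihilate the unit. For translation invariance I would use the elementary identity $[D,L_c]=L_{Dc}$, valid for any even or odd derivation $D$ by the super-Leibniz rule: it gives $[T,Y(a,Z)]=L_{T(e^{Z\nabla}a)}=\pd_z Y(a,Z)$ at once (the outer $T$ is simply what $\pd_z$ brings down from $e^{zT}$), and reduces the odd case to verifying the identity $S^i(e^{Z\nabla}a)=\pd_{\zeta^i}(e^{Z\nabla}a)$ inside $V\dbr{Z}$.

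I expect this last identity to be the main obstacle. A direct expansion (already for $N=1$, where $e^{Z\nabla}a=e^{zT}a+\zeta\,e^{zT}Sa$) shows that matching $S^i(e^{Z\nabla}a)$ with $\pd_{\zeta^i}(e^{Z\nabla}a)$ forces the relations $[T,S^i]=0$, $[S^i,S^j]=0$ and $(S^i)^2=0$, together with careful Koszul-sign bookkeeping for the action of the odd operator $S^i$ on the odd variables $\zeta^j$. These relations hold automatically for the translation operators of any $N_W=N$ SUSY VA: from \eqref{eq:NWrec} one checks $[S^i,[S^i,a_{(j|J)}]]=0$, whence $(S^i)^2$ supercommutes with all Fourier modes and kills $\vac$, so $(S^i)^2=0$ on all of $V$; and they are precisely what lets $e^{Z\nabla}$ factor as the displayed product and act as a super-translation. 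I would therefore either record them as standing assumptions on the data $(V,\cdot,T,S^i)$ or extract them from the verification itself, treating the sign analysis in this odd-translation identity as the one genuine computation of the proof.
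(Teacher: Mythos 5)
Your forward direction is correct, but it reaches associativity by a genuinely different route than the paper. The paper never touches the iterate formula here: using \eqref{eq:scv} (the identity $Y(a,Z)b=\rst{(-1)^{p(a)p(b)}Y(b,W)Y(a,Z)\vac}{W=0}$ from the proof of \cref{lem:com}, rather than the skew-symmetry \cref{fct:NWskew} you invoke, though both give the same mode identity) it obtains $a_{(-1|N)}b=(-1)^{p(a)p(b)}b_{(-1|N)}a$, notes that commutativity of the SUSY VA makes the operators $Y_a\ceq a_{(-1|N)}$ supercommute, and then cites the classical argument of \cite[\S 1.3.3]{FBZ}: commuting multiplication operators together with $Y_a\vac=a$ yield commutativity and associativity simultaneously. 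Your specialization of \cref{lem:NWiter} at $k=l=-1$, $K=L=[N]$ is a correct substitute: the factor $\sigma([N],[N]\sm J)$ indeed kills every $J\neq[N]$, the surviving signs and binomials are all $1$, and the formula collapses to $(a_{(-1|N)}b)_{(-1|N)}c=a_{(-1|N)}(b_{(-1|N)}c)$. This has the advantage of staying entirely inside the paper's own lemmas, at the cost of resting on the heaviest computation of \cref{ss:NW}; the paper's route is lighter but outsourced. Your treatment of the unit and of the Leibniz rules via \cref{fct:NW:Sder} agrees with the paper.

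For the converse, the paper offers no proof at all (``checked directly from the formula\dots, we omit the detail''), and your analysis is sharper than the statement itself. You are right that, reading \eqref{eq:NW-scom} as multiplication by the element $e^{Z\nabla}a$ (the reading that works; composing $e^{Z\nabla}$ with $a\cdot{?}$ would already break translation invariance in $T$), the $T$-part is automatic, while the mode form \eqref{eq:NWrec} of $[S^i,Y(a,Z)]=\pd_{\zeta^i}Y(a,Z)$ forces $[T,S^i]=0$, $[S^i,S^j]=0$ and $(S^i)^2=0$: already for $N=1$ the modes of $Y(a,Z)$ are multiplication by $\tfrac{1}{n!}T^n a$ and $\tfrac{1}{n!}T^n S a$, and \eqref{eq:NWrec} demands $S(T^n a)=T^n S a$ and $S(T^n S a)=0$ for all $a$ and $n$. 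These relations are not among the hypotheses of the converse as stated (contrast \cref{lem:comK}, where the analogous relations are imposed explicitly), so they must be added; this is a defect of the lemma's statement, not of your argument. They do hold in the paper's one application, \cref{prp:NWsj}, since the derivations \eqref{eq:NWcom:TS} on the superjet algebra satisfy them, and, as you check from \eqref{eq:NWrec}, they hold automatically for the translation operators of any $N_W=N$ SUSY VA, so the corrected statement remains an equivalence.
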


\begin{proof}
This one is also a simple analogue of the even case \cite[\S 1.4]{FBZ}.
First, assume that $V$ is a commutative $N_W=1$ SUSY VA,
and let $a,b \in V$ be of pure parity.
Then \eqref{eq:scv} yields $Y(a,Z)b=(-1)^{p(a)p(b)} b_{(-1|1)}a+O(Z,W)$, 
where $O(Z,W)$ is an element vanishing at $Z=W=0$.
Taking the coefficient of $Z^{0|0}$, we have $a_{(-1|1)}b=(-1)^{p(a)p(b)}b_{(-1|1)}a$.
Then, denoting $Y_a := a_{(-1|1)}$, we have $Y_a Y_b = (-1)^{p(a)p(b)}Y_b Y_a$.
By the argument in \cite[\S 1.3.3]{FBZ}, it shows the associativity and 
the commutativity of $a \cdot b \ceq Y_a b$.
The vacuum axiom yields that $\vac$ is the unit of this multiplication $\cdot$,
and \cref{fct:NW:Sder} means that $T$ and $S$ are an even and odd derivation, respectively.
The converse statement is checked directly from the formula \eqref{eq:NW-scom}, 
and we omit the detail.
\end{proof}

\begin{lem}\label{lem:comK}
Let $V$ be a $k$-linear superspace.
Then the structure $(V,\vac,S^i,Y)$ of commutative $N_K=N$ SUSY VA is equivalent to the 
structure $(V,\cdot,1,S^i)$ of unital commutative $k$-superalgebra with 
odd derivations $S^i$ ($i \in [N]$) which satisfy the commutation relation
\[
 [S^i,S^j] = 2\delta_{i,j}T, \quad [T,S^i]=0.
\]
The correspondence is given by $a_{(-1|N)} b = a \cdot b$, $\vac = 1$ and 
\begin{align}\label{eq:NK-scom}
 Y(a,Z) = e^{Z \nabla}a = e^{z T}(1+\zeta^1 S^1)\dotsm(1+\zeta^N S^N)a.
\end{align}
\end{lem}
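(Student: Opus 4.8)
The plan is to mirror the proof of the $N_W=N$ analogue \cref{lem:comW} for the forward direction and to treat the converse with extra care, since the $N_K$ translation invariance is governed by the twisted odd derivation $\ol{D}_Z^i=\pd_{\zeta^i}-\zeta^i\pd_z$ rather than by $\pd_{\zeta^i}$ alone. For the forward direction, let $(V,\vac,S^i,Y)$ be a commutative $N_K=N$ SUSY VA. By \cref{lem:com} we have $Y(a,Z)\in(\End V)\dbr{Z}$, so \eqref{eq:scv} gives $Y(a,Z)b=\rst{Y(a,Z)Y(b,W)\vac}{W=0}=(-1)^{p(a)p(b)}\rst{Y(b,W)Y(a,Z)\vac}{W=0}$ for $a,b$ of pure parity. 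Comparing the $Z$-independent terms yields $a_{(-1|N)}b=(-1)^{p(a)p(b)}b_{(-1|N)}a$, so with $Y_a\ceq a_{(-1|N)}$ we get $Y_aY_b=(-1)^{p(a)p(b)}Y_bY_a$, and the argument of \cite[\S1.3.3]{FBZ} shows that $a\cdot b\ceq a_{(-1|N)}b$ is a commutative associative product. The vacuum axiom \eqref{eq:NWvac2} makes $\vac=1$ its unit, \cref{fct:Sder} shows each $S^i$ is an odd derivation, and the commutation relations with $T=(S^i)^2$ are \cref{lem:STL}.

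For the converse, I read the formula \eqref{eq:NK-scom} as defining $Y(a,Z)$ to be left multiplication by the vector $e^{Z\nabla}a\in V\dbr{Z}$, that is $Y(a,Z)b\ceq(e^{Z\nabla}a)\cdot b$; this is the reading consistent with the even-case description of a commutative vertex algebra, and it places $Y(a,Z)\in(\End V)\dbr{Z}$, so \cref{lem:com} makes $V$ commutative and the locality axiom automatic. The vacuum axiom is immediate, since $Y(a,Z)\vac=(e^{Z\nabla}a)\cdot1=e^{Z\nabla}a=a+O(Z)$ and $S^i\vac=S^i1=0$. The only substantial point is the translation invariance $[S^i,Y(a,Z)]=\ol{D}_Z^iY(a,Z)$.

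Because $Y(a,Z)$ is multiplication by $e^{Z\nabla}a$ and both $S^i$ and $\ol{D}_Z^i$ act as odd derivations of the superalgebra $V\dbr{Z}$, one has $[S^i,Y(a,Z)]=L_{S^i(e^{Z\nabla}a)}$ and $\ol{D}_Z^iY(a,Z)=L_{\ol{D}_Z^i(e^{Z\nabla}a)}$, where $L_v$ denotes left multiplication by $v$. Thus translation invariance reduces to the single vector identity $S^i(e^{Z\nabla}a)=\ol{D}_Z^i(e^{Z\nabla}a)$, equivalently to the operator identity $S^i\,e^{Z\nabla}=\ol{D}_Z^i\,e^{Z\nabla}$ on $V$.

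The hard part is this identity, and it is exactly where the $N_K$ structure is used. Writing $e^{Z\nabla}=e^{zT}\prod_{k}(1+\zeta^kS^k)$ and commuting $S^i$ to the right by means of $S^i\zeta^k=-\zeta^kS^i$, $[S^i,T]=0$ and $[S^i,S^k]=2\delta_{i,k}T$, the factors with $k\ne i$ are crossed freely, while the factor $k=i$ produces $S^i(1+\zeta^iS^i)=S^i-\zeta^i(S^i)^2=S^i-\zeta^iT$. Hence $S^i\,e^{Z\nabla}=\pd_{\zeta^i}e^{Z\nabla}-\zeta^iT\,e^{Z\nabla}=(\pd_{\zeta^i}-\zeta^i\pd_z)e^{Z\nabla}=\ol{D}_Z^i\,e^{Z\nabla}$, the crucial $-\zeta^i\pd_z$ term arising precisely from $(S^i)^2=T$; this is the feature that separates the $N_K$ case from the $N_W$ case, where only $\pd_{\zeta^i}$ occurs. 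I expect the main burden to be keeping track of the Koszul signs $(-1)^{\tabs{K}}$ and the reordering signs $(-1)^{\binom{\tabs{K}}{2}}$ in the expansion $\prod_k(1+\zeta^kS^k)=\sum_K(-1)^{\binom{\tabs{K}}{2}}\zeta^KS^K$, and I would cross-check the whole computation against the equivalent Fourier-mode form \eqref{eq:NKrecS}, using the explicit modes $a_{(j|J)}b=\tfrac{(-1)^{\binom{N-\tabs{J}}{2}}}{(-1-j)!}(T^{-1-j}S^{N\sm J}a)\cdot b$; already for $N=1$ this exhibits the cancellation that forces $[S^i,a_{(j|J)}]$ to match the derivation rule for $S^i$.
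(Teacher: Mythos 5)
Your proposal is correct and takes essentially the same route as the paper: the forward direction is the same transcription of the $N_W=N$ argument (commutativity via \eqref{eq:scv} and \cite[\S 1.3.3]{FBZ}, the unit from the vacuum axiom, \cref{fct:Sder} for the derivation property and \cref{lem:STL} for the commutation relations), while the converse is exactly the ``direct check from the formula'' that the paper's proof (which merely points back to \cref{lem:comW}) leaves to the reader. Your operator identity $S^i e^{Z\nabla}=\ol{D}_Z^i e^{Z\nabla}$, with the $-\zeta^i T$ term produced by $(S^i)^2=T$, is precisely that omitted detail, and you carry it out correctly --- including the correct reading of \eqref{eq:NK-scom} as left multiplication by the state $e^{Z\nabla}a$, which is the reading under which the $N_K$ translation invariance actually holds.
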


\begin{proof}
The argument in \cref{lem:comW} works, using \cref{lem:STL} and \cref{fct:Sder}.
\end{proof}

\subsubsection{$N_W=N$ SUSY VA structure of superjet algebras}

Recall that in \S \ref{ss:arc} we introduced the superjet algebra 
\[
 A^O = A^{k\dbr{Z}} = \clS^N(A_\infty)
\]
for each $A \in \SCom$ (\cref{prp:sj}).
Here $O=k\dbr{Z}$ denotes the topological superalgebra 
$O^{1|N}=k\dbr{z,\zeta^1,\dotsc,\zeta^N}$.
We argue that $A^O$ has a natural structure of $N_W=N$ SUSY VA (\cref{prp:NWsj}).

Let us recall some basic properties of $A^O$.
It is a commutative $k$-superalgebra equipped with 
the universal differentials $d_{n|J}=d_{\txdR}^J d_n$ for $n \in \bbN$ and $J \subset [N]$.
Since we are assuming $k$ is of characteristic $0$, 
we have $d_n = \frac{1}{n!}d_1^n$ by \cref{rmk:HS}. 
Note that $d_1$ and $d_{\txdR}$ is an even and odd derivation, respectively.
The even derivation $d_1$ comes from the Hasse-Schmidt derivations 
$A_\infty=\HS{A/k}{\infty}$, and the odd derivation $d_{\txdR}$ 
is the differential of the de Rham complex.
Also, recall that $A^O$ is generated by $a^{[n|J]} \ceq d_{n|J}a$ with $a \in A$, 
$n \in \bbN$ and $J \subset [N]$ (see \cref{lem:dnJ}).

By \cref{lem:sj-TS}, the superjet algebra $A^O$ has an an even derivation $T$ 
and odd derivations $S^i$ for $i \in [N]$ with
\begin{align}\label{eq:NWcom:TS}
 T(a^{[n|J]}) \ceq (n+1)a^{[n+1|J]}, \quad
 S^i(a^{[n|J]}) \ceq \sigma(e_i,J) a^{[n|J \sm e_i]}.
\end{align}
Then, \cref{lem:comW} yields:

\begin{prp}\label{prp:NWsj}
For every $A \in \SCom k$, the superjet algebra $A^O$ 
has a structure of commutative $N_W=N$ SUSY VA with $T$ and $S^i$ given by \eqref{eq:NWcom:TS}.
\end{prp}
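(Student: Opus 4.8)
The plan is to deduce this directly from the converse direction of \cref{lem:comW}, which upgrades any commutative $k$-superalgebra carrying a unit, one even derivation, and $N$ odd derivations into a commutative $N_W=N$ SUSY VA with the explicit state–superfield correspondence \eqref{eq:NW-scom}. Thus the entire task reduces to checking that $A^O$ carries exactly these data, and then reading off that the translation operators produced by the lemma are the ones named in \eqref{eq:NWcom:TS}.

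First I would recall that $A^O = \clS^N(A_\infty)$ is, by its very construction (\cref{prp:sj}), a commutative $k$-superalgebra with unit. Next, \cref{lem:sj-TS} already exhibits on $A^O$ an even derivation $T$ and odd derivations $S^i$ ($i \in [N]$) acting precisely by the formulas \eqref{eq:NWcom:TS} quoted in the statement; these are exactly the hypotheses demanded by the converse part of \cref{lem:comW}. I would then simply invoke that lemma, which endows $(A^O,1,T,S^i,Y)$ with the structure of a commutative $N_W=N$ SUSY VA, where $\vac = 1$ and $Y(a,Z) = e^{Z \nabla} a \in (\End A^O)\dbr{Z}$, and whose translation operators coincide with the $T$ and $S^i$ fed into it. Hence the resulting operators agree with \eqref{eq:NWcom:TS}, as claimed.

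The reassuring point, and the reason there is essentially no obstacle, is that the $N_W=N$ case imposes no commutation relations among $T$ and the $S^i$: in contrast to the $N_K=N$ situation of \cref{lem:comK}, where the converse requires the relations $[S^i,S^j]=2\delta_{i,j}T$ and $[T,S^i]=0$, here $T$ and $S^i$ need only be an even, respectively odd, derivation. Since \cref{lem:sj-TS} guarantees exactly that, no further compatibility need be verified, and all the genuine content (that $Y$ lands in $(\End V)\dbr{Z}$ and satisfies the vacuum and locality axioms) was already discharged once and for all inside \cref{lem:comW}.
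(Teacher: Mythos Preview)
Your proposal is correct and follows exactly the paper's approach: the proposition is stated immediately after recalling \cref{lem:sj-TS}, and the paper's proof is simply the one-line invocation ``Then, \cref{lem:comW} yields:''. Your additional remark contrasting with the $N_K=N$ case is a helpful elaboration but not strictly needed.
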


\subsubsection{$N_K=N$ SUSY VA structure of superconformal jet algebras}

Recall \cref{dfn:scj}, which says that for any $A \in \SCom k$ 
we have the superconformal jet algebra $A^{\Osc}=(A^O,S_K^i)$.
It is a unital commutative superalgebra equipped with odd derivations 
\begin{align}\label{eq:NKcom:Sact}
 S_K^i(a^{[n|J]}) \ceq \begin{cases} \sigma(e_i,J) a^{[n|J \cup e_i]} & (i \notin J) \\
 (n+1) \sigma(e_i,J \sm e_i) a^{[n+1|J \sm e_i]} & (i \in J)\end{cases}.
\end{align}
$S_K^i$'s form the Lie superalgebra in \cref{lem:STL}.
Hence \cref{lem:comK} yields:

\begin{prp}\label{prp:sjCV}
For every $A \in \SCom k$, the superjet algebra $A^O$ has a structure of a commutative 
$N_K=N$ SUSY VA with $S^i$ being the odd derivation determined by \eqref{eq:NKcom:Sact}.
\end{prp}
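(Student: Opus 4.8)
The plan is to deduce the proposition directly from the equivalence established in \cref{lem:comK}. That lemma says that a commutative $N_K=N$ SUSY VA structure on a $k$-linear superspace $V$ is the same datum as a unital commutative $k$-superalgebra structure on $V$ together with odd derivations $S^i$ ($i\in[N]$) satisfying $[S^i,S^j]=2\delta_{i,j}T$ and $[T,S^i]=0$ with $T=(S^i)^2$. So the entire task is to verify that $A^O$, equipped with the operators $S_K^i$ of \eqref{eq:NKcom:Sact}, supplies exactly these data; the nontrivial input has in fact already been assembled in the preceding subsection.

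Concretely, I would first recall that $A^O=\clS^N(A_\infty)$ is by construction a unital commutative $k$-superalgebra, linearly spanned by the monomials of \cref{lem:dnJ}, with unit $1$. Second, by \cref{dfn:scj} and \cref{lem:scj-dsa} the pair $(A^O,S_K^i)$ is a differential superalgebra, so each $S_K^i$ is a bona fide odd derivation of $A^O$: oddness is visible on generators, since $a^{[n|J]}$ has parity $p(a)+\abs{J}\bmod 2$ while \eqref{eq:NKcom:Sact} sends it to an element of parity $p(a)+\abs{J}+1$. Third, \cref{lem:STL} records precisely the commutation relations $[S_K^i,S_K^j]=2\delta_{i,j}T_K$ and $[T_K,S_K^i]=0$ with $T_K=(S_K^i)^2$. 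These are exactly the hypotheses of \cref{lem:comK}.

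With these three points in hand, \cref{lem:comK} applies and equips $A^O$ with a commutative $N_K=N$ SUSY VA structure: vacuum $\vac=1$, product recovered via $a_{(-1|N)}b=a\cdot b$, and state-superfield correspondence $Y(a,Z)=e^{Z\nabla}a$ as in \eqref{eq:NK-scom}, where $Z\nabla=zT_K+\sum_{i=1}^N\zeta^i S_K^i$. This is the claimed structure, and it is the $N_K=N$ analogue of \cref{prp:sjCV}'s $N_W$ counterpart \cref{prp:NWsj}. I do not expect a genuine obstacle: commutativity and unitality come from the jet-algebra construction, the Leibniz property from \cref{lem:scj-dsa}, and the bracket relations from \cref{lem:STL}, so the proof is purely a matter of confirming that the operator $T$ implicit in \cref{lem:comK} coincides with $T_K$ and that the signs $\sigma(e_i,J)$ appearing in \eqref{eq:NKcom:Sact} are consistent with the super Leibniz rule---both routine.
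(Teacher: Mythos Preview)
Your proposal is correct and follows essentially the same approach as the paper: the paper simply recalls that $(A^O,S_K^i)$ is a unital commutative superalgebra with odd derivations satisfying the relations of \cref{lem:STL}, and then invokes \cref{lem:comK} to obtain the commutative $N_K=N$ SUSY VA structure. Your write-up is more detailed in checking parity and the role of $T_K$, but the argument is the same.
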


For later reference, we give a lemma on the obtained structure on $A^O$.

\section{Li filtration of SUSY vertex algebra}\label{s:Lif}

In this subsection, we introduce SUSY analogue of Li's canonical filtration \cite{Li}.
As in the previous \cref{s:SUSY}, we fix a positive integer $N$.

\subsection{SUSY vertex Poisson algebras}\label{ss:VP}

Recall that, in the even case, a vertex Poisson algebra is a data $(V,\vac,T,Y_+,Y_-)$ 
combining a commutative VA structure $(\vac,T,Y_+)$ and a 
vertex Lie algebra structure $(V,T,Y_-)$ with common derivation $T$,
subject to the condition that $Y_-$ gives Poisson-like operations for 
the commutative multiplication associated $Y_+$.
See \cite[Chap.\ 16]{FBZ} and \cite[\S 2]{Li} for the detail.
A SUSY analogue of this notion is introduced in \cite{HK}, where 
the notion of Lie conformal algebra is used instead of vertex Lie algebras.
Below we give a definition using vertex Lie algebras.

\subsubsection{$N_W=N$ SUSY vertex Lie algebras}\label{sss:NWL}

We begin with the introduction of $N_W=N$ SUSY vertex Lie algebras.
Recall that the notion of an even vertex Lie algebra is obtained by taking 
singular part (called polar part in \cite[16.1.2. Definition]{FBZ}) of a vertex algebra.
For a linear superspace $V$ and a series 
$f(Z)=\sum_{(j|J)}Z^{j|J}v_{j|J} \in V\dbr{Z^{\pm1}}$ 
of a $1|N$-supervariable $Z=(z,\zeta^1,\dotsc,\zeta^N)$, we denote
\begin{align}\label{eq:Sing}
 \Sing(f) \ceq \sum_{(j|J), \, j<0} Z^{j|J}v_{j|J} \in Z^{-1}V[Z^{-1}]
\end{align}
and call it the singular part of $f$, 
where $Z^{-1}V[Z^{-1}] \ceq \{\sum_{(j|J), j<0} Z^{j|J}v_{j|J} \mid v_{j|J} \in V\}$.

\begin{dfn}\label{dfn:NWL}
An \emph{$N_W=N$ SUSY vertex Lie algebra}is a data $(V,T,S^i,Y_-)$ consisting of
\begin{itemize}[nosep]
\item 
a linear superspace $V$,
\item
an even operator $T \in \iEnd(V)_{\ol{0}}$,
\item
$N$ odd operators $S^i \in \iEnd(V)_{\ol{1}}$ for $i \in [N]$, and
\item
an even linear map $Y_-(\cdot,Z)\colon V \to \Hom(V,Z^{-1}V[Z^{-1}])$,
\end{itemize}
which satisfies the following axioms.
\begin{enumerate}[nosep]
\item 
Translation invariance: 
$Y_-(T a,Z) = \pd_{z} Y_-(a,Z)$, 
$Y_-(S^i a,Z) = \pd_{\zeta^i} Y_-(a,Z)$.
\item
Skew-symmetry:
$Y_-(a,Z)b = \Sing\bigl((-1)^{p(a) p(b)} e^{Z \nabla}Y(b,-Z)a\bigr)$
with $Z \nabla \ceq z T+\sum_{i=1}^N \zeta^i S^i$.

\item
Supercommutator:
$[a_{(m|M)},Y_-(b,Z)]=\Sing\bigl(Y(e^{-Z \nabla} a_{(m|M)} e^{Z \nabla} b,Z)\bigr)$,
where $a_{(m|M)}$ denotes the Fourier mode of the expansion 
$Y_-(a,Z) = \sum_{(m|M), m \ge 0}Z^{-m-1|N \sm M}a_{(m|M)}$.
\end{enumerate}
We often abbreviate the word ``vertex Lie algebra'' to VLA.
\end{dfn}

\begin{lem}\label{lem:NWL}
Given an $N_W=N$ SUSY VA $(V,\vac,T,S^i,Y)$, we have an $N_W=N$ SUSY VLA $(V,T,S^i,Y_-)$ 
by setting $Y_-(a,Z) \ceq \Sing(Y(a,Z))$.
\end{lem}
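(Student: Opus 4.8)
The plan is to verify the three axioms of \cref{dfn:NWL} one at a time for $Y_-(a,Z) \ceq \Sing(Y(a,Z))$, deriving each from the corresponding structural property of the ambient $N_W=N$ SUSY VA. First I would record that $Y_-$ is well defined: for $b \in V$ the series $Y(a,Z)b$ lies in $V\dpr{Z}$, hence has only finitely many negative powers of $z$, so $Y_-(a,Z)b = \Sing(Y(a,Z)b) \in Z^{-1}V[Z^{-1}]$; linearity and evenness are inherited from $Y$ and $\Sing$. The elementary fact used throughout is that $\Sing$ commutes with $\pd_z$ and $\pd_{\zeta^i}$: the operator $\pd_{\zeta^i}$ leaves the power of $z$ (by which $\Sing$ selects) unchanged, while $\pd_z$ lowers the $z$-degree by one and annihilates the $z^0$-coefficient, so that $\Sing(\pd_z f)=\pd_z\Sing(f)$.

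Axioms (1) and (2) are then immediate. Applying $\Sing$ to the identities $Y(Ta,Z)=\pd_z Y(a,Z)$ and $Y(S^i a,Z)=\pd_{\zeta^i}Y(a,Z)$ of \cref{fct:NW-1} and using the above commutation gives $Y_-(Ta,Z)=\pd_z Y_-(a,Z)$ and $Y_-(S^i a,Z)=\pd_{\zeta^i}Y_-(a,Z)$, which is translation invariance. For skew-symmetry, taking $\Sing$ of \cref{fct:NWskew} and using $\Sing(Y(a,Z)b)=Y_-(a,Z)b$ yields exactly $Y_-(a,Z)b=\Sing\bigl((-1)^{p(a)p(b)}e^{Z\nabla}Y(b,-Z)a\bigr)$.

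The supercommutator axiom (3) is the substantive step. Since for $m\ge 0$ the mode $a_{(m|M)}$ is an endomorphism of $V$ independent of $Z$, one has $[a_{(m|M)},Y_-(b,Z)]=\Sing\bigl([a_{(m|M)},Y(b,Z)]\bigr)$, so it suffices to establish the $Z$-full identity
\[
 [a_{(m|M)},Y(b,Z)] = Y\bigl(e^{-Z\nabla}a_{(m|M)}e^{Z\nabla}b,Z\bigr)
\]
and then apply $\Sing$. I would prove this by evaluating both sides: the left side is expanded via the commutator formula \cref{fct:NWcmt}, while the right side is computed by conjugating $a_{(m|M)}$ through the factorization $e^{Z\nabla}=e^{zT}(1+\zeta^1 S^1)\dotsm(1+\zeta^N S^N)$ of \cref{fct:NW-1}, using the recursion \eqref{eq:NWrec} for $[T,a_{(j|J)}]$ and $[S^i,a_{(j|J)}]$. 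The hypothesis $m\ge 0$ is what guarantees that $(\mathrm{ad}\,T)^k a_{(m|M)}$ terminates, so that $e^{-Z\nabla}a_{(m|M)}e^{Z\nabla}b$ is polynomial in $Z$ and $Y(\cdot,Z)$ of it is meaningful.

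The main obstacle is the term-by-term matching of the two sides of this identity: one must check that the binomial coefficients and the signs $\sigma$ produced by the conjugation coincide with those in \cref{fct:NWcmt}. This is a faithful but bookkeeping-heavy $N_W=N$ analogue of the even-case computation of \cite[3.1.1--3.1.7]{FBZ}, as used in \cite[\S 2]{Li}; the extra work lies entirely in tracking the Koszul signs attached to the odd variables $\zeta^i$ and the odd operators $S^i$, since the purely even part reduces verbatim to the classical identity $e^{-zT}a_{(m)}e^{zT}=\sum_{j\ge 0}\binom{m}{j}z^{m-j}a_{(j)}$.
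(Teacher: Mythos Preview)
Your proposal is correct and follows essentially the same approach as the paper, which simply cites translation invariance, \cref{fct:NWskew}, and \cref{fct:NWcmt} as yielding the three axioms; you have just made the mechanics explicit. Your observation that the supercommutator axiom reduces to the ``$Z$-full'' identity $[a_{(m|M)},Y(b,Z)] = Y(e^{-Z\nabla}a_{(m|M)}e^{Z\nabla}b,Z)$ (for $m\ge 0$) is exactly the way to read \cref{fct:NWcmt} here, and your remark that the conjugation terminates because $m\ge 0$ is the key point making the right-hand side well defined.
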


\begin{proof}
The axioms in \cref{dfn:NWL} are consequences of the translation invariance 
in \cref{dfn:NW}, \cref{fct:NWskew} and \cref{fct:NWcmt}.
\end{proof}

We also have the notion of a \emph{module} $M$ over an $N_W=N$ SUSY VLA $V$,
similar to SUSY VA modules.
For $a \in V$ and $m \in M$, we denote the SUSY vertex Lie action of $V$ on $W$ by 
\begin{align}\label{eq:VLact}
 Y^W_-(a,Z)m = \sum_{(j|J), \, j \ge 0}Z^{-1-j|N \sm J}a_{(j|J)}m \in M\dpr{Z}.
\end{align}

Our \cref{dfn:NWL} is equivalent to 
the notion of an $N_W=N$ SUSY Lie conformal algebra in \cite[Definition 3.2.2]{HK}.
Using the latter, we can describe the ``Lie algebra structure'' more explicitly.
Let us recall \cref{ntn:NWLam} of the $\Lambda$-bracket for $N_W=N$ case.
In particular, $\scL_W = \bbC[\Lambda] = \bbC[\lambda,\chi^i]$
is the commutative superalgebra freely generated by even $\lambda$ and odd $\chi^i$'s.
Note that the $\Lambda$-bracket $[a_\Lambda b]$ only involves $(j|J)$-operators 
for $j \ge 0$, so that it can be defined also for an $N_W=N$ SUSY VLA.
Now, let $\scL_W'$ be another commutative superalgebra 
freely generated by even $\gamma$ and odd $\eta^i$'s, 
and define $[a_\Gamma b]$ similarly using $\Gamma^{m|M} \ceq \gamma^m \eta^M$.

\begin{fct}[{c.f.\ \cite[\S 3.2.1]{HK}}]\label{fct:NWL-Jac}
Let $V$ be an $N_W=N$ SUSY VLA. For $a,b,c \in V$ of pure parity, 
we have the following identity in $\scL_W \otimes \scL_W' \otimes V$.
\[
 [a_\Lambda [b_\Gamma c]] = (-1)^{(p(a)+1)N}[[a_\Lambda b]_{\Gamma+\Lambda}c]
 +(-1)^{(p(a)+N)(p(b)+N)}[b_\Gamma[a_\Lambda c]].
\]
\end{fct}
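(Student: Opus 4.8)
The plan is to reduce the identity to the super-commutator formula \cref{fct:NWcmt} for the Fourier modes, which is the source of every Jacobi-type relation in this theory. First I would expand the three $\Lambda$-brackets into Fourier modes using \cref{ntn:NWLam} together with the expansion $Y_-(a,Z)=\sum_{(m|M),\,m\ge0}Z^{-m-1|N\sm M}a_{(m|M)}$ from \cref{dfn:NWL}, so that each iterated bracket becomes a generating series in $\lambda,\chi^i$ (for the outer variable $\Lambda$) resp.\ $\gamma,\eta^i$ (for $\Gamma$) whose coefficients are the composite operators $a_{(l|L)}(b_{(m|M)}c)$, $b_{(m|M)}(a_{(l|L)}c)$, and $(a_{(j|J)}b)_{(l|L)}c$, respectively.

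Second, the key observation is that \cref{fct:NWcmt}, applied to the vector $c$, expresses precisely the difference $a_{(l|L)}(b_{(m|M)}c)-(-1)^{\star}b_{(m|M)}(a_{(l|L)}c)$ of the two outer compositions as a finite sum of terms $(a_{(j|J)}b)_{(l+m-j|M\cup(L\sm J))}c$ carrying the binomial weights $\binom{l}{j}$ and the sign $\wt{\sigma}$ recorded in \eqref{eq:NWcmt}; here $(-1)^{\star}$ is the Koszul sign of the two modes, whose parities are $p(a)+N+\abs{L}$ and $p(b)+N+\abs{M}$. Summing this operator relation against the generating monomials $\Lambda^{l|L}\Gamma^{m|M}$ and using the translation-invariance axiom to organize the series turns the left-hand side into $[a_\Lambda[b_\Gamma c]]$ minus the suitably signed $[b_\Gamma[a_\Lambda c]]$, and turns the right-hand side into $[[a_\Lambda b]_{\Gamma+\Lambda}c]$: the shift $\Gamma+\Lambda$ arises because the output mode index $l+m-j$ forces the generating variables of the inner and outer brackets to be added, exactly as in the even case.

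Third, I would verify the two combinatorial points underlying this repackaging. On the even variables, the weights $\frac{1}{l!}\lambda^{l}$ and $\frac{1}{m!}\gamma^{m}$ recombine through the binomial coefficient $\binom{l}{j}$ into powers of $\lambda+\gamma$ in the outer slot, which is the Taylor identity already responsible for the shift. On the odd variables, the monomials $\chi^L$ and $\eta^M$ reassemble into $\Lambda^{j|J}$ and $\Gamma^{\cdots}$ with precisely the normalizing signs $\sigma(J,N\sm J)(-1)^{\binom{\abs{J}+1}{2}}$ built into the definition of the $\Lambda$-bracket in \cref{ntn:NWLam}, so that after reindexing $j\leftrightarrow l-j$ the surviving coefficients match those in the commutator formula.

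The main obstacle will be the sign bookkeeping. The prefactors $(-1)^{(p(a)+1)N}$ and $(-1)^{(p(a)+N)(p(b)+N)}$ are not formal decorations: they collect the Koszul signs produced when the odd variables $\chi^i,\eta^i$ are commuted past one another and past the odd parts of $a,b,c$, together with the global $N$-dependent contributions $\alpha$ and $\wt{\sigma}$ already present in \eqref{eq:NWcmt}. I expect the cleanest control is to track the parity of each monomial as $\abs{L}+\abs{M}\bmod 2$ and to reconcile the signs term-by-term against the even-case identity, rather than expanding everything at once. Alternatively, one may invoke the equivalence between \cref{dfn:NWL} and the $N_W=N$ SUSY Lie conformal algebras of \cite[Definition 3.2.2]{HK}, for which this Jacobi identity is part of the established structure, and merely check that the two axiom systems correspond under $Y_-\leftrightarrow[\cdot_\Lambda\cdot]$; this is the route implicit in the ``c.f.'' reference, and I would present the direct mode computation above as the self-contained verification.
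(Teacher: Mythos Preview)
The paper does not prove this statement: it is recorded as a \emph{Fact} with a citation to \cite[\S 3.2.1]{HK}, where the Jacobi identity is built into the axioms of an $N_W=N$ SUSY Lie conformal algebra. Your final paragraph already identifies this, and that equivalence-based route is exactly the paper's ``proof''.

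Your direct mode computation is a reasonable self-contained alternative, but note one technical slip: you invoke \cref{fct:NWcmt}, which is the commutator formula for an $N_W=N$ SUSY \emph{vertex algebra}, whereas the statement concerns a SUSY \emph{vertex Lie algebra}, where only the non-negative modes $a_{(m|M)}$ with $m\ge 0$ exist. The correct input is the supercommutator axiom (3) in \cref{dfn:NWL}, which encodes the same combinatorics restricted to singular parts. Since the Jacobi identity only involves non-negative modes, this is a notational rather than substantive issue, and your outline goes through once you swap the reference.
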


We can regard it as a kind of Jacobi identity.
In order to understand the signs, let us recall:

\begin{dfn}[{\cite[Definition 3.2.5]{HK}}]\label{dfn:sL}
A \emph{Lie superalgebra of parity $q \in \Zt$} is a linear superspace $V$ 
with a binary operation $[\cdot,\cdot]\colon V \otimes V \to V$ 
satisfying the following axioms for $a,b,c \in V$ of pure parity.
\begin{enumerate}[nosep]
\item 
Skew-symmetry: $[a,b]=-(-1)^{p(a)p(b)+q}[b,a]$.
\item
Jacobi identity: $[a,[b,c]]=-(-1)^{p(a)q+q}[[a,b],c]+(-1)^{(p(a)+q)(p(b)+q)}[b,[a,c]]$.
\end{enumerate}
We call $[\cdot,\cdot]$ the \emph{Lie bracket of parity $q$}.
We also call $(V,[\cdot,\cdot])$ an \emph{even} or \emph{odd Lie superalgebra} 
according to $q=\ol{0}$ or $\ol{1}$.
\end{dfn}

\begin{rmk}\label{rmk:sL}
Some comments are in order.
\begin{enumerate}[nosep]
\item 
We slightly modify the terminology in \cite{HK}, 
and use the phrase ``of parity $q$'' instead of ``degree $q$'' used therein. 

\item
An even Lie superalgebra is a Lie superalgebra in the standard sense.

\item \label{rmk:sL:3}
We can find in \cref{fct:NWL-Jac} that ``the $\Lambda=0$ part'' 
of an $N_W=N$ SUSY VLA is a Lie superalgebra of parity $N \bmod 2$.
See \cite[Lemma 3.2.7]{HK} for the precise statement.
\end{enumerate}
\end{rmk}

\subsubsection{$N_W=N$ SUSY vertex Poisson algebras}\label{sss:NWP}

Now we introduce $N_W=N$ SUSY vertex Poisson structure.

\begin{dfn}[{c.f.\ \cite[Definition 3.3.16]{HK}}]\label{dfn:NWP}
An \emph{$N_K=W$ SUSY vertex Poisson algebra} is a data $(V,\vac,T,S^i,Y_+,Y_-)$ consisting of 
\begin{itemize}[nosep]
\item 
a commutative $N_K=N$ SUSY VA $(V,\vac,T,S^i,Y_+)$,
\item
an $N_W=N$ SUSY VLA $(V,T,S^i,Y_-)$
\end{itemize}
such that the vertex Lie structure $Y_-$ is a derivation for 
the commutative superalgebra structure $a b = a_{(-1|N)}b$ coming from $Y_+$.
More precisely speaking, for $a,b,c \in V$ of pure parity, we have 
\begin{align}\label{eq:VPA}
 Y_-(a,Z)(b c) = \bigl(Y_-(a,Z)b\bigr)c + (-1)^{(p(a)+N)p(b)}b\bigl(Y_-(a,Z)c\bigr).
\end{align}
We often abbreviate the word ``vertex Poisson algebra'' to VPA.
\end{dfn}

Recalling \cref{lem:com} of commutative property,
we denote the expansions of $Y_+$ and $Y_-$ by
\begin{align}\label{eq:VPA-exp}
 Y_+(a,Z) = \sum_{(j|J), \, j   < 0} Z^{-1-j|1-J}a_{(j|J)}, \quad 
 Y_-(a,Z) = \sum_{(j|J), \, j \ge 0} Z^{-1-j|1-J}a_{(j|J)}.
\end{align}

We also have the notion of a \emph{module} over an $N_W=N$ SUSY VPA $V$, which is a 
combination of the module structure over the commutative superalgebra associated to 
$(V,\vac,T,S^i,Y_+)$, and the module structure over the $N_W=N$ SUSY VLA $(V,T,S^i,Y_-)$.

As for the derivation axiom \eqref{eq:VPA}, let us recall:

\begin{dfn}\label{dfn:sP}
An \emph{Poisson superalgebra of parity $q \in \Zt$} is a commutative superalgebra $P$
together with a Lie bracket $\{\cdot,\cdot\}\colon P \otimes P \to P$ of parity $q$ 
in the sense of \cref{dfn:sL} such that 
\[
 \{a,b c\}=\{a,b\}c+(-1)^{(p(a)+q)p(b)}a\{b,c\}
\]
for $a,b,c \in P$ of pure parity.
$\{\cdot,\cdot\}$ is called the \emph{Poisson bracket of parity $q$}.
We also call $(V,\{\cdot,\cdot\})$ an \emph{even} or \emph{odd Poisson superalgebra} 
according to $q=\oz$ or $\oo$.
\end{dfn}

\begin{rmk}\label{rmk:sP}
Some comments are in order.
\begin{enumerate}[nosep]
\item 
An even Poisson superalgebra is a Poisson superalgebra in the standard sense.

\item
The structure of an odd Poisson superalgebra 
is a part of the Gerstenhaber algebra structure.

\item
By \cref{dfn:sP} and \cref{rmk:sL} \eqref{rmk:sL:3}, we see that ``the $\Lambda=0$ part" 
of an $N_W=N$ SUSY VPA is a Poisson superalgebra of parity $N \bmod 2$.
\end{enumerate}
\end{rmk}

\subsubsection{$N_W=N$ SUSY VPA structure on superjet algebra}\label{ss:sjVPA}

Let us give a basic example of an $N_K=N$ SUSY VPA.
Recall the $1|N$-superjet algebra 
\[
 A^O = A^{k\dbr{Z}}=\fcS^N(A_\infty)
\]  
of a commutative superalgebra $A$ (\cref{prp:sj}).
It has a structure of commutative $N_W=N$ SUSY VA (\cref{prp:NWsj}).
If $A$ is moreover a Poisson superalgebra of parity $N \mod 2$ in the sense of \cref{dfn:sP}, 
then this structure has an enhancement to vertex Poisson structure. 

\begin{prp}\label{prp:NWlv0}
Let $(P,\{\cdot,\cdot\})$ be a Poisson superalgebra of parity $N \bmod 2$,
and $(P^O,T,S^i)$ be the commutative $N_W=N$ SUSY VA. 
Then $P^O$ has an $N_W=N$ SUSY VPA structure such that 
\begin{align}\label{eq:NWlv0}
 u_{(m|M)}v = \begin{cases} \{u,v\} & (m=0,M=[N]) \\ 0 & (\text{otherwise})\end{cases}
\end{align}
for $u,v \in P \subset P^O$, where we used the expansion \eqref{eq:VPA-exp} of $Y_-$.
We call is the \emph{level $0$ SUSY VPA structure}.
\end{prp}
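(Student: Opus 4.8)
The commutative $N_W=N$ SUSY VA structure $(\vac,T,S^i,Y_+)$ on $P^O$ is already supplied by \cref{prp:NWsj}, so the only task is to construct the vertex Lie part $Y_-$, to check that $(P^O,T,S^i,Y_-)$ is an $N_W=N$ SUSY VLA in the sense of \cref{dfn:NWL}, and to verify the derivation compatibility \eqref{eq:VPA}. The plan is to \emph{define} $Y_-$ by the level $0$ prescription \eqref{eq:NWlv0} on the generating subspace $P \subset P^O$ and to propagate it to all of $P^O$ by the structural axioms, using that $P$ generates $(P^O,T,S^i)$ as a differential superalgebra (\cref{lem:sj-dsa}) and that the monomials of \cref{lem:dnJ} span $P^O$.

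Concretely, on generators I would set $Y_-(u,Z)v \ceq z^{-1}\{u,v\}$ for $u,v \in P$, i.e.\ $u_{(m|M)}v = \delta_{m,0}\delta_{M,[N]}\{u,v\}$, which in $\Lambda$-bracket form (\cref{ntn:NWLam}) reads $[u_\Lambda v]=(-1)^{\binom{N+1}{2}}\chi^{[N]}\{u,v\}$. This is extended in three moves. First, the first argument is pushed from $P$ to every generator $a^{[n|J]}$ by forcing translation invariance $Y_-(T a,Z)=\pd_z Y_-(a,Z)$ and $Y_-(S^i a,Z)=\pd_{\zeta^i}Y_-(a,Z)$; since each $a^{[n|J]}$ is a $T,S^i$-derivative of an element of $P$, this determines $Y_-(a^{[n|J]},Z)$ uniquely, and it is automatically consistent with the relations $[S^i,S^j]=0$, $(S^i)^2=0$ because odd partial derivatives anticommute and square to zero. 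Second, the second argument is extended from $P$ to arbitrary elements by imposing the Leibniz rule \eqref{eq:VPA} for products together with the compatibility of $T,S^i$ as derivations for $T,S^i$-derivatives. Third, products in the first argument are handled by imposing skew-symmetry for the total field $Y=Y_++Y_-$ (equivalently a left Leibniz rule deduced from the previous data). The level $0$ nature of the prescription—only the $(0|[N])$-mode survives on $P$—guarantees that $Y_-(a,Z)$ always lands in $z^{-1}P^O[z^{-1}]$.

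The verification then proceeds axiom by axiom. Translation invariance holds by construction. For the derivation compatibility \eqref{eq:VPA} and the skew-symmetry of \cref{dfn:NWL}, the point is that each reduces, after stripping the $T,S^i$-derivatives via sesquilinearity, to the super-Leibniz rule and the skew-symmetry of the parity $N$ Poisson bracket of \cref{dfn:sP}. The supercommutator axiom of \cref{dfn:NWL} is the substantial one: expanded through the level $0$ definition it becomes a finite identity among $\{\cdot,\cdot\}$-brackets whose shape mirrors \cref{fct:NWcmt} and \cref{lem:NWiter}, and because only the $(0|[N])$-mode is nonzero on $P$, the overwhelming majority of terms vanish, so the identity collapses precisely to the super Jacobi identity of $\{\cdot,\cdot\}$ (equivalently the $\Lambda=0$ Jacobi relation of \cref{fct:NWL-Jac}, cf.\ \cref{rmk:sL}).

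I expect the main obstacle to be two-fold. First, \emph{well-definedness}: the three extension rules must be mutually consistent on the relations of $P^O$, which is not free on the $a^{[n|J]}$ since $P$ itself carries the relations of a commutative algebra; one must check that propagating $Y_-$ through a product computed in two different orders agrees, which is exactly where the associativity and commutativity of $Y_+$ must conspire with the Leibniz rule of $\{\cdot,\cdot\}$. Second, the \emph{sign bookkeeping}: the parity $N$ signs $\sigma(J,N \sm J)$, $(-1)^{\binom{\abs{J}+1}{2}}$ and the Koszul signs $(-1)^{(p(a)+N)p(b)}$ occurring in \cref{dfn:sP}, \cref{dfn:NWL} and \cref{fct:NWcmt} must be tracked carefully to confirm that the one surviving term of the supercommutator reproduces $\{\cdot,\cdot\}$ with the correct parity shift. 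I anticipate that this sign matching, together with the skew-symmetry step, rather than the Jacobi identity itself, will absorb most of the effort.
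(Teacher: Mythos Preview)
Your approach is correct and is essentially the same as the paper's, but the paper organizes it differently: rather than carrying out the extension and axiom checks by hand, it first isolates a general reusable statement (\cref{lem:NW3.6}, the $N_W=N$ analogue of \cite[Theorem~3.6, Proposition~3.10]{Li04}) which says that any map $Y^0_-\colon B \to Z^{-1}\iHom(B,A)\dbr{Z}$ on a generating subspace $B$ of a differential superalgebra $(A,T,S^i)$, satisfying only the skew-symmetry condition \eqref{eq:NW3.54}, extends uniquely to a $Y_-$ valued in $Z^{-1}(\Der A)\dbr{Z}$ that makes $A$ an $N_W=N$ SUSY VPA. The proof of the proposition is then two lines: take $B=P$, $A=P^O$, set $Y^0_-(u,Z)v \ceq z^{-1}\{u,v\}$, and observe that skew-symmetry on $P$ is immediate from the skew-symmetry of the parity-$N$ Poisson bracket.

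What you describe---propagating $Y_-$ off $P$ via translation invariance in the first slot, the Leibniz rule in the second, and skew-symmetry for products in the first---is exactly the content of the proof of \cref{lem:NW3.6}, which the paper in turn delegates to \cite{Li04}. One point worth noting: you flag the supercommutator axiom as ``the substantial one'' to verify, but the packaging in \cref{lem:NW3.6} shows that it is \emph{not} an independent check---once $Y_-$ is derivation-valued and satisfies translation invariance plus skew-symmetry on the generating set, the commutator formula follows. So your anticipated Jacobi verification, while not wrong, is already absorbed into the well-definedness and skew-symmetry steps you outline; you would not need to confront \cref{fct:NWcmt} or \cref{lem:NWiter} directly.
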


\begin{rmk}\label{rmk:lv0}
Our naming comes from the \emph{level $0$ VPA structure} \cite[Proposition 2.3.1]{A12} 
of the jet algebra $P_\infty$ (see \eqref{eq:Ainf})  in the even case.
\end{rmk}

For the proof, we need the following 
$N_W=N$ SUSY analogue of \cite[Theorem 3.6, Proposition 3.10]{Li04}.

\begin{lem}\label{lem:NW3.6}
Let $A$ be a unital commutative superalgebra equipped with an even derivation $T$ and 
odd derivations $S^i$ for $i \in [N]$.
Also, let $B \subset A$ be a linear sub-superspace generating 
the differential algebra $(A,T,S^i)$ in the sense of \cref{dfn:dsa}.
\begin{enumerate}[nosep]
\item \label{i:NW3.6:1}
Assume that there is a morphism of linear superspaces 
$Y_-(\cdot,Z)\colon A \to Z^{-1}(\Der A)\dbr{Z}$ satisfying 
\begin{enumerate}[nosep,label=(\roman*)]
\item \label{i:NW:c0} $Y_-(a,Z)a' \in Z^{-1}A[Z^{-1}]$, 
\item \label{i:NW:c1} $Y_-(1,Z)=0$, 
\item \label{i:NW:c2} $Y_-(T a,Z)=\pd_z Y_-(a,Z)$, $Y_-(S^i a,Z)=\pd_{\zeta^i} Y_-(a,Z)$, and 
\item \label{i:NW:c3} $[T,Y_-(a,Z)]=\pd_z Y_-(a,Z)$,  $[S^i,Y_-(a,Z)]=\pd_{\zeta^i} Y_-(a,Z)$
\end{enumerate}
for $a,a' \in A$ and $i \in [N]$.
Assume moreover that for $a \in A$ and $b \in B$ of pure parity, we have
\begin{align}\label{eq:NW3.26}
 Y_-(a,Z)b = \Sing\bigl((-1)^{p(a)p(b)}e^{Z \nabla}Y_-(b,-Z)a\bigr),
\end{align}
where $\Sing$ is given in \eqref{eq:Sing},
and $Z \nabla \ceq z T+\sum_{i=1}^N \zeta^i S^i$.
Then $Y_-$ gives an $N_W=N$ VPA structure on $A$.

\item \label{i:NW3.6:2}
Assume that there is a morphism of linear superspaces 
$Y^0_-(\cdot,Z)\colon B \to Z^{-1}(\iHom(B,A))\dbr{Z}$ satisfying
\begin{align}\label{eq:NW3.54}
 Y^0_-(b,Z)b' = \Sing\bigl((-1)^{p(b)p(b')}e^{Z \nabla}Y_-(b',-Z)b\bigr) 
 \in Z^{-1} A[Z^{-1}]
\end{align}
for $b,b' \in B$ of pure parity. Then $Y^0_-$ extends uniquely to 
$Y_-(\cdot,Z)\colon A \to Z^{-1}(\Der A)\dbr{Z}$ satisfying the conditions 
\ref{i:NW:c0}--\ref{i:NW:c3} and \eqref{eq:NW3.26} in \eqref{i:NW3.6:1}.
\end{enumerate}
\end{lem}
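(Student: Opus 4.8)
The plan is to adapt Li's even-case arguments \cite[Theorem 3.6, Proposition 3.10]{Li04} to the $N_W=N$ setting, the only genuine additions being the SUSY sign bookkeeping and the systematic use of \cref{fct:NWskew,fct:NWcmt,fct:NW:Sder}. The guiding principle is that every defining identity of an $N_W=N$ SUSY VPA is multiplicative or differential in nature, so once it is known on the generating subspace $B$ it propagates to all of $A$ by induction along the differential-polynomial presentation of $A$ over $B$ (\cref{dfn:dsa}), passing the operators $T,S^i$ by translation covariance and passing products by the Leibniz rule.

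For part \ref{i:NW3.6:1}, the commutative $N_W=N$ SUSY VA underlying the VPA is the one attached to $(A,T,S^i)$ by \cref{lem:comW}, and the Poisson (derivation) axiom \eqref{eq:VPA} is, after matching the Koszul signs carried by the odd coefficient $Z^{-1-m|N\setminus M}$ against the factor $(-1)^{(p(a)+N)p(b)}$, exactly the hypothesis that $Y_-(a,Z)$ is valued in $\Der A$. The translation-invariance axiom of \cref{dfn:NWL} is \ref{i:NW:c2}, while \ref{i:NW:c0} and \ref{i:NW:c1} guarantee that values lie in $Z^{-1}A[Z^{-1}]$ and that the unit acts trivially. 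It thus remains to upgrade skew-symmetry \eqref{eq:NW3.26} from $b\in B$ to all $b\in A$ and to deduce the supercommutator. I would run one induction on differential-polynomial degree: in the operator step I rewrite $Y_-(a,Z)(Tb)$ and $Y_-(a,Z)(S^ib)$ by \ref{i:NW:c3}, insert the inductive skew-symmetry for $b$, and match the two sides using the chain rule for $W\mapsto -Z$ together with the intertwining of $T,S^i$ with $e^{Z\nabla}$ and $\pd_z,\pd_{\zeta^i}$ (read off from $Z\nabla\ceq zT+\sum_i\zeta^iS^i$ and \cref{fct:NW-1}); in the product step I expand $Y_-(a,Z)(b_1b_2)$ by \eqref{eq:VPA} and reorganize. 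The supercommutator axiom is then obtained by the same reduction to $B$, using skew-symmetry and translation invariance; this is where I expect to import the even-case mechanism of \cite{Li04} most directly.

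For part \ref{i:NW3.6:2}, I would build $Y_-$ on $A$ from $Y^0_-$ in three stages. First, reading \ref{i:NW:c2} as a definition, I extend the first argument from $B$ to the differential-operator generators by $Y_-(T^rS^Ib,Z)\ceq\pd_z^r\pd_\zeta^I Y^0_-(b,Z)$, and then to products by the first-argument Leibniz rule forced by \eqref{eq:VPA}. Second, I let each $Y_-(a,Z)$ act on all of $A$ as a super-derivation, determined on generators and extended by Leibniz. Third, and crucially, I verify that both extensions are well defined, i.e.\ independent of the chosen differential-polynomial presentation of an element of $A$; here the skew-symmetry \eqref{eq:NW3.54} on $B$ is precisely what forces the first- and second-argument structures to be consistent. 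Uniqueness is automatic since $B$ generates $(A,T,S^i)$ and every extension step is dictated. Finally I check \ref{i:NW:c0}--\ref{i:NW:c3} and \eqref{eq:NW3.26} for the constructed $Y_-$, all of which reduce to \eqref{eq:NW3.54}, and invoke part \ref{i:NW3.6:1}.

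The main obstacle, in both parts, is the product step. Because skew-symmetry ties the first and second arguments of $Y_-$ together, extending it across a product $b_1b_2$ couples the first-argument product formula (the Poisson rule $\{uv,\cdot\}=u\{v,\cdot\}\pm\{u,\cdot\}v$) to the second-argument derivation property, so a naive induction is circular. I expect to break the circularity by first establishing skew-symmetry when both arguments lie in $B$ and then bootstrapping one argument at a time, throughout keeping careful track of the signs $\sigma(\cdot,\cdot)$ and $(-1)^{(p(a)+N)p(b)}$ that separate the SUSY computation from Li's even-case one.
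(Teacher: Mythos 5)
Your proposal is correct and takes essentially the same approach as the paper: the paper's entire proof is the remark that the arguments of \cite[Theorem 3.6, Proposition 3.10]{Li04} work with little modification, with all details omitted. Your sketch simply fills in the adaptation the paper leaves implicit—the induction along the differential-polynomial presentation of $A$ over $B$, the staged extension of $Y^0_-$ with its well-definedness check, and the SUSY sign bookkeeping—so there is nothing to correct or compare.
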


\begin{proof}
The arguments \cite[Theorem 3.6, Proposition 3.10]{Li04} works with little modification.
We omit the detail.
\end{proof}

\begin{proof}[Proof of \cref{prp:NWlv0}]
Apply \cref{lem:NW3.6} \eqref{i:NW3.6:2} to $B=P$, $A=P^O$
and $Y^0_-(b,Z) \ceq b_{(0|N)}$, $b_{(0|N)}b' \ceq \{b,b'\}$ for $b,b' \in P$,
which obviously satisfies the condition \eqref{eq:NW3.54}.
Then we have an extension $Y_-$ of $Y^0_-$ on $P^O$,
which gives the desired VPA structure by \cref{lem:NW3.6} \eqref{i:NW3.6:1}.
\end{proof}

\begin{rmk}\label{rmk:NWlv0-1}
The above proof lacks explicit formulas of $a_{(m|M)}b$ for $a,b \in P^O$.
For a comparison with the even case \cite[Proposition 2.3.1]{A12} mentioned 
in \cref{rmk:lv0}, we explain how to obtain concrete formulas.

Let us consider $u_{(m|M)}a$ with $u \in P$, $a \in P^O$, $m \ge 0$ and $M \subset [N]$.
Note that every $a \in P^O$ is written as a polynomial of $S^L T^{(l)} v$
with $L \subset [N]$, $l \in \bbN$ and $v \in P$, where $S^L \ceq S^{l_1} S^{l_2} \dotsm$
for $L=\{l_1,l_2,\dotsc\}$ and $T^{(l)} \ceq \frac{1}{l!}T^l$.
Hence, by the Leibniz rule \eqref{eq:VPA}, it is enough to determine 
$u_{(m|M)}(S^L T^{(l)} v)$ for $u,v \in P$ of pure parity. 
We determine them by the condition \ref{i:NW:c3} in \cref{lem:NW3.6},
which is equivalent to (recall \eqref{eq:NWrec})
\begin{align*}
 [S^i,u_{(m|M)}] = 
 \begin{cases} \sigma(N \sm M,e_m)a_{(m|M \sm e_i)} & (i \in M) \\
 0 & (i \notin M) \end{cases}, \quad
 [T,u_{(m|M)}] = -m u_{(m-1|M)}
\end{align*}
with the convention $u_{(-1|M)} \ceq 0$. 
These are recursion for the desired $u_{(m|M)}(S^L T^l v)$, and using \eqref{eq:NWlv0} 
as the initial condition, we can solve them to obtain:
\begin{itemize}[nosep]
\item In case $l \ge m$, we set 
\begin{align}
 \label{eq:NWlv0:mN}
 u_{(m|N)}(S^L T^{(l)}v) &\ceq (-1)^{\abs{L}p(u)} S^L T^{(l-m)}\{u,v\}, \\
 \label{eq:NWlv0:mM}
 u_{(m|N \sm L)}(S^L T^{(l)}v) &\ceq (-1)^{\abs{L}p(u)+\lfloor \abs{L}/2 \rfloor}
 \bigl(\tprd_{i=1}^{\abs{L}}\sigma(\{l_{i+2},l_{i+2},\dotsc\},e_{l_i})\bigr) 
 T^{(l-m+\abs{I})}\{u,v\},
\end{align}
and otherwise $u_{(m|M)}(T^{(l)} S^L v) \ceq 0$.
\item
In case $l<m$, we set $u_{(m|M)}(T^{(l)} S^L v) \ceq 0$.
\end{itemize}
We can check directly that these formulas satisfy 
the condition \ref{i:NW:c2} in \cref{lem:NW3.6}.
Let us also mention that the case $L=\emptyset$ in \eqref{eq:NWlv0:mN} 
is analogous to the even case formula $u_{(m)}(T^l v) = \frac{l!}{(l-m)!} T^{l-m}\{u,v\}$
for $l \ge m$ in \cite[Proposition 2.3.1, (10)]{A12}.
\end{rmk}

\begin{rmk}\label{rmk:NWlv0-2}
Let us sketch another description of the level $0$ SUSY VPA structure, following 
\cite[Lemma 3.1]{Mal}, where the level $0$ VPA structure in the even case is treated 
in terms of the language of coisson algebra \cite[\S 2.6]{BD}.
(Although it is not shown explicitly in \cite{Mal} that 
 the coisson structure coincides with the level $0$ VPA structure in \cite{A12}, 
 one can check it by writing down the coisson product.)
The following argument has some overlap with \cite[Remark 3.2.3]{HK}, from which 
we borrow some symbols. 

Let $\scH_W=k[\pd,\delta^1,\dotsc,\delta^N]$ be the commutative superalgebra over 
the base field $k$ generated by an even variable $\pd$ and odd variables $\delta^i$
for $i \in [N]$.
We regard $\scH_W$ as a Hopf superalgebra with comultiplication 
$\Delta(\pd) \ceq \pd \otimes 1 + 1 \otimes \pd$, 
$\Delta(\delta^i) \ceq \delta^i \otimes 1 + 1 \otimes \delta^i$ and 
counit $\ep(\pd)=\ep(\delta^i) \ceq 0$.
On the category $\sfM$ of right $\scH_W$-modules, 
we can define the \emph{$*$-pseudo-tensor structure} 
in a similar way as the even case \cite{BD}, \cite[\S 3.1]{Mal}.
A Lie algebra object $L$ in the corresponding pseudo-tensor category $\sfM^*$ 
is called a \emph{$\text{Lie}^*$ algebra}.
The $\text{Lie}^*$-bracket $[\cdot,\cdot]$ is an element of 
$\Hom_{\scH_W^2}(L \otimes_k L, L \otimes_{\scH_W} \scH_W^2)$,
where $\scH_W^2$ denotes the tensor product algebra $\scH_W \otimes_k \scH_W$,
and we regard it as a right $H$-module by $\Delta$.
Identifying $L \otimes_{\scH_W} \scH_W^2$ with $L[\pd_1,\delta_1^i]$,
where $\pd_1 \ceq \pd \otimes 1$ and $\delta_1^i \ceq \delta^i \otimes 1$, 
we can write $[\cdot,\cdot]$ as 
$[a,b] = \sum_{(j|J), j \ge 0} (a_{[j|J]}b)\pd_1^j \delta_1^J$ for some $a_{[j|J]}b \in L$.
As in the even case, such a $\text{Lie}^*$ algebra is equivalent to our $N_W=N$ SUSY VLA, 
and also to an $N_W=N$ SUSY Lie conformal algebra in \cite{HK}.

The category $\sfM$ has a tensor structure $\otimes^!$
coming from the comultiplication $\Delta$ on $\scH_W$,
and the corresponding tensor category is denoted by $\sfM^!$.
A commutative algebra object in $\sfM^!$ is called a $\text{commutative}^!$ algebra.
A \emph{coisson algebra} $C$ is a $\text{Lie}^*$ algebra and 
a $\text{commutative}^!$ algebra such that the multiplication $C \otimes^! C \to C$
is a $\text{Lie}^*$ algebra morphism.

Now we can restate \cref{prp:NWlv0}.
Let $(P,\{\cdot,\cdot\})$ be a Poisson superalgebra of parity $N \bmod 2$.
Note that $P^O \simeq P \otimes_k \scH_W$ with operators $T$ and $S^i$ corresponding to 
$\pd$ and $\delta^i$, respectively.
The statement is that $P \otimes_k \scH_W$ has a coisson structure 
whose $\text{Lie}^*$ bracket $\{\cdot,\cdot\}$ is given by
$\{u \pd^l \delta^L, v \pd^m \delta^M\} =\{u,v\}\pd_1^m \delta_1^L \pd_2^m \delta_2^M$ 
for $u,v \in P$, $l,m \in \bbN$ and $L,M \subset [N]$, 
where $\pd_2 \ceq 1 \otimes \pd$ and $\delta_2^i \ceq 1 \otimes \delta^i$.
\end{rmk}

\subsubsection{$N_K=N$ SUSY vertex Poisson algebras}

Here we give an $N_K=N$ analogue of \cref{sss:NWL} and \cref{sss:NWP}.
We begin with:

\begin{dfn}\label{dfn:NKL}
An \emph{$N_K=N$ SUSY vertex Lie algebra}is a data $(V,S^i,Y_-)$ consisting of
\begin{itemize}[nosep]
\item 
a linear superspace $V$,
\item
$N$ odd operators $S^i \in \iEnd(V)_{\ol{1}}$ for $i \in [N]$, and 
\item
an even linear map $Y_-(\cdot,Z)\colon V \to \Hom(V,Z^{-1}V[Z^{-1}])$,
\end{itemize}
which satisfies the following axioms.
\begin{enumerate}[nosep]
\item 
Translation invariance: $Y_-(S^i a,Z) = D^i_Z Y_-(a,Z)$.
\item
Skew-symmetry:
$Y_-(a,Z)b = \Sing\bigl((-1)^{p(a) p(b)} e^{Z \nabla}Y(b,-Z)a\bigr)$
with $Z \nabla \ceq z T+\sum_{i=1}^N \zeta^i S^i$, $T \ceq (S^1)^2=\dotsb=(S^N)^2$.

\item
Supercommutator:
$[a_{(m|M)},Y_-(b,Z)]=\Sing\bigl(Y(e^{-Z \nabla} a_{(m|M)} e^{Z \nabla} b,Z)\bigr)$,
where $a_{(m|M)}$ denotes the Fourier mode of the expansion 
$Y_-(a,Z) = \sum_{(m|M), m \ge 0}Z^{-m-1|N \sm M}a_{(m|M)}$.
\end{enumerate}
We often abbreviate the word ``vertex Lie algebra'' to VLA.
\end{dfn}

As in \cref{lem:NWL}, we have:

\begin{lem}
Given an $N_K=N$ SUSY VA $(V,\vac,S^i,Y)$, we have an $N_K=N$ SUSY VLA $(V,S^i,Y_-)$ 
by setting $Y_-(a,Z) \ceq \Sing(Y(a,Z))$.
\end{lem}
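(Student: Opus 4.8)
The plan is to follow the structure of the proof of \cref{lem:NWL} verbatim, verifying each of the three axioms of \cref{dfn:NKL} by applying $\Sing$ to the corresponding global identity for $Y$ and using that $\Sing$ commutes with the operators appearing in each axiom. The underlying reason $\Sing$ is compatible with these operations is that it simply retains the monomials of strictly negative $z$-degree, and each relevant operator preserves the sign of the $z$-degree.

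For translation invariance I would first observe that $D_Z^i=\pd_{\zeta^i}+\zeta^i\pd_z$ commutes with $\Sing$: the summand $\pd_{\zeta^i}$ leaves the $z$-degree unchanged, while $\zeta^i\pd_z$ lowers it by one and annihilates the constant term, so the regular and singular parts are each preserved. Hence, feeding in the $N_K$ translation relation $Y(S^ia,Z)=D_Z^iY(a,Z)$ (the superfield form of \eqref{eq:NK-TSa}), one gets $Y_-(S^ia,Z)=\Sing\bigl(D_Z^iY(a,Z)\bigr)=D_Z^iY_-(a,Z)$. The point I would flag carefully is that this relation, and \emph{not} the defining relation \eqref{eq:NK-trs} $[S^i,Y(a,Z)]=\ol{D}_Z^iY(a,Z)$, is the correct input: in the $N_K$ setting one has $Y(S^ia,Z)=D_Z^iY(a,Z)$ whereas $[S^i,Y(a,Z)]=\ol{D}_Z^iY(a,Z)$, and $D_Z^i\neq\ol{D}_Z^i$, as emphasised in the remark following that relation. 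Skew-symmetry is then immediate: $Y_-(a,Z)b=\Sing\bigl(Y(a,Z)b\bigr)$ by construction, and substituting \cref{fct:skew} inside $\Sing$ reproduces axiom~(2).

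For the supercommutator I would start from the $N_K$ commutator formula \cref{fct:NKcmt}. Since each mode $a_{(m|M)}$ is a $Z$-independent operator it commutes with $\Sing$, so $[a_{(m|M)},Y_-(b,Z)]=\Sing\bigl([a_{(m|M)},Y(b,Z)]\bigr)$; taking $l=m\ge0$ and $L=M$ in \cref{fct:NKcmt} and applying $\Sing$ expresses the left side as the singular part of a sum of terms $D_Z^{(j|J)}Z^{m|M}Y(a_{(j|J)}b,Z)$. It then remains to identify this with $\Sing\bigl(Y(e^{-Z\nabla}a_{(m|M)}e^{Z\nabla}b,Z)\bigr)$, which amounts to recognising the conjugation $e^{-Z\nabla}(\cdot)e^{Z\nabla}$ as the operator-level avatar of the translation factor $D_W^{(j|J)}W^{l|L}$, exactly as in the even case of \cite[\S2]{Li}. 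I expect this last identification—matching the sign $\wt{\sigma}$ and the exponents $\alpha,\beta$ of \cref{fct:NKcmt} against the conjugated mode—to be the main obstacle: the derivation is routine but sign-heavy, and I would carry out the bookkeeping explicitly rather than assert it by analogy with the $N_W$ case, since the $N_K$ delta-function identities \eqref{eq:NKDdel} and \eqref{eq:NKexp} differ from their $N_W$ counterparts.
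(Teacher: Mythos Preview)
Your proposal is correct and follows the same approach as the paper. The paper does not actually write out a proof for this $N_K=N$ lemma, but the analogous $N_W=N$ case (\cref{lem:NWL}) is proved by a one-line citation of the translation invariance, \cref{fct:NWskew}, and \cref{fct:NWcmt}; your plan is precisely the $N_K$ analogue of this, citing the identity $Y(S_K^ia,Z)=D_Z^iY(a,Z)$, \cref{fct:skew}, and \cref{fct:NKcmt}, together with the observation that $\Sing$ commutes with the relevant operators. Your care in distinguishing $Y(S_K^ia,Z)=D_Z^iY(a,Z)$ from the defining relation $[S_K^i,Y(a,Z)]=\ol{D}_Z^iY(a,Z)$ is well placed and is exactly the subtlety the paper flags. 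The sign-heavy bookkeeping you anticipate for axiom~(3) is not really needed: once you have $[a_{(m|M)},Y_-(b,Z)]=\Sing\bigl([a_{(m|M)},Y(b,Z)]\bigr)$, the identification with $\Sing\bigl(Y(e^{-Z\nabla}a_{(m|M)}e^{Z\nabla}b,Z)\bigr)$ follows directly from expanding the conjugation via the recursions \eqref{eq:NKrecS}, \eqref{eq:NKrecT} and comparing term-by-term with \cref{fct:NKcmt}, without unpacking the explicit signs $\wt{\sigma}$, $\alpha$, $\beta$.
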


Our \cref{dfn:NKL} is equivalent to the notion of an $N_K=N$ SUSY Lie conformal algebra
in \cite[Definition 4.10]{HK}. For later reference, we record the Jacobi identity.
Recall \cref{ntn:NKLam} of the $\Lambda$-bracket.
In particular, $\scL$ denotes the superalgebra generated by even $\lambda$ and odd $\chi^i$.
Note that the $\Lambda$-bracket makes sense for an $N_K=N$ SUSY VLA $V=(V,S^i,Y_-)$: we have 
\[
 [a_\Lambda b] = \sum_{(m|M), \, m \ge 0} 
  \sigma(M,N \sm M) (-1)^{\binom{\abs{M}+1}{2}} \frac{1}{m!} \Lambda^{m|M} a_{(m|M)}b.
\]
for $a,b \in V$.
Also, let $\scL'$ be the superalgebra generated by even $\gamma$ and odd $\eta^i$'s 
subject to the relation $[\gamma,\eta^i]=0$ and $[\eta^i,\eta^j]=-2\gamma \delta_{i,j}$, 
and define $[a_\Gamma b]$ similarly using $\Gamma^{m|M} \ceq \gamma^m \eta^M$.

\begin{fct}[{c.f.\ \cite[Definition 4.10]{HK}}]\label{fct:VL-Jac}
Let $V$ be an $N_K=N$ SUSY VLA. For $a,b,c \in V$ of pure parity, 
we have the following identity in $\scL \otimes \scL' \otimes V$.
\[
 [a_\Lambda [b_\Gamma c]] = (-1)^{(p(a)+1)N}[[a_\Lambda b]_{\Gamma+\Lambda}c]
 +(-1)^{(p(a)+N)(p(b)+N)}[b_\Gamma[a_\Lambda c]].
\]
\end{fct}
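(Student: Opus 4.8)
The plan is to derive the identity directly from the supercommutator axiom of \cref{dfn:NKL}, recognizing the asserted Jacobi identity as nothing but the $\Lambda$-bracket reformulation of that axiom; conceptually this is the content of the equivalence between \cref{dfn:NKL} and the $N_K=N$ SUSY Lie conformal algebra of \cite[Definition 4.10]{HK}, under which the three axioms (translation invariance, skew-symmetry, supercommutator) match the sesquilinearity, skew-symmetry, and Jacobi axioms of loc.\ cit. First I would set up the generating-function bookkeeping: writing $a_\Lambda \ceq \sum_{(m|M),\,m\ge0}\sigma(M,N\sm M)(-1)^{\binom{\abs{M}+1}{2}}\tfrac{1}{m!}\Lambda^{m|M}a_{(m|M)}$ as an $\scL$-dressed operator (and the analogous $b_\Gamma$ over $\scL'$), so that $[a_\Lambda b]$ is $a_\Lambda$ applied to $b$ and both sides of the claim become expressions in $\scL\otimes\scL'\otimes V$ assembled from the Fourier modes $a_{(m|M)},b_{(n|Q)}$ acting on $c$.

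The key steps, in order, are as follows. First, expand the left-hand side: applying the $\Lambda$-dressing to the supercommutator of \cref{dfn:NKL}, the term $a_{(m|M)}\bigl(Y_-(b,Z)c\bigr)$ assembles into $[a_\Lambda[b_\Gamma c]]$, with $Z$ carrying the $\Gamma$-weights through the expansion $Y_-(b,Z)c=\sum_{(n|Q)}Z^{-1-n|N\sm Q}b_{(n|Q)}c$, while the opposite-order term $Y_-(b,Z)\bigl(a_{(m|M)}c\bigr)$ assembles into $[b_\Gamma[a_\Lambda c]]$ up to a Koszul sign. Second, for the right-hand side I would compute the conjugation $e^{-Z\nabla}a_{(m|M)}e^{Z\nabla}$ using the recursions \eqref{eq:NKrecS} and \eqref{eq:NKrecT} for $[S_K^i,a_{(m|M)}]$ and $[T,a_{(m|M)}]$; this conjugation shifts the mode $(m|M)$ by the odd derivations $D_Z^i$, so that after dressing with $\Lambda$ and applying $\Sing\circ Y_-(\cdot,Z)$ the Fourier modes of $a_{(m|M)}b$ reorganize exactly into the outer bracket evaluated at the shifted spectral parameter, namely $[[a_\Lambda b]_{\Gamma+\Lambda}c]$ (here $\Gamma+\Lambda$ means $\gamma\mapsto\gamma+\lambda$, $\eta^i\mapsto\eta^i+\chi^i$, which is precisely the effect of the $Z$-translation on the generating series). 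Collecting the three assembled pieces reproduces the stated identity.

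The main obstacle will be the sign bookkeeping rather than any structural difficulty. The odd variables $\zeta^i$, together with the odd generators $\chi^i\in\scL$ and $\eta^i\in\scL'$ (subject to $[\chi^i,\chi^j]=-2\lambda\delta_{i,j}$ and the analogous relation for $\eta^i$), and the global parity shift by $N\bmod 2$ built into the $N_K=N$ bracket, all contribute Koszul factors; tracking them is exactly what produces the prefactors $(-1)^{(p(a)+1)N}$ and $(-1)^{(p(a)+N)(p(b)+N)}$. I would pin these down using the same sign conventions as the Fourier-mode commutator formula \cref{fct:NKcmt} and the iterate formula \cref{lem:NKiter}, and sanity-check against the $N=1$ case and against the parallel $N_W=N$ statement \cref{fct:NWL-Jac}, whose right-hand side carries the identical sign structure. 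Should the direct tracking prove unwieldy, an alternative is to verify the identity first for those $N_K=N$ SUSY VLAs arising as $Y_-=\Sing\,Y$ from a SUSY VA---where \cref{fct:NKcmt} applies verbatim---and then transfer it to an arbitrary VLA through the equivalence with $N_K=N$ SUSY Lie conformal algebras.
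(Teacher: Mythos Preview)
The paper does not prove this statement: it is recorded as a \emph{Fact} citing \cite[Definition 4.10]{HK}, immediately after the remark that \cref{dfn:NKL} is equivalent to the $N_K=N$ SUSY Lie conformal algebra of loc.\ cit. In Heluani--Kac the Jacobi identity is an \emph{axiom} of the Lie conformal algebra, so from the paper's standpoint there is nothing to prove beyond the (asserted, not written out) equivalence of the two definitions.

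Your proposal is precisely the argument one would give to establish that equivalence in the direction ``VLA $\Rightarrow$ Lie conformal Jacobi'': dress the supercommutator axiom of \cref{dfn:NKL} with the $\Lambda$- and $\Gamma$-generating functions and identify the three terms. The outline is sound, and you correctly flag the sign tracking (arising from the parity shift by $N\bmod 2$ and the noncommutativity $[\chi^i,\chi^j]=-2\lambda\delta_{i,j}$ in $\scL_K$) as the only real work. Your fallback---checking first for VLAs of the form $\Sing Y$ via \cref{fct:NKcmt} and then transferring via the equivalence---is circular as stated, since the equivalence is what is at stake; but the direct computation you describe first is adequate and is what the paper implicitly relies on when it asserts the equivalence.
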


\begin{dfn}[{c.f.\ \cite[\S 4.10]{HK}}]\label{dfn:NKP}
An \emph{$N_K=N$ SUSY vertex Poisson algebra} ($N_K=N$ SUSY VPA for short) 
is a data $(V,\vac,S^i,Y_+,Y_-)$ consisting of 
\begin{itemize}[nosep]
\item 
a commutative $N_K=N$ SUSY VA $(V,\vac,S^i,Y_+)$,
\item
an $N_K=N$ SUSY VLA $(V,S^i,Y_-)$
\end{itemize}
such that the vertex Lie structure $Y_-$ is a derivation for 
the commutative superalgebra structure $a b = a_{(-1|N)}b$ coming from $Y_+$
(see \eqref{eq:VPA}).
\end{dfn}

We also have the notion of a \emph{module} $M$ over an $N_K=N$ SUSY VPA $V$.
The VLA action of $V$ on $M$ is denoted as \eqref{eq:VLact}.

\subsubsection{$N_K=N$ SUSY VPA structure on superconformal jet algebra}

We have an $N_K=N$ analogue of \cref{ss:sjVPA}.
Recall the $1|N$-superconformal jet algebra 
\[
 A^{\Osc} = (A^O,S^i) = (\dR{A_\infty}{},S_K^i),
\]  
for a commutative superalgebra $A$ (\cref{dfn:scj}). It has a structure of 
a commutative $N_K=N$ SUSY VA (\cref{prp:sjCV}), and if $A$ is moreover a Poisson superalgebra, 
then this structure can be enhanced to a vertex Poisson structure.

\begin{prp}\label{prp:NKlv0}
Let $(P,\{\cdot,\cdot\})$ be a Poisson superalgebra of parity $N \bmod 2$,
and $P^{\Osc}=(P^O,S_K^i)$  be the superconformal jet algebra of $P$.
Then $P^{\Osc}$ has an $N_K=N$ SUSY VPA structure such that 
\begin{align}\label{eq:NKlv0}
 u_{(m|M)}v = \begin{cases} \{u,v\} & (m=0,M=[N]) \\ 0 & (\text{otherwise})\end{cases}
\end{align}
for $u,v \in P \subset P^O$, where we used the expansion \eqref{eq:VPA-exp} of $Y_-$.
We call is the \emph{level $0$ SUSY VPA structure}.
\end{prp}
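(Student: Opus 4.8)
The plan is to reproduce the proof of \cref{prp:NWlv0} line by line, replacing every piece of $N_W=N$ machinery by its $N_K=N$ counterpart. The one ingredient that is not yet available is an $N_K=N$ analogue of \cref{lem:NW3.6}, so I would first formulate and prove it. For a unital commutative superalgebra $A$ carrying odd derivations $S_K^i$ that satisfy the relations $[S_K^i,S_K^j]=2\delta_{i,j}T$, $[T,S_K^i]=0$ of \cref{lem:STL}, together with a sub-superspace $B$ generating the differential superalgebra $(A,S_K^i)$ in the sense of \cref{dfn:dsa}, the lemma would assert: part~(i), any $Y_-(\cdot,Z)\colon A\to Z^{-1}(\Der A)\dbr{Z}$ with $Y_-(1,Z)=0$, satisfying the $N_K$ translation invariance $Y_-(S_K^i a,Z)=D_Z^i Y_-(a,Z)$ and $[S_K^i,Y_-(a,Z)]=\ol{D}_Z^i Y_-(a,Z)$ and the skew-symmetry \eqref{eq:NW3.26} for $a\in A$, $b\in B$, defines an $N_K=N$ SUSY VPA structure (\cref{dfn:NKP}); and part~(ii), a seed $Y^0_-(\cdot,Z)\colon B\to Z^{-1}(\iHom(B,A))\dbr{Z}$ obeying \eqref{eq:NW3.54} extends uniquely to such a $Y_-$. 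As in \cite{Li04} and in the $N_W=N$ case, the existence and uniqueness in part~(ii) would be obtained by propagating the seed off $B$ via translation invariance and checking consistency with the $N_K$ iterate formula \cref{lem:NKiter}, while part~(i) merely repackages the resulting operations; the inputs are the $N_K$ skew-symmetry \cref{fct:skew} and commutator formula \cref{fct:NKcmt}.

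With this lemma available I would apply part~(ii) to $B=P$, $A=P^{\Osc}=(P^O,S_K^i)$, and the seed $Y^0_-(b,Z)b'\ceq z^{-1}\{b,b'\}$, i.e.\ $b_{(0|N)}b'\ceq\{b,b'\}$ with all other seed modes zero. The only point requiring verification is that this seed satisfies \eqref{eq:NW3.54}, and this is precisely where the hypothesis that $\{\cdot,\cdot\}$ is a Poisson bracket \emph{of parity $N\bmod 2$} (\cref{dfn:sP}) is used: the sign $(-1)^{p(b)p(b')+N}$ in the skew-symmetry of $\{\cdot,\cdot\}$ must match the sign produced by applying $(-1)^{p(b)p(b')}e^{Z\nabla}$ and then $\Sing$ to $Y^0_-(b',-Z)b$. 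The unique extension $Y_-$ then endows $P^{\Osc}$ with an $N_K=N$ SUSY VPA structure by part~(i), and its level-$0$ modes are exactly \eqref{eq:NKlv0} by construction, giving the claim.

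The main obstacle is proving the $N_K=N$ Li lemma, and inside it the single feature distinguishing this case from \cref{prp:NWlv0}: the asymmetry $Y(S_K^i a,Z)=D_Z^i Y(a,Z)\neq\ol{D}_Z^i Y(a,Z)=[S_K^i,Y(a,Z)]$ recorded in the Remark following \cref{fct:Sder}. In the $N_W=N$ case both translation-invariance identities involve the same operator $\pd_{\zeta^i}$, so the recursion defining $a_{(m|M)}$ off the generators and the skew-symmetry check are sign-uniform; here one must carry $D_Z^i=\pd_{\zeta^i}+\zeta^i\pd_z$ and $\ol{D}_Z^i=\pd_{\zeta^i}-\zeta^i\pd_z$ separately throughout. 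I expect this to interact delicately with the more intricate combinatorics of \cref{lem:NKiter} (the double sum over $M\subset J\cap K$ and the falling-factorial factor $\df{l}{\cdot}$), so the sign bookkeeping in the extension step is the part most likely to demand care, whereas the concluding application to $P^{\Osc}$ is routine.
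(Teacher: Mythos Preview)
Your proposal is correct and matches the paper's approach essentially line by line: the paper states precisely the $N_K=N$ analogue of \cref{lem:NW3.6} that you describe (as \cref{lem:NK3.6}, with conditions \ref{i:NKlv0:c2} $Y_-(S_K^i a,Z)=D_Z^i Y_-(a,Z)$ and \ref{i:NKlv0:c3} $[S_K^i,Y_-(a,Z)]=\ol{D}_Z^i Y_-(a,Z)$), and then says the proof of \cref{prp:NKlv0} is ``similar to \cref{prp:NWlv0}, using the following lemma.'' Your identification of the $D_Z^i$ versus $\ol{D}_Z^i$ asymmetry as the only genuinely new feature is accurate, though the paper simply asserts (as for \cref{lem:NW3.6}) that the arguments of \cite{Li04} carry over without spelling out the bookkeeping.
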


The proof is similar to \cref{prp:NWlv0}, using the following lemma.

\begin{lem}\label{lem:NK3.6}
Let $A$ be a unital commutative superalgebra equipped with odd derivations 
$S_K^i$ for $i \in [N]$ subject to the commutation relation (c.f.\ \cref{lem:STL})
\[
 [S_K^i,S_K^j] = 2 T \delta_{i,j}, \quad [S_K^i,T] = 0.
\]
Also, let $B \subset A$ be a sub-superspace generating 
the differential algebra $(A,S_K^i)$ (see \cref{dfn:dsa}).
\begin{enumerate}[nosep]
\item \label{i:NK3.6:1}
Assume that there is a morphism of linear superspaces 
$Y_-(\cdot,Z)\colon A \to Z^{-1}(\Der A)\dbr{Z}$ satisfying 
\begin{enumerate}[nosep,label=(\roman*)]
\item \label{i:NKlv0:c0} $Y_-(a,Z)a' \in Z^{-1}A[Z^{-1}]$, 
\item \label{i:NKlv0:c1} $Y_-(1,Z)=0$, 
\item \label{i:NKlv0:c2} $Y_-(S_K^i a,Z)=D_Z^i Y_-(a,Z)$,
\item \label{i:NKlv0:c3} $[S_K^i,Y_-(a,Z)]=\ol{D}_Z^i Y_-(a,Z)$
\end{enumerate}
for $a,a' \in A$ and $i \in [N]$.
Further assume that for $a \in A$ and $b \in B$ of pure parity, we have
\begin{align}\label{eq:NK3.26}
 Y_-(a,Z)b = \Sing\bigl((-1)^{p(a)p(b)}e^{Z \nabla}Y_-(b,-Z)a\bigr),
\end{align}
where $\Sing$ is given in \eqref{eq:Sing},
and $Z \nabla \ceq z T+\sum_{i=1}^N \zeta^i S_K^i$ as in \cref{dfn:NKL}.
Then $Y_-$ gives an $N_K=N$ VPA structure on $A$.

\item \label{i:Li04:3.6:2}
Assume that there is a morphism of linear superspaces 
$Y^0_-(\cdot,Z)\colon B \to Z^{-1}(\iHom(B,A))\dbr{Z}$ satisfying
\begin{align}\label{eq:Li04:3.54}
 Y^0_-(b,Z)b' = \Sing\bigl((-1)^{p(b)p(b')}e^{Z \nabla}Y_-(b',-Z)b\bigr) 
 \in Z^{-1} A[Z^{-1}]
\end{align}
for $b,b' \in B$ of pure parity. Then $Y^0_-$ extends uniquely to 
$Y_-(\cdot,Z)\colon A \to Z^{-1}(\Der A)\dbr{Z}$ satisfying the conditions 
\ref{i:NKlv0:c0}--\ref{i:NKlv0:c3} and \eqref{eq:NK3.26} in \eqref{i:NK3.6:1}.
\end{enumerate}
\end{lem}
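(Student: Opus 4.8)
The plan is to run the proof of \cref{lem:NW3.6} essentially verbatim in structure, replacing the even translation operators $\pd_z$ and $\pd_{\zeta^i}$ by the $N_K$ operators $D_Z^i=\pd_{\zeta^i}+\zeta^i\pd_z$ and $\ol{D}_Z^i=\pd_{\zeta^i}-\zeta^i\pd_z$, and replacing the freeness of the pair $(T,S^i)$ by the Clifford-type relations of \cref{lem:STL}. Both halves descend ultimately from \cite[Theorem 3.6, Proposition 3.10]{Li04}; I indicate only the points where the $N_K$ structure forces genuine changes.

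For part \eqref{i:Li04:3.6:2} (the extension) I would first use that, since $B$ generates $(A,S_K^i)$ as a differential superalgebra, every element of $A$ is a polynomial in the translates $S_K^L b$ with $b\in B$ and $L\subset[N]$, where $S_K^L\ceq S_K^{l_1}S_K^{l_2}\dotsm$ for ordered $L$. I then define $Y_-$ by forcing \ref{i:NKlv0:c2}, i.e.\ $Y_-(S_K^i a,Z)\ceq D_Z^i Y_-(a,Z)$ on differential translates, and by forcing the Leibniz rule \eqref{eq:VPA} on products. The real content is well-definedness: the defining relations among the $S_K^i$ must be matched by identities among the $D_Z^i$. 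This is exactly the case, since $(D_Z^i)^2=\pd_z$ and $D_Z^iD_Z^j+D_Z^jD_Z^i=0$ for $i\ne j$ mirror $(S_K^i)^2=T$ and $[S_K^i,S_K^j]=0$ of \cref{lem:STL}, so the two sides of each defining relation are carried to the same operator on $Z^{-1}(\Der A)\dbr{Z}$; applying \ref{i:NKlv0:c2} twice also yields $Y_-(Ta,Z)=\pd_z Y_-(a,Z)$, the correct $N_K$ translation invariance for $T$. Conditions \ref{i:NKlv0:c0} and \ref{i:NKlv0:c1} are preserved under $D_Z^i$ and the Leibniz rule, hence propagate from $B$ to all of $A$.

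The genuinely new point, compared with \cref{lem:NW3.6}, is the commutator condition \ref{i:NKlv0:c3}, which involves $\ol{D}_Z^i$ rather than $D_Z^i$; this asymmetry is the $N_K$ analogue of the inequality $Y(S_K^i a,Z)=D_Z^iY(a,Z)\ne\ol{D}_Z^iY(a,Z)=[S_K^i,Y(a,Z)]$. I would derive \ref{i:NKlv0:c3} from \ref{i:NKlv0:c2} together with skew-symmetry \eqref{eq:NK3.26}: writing $Y_-(a,Z)b$ in terms of $Y_-(b,-Z)a$ and applying $S_K^i$ as an odd derivation on the module $A$ (the analogue of \cref{fct:Sder}), the reflection $Z\mapsto -Z$ built into skew-symmetry converts the $D$-type action on the right-hand variable into a $\ol{D}$-type action, producing precisely $\ol{D}_Z^i$. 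Establishing \eqref{eq:NK3.26} itself for all of $A$ (not merely $A\times B$) is by induction on the generation degree in the $S_K^i$: one checks that both sides of \eqref{eq:NK3.26} transform identically under $S_K^i$, using \ref{i:NKlv0:c2}, \ref{i:NKlv0:c3} and the derivation property, so that agreement on $B$ forces agreement everywhere.

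For part \eqref{i:NK3.6:1} I would take the $Y_-$ of the hypothesis and verify the axioms of an $N_K=N$ SUSY VLA (\cref{dfn:NKL}) together with the Poisson compatibility \eqref{eq:VPA}. Translation invariance is exactly \ref{i:NKlv0:c2}; the derivation axiom \eqref{eq:VPA} holds because $Y_-$ is valued in $\Der A$ by hypothesis; full skew-symmetry follows from \eqref{eq:NK3.26} on $A\times B$ by the same induction as above; and the remaining supercommutator (Jacobi-type) axiom I would deduce from skew-symmetry and the Leibniz rule, as in \cite{Li04}. The main obstacle throughout is bookkeeping rather than any conceptual difficulty: tracking the Koszul signs coming from the parities $p(a),p(b)$, the cardinalities $\abs{J}$, and the ambient parity $N\bmod 2$ through the $Z\mapsto -Z$ reflection — exactly the sign analysis already performed in \cref{lem:NKiter} and \cref{fct:Sder} — since the $N_K$ relations of \cref{lem:STL} align perfectly with the operator identities among the $D_Z^i$ and $\ol{D}_Z^i$.
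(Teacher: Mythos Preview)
Your proposal is correct and follows the same approach as the paper, which for \cref{lem:NK3.6} gives no proof at all and for the parallel \cref{lem:NW3.6} simply writes ``The arguments \cite[Theorem 3.6, Proposition 3.10]{Li04} works with little modification. We omit the detail.'' Your sketch supplies considerably more detail than the paper does, and in particular your identification of the one genuinely new point --- that the matching of the Clifford relations $[S_K^i,S_K^j]=2T\delta_{i,j}$ with $(D_Z^i)^2=\pd_z$, $D_Z^iD_Z^j+D_Z^jD_Z^i=0$ is what makes the extension well-defined, and that the $Z\mapsto -Z$ reflection in skew-symmetry is what converts $D_Z^i$ into (a sign times) $\ol{D}_Z^i$ --- is exactly the content the paper leaves implicit.
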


\begin{rmk}
As in the $N_W=N$ case (\cref{rmk:NWlv0-1}),
we have an explicit form of the VPA structure.
We will describe  $u_{(m|M)}(S^L T^l v)$ for $u,v \in P$ of pure parity. 
The condition \ref{i:NKlv0:c3} in \cref{lem:NK3.6} is equivalent to 
(recall \eqref{eq:NKrecS} and \eqref{eq:NKrecT})
\begin{align*}
&[S^i,u_{(m|M)}] = 
 \begin{cases} \sigma(N \sm M,e_m)a_{(m|M \sm e_i)} & (i \in M) \\
 -\sigma(N \sm (M \cup e_i),e_m) m a_{(m-1|M \cup e_i)} & (i \notin M) \end{cases}, \\
&[T,u_{(m|M)}] = -m u_{(m-1|M)}
\end{align*}
with the convention $u_{(-1|M)} \ceq 0$. 
These are recursion for the desired $u_{(m|M)}(S^L T^l v)$, and using \eqref{eq:NKlv0} 
as the initial condition, we can solve them to obtain:
\begin{itemize}[nosep]
\item In case $l \ge m$, we set 
\begin{align*}
 u_{(m|N)}(S^L T^{(l)}v) &\ceq (-1)^{\abs{L}p(u)} S^L T^{(l-m)}\{u,v\}, \\
 u_{(m|N \sm L)}(S^L T^{(l)}v) &\ceq (-1)^{\abs{L}p(u)+\lfloor \abs{L}/2 \rfloor}
 \bigl(\tprd_{i=1}^{\abs{L}}\sigma(\{l_{i+2},l_{i+2},\dotsc\},e_{l_i})\bigr) 
 T^{(l-m+\abs{I})}\{u,v\},
\end{align*}
and otherwise $u_{(m|M)}(T^{(l)} S^L v) \ceq 0$.
\item
In case $l<m$, we set $u_{(m|M)}(T^{(l)} S^L v) \ceq 0$.
\end{itemize}
\end{rmk}

\begin{rmk}
We also have another description of the level $0$ SUSY VPA structure via chiral algebra
analogous to \cref{rmk:NWlv0-2}. 
Instead of using the commutative superalgebra $H=k[\pd,\sld^i]$,
we use the non-commutative superalgebra 
$\scH=k\langle \sld_K^i \rangle$ generated by odd variables $\sld_K^i$ 
subject to the relation $[\sld_K^i,\sld_K^i]=[\sld^j,\sld_K^j]$ and 
$[\sld_K^i,\sld_K^j]=0$ for $i \neq j$.
Let us denote $\pd \ceq \hf [\sld_K^i,\sld_K^i]$.
We regard $\scH$ as a Hopf superalgebra, similarly as $H$.
The category $\sfM$ of right $\scH$-modules has the $*$-pseudo-tensor structure,
and the $\text{Lie}^*$-bracket $[\cdot,\cdot]$ on an object $L \in \sfM$ 
is an element of $\Hom_{\scH^2}(L \otimes_k L, L \otimes_{\scH} \scH^2)$.
Under the identification $L \otimes_{\scH} \scH^2 \cong L\langle \sld_1^i \rangle$
with $\sld_1^i \ceq \sld_K^i \otimes 1$, we can write $[\cdot,\cdot]$ as 
$[a,b] = \sum_{(j|J),j \ge 0} (a_{[j|J]}b)\pd_1^j \sld_1^J$ for some $a_{[j|J]}b \in L$.
A $\text{Lie}^*$ algebra is equivalent to an $N_K=N$ SUSY VLA, 
and also to an $N_K=N$ SUSY Lie conformal algebra in \cite{HK}.
The equivalence is given by $a_{[2j|J]}b = a_{(j|J)}b$.

The category $\sfM_K$ has a tensor structure $\otimes^!$
coming from the comultiplication $\Delta$ on $\scH$,
and has a compound tensor structure $(\otimes^*,\otimes^!)$. 
As a result, we have the notion of a coisson algebra on $\sfM_K$.

Now we can restate \cref{prp:NKlv0}.
Let $(P,\{\cdot,\cdot\})$ be an odd Poisson superalgebra. Note that 
$P^O \simeq P \otimes_k \scH$ with the odd operators $S^i$ corresponding to $\sld_K^i$.
The statement is that $P \otimes_k \scH$ has a coisson structure 
whose $\text{Lie}^*$ bracket $\{\cdot,\cdot\}$ is given by
$\{u \pd^l \sld_K^L, v \pd^m \sld_K^M\} =\{u,v\}\pd_1^l \sld_1^L \pd_2^m \sld_2^M$ 
for $u,v \in P$, $l,m \in \bbN$ and $L,M \subset [N]$, 
where $\sld_2^i \ceq 1 \otimes \sld_K^i$.
\end{rmk}

\subsection{Li filtration of $N_W=N$ SUSY vertex algebra}\label{ss:NWLif}

In this and the next subsections, we introduce a SUSY analogue of 
Li's canonical filtration \cite[Definition 2.7]{Li}.
Let us give a brief recollection of the even case.
Let $V$ be an even vertex algebra, and $M$ be a $V$-module.
Then the Li filtration of $M$ is a decreasing sequence of subspaces
\[
 M=E_0(M) \supset E_1(M) \supset \dotsb \supset E_n(M) \supset \dotsb
\]
which consists of 
\begin{align}\label{eq:ev-Li}
 E_n(M) \ceq \spn\left\{
 a^1_{(-1-k_1)} \dotsm a^r_{(-1-k_r)} m \, \Bigg|
 \begin{array}{l}
  r \in \bbZ_{>0}, \, a^i \in V, \, m \in M, \, k_i \in \bbN \\
  \text{satisfying $k_1+\dotsb+k_r \ge n$}
 \end{array}
 \right\}.
\end{align}

Our strategy for SUSY case is to discard the odd index $J$ of $(j|J)$-operators
and to apply Li's arguments in \cite{Li}.
Hence, some of the arguments in loc.\ cit.\ work as they are, but some do not.
The points where Li's argument need to be modified are the ones using 
Borcherds' commutator formula \eqref{eq:cmt-ev} and iterate formula \eqref{eq:ev-iter}.

Hereafter until the end of this subsection, 
$V=(V,\vac,T,S^i,Y)$ denotes an $N_K=N$ SUSY VA.

The construction of Li filtration in the even case \cite{Li} starts with 
an argument \cite[Proposition 2.6]{Li} 
on a general decreasing filtration of a vertex algebra whose associated graded space
has a natural structure of a vertex Poisson algebra.
The following statement is an $N_W=N$ SUSY analogue of \cite[Proposition 2.6]{Li}.

\begin{prp}\label{prp:NW2.6}
Let $V=E_0 \supset E_1 \supset \dotsb \supset E_n \supset \dotsb$ be a decreasing 
filtration of linear sub-superspaces of $V$ such that $\vac \in E_0$ and 
\begin{align}\label{eq:NW2.6}
 a_{(j|J)}b \in E_{r+s-j-1} \quad 
 (a \in E_r, \, b \in E_s, \, j \in \bbZ, \, J \subset [N]), 
\end{align}
where we used the convention $E_n \ceq V$ for $n \in \bbZ_{<0}$.
\begin{enumerate}[nosep]
\item \label{i:NW2.6:1}
The associated graded linear superspace 
\[
 \gr_E V \ceq \tboplus_{n \in \bbN} E_n/E_{n+1}
\]
is an $N_W=N$ SUSY VA whose state-superfield correspondence is given by 
\begin{align}\label{eq:NW2.6:op}
 (a+E_{r+1})_{(j|J)}(b+E_{s+1}) \ceq a_{(j|J)}b+E_{r+s-j}
\end{align}
for $a \in E_r$, $b \in E_s$, $j \in \bbZ$ and $J \subset [N]$,
whose vacuum is given by $\vac+E_1 \in E_0/E_1$,
and whose even operator $\pd$ and odd operators $\sld^i$ are given by 
\[
 \pd(a+E_{r+1}) \ceq T a + E_{r+2}, \quad \sld^i(a+E_{r+1}) \ceq S^i a + E_{r+2}.
\]

\item
The $N_W=N$ SUSY VA $\gr_E V$ in \eqref{i:NW2.6:1} is commutative if and only if 
\begin{align}\label{eq:NW2.6:com}
 a_{(m|M)}b \in E_{r+s-m}
\end{align}
for all $a \in E_r$, $b \in E_s$, $m \in \bbN$ and $M \subset [N]$.

\item \label{i:NW2.6:3}
Under the condition \eqref{eq:NW2.6:com}, the commutative $N_W=N$ SUSY VA $\gr_E V$ 
has an $N_W=N$ SUSY VPA structure 
\begin{align}\label{eq:NW2.6:Y-}
 Y_-(a+E_{r+1},Z)(b+E_{s+1}) \ceq 
 \sum_{(m|M), \, m \ge 0}Z^{-m-1|N \sm M}(a_{(m|M)}b+E_{r+s-m+1})
\end{align}
for $a \in E_r$ and $b \in E_s$.
\end{enumerate}
\end{prp}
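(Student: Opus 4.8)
The plan is to transport the entire $N_W=N$ SUSY VA structure of $V$ to $\gr_E V$ term by term, exactly as Li does in the even case \cite[Proposition 2.6]{Li}, using the compatibility hypothesis \eqref{eq:NW2.6} for every well-definedness statement and the SUSY identities of \cref{ss:NW} for the axioms. For part \eqref{i:NW2.6:1}, the first task is well-definedness of the operations: if $a \in E_{r+1}$ then \eqref{eq:NW2.6} gives $a_{(j|J)}b \in E_{(r+1)+s-j-1}=E_{r+s-j}$, and symmetrically for $b \in E_{s+1}$, so \eqref{eq:NW2.6:op} descends to a well-defined map $E_r/E_{r+1}\otimes E_s/E_{s+1}\to E_{r+s-j-1}/E_{r+s-j}$. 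For $\pd$ and $\sld^i$ I would use the vacuum recursions $T a = a_{(-2|N)}\vac$ and $S^i a = a_{(-1|N\sm e_i)}\vac$ coming from \eqref{eq:NWvac2} and \eqref{eq:NWrec}, and read off the induced filtration shifts from \eqref{eq:NW2.6}. Granting well-definedness, each SUSY VA axiom for $\gr_E V$ reduces to the corresponding axiom of $V$: the vacuum axiom follows from \eqref{eq:NWvac2}; translation invariance from \eqref{eq:NWrec} together with \cref{fct:NW-1}; and locality because $(z-w)^n[Y(a,Z),Y(b,W)]=0$ in $(\End V)\dbr{Z^{\pm1},W^{\pm1}}$ forces the same identity after projecting each coefficient to the relevant graded piece, the same $n$ serving on $\gr_E V$. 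One also checks that $Y(\cdot,Z)$ takes values in superfields, i.e. $Y(u,Z)v\in(\gr_E V)\dpr{Z}$, which is inherited from $V$.

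For part \eqref{i:NW2.6:2} I would invoke the commutativity criterion \cref{lem:com}: $\gr_E V$ is commutative iff $Y(u,Z)\in(\End \gr_E V)\dbr{Z}$ for all $u$, i.e. iff every Fourier mode $u_{(m|M)}$ with $m\ge 0$ vanishes on $\gr_E V$. By \eqref{eq:NW2.6:op}, $(a+E_{r+1})_{(m|M)}(b+E_{s+1})=a_{(m|M)}b+E_{r+s-m}$, which represents $0$ in $E_{r+s-m-1}/E_{r+s-m}$ exactly when $a_{(m|M)}b\in E_{r+s-m}$; this is precisely \eqref{eq:NW2.6:com}. So part \eqref{i:NW2.6:2} is a direct translation of \cref{lem:com}.

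For part \eqref{i:NW2.6:3}, under \eqref{eq:NW2.6:com} the modes with $m\ge 0$ now land one filtration level deeper, so $Y_-$ in \eqref{eq:NW2.6:Y-} is well-defined (apply \eqref{eq:NW2.6:com} to $a\in E_{r+1}$ and to $b\in E_{s+1}$). I would then verify the three vertex Lie axioms of \cref{dfn:NWL} by passing the corresponding $V$-identities to the graded pieces: translation invariance from \eqref{eq:NWrec}, skew-symmetry from \cref{fct:NWskew}, and the supercommutator axiom from the SUSY commutator formula \cref{fct:NWcmt}. It remains to check the Poisson (Leibniz) compatibility \eqref{eq:VPA}, that each $Y_-(a,Z)$ acts by derivations of the commutative product $b\cdot c=b_{(-1|N)}c$. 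This I would derive from \cref{fct:NWcmt} by writing $a_{(m|M)}(b_{(-1|N)}c)=[a_{(m|M)},b_{(-1|N)}]c\pm b_{(-1|N)}(a_{(m|M)}c)$ and expanding the commutator via \eqref{eq:NWcmt}: only the summand with $j=m$ produces a product mode $(a_{(m|\cdot)}b)_{(-1|N)}$ and survives, while the summands with $j<m$ contribute modes of non-negative index which, by \eqref{eq:NW2.6:com}, lie in deeper filtration and hence vanish in the relevant graded piece; the result is exactly the derivation-on-$b$ term, with the derivation-on-$c$ term coming from the second summand. (The iterate formula \cref{lem:NWiter} is what one keeps in reserve for establishing \eqref{eq:NW2.6} for the actual Li filtration in the subsequent theorem.)

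The main obstacle is the sign bookkeeping in part \eqref{i:NW2.6:3}. Establishing the supercommutator axiom, and especially the Leibniz rule \eqref{eq:VPA}, requires matching the Koszul signs $\sigma(\cdots)$ and parity factors produced by \eqref{eq:NWcmt} against the signs built into \cref{dfn:NWL} and into the notion of Poisson superalgebra of parity $N \bmod 2$ (\cref{dfn:sP}); verifying that these agree term by term after the filtration truncation is the genuinely delicate step, whereas parts \eqref{i:NW2.6:1} and \eqref{i:NW2.6:2} are essentially formal consequences of \eqref{eq:NW2.6} and \cref{lem:com}.
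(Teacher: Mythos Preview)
Your approach is correct and matches the paper's own proof essentially line for line. The paper defers parts \eqref{i:NW2.6:1} and \eqref{i:NW2.6:2} to \cite[Proposition 2.6]{Li} with the remark that the condition \eqref{eq:NW2.6} is independent of the odd index $J$, and for part \eqref{i:NW2.6:3} it carries out exactly the computation you outline: it expands $[a_{(l|L)},b_{(-1|N)}]c$ via \eqref{eq:NWcmt}, isolates the $j=l$ summand as $(a_{(l|L)}b)_{(-1|N)}c$, and shows the remaining summands $(a_{(j|J)}b)_{(l-1-j|N)}c$ with $0\le j\le l-1$ lie in $E_{r+s+t-l+1}$ by applying \eqref{eq:NW2.6:com} twice (once to $a_{(j|J)}b\in E_{r+s-j}$, once to the outer non-negative mode). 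The paper likewise does not track the extra signs, writing ``$\pm$ denotes some sign'' for the discarded terms, so your caution about sign bookkeeping is in fact more than the paper itself supplies; and your side remark that the iterate formula \cref{lem:NWiter} is reserved for the Li filtration proper (\cref{lem:NW2.11}) is exactly how the paper proceeds.
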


\begin{proof}
The condition \eqref{eq:NW2.6} does not depend on the odd index $J$,
and the proof of \cite[Proposition 2.6]{Li} works, except for \eqref{i:NW2.6:3}
where Borcherds' commutator formula is used. 
So we give a proof of \eqref{i:NW2.6:3} only.

Well-definedness of \eqref{eq:NW2.6:Y-} is guaranteed by the condition \eqref{eq:NW2.6:com}.
Checking the axioms of $N_W=N$ SUSY VLA for $\gr_E V$ is then straightforward.
It remains to check the derivation axiom \eqref{eq:VPA}, which is equivalent to
\begin{align}\label{eq:NW2.6:der}
 a_{(l|L)}(b_{(-1|N)}c) + E_{r+s+t-m+1} = (a_{(l|L)}b)_{(-1|N)}c +
 (-1)^{(p(a)+\abs{L})p(b)} b_{(-1|N)}(a_{(l|L)}c) + E_{r+s+t-m+1}
\end{align}
for $l \in \bbN$, $L \subset [N]$, $a \in E_r$, $b \in E_s$ and $c \in E_t$.
The commutator formula \eqref{eq:NWcmt} yields
\begin{align*}
&a_{(l|L)}(b_{(-1|N)}c) - (-1)^{(p(a)+\abs{L})p(b)} b_{(-1|N)}(a_{(l|L)}c)  \\
&=(a_{(l|L)}b)_{(-1|N)}c +
  \sum_{j=0}^{l-1} \pm \binom{l}{j} (a_{(j|N)}b)_{(l-1-j|N)}c,
\end{align*}
where $\pm$ denotes some sign.
For each term of this equality, the condition \eqref{eq:NW2.6:com} yields
\[
 a_{(l|L)}(b_{(-1|N)}c), \, b_{(-1|N)}(a_{(l|L)}c), \, (a_{(l|L)}b)_{(-1|N)}c 
 \in E_{r+s+t-l},
\]
and since $l-1-j \ge 0$ in the summation, it also yields
\begin{align*}
 (a_{(j|N)}b)_{(l-1-j)}c \in E_{(r+s-j)+t-(l-1-j)} = E_{r+s+t-l+1}.
\end{align*}
Hence we have the desired equality \eqref{eq:NW2.6:der}.
\end{proof}

Now, mimicking the even Li filtration \eqref{eq:ev-Li} with the strategy of 
``discarding the odd index $J$ of the $(j|J)$-operator'', we introduce:

\begin{dfn}\label{dfn:NWLif}
For a $V$-module $M$ and $n \in \bbZ$, we define a linear sub-superspace $E_n(M) \subset M$ by
\[
 E_n(M) \ceq \spn\left\{
 a^1_{(-1-k_1|K_1)} \dotsm a^r_{(-1-k_r|K_r)} m \, \Bigg|
 \begin{array}{l}
  r \in \bbZ_{>0}, \, a^i \in V, \, m \in M, \, k_i \in \bbN, \, K_i \subset [N] \\
  \text{satisfying $k_1+\dotsb+k_r \ge n$}
 \end{array}
 \right\}.
\]
\end{dfn}

Below we show that $\{E_n(M) \mid n \in \bbN\}$ satisfies the conditions in \cref{prp:NW2.6},
so that for $M=V$, the associated graded space $\gr_E V$ has a structure of $N_W=N$ SUSY VPA.
Our argument follows that of \cite[Lemma 2.8--Proposition 2.11]{Li}.

The statements in the next lemma are analogue of \cite[Lemmas 2.8, 2.9]{Li}.

\begin{lem}\label{lem:NW2.89}
For a $V$-module $M$, we have the following.
\begin{enumerate}[nosep]
\item \label{i:NW2.89:1}
$E_n(M) \supset E_{n+1}(M)$ for any $n \in \bbZ$.

\item \label{i:NW2.89:2}
$E_n(M) = M$ for any $n \in \bbZ_{\le 0}$

\item \label{i:NW2.89:3}
$a_{(-1-k|K)}E_n(M) \subset E_{n+k}(M)$ 
for any $a \in V$, $k \in \bbN$, $K \subset [N]$ and $n \in \bbZ$.

\item \label{i:NW2.89:4}
For $n \ge 1$, $E_n(M)$ is equal to 
\begin{align}\label{eq:NW2.32}
 E_n'(M) \ceq \spn\left\{ a_{(-1-k|K)}m \mid 
  \text{ $a \in V$, $k \in \bbZ_{>0}$, $K \subset [N]$, $m \in E_{n-k}(M)$} \right\}.
\end{align}

\item \label{i:NW2.89:5}
For $n \ge 1$, $E_n(M)$ is equal to 
\begin{align}\label{eq:NW2.33}
 E_n''(M) \ceq \spn\left\{ a^1_{(-1-k_1|K_1)} \dotsm a^r_{(-1-k_r|K_r)} m \, \Bigg|
 \begin{array}{l}
  r \in \bbZ_{>0}, \, a^i \in V, \, m \in M, \, k_i \in \bbZ_{\ge1}, \, K_i \subset [N] \\
  \text{satisfying $k_1+\dotsb+k_r \ge n$}
 \end{array} \right\}.
\end{align}
Note that the difference with \cref{dfn:NWLif} is the condition $k_i \ge 1$.
\end{enumerate}
\end{lem}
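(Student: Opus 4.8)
I would dispatch \eqref{i:NW2.89:1}, \eqref{i:NW2.89:2} and \eqref{i:NW2.89:3} directly from \cref{dfn:NWLif}, as they involve only bookkeeping of the even index $k$ and not the super structure. For \eqref{i:NW2.89:1}, a spanning vector of $E_{n+1}(M)$ satisfies $k_1+\dotsb+k_r \ge n+1 \ge n$, so it also spans $E_n(M)$. For \eqref{i:NW2.89:2}, taking $r=1$, $a^1=\vac$, $k_1=0$, $K_1=[N]$ and using that the vacuum acts as the identity $\vac_{(-1|N)}=\id_M$ gives $m=\vac_{(-1|N)}m \in E_0(M)$ for every $m$, whence $E_0(M)=M$, and \eqref{i:NW2.89:1} then forces $E_n(M)=M$ for $n<0$. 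For \eqref{i:NW2.89:3}, prepending $a_{(-1-k|K)}$ to a spanning vector of $E_n(M)$ raises the index sum from $\ge n$ to $\ge n+k$, producing a spanning vector of $E_{n+k}(M)$; linearity then finishes it.

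The content of the lemma is in \eqref{i:NW2.89:4}. The inclusion $E_n'(M) \subset E_n(M)$ is immediate from \eqref{i:NW2.89:3} (apply $a_{(-1-k|K)}$ with $k \ge 1$ to $E_{n-k}(M)$). For the reverse inclusion I plan to induct on the number $r$ of operators in a spanning vector $x = a^1_{(-1-k_1|K_1)} \dotsm a^r_{(-1-k_r|K_r)}m$ with $k_1+\dotsb+k_r \ge n \ge 1$. If $k_1 \ge 1$, then the inner vector lies in $E_{n-k_1}(M)$ and $x$ is already an $E_n'(M)$-generator. If $k_1=0$, the inner vector has $r-1$ operators and still lies in $E_n(M)$, hence in $E_n'(M)$ by induction, so the step reduces to the auxiliary claim that $a_{(-1|K)}E_n'(M) \subset E_n'(M)$.

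This auxiliary claim is where I expect the real work, and it is the only place the super commutator formula enters. Given an $E_n'(M)$-generator $w=b_{(-1-l|L)}m'$ with $l \ge 1$ and $m' \in E_{n-l}(M)$, I would expand $a_{(-1|K)}\,b_{(-1-l|L)}\,m'$ via the supercommutator. The ``straight'' term $\pm\, b_{(-1-l|L)}\bigl(a_{(-1|K)}m'\bigr)$ lies in $E_n'(M)$ because $a_{(-1|K)}m' \in E_{n-l}(M)$ by \eqref{i:NW2.89:3}. For the remaining term $[a_{(-1|K)},b_{(-1-l|L)}]m'$ I would invoke the $N_W=N$ commutator formula \eqref{eq:NWcmt} (in its module form) with left operator $a_{(-1|K)}$: each summand is $(a_{(j|J)}b)_{(-2-l-j|M')}m'$ with $j \ge 0$, and writing the even index as $-1-(1+l+j)$ exhibits it as an operator with $k$-value $1+l+j \ge 1$. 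Since $n-(1+l+j) \le n-l$, the decreasing property \eqref{i:NW2.89:1} gives $m' \in E_{n-l}(M) \subset E_{n-(1+l+j)}(M)$, so every summand is an $E_n'(M)$-generator. The sum is finite as $a_{(j|J)}b=0$ for $j \gg 0$, and the explicit signs together with the factors $\wt{\sigma}$ and $\binom{-1}{j}$ in \eqref{eq:NWcmt} play no role, since only the indices decide membership in $E_\bl(M)$. This proves the claim and hence \eqref{i:NW2.89:4}.

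Finally, for \eqref{i:NW2.89:5} the inclusion $E_n''(M) \subset E_n(M)$ holds because the generators of $E_n''(M)$, having all $k_i \ge 1$, form a subset of those of $E_n(M)$. For $E_n(M) \subset E_n''(M)$ I would combine \eqref{i:NW2.89:4} with strong induction on $n \ge 1$: given an $E_n'(M)$-generator $b_{(-1-l|L)}m'$ with $l \ge 1$ and $m' \in E_{n-l}(M)$, either $n-l \le 0$, so it is already a single-operator $E_n''(M)$-generator with $l \ge n$, or $1 \le n-l < n$, in which case $m' \in E_{n-l}(M)=E_{n-l}''(M)$ by the inductive hypothesis; prepending $b_{(-1-l|L)}$ keeps all indices $\ge 1$ and raises the index sum to $\ge l+(n-l)=n$. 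This yields $E_n(M)=E_n'(M)=E_n''(M)$, completing the plan.
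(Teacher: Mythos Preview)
Your proof is correct and follows essentially the same approach as the paper: items \eqref{i:NW2.89:1}--\eqref{i:NW2.89:3} are dispatched directly, and \eqref{i:NW2.89:4} is proved by induction on the length $r$ with the $N_W=N$ commutator formula \eqref{eq:NWcmt} handling the $k_1=0$ step, after which \eqref{i:NW2.89:5} follows by induction on $n$. Your packaging is marginally cleaner in that you isolate the auxiliary claim $a_{(-1|K)}E_n'(M)\subset E_n'(M)$ and prove \eqref{i:NW2.89:4} without reference to \eqref{i:NW2.89:5}, whereas the paper first asserts $E_n'(M)=E_n''(M)$ (by induction on $n$) and uses it inside the proof of \eqref{i:NW2.89:4}; the underlying commutator computation is identical.
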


\begin{proof}
The items \eqref{i:NW2.89:1}--\eqref{i:NW2.89:3} are immediate consequence 
of \cref{dfn:NWLif}, and we omit the proof. 
For the rest, we have by induction on $n$ that 
\begin{align}\label{eq:NWLif:4=5}
 E_n'(M) = E_n''(M).
\end{align}
So it is enough to prove \eqref{i:NW2.89:4} only. For that, we show that each element 
\[
 u = a^1_{(-1-k_1|K_1)} \dotsm a^r_{(-1-k_r|K_r)} m \in E_n(M)
\]
belongs to $E_n'(W)$ by induction on the length $r$, 
using the $N_W=N$ SUSY commutator formula \eqref{eq:NWcmt}.
\begin{itemize}[nosep]
\item 
If $r=1$, then $u=a^1_{(-1-k_1|K_1)} m$ with $k_1 \ge n \ge 1$
and $m \in M = E_{n-k_1}(M)$. Thus we have $u \in E_n'(M)$ by definition.

\item
Next, assume $r \ge 2$. We set 
\[
 u = a^1_{(-1-k_1|K_1)}u', \quad u' \ceq a^2_{(-1-k_2|K_2)} \dotsm a^r_{(-1-k_r|K_r)} m.
\]
If $k_1 \ge 1$, then $u' \in E_{n-k_1}(M)$, and we have
$u =a^1_{(-1-k_1|K_1)}u'\in E_n'(M)$ as desired.
Hereafter we assume $k_1=0$.
Then $k_2+\dotsb+k_r \ge n$ and $u' \in E_n(M)$, 
which yields $u' \in E_n'(M)$ by induction hypothesis.
Now the equality \eqref{eq:NWLif:4=5} yields $u' \in E_n''(M)$.
Hence we may assume $k_2,\dotsc,k_r \ge 1$.
Let us rewrite $a \ceq a^1$, $b \ceq a^2$ and $k \ceq k_2$, so that we have 
\[
 u = a_{(-1|K_1)}b_{(-1-k|K_2)}u'', \quad 
 u'' \ceq a^3_{(-1-k_3|K_3)} \dotsm a^r_{(-1-k_r|K_r)} m \in E_{n-k}(M). 
\]
The commutator formula \eqref{eq:NWcmt} yields
\begin{align*}
 u=\pm b_{(-1-k|K_2)}a_{(-1|K_1)}u'' 
  +\sum_{\substack{(j|J), \\ j \ge 0, \, J \supset K_1 \cap K_2}} 
   \cst \cdot (a_{(j|J)}b)_{(-2-k-j|K)}u'',
\end{align*}
where $\pm$ denotes a sign, $\cst$ denotes a constant and $K \ceq K_2 \cup (K_1 \sm J)$.
Since $u'' \in E_{n-k}(M)$, we have $a_{(-1|K_1)}u'' \in E_{n-k}(M)$ by \eqref{i:NW2.89:3}, 
and the condition $k \ge 1$ yields 
\[
 b_{(-1-k|K_2)}a_{(-1|K_1)}u'' \in E_n'(M). 
\]
Also, $u'' \in E_{n-k}(M) \subset E_{n-(k+j+1)}(M)$ for $j \ge 0$ by \eqref{i:NW2.89:1}, 
and we have 
\[
 (a_{(j|J)}b)_{(-2-k-j|K')}u'' \in E_n'(M).
\]
Hence we have $u \in E_n'(M)$.
\end{itemize}
\end{proof}

The next lemma is an analogue of \cite[Lemma 2.10]{Li}.

\begin{lem}\label{lem:NW2.10}
Let $M$ be a $V$-module. For $a \in V$, $l,n \in \bbZ$ and $L \subset [N]$, we have
\begin{align}\label{eq:NW2.34}
 a_{(l|L)}E_n(M) \subset E_{n-l-1}(M).
\end{align}
If moreover $l \ge 0$, then we have 
\begin{align}\label{eq:NW2.35}
 a_{(l|L)}E_n(M) \subset E_{n-l}(M).
\end{align}
\end{lem}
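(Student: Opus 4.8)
The plan is to reduce the whole statement to the nonnegative-mode estimate \eqref{eq:NW2.35} and prove that by induction, mirroring Li's treatment of the even case \cite[Lemma 2.10]{Li}. First I would dispose of the case $l<0$ of \eqref{eq:NW2.34} immediately: writing $l=-1-k$ with $k=-l-1\in\bbN$, the claim $a_{(l|L)}E_n(M)\subset E_{n-l-1}(M)=E_{n+k}(M)$ is literally \cref{lem:NW2.89} \eqref{i:NW2.89:3}. Conversely, once \eqref{eq:NW2.35} is known for $l\ge 0$, the inclusion $E_{n-l}(M)\subset E_{n-l-1}(M)$ from \cref{lem:NW2.89} \eqref{i:NW2.89:1} gives \eqref{eq:NW2.34} in the remaining range. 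So the entire lemma comes down to \eqref{eq:NW2.35}.

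For \eqref{eq:NW2.35} I fix $l\ge 0$ and argue by strong induction on $n$. When $n\le l$ we have $n-l\le 0$, so $E_{n-l}(M)=M$ by \cref{lem:NW2.89} \eqref{i:NW2.89:2} and there is nothing to prove; this also covers all $n\le 0$ and serves as the base case. For $n\ge l+1$ (in particular $n\ge 1$) I invoke the description $E_n(M)=E_n'(M)$ from \cref{lem:NW2.89} \eqref{i:NW2.89:4} and treat a spanning element $u=b_{(-1-k|K)}m'$ with $k\ge 1$ and $m'\in E_{n-k}(M)$. Moving $a_{(l|L)}$ past $b_{(-1-k|K)}$ via the supercommutator splits $a_{(l|L)}u$ into a straightened term $\pm\,b_{(-1-k|K)}\bigl(a_{(l|L)}m'\bigr)$ and a commutator term $[a_{(l|L)},b_{(-1-k|K)}]m'$.

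The straightened term is handled by the induction hypothesis at level $n-k<n$, giving $a_{(l|L)}m'\in E_{n-k-l}(M)$, followed by \cref{lem:NW2.89} \eqref{i:NW2.89:3} to reabsorb $b_{(-1-k|K)}$ and land in $E_{(n-k-l)+k}(M)=E_{n-l}(M)$. For the commutator term I expand $[a_{(l|L)},b_{(-1-k|K)}]$ by the $N_W=N$ commutator formula \eqref{eq:NWcmt}; since $l\ge 0$ the binomial $\binom{l}{j}$ forces $0\le j\le l$, and each summand is $(\cst)\,(a_{(j|J)}b)_{(l-1-k-j|K')}m'$ with $K'=K\cup(L\sm J)$ acting on $m'\in E_{n-k}(M)$. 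The main bookkeeping is a dichotomy on the sign of the mode index $l-1-k-j$: when it is $\le -1$ I rewrite it as $-1-k'$ with $k'=k+j-l\ge 0$ and apply \cref{lem:NW2.89} \eqref{i:NW2.89:3}, landing in $E_{n+j-l}(M)\subset E_{n-l}(M)$; when it is $\ge 0$ it is a nonnegative mode at a strictly smaller filtration level ($n-k<n$), so the induction hypothesis applies and lands in $E_{n-l+1+j}(M)\subset E_{n-l}(M)$. In both branches $j\ge 0$ forces the target inside $E_{n-l}(M)$, so summing gives $a_{(l|L)}u\in E_{n-l}(M)$ and closes the induction.

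The main obstacle is purely the index bookkeeping in the commutator term: one must confirm that every mode produced by \eqref{eq:NWcmt} either has a small enough exponent to be absorbed by the creation-type estimate \eqref{i:NW2.89:3}, or a nonnegative mode at a strictly smaller filtration level so that the inductive hypothesis is available, and that the degenerate configurations (such as $n-k\le 0$ or the boundary value $k+j=l$) remain consistent with the conventions $E_m(M)=M$ for $m\le 0$ and $E_m(M)\supset E_{m+1}(M)$. The signs and the constants $\wt{\sigma}$ appearing in \eqref{eq:NWcmt} are irrelevant, since we only assert membership in a sub-superspace, so no sign computation is needed.
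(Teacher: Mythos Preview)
Your argument is correct and follows essentially the same route as the paper's proof: reduce to $l\ge 0$ via \cref{lem:NW2.89}~\eqref{i:NW2.89:3}, induct on $n$, and handle the commutator term via \eqref{eq:NWcmt}. Two minor remarks: your induction must be on $n$ uniformly in $l\ge 0$ (not for a fixed $l$), since your nonnegative-mode branch invokes the hypothesis at $l'=l-1-k-j\ne l$; and the paper avoids your sign dichotomy by applying the weaker estimate \eqref{eq:NW2.34} (already available at level $n-k$ by induction) uniformly to get $E_{n-l+j}(M)\subset E_{n-l}(M)$ in one stroke.
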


\begin{proof}
For $l \le -1$, \eqref{eq:NW2.34} follows from \cref{lem:NW2.89} \eqref{i:NW2.89:4}.
So assume $l \ge 0$. Then it is enough to show \eqref{eq:NW2.35} only 
since $E_{n-l-1}(M) \supset E_{n-l}(M)$. The argument is an induction on $n$.
For $n \le 0$, we have $E_n(M)=E_{n-l}(M)=M$ by \cref{lem:NW2.89} \eqref{i:NW2.89:2},
which shows \eqref{eq:NW2.35}. Next, assume $n \ge 1$.
By \cref{lem:NW2.89} \eqref{i:NW2.89:4}, $E_n(M)$ is spanned by elements
$b_{(-1-k|K)}m$ with $b \in V$, $k \ge 1$, $K \subset [N]$ and $m \in E_{n-k}(M)$.
We will show that $a_{(l|L)}b_{(-1-k|K)}m$ belongs to $E_{n-l}(M)$ 
using the commutator formula \eqref{eq:NWcmt}. If says
\[
 a_{(l|L)}b_{(-1-k|K)}m = \pm b_{(-1-k|K)}a_{(l|L)}m + 
 \sum_{\substack{(j|J), \\ j \ge 0, \, J \supset L \cap K}} 
 \cst \cdot (a_{(j|J)}b)_{(l-1-k-j|J')}m
\]
with $J' \ceq K \cup (L \sm J)$.
As for the first term, by $n-k<n$, $l \ge 0$ and the induction hypothesis, we have 
$a_{(l|L)}m \in E_{n-k-l}(M)$.
Then, by $-1-k<0$ and the already-proved \eqref{eq:NW2.34}, we have
\[
 b_{(-1-k|K)}a_{(l|L)}m \in E_{(n-k-l)-(-1-k)-1}(M) = E_{n-l}(M).
\]
As for the terms in the summation, by \eqref{eq:NW2.34} and $j \ge 0$, we have
\begin{align*}
 (a_{(j|J)}b)_{(l-1-k-j|J')}w \in E_{(n-k)-(l-1-k-j)-1}(M) = E_{n-l+j}(M) 
 \subset E_{n-l}(M).
\end{align*}
Hence we have $a_{(l|L)}b_{(-1-k|K)}m \in E_{n-l}(M)$.
\end{proof}

We also have an analogue of \cite[Proposition 2.11]{Li}.
The proof is again similar to loc.\ cit., 
but since the argument uses the iterate formula (\cref{lem:NWiter}), we write down it.

\begin{lem}\label{lem:NW2.11}
Let $M$ be a $V$-module.  
For $u \in E_r(V)$, $m \in E_s(M)$, $l \in \bbZ$ and $L \subset [N]$, we have
\begin{align}\label{eq:NW2.36}
 u_{(l|L)}m \in E_{r+s-l-1}(M).
\end{align}
If moreover $l \ge 0$, then we have
\begin{align}\label{eq:NW2.37}
 u_{(l|L)}m \in E_{r+s-l}(M).
\end{align}
\end{lem}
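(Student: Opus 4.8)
The plan is to prove \eqref{eq:NW2.36} and \eqref{eq:NW2.37} simultaneously by induction on $r$. The base case $r=0$ is exactly \cref{lem:NW2.10}, since $E_0(V)=V$. For the inductive step with $r\ge1$, I would invoke \cref{lem:NW2.89} \eqref{i:NW2.89:4}, which says $E_r(V)$ is spanned by elements $a_{(-1-k|K)}v$ with $a\in V$, $k\in\bbZ_{\ge1}$, $K\subset[N]$ and $v\in E_{r-k}(V)$. Thus it suffices to treat $u=a_{(-1-k|K)}v$, and here the SUSY iterate formula \cref{lem:NWiter} is the key tool, playing the role that Borcherds' iterate formula \eqref{eq:ev-iter} plays in \cite[Proposition 2.11]{Li}.

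Applying \cref{lem:NWiter} with $(k,K)\mapsto(-1-k,K)$ gives
\begin{align*}
 (a_{(-1-k|K)}v)_{(l|L)}m = \sum_{\substack{(j|J),\\ j\ge0,\,J\subset K}}
 \cst\cdot\binom{-1-k}{j}\Bigl(\cst\cdot a_{(-1-k-j|J)}v_{(l+j|J')}m
 -\cst\cdot v_{(-1-k+l-j|J')}a_{(j|J)}m\Bigr),
\end{align*}
with $J'\ceq L\cup(K\sm J)$, where each $\cst$ denotes a sign or constant whose value is irrelevant. I would then bound the two families of terms separately.

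For the first family, $v\in E_{r-k}(V)$ with $r-k<r$, so the induction hypothesis \eqref{eq:NW2.36} gives $v_{(l+j|J')}m\in E_{(r-k)+s-(l+j)-1}(M)$; since the mode index $-1-k-j$ is negative (as $k\ge1$), \cref{lem:NW2.89} \eqref{i:NW2.89:3} raises this to $a_{(-1-k-j|J)}v_{(l+j|J')}m\in E_{(r-k)+s-(l+j)-1+(k+j)}(M)=E_{r+s-l-1}(M)$. For the second family, $j\ge0$ lets me apply \cref{lem:NW2.10} \eqref{eq:NW2.35} to obtain $a_{(j|J)}m\in E_{s-j}(M)$, and the induction hypothesis \eqref{eq:NW2.36} applied to $v\in E_{r-k}(V)$ then yields $v_{(-1-k+l-j|J')}a_{(j|J)}m\in E_{(r-k)+(s-j)-(-1-k+l-j)-1}(M)=E_{r+s-l}(M)\subset E_{r+s-l-1}(M)$. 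Both families lie in $E_{r+s-l-1}(M)$, which proves \eqref{eq:NW2.36}.

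For the refinement \eqref{eq:NW2.37} with $l\ge0$, only the first family needs strengthening: now $l+j\ge0$, so I would instead use the sharper induction hypothesis \eqref{eq:NW2.37} to get $v_{(l+j|J')}m\in E_{(r-k)+s-(l+j)}(M)$, improving the final bound to $E_{r+s-l}(M)$; the second family already lands in $E_{r+s-l}(M)$. I expect the only genuine friction to be bookkeeping: checking that $-1-k-j$ is negative so \cref{lem:NW2.89} \eqref{i:NW2.89:3} applies, and keeping the two flavours of the induction hypothesis (the plain \eqref{eq:NW2.36} and the $l\ge0$ version \eqref{eq:NW2.37}) straight across the two families. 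The elaborate signs $\alpha$, $\wt{\sigma}$ and the constraint $J\subset K$ in \cref{lem:NWiter} contribute nothing, since membership in the filtration steps $E_n(M)$ is insensitive to scalar coefficients.
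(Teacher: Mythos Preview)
Your proposal is correct and follows essentially the same approach as the paper's proof: induction on $r$ with base case \cref{lem:NW2.10}, the decomposition $u=a_{(-1-k|K)}v$ from \cref{lem:NW2.89}\,\eqref{i:NW2.89:4}, and the iterate formula \cref{lem:NWiter} to split into the two families of terms, each handled by the induction hypothesis together with \cref{lem:NW2.10} or \cref{lem:NW2.89}\,\eqref{i:NW2.89:3}. The only cosmetic difference is that the paper organizes the argument by a case split on the sign of $l$, whereas you organize it by which of \eqref{eq:NW2.36} and \eqref{eq:NW2.37} is being established; the logic is equivalent.
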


\begin{proof}
We first consider the case $r \le 0$.
By \cref{lem:NW2.10}, we have $u_{(l|L)}m \in E_{s-l-1}(M) \subset E_{r+s-l-1}(M)$
for any $l \in \bbZ$, and $u_{(l|L)}w \in E_{s-l}(M) \subset E_{r+s-l}(M)$ for $l \ge 0$.
Thus, we have the conclusions.

Second, we show the case $r \ge 0$ by induction on $r$. 
Assume $u \in E_{r+1}(V)$. Then, by \cref{lem:NW2.89} \eqref{i:NW2.89:4}, we can write 
$u=a_{(-2-i|I)}b$ with some $a \in V$, $0 \le i \le r$, $I \subset [N]$ and $b \in E_{r-i}(V)$.
We calculate $(a_{(-2-i|I)}b)_{(l|L)}m$ using the iterate formula in \cref{lem:NWiter}.
It yields
\begin{align}\label{eq:NW2.11:iter}
\begin{split}
 (a_{(-2-i|I)}b)_{(l|L)}m
=\sum_{\substack{(j|J), \\ j \ge 0, \, J \subset I}} 
 \bigl(&\cst \cdot a_{(-2-i-j|J)}b_{(l+j|J')}m 
      + \cst \cdot b_{(l-2-i-j|J')} a_{(j|J)}m \bigr) 
\end{split}
\end{align}
with $J' \ceq L \cup (I \sm J)$ and $\cst$ being some constant.
We divide the argument by the sign of $l$.
\begin{itemize}[nosep]
\item Assume $l \ge 0$. For the first term in \eqref{eq:NW2.11:iter}, we have 
\begin{align*}
 a_{(-2-i-j|J)}b_{(l+j|J')}m
&\in a_{(-2-i-j|J)} E_{(r-i)+s-(l+j)}(M) \\
&\subset E_{(r+s-l-i-j)-(-2-i-j)-1}(M) = E_{(r+1)+s-l}(M),
\end{align*}
where in the first line we used the induction hypothesis with $l+j \ge 0$ and $r-i \le r$,
and in the second line we used \cref{lem:Li:2.10}. 

For the second term in \eqref{eq:NW2.11:iter},
the induction hypothesis and \cref{lem:NW2.10} imply 
\begin{align}\label{eq:NW2.11:2}
\begin{split}
 b_{(l-2-i-j|J')}a_{(j|J)}m
&\in b_{(l-2-i-j|J')} E_{s-j}(M) \\
&\subset E_{(r-i)+(s-j)-(l-2-i-j)-1}(M) = E_{(r+1)+s-l}(M).
\end{split}
\end{align}

Hence we have $(a_{(-2-i|I)}b)_{(l|L)}m \in E_{(r+1)+s-l}(M)$.

\item
Assume $l<0$. By the induction hypothesis and \cref{lem:NW2.10},
the first term in \eqref{eq:NW2.11:iter} is
\begin{align*}
 a_{(-2-i-j|J)}b_{(l+j|J')}m
&\in a_{(-2-i-j|J)} E_{(r-i)+s-(l+j)-1}(M) \\
&\subset E_{(r+s-l-i-j-1)-(-2-i-j)-1}(M) = E_{(r+1)+s-l-1}(M).
\end{align*}
For the second term in \eqref{eq:NW2.11:iter}, the argument \eqref{eq:NW2.11:2} yields
\begin{align*}
 b_{(l-2-i-j|J')}a_{(j|J)}m \in E_{(r+1)+s-l}(M) \subset E_{(r+1)+s-l-1}(M).
\end{align*}
Hence we have $(a_{(-2-i|I)}b)_{(l|L)}m \in E_{(r+1)+s-l-1}(M)$.
\end{itemize}
By these arguments, the induction step works, and we have the conclusion.
\end{proof}

Now \cref{prp:NW2.6} and \cref{lem:NW2.11} yield 
the following analogue of \cite[Theorem 2.12]{Li}:

\begin{thm}\label{thm:NW2.12}
Let $V=(V,\vac,T,S^i,Y)$ be an $N_W=N$ SUSY VA, and $E_n = E_n(V)$ for $n \in \bbZ$
be the linear sub-superspaces of $V$ in \cref{dfn:NWLif},
which form a decreasing filtration 
\[
 \dotsb = E_{-1} = E_0 = V \supset E_1 \supset E_2 \supset \dotsb \supset E_n \supset \dotsb
\]
by \cref{lem:NW2.89} \eqref{i:NW2.89:1}.
Then the associated graded space 
\[
 \gr_E V = \tboplus_{n \in \bbN} E_n/E_{n+1}
\]
has the following structure $(\cdot,1,\pd,\delta^i,Y_-)$ of an $N_W=N$ SUSY VPA.
Let $a \in E_r$ and $b \in E_s$.
\begin{itemize}[nosep]
\item 
The commutative multiplication $\cdot$ is 
\[
 (a+E_{r+1}) \cdot (b+E_{s+1}) \ceq a_{(-1|N)}b+E_{r+s+1}.
\] 

\item
The unit is $1 \ceq \vac+E_1$.

\item
The even operator $\pd$ is $\pd(a+E_{r+1}) \ceq T a + E_{r+2}$.

\item
The odd operator $\delta^i$ for $i \in [N]$ is
$\delta^i(a+E_{r+1}) \ceq S^i a + E_{r+2}$.

\item
The SUSY VLA structure $Y_-$ is 
\[
 Y_-(a+E_{r+1},Z)(b+E_{s+1}) \ceq 
 \sum_{(j|J), \, j \ge 0} Z^{-1-j|N \sm J} (a_{(j|J)}b+E_{r+s-j+1}).
\]
\end{itemize}
\end{thm}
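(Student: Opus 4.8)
The plan is to obtain the theorem as an immediate specialization of the general \cref{prp:NW2.6}, since essentially all the analytic content has already been packaged into the preceding lemmas. The only genuine work is to confirm that the particular filtration $\{E_n(V)\}_{n \in \bbZ}$ of \cref{dfn:NWLif} satisfies the hypotheses of \cref{prp:NW2.6}, and then to match the abstract operations produced there with the explicit formulas in the statement.

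First I would record the elementary structural facts. By \cref{lem:NW2.89} \eqref{i:NW2.89:1} the subspaces $E_n = E_n(V)$ form a decreasing filtration, and by \eqref{i:NW2.89:2} we have $E_n = V$ for all $n \le 0$; in particular $\vac \in E_0$ and the convention $E_n \ceq V$ for $n < 0$ used in \cref{prp:NW2.6} is the natural one. This is exactly the displayed filtration in the statement.

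The key step is to verify the two inclusion conditions. Taking $M = V$ to be the adjoint module, condition \eqref{eq:NW2.6}, namely $a_{(j|J)}b \in E_{r+s-j-1}$ for $a \in E_r$, $b \in E_s$ and arbitrary $j \in \bbZ$, $J \subset [N]$, is precisely the inclusion \eqref{eq:NW2.36} of \cref{lem:NW2.11}; similarly the commutativity condition \eqref{eq:NW2.6:com}, namely $a_{(m|M)}b \in E_{r+s-m}$ for $m \ge 0$, is precisely \eqref{eq:NW2.37}. With these in hand, \cref{prp:NW2.6} \eqref{i:NW2.6:1}--\eqref{i:NW2.6:3} apply verbatim, so $\gr_E V$ is a commutative $N_W=N$ SUSY VA carrying an $N_W=N$ SUSY VPA structure.

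Finally I would identify the abstract output with the stated operations. The state-superfield correspondence \eqref{eq:NW2.6:op} reads $(a+E_{r+1})_{(j|J)}(b+E_{s+1}) = a_{(j|J)}b + E_{r+s-j}$; specializing to $j=-1$, $J=[N]$ gives the commutative product $a_{(-1|N)}b + E_{r+s+1}$, the class $\vac + E_1$ is the unit, and $\pd$, $\delta^i$ are the operators induced by $T$, $S^i$ exactly as in \cref{prp:NW2.6} \eqref{i:NW2.6:1}, while the SUSY vertex Lie structure is literally \eqref{eq:NW2.6:Y-}, i.e.\ the displayed $Y_-$. Thus no further obstacle remains: the one step that would be hard from scratch---controlling how the $(l|L)$-operators shift the filtration degree---has already been carried out in \cref{lem:NW2.11} through the SUSY commutator and iterate formulas \cref{fct:NWcmt,lem:NWiter}, and the proof of the theorem itself is a matter of transcribing these identifications.
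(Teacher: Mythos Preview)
Your proposal is correct and matches the paper's approach exactly: the paper simply states that \cref{prp:NW2.6} and \cref{lem:NW2.11} yield the theorem, and you have spelled out precisely how \eqref{eq:NW2.36} and \eqref{eq:NW2.37} supply the hypotheses \eqref{eq:NW2.6} and \eqref{eq:NW2.6:com} of \cref{prp:NW2.6}, after which the conclusion is read off.
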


Similarly as in \cite[Proposition 2.13]{Li}, we also have the module structure 
of the associated graded space of an $N_W=N$ SUSY VA module. 
The proof is straightforward, and we omit the detail.

\begin{prp}\label{prp:NWgrE-mod}
Let $V$ be as in \cref{thm:NW2.12}, $M$ be a $V$-module, 
and $\{E_n (W) \mid n \in \bbZ\}$ be the decreasing filtration in \cref{dfn:NWLif}.
Then the associated graded $\gr_E M = \bigoplus_{n \in \bbN}E_n(M)/E_{n+1}(M)$
is a module over the $N_W=N$ SUSY VPA $\gr_E V$ in \cref{thm:NW2.12}
with the following structure for $a \in E_r$ and $m \in E_s(M)$:
\begin{itemize}[nosep]
\item 
The module structure over commutative superalgebra is
\[
 (a+E_{r+1}).(m+E_{s+1}(M)) \ceq a_{(-1|N)}m+E_{r+s+1}(M).
\] 
\item
The module structure over SUSY vertex Lie algebra (see \eqref{eq:VLact}) is
\[
 Y_-^M(a+E_{r+1},Z)(m+E_{s+1}(M)) \ceq
 \sum_{(j|J), \, j \ge 0} Z^{-1-j|N \sm J} \bigl(a_{(-j|J)}m+E_{r+s-j+1}(M)\bigr).
\] 
\end{itemize}
\end{prp}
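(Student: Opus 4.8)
The plan is to follow \cite[Proposition 2.13]{Li}, reducing the entire statement to the filtration estimate already proved in \cref{lem:NW2.11} and then verifying the module axioms by reproducing the arguments of \cref{thm:NW2.12} (equivalently \cref{prp:NW2.6}) with the third slot occupied by a module element. Since the odd index $J$ enters none of the relevant filtration bounds, the computations carry over essentially verbatim. Concretely, a module over the $N_W=N$ SUSY VPA $\gr_E V$ consists of a module structure over the commutative superalgebra attached to $Y_+$ together with a module structure over the SUSY VLA $(\gr_E V, \pd, \delta^i, Y_-)$, compatible via the Poisson (derivation) condition; I would check these three packages in turn.

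First I would settle well-definedness. For the commutative action, \cref{lem:NW2.11} with $l=-1$ gives $a_{(-1|N)}m \in E_{r+s}(M)$ for $a \in E_r$ and $m \in E_s(M)$, so the class lands in $E_{r+s}(M)/E_{r+s+1}(M)$; replacing $a$ by an element of $E_{r+1}$, or $m$ by an element of $E_{s+1}(M)$, yields an element of $E_{r+s+1}(M)$ by the same lemma, giving independence of representatives. For $Y_-^M$, the $l \ge 0$ clause of \cref{lem:NW2.11} gives $a_{(j|J)}m \in E_{r+s-j}(M)$ for $j \ge 0$, matching the target graded piece $E_{r+s-j}(M)/E_{r+s-j+1}(M)$, and the same shift-by-one check gives representative-independence. (I note in passing that the operator in the displayed formula for $Y_-^M$ should read $a_{(j|J)}m$, consistent with \eqref{eq:VLact} and \cref{thm:NW2.12}.)

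Next I would verify the two families of module axioms. The commutative-superalgebra module axioms, namely the unit law and associativity $(a_{(-1|N)}b)_{(-1|N)}m \equiv a_{(-1|N)}(b_{(-1|N)}m)$ modulo the appropriate $E_\bullet(M)$, follow from the iterate formula \cref{lem:NWiter} exactly as the associativity and commutativity of $\gr_E V$ were obtained in \cref{prp:NW2.6}: all correction terms involve an operator $(a_{(j|J)}b)_{(l|J')}$ with $l \ge 0$ and hence drop one filtration degree by the $l\ge0$ clause of \cref{lem:NW2.11}. The SUSY VLA module axioms (translation invariance and the module form of the supercommutator axiom) are inherited directly from the translation invariance in $V$ and the commutator formula \cref{fct:NWcmt} applied to the $V$-action on $M$, since $Y_-^M$ is just the singular part of the $V$-action read off on the graded pieces.

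The step requiring genuine care, and the one I expect to be the main obstacle, is the Poisson-module compatibility: that $Y_-^M$ acts by derivations of the commutative module multiplication, i.e.\ the module analogue of \eqref{eq:VPA},
\[
 a_{(l|L)}(b_{(-1|N)}m) \equiv (a_{(l|L)}b)_{(-1|N)}m + (-1)^{(p(a)+\abs{L})p(b)} b_{(-1|N)}(a_{(l|L)}m) \pmod{E_{r+s+t-l+1}(M)}
\]
for $a \in E_r$, $b \in E_s$, $m \in E_t(M)$, $L \subset [N]$ and $l \ge 0$. Here I would expand $a_{(l|L)}b_{(-1|N)}$ against $b_{(-1|N)}a_{(l|L)}$ using \cref{fct:NWcmt}; the top term $(j,J)=(l,L)$ reproduces $(a_{(l|L)}b)_{(-1|N)}m$, while every remaining term has the form $(a_{(j|J)}b)_{(l-1-j|J')}m$ with $0 \le j \le l-1$, so that $l-1-j \ge 0$. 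By \cref{lem:NW2.11} one has $a_{(j|J)}b \in E_{r+s-j}(V)$, and a second application of the $l\ge0$ clause places $(a_{(j|J)}b)_{(l-1-j|J')}m$ in $E_{(r+s-j)+t-(l-1-j)}(M) = E_{r+s+t-l+1}(M)$, hence in the kernel of the quotient. This is precisely the computation in the proof of \cref{prp:NW2.6}\,\eqref{i:NW2.6:3} with $c$ replaced by $m$, so no new phenomenon arises; tracking the signs and the filtration degrees is the only delicate bookkeeping.
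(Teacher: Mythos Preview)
Your proposal is correct and follows exactly the approach indicated by the paper, which explicitly omits the proof as a straightforward analogue of \cite[Proposition 2.13]{Li} using \cref{lem:NW2.11} and the argument of \cref{prp:NW2.6}\,\eqref{i:NW2.6:3}. Your parenthetical catch of the typo $a_{(-j|J)}m$ versus $a_{(j|J)}m$ in the displayed formula for $Y_-^M$ is also right.
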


\subsection{Li filtration of $N_K=N$ SUSY vertex algebra}\label{ss:NKLif}

Throughout this subsection, $V=(V,\vac,S_K^i,Y)$ denotes an $N_K=N$ SUSY vertex algebra.
We begin with an $N_K=N$ analogue of \cref{prp:NW2.6}:

\begin{prp}\label{prp:Li:2.6}
Let $E=\{E_n \mid n \in \bbN\}$ be a decreasing filtration of linear sub-superspaces 
of $V$ such that $\vac \in E_0$ and 
\begin{align}\label{eq:Li}
 a_{(j|J)}b \in E_{r+s-j-1} \qquad 
 (a \in E_r, \, b \in E_s, \, j \in \bbZ, \, J \subset [N]), 
\end{align}
where we used the convention $E_n \ceq V$ for $n \in \bbZ_{<0}$.
\begin{enumerate}[nosep]
\item \label{i:Li:1}
The associated graded linear superspace 
\[
 \gr_E V \ceq \tboplus_{n \in \bbN} E_n/E_{n+1}
\]
is an $N_K=N$ SUSY VA whose state-superfield correspondence is given by 
\begin{align}\label{eq:Li:op}
 (a+E_{r+1})_{(j|J)}(b+E_{s+1}) \ceq a_{(j|J)}b+E_{r+s-j}
\end{align}
for $a \in E_r$, $b \in E_s$, $j \in \bbZ$ and $J \subset [N]$,
whose vacuum is given by $\vac+E_1 \in E_0/E_1$, 
and whose odd operators $\sld^i$ are given by 
\[
 \sld^i(a+E_{r+1}) \ceq S_K^i a + E_{r+2}.
\]

\item
The $N_K=N$ SUSY VA $\gr_E V$ in \eqref{i:Li:1} is commutative if and only if 
\begin{align}\label{eq:Li:com}
 a_{(m|M)}b \in E_{r+s-m}
\end{align}
for any $a \in E_r$, $b \in E_s$, $m \in \bbN$ and $M \subset [N]$, 

\item \label{i:Li:2.6:3}
Under the condition \eqref{eq:Li:com}, 
the commutative $N_K=N$ SUSY VA $\gr_E V$ has an $N_K=N$ SUSY VPA structure 
\begin{align}\label{eq:Li:Y-}
 Y_-(a+E_{r+1},Z)(b+E_{s+1}) \ceq 
 \sum_{(m|M), \, m \ge 0}Z^{-m-1|N \sm M}(a_{(m|M)}b+E_{r+s-m+1})
\end{align}
for $a \in E_r$ and $b \in E_s$.
\end{enumerate}
\end{prp}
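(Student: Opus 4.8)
The plan is to transcribe the proof of the $N_W=N$ analogue \cref{prp:NW2.6} wherever the odd index is inert, and to isolate the one computation that genuinely requires the $N_K=N$ commutator formula \eqref{eq:NKcmt}. Since the hypothesis \eqref{eq:Li} constrains filtration degrees only and does not see the odd index $J$, the verification that the operations \eqref{eq:Li:op} make $\gr_E V$ an $N_K=N$ SUSY VA (part~\eqref{i:Li:1}), with odd operators $\sld^i$ read off from $S_K^i$, and the commutativity criterion \eqref{eq:Li:com} (the second part) carry over from the even case \cite[Proposition~2.6]{Li} just as they did in \cref{prp:NW2.6}. I would therefore dispose of these two parts with a one-line remark and put all the work into part~\eqref{i:Li:2.6:3}.

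For part~\eqref{i:Li:2.6:3}, the well-definedness of \eqref{eq:Li:Y-} is immediate from \eqref{eq:Li:com}. Moreover, part~\eqref{i:Li:1} already equips $\gr_E V$ with an $N_K=N$ SUSY VA structure whose state-superfield correspondence restricts to $Y_-$ on its singular part; hence the lemma following \cref{dfn:NKL} (the singular part of an $N_K=N$ SUSY VA is an $N_K=N$ SUSY VLA) furnishes all the SUSY vertex Lie axioms at no cost, while the second part provides the commutative $Y_+$. The proposition thus reduces to the single derivation axiom \eqref{eq:VPA}, which, after extracting the coefficient of $Z^{-1-l|N\sm L}$, is the congruence
\[
 a_{(l|L)}(b_{(-1|N)}c) \equiv (a_{(l|L)}b)_{(-1|N)}c
 + (-1)^{(p(a)+\abs{L})p(b)}\,b_{(-1|N)}(a_{(l|L)}c)
 \pmod{E_{r+s+t-l+1}}
\]
for $a \in E_r$, $b \in E_s$, $c \in E_t$, $l \in \bbN$ and $L \subset [N]$.

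To prove this I would move $a_{(l|L)}$ past $b_{(-1|N)}$ by \eqref{eq:NKcmt} with $m=-1$, $M=[N]$. The pleasant point is that the constraint $M \cap (J \Delta L)=\emptyset$ then forces $J \Delta L=\emptyset$, i.e.\ $J=L$; consequently $\#(J \sm L)=0$, the resulting index set $M \cup (J \Delta L)$ collapses to $[N]$, and the coefficient $\df{l}{j}/j!$ reduces to $\binom{l}{j}$, so the commutator becomes a single sum $\sum_{j \ge 0}(\cst)\binom{l}{j}(a_{(j|L)}b)_{(l-1-j|N)}c$. The term $j=l$ is exactly the leading term $(a_{(l|L)}b)_{(-1|N)}c$, which \eqref{eq:Li:com} (applied to $a_{(l|L)}b$) followed by \eqref{eq:Li} places in $E_{r+s+t-l}$, as do the two right-hand factors of the congruence. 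For each remaining index $0 \le j \le l-1$ the commutativity estimate \eqref{eq:Li:com} gives $a_{(j|L)}b \in E_{r+s-j}$, and since the mode index $l-1-j$ is now nonnegative a second application of \eqref{eq:Li:com} lands $(a_{(j|L)}b)_{(l-1-j|N)}c$ in $E_{(r+s-j)+t-(l-1-j)}=E_{r+s+t-l+1}$; these corrections therefore vanish modulo $E_{r+s+t-l+1}$, yielding the congruence.

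I expect the only friction, rather than a true obstacle, to be the sign and $\sigma$-bookkeeping concealed in the constants $\alpha$, $\beta$ and $\wt{\sigma}$ of \eqref{eq:NKcmt}, which are considerably heavier than in the $N_W=N$ case. The saving grace is that the Poisson (derivation) axiom only asks each correction term to drop into the next filtration layer, so the precise signs are irrelevant and may be swept into the symbol $\cst$; what does demand care is confirming the collapse $J=L$ and performing the degree count above --- exactly the role played by the commutator computation in the proof of \cref{prp:NW2.6}.
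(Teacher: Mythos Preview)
Your proposal is correct and follows essentially the same approach as the paper's proof: reduce to the derivation axiom for part~\eqref{i:Li:2.6:3}, apply the $N_K=N$ commutator formula \eqref{eq:NKcmt} with $(m|M)=(-1|N)$ so that the constraint $M\cap(J\Delta L)=\emptyset$ collapses to $J=L$, separate off the $j=l$ leading term, and use \eqref{eq:Li:com} twice to drop the remaining terms $0\le j\le l-1$ into $E_{r+s+t-l+1}$. The paper's argument is identical in substance, including the observation that the signs can be ignored since only filtration degrees matter.
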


\begin{proof}
Similarly as in \cref{prp:NW2.6}, it is enough to check that \eqref{i:Li:2.6:3} 
satisfies the derivation axiom (see \eqref{eq:VPA}), which is equivalent to
\begin{align*}
 a_{(l|L)}(b_{(-1|N)}c) + E_{r+s+t-m+1} = (a_{(l|L)}b)_{(-1|N)}c +
 (-1)^{(p(a)+\abs{L})p(b)} b_{(-1|N)}(a_{(l|L)}c) + E_{r+s+t-m+1}
\end{align*}
for $l \in \bbN$, $L \subset [N]$, $a \in E_r$, $b \in E_s$ and $c \in E_t$.
The commutator formula \eqref{eq:NKcmt} yields
\begin{align*}
&a_{(l|L)}(b_{(-1|N)}c) - (-1)^{(p(a)+\abs{L})p(b)} b_{(-1|N)}(a_{(l|L)}c)  \\
&= \sum_{\substack{(j|J), \, j \ge 0, \\ [N] \cap (J \Delta L) = \emptyset }} 
  \pm \frac{1}{j!} \df{l}{j+\abs{J \sm L}} (a_{(j|J)}b)_{(l-1-j-\# J \sm L |N)}c \\
&=(a_{(l|L)}b)_{(-1|N)}c +
  \sum_{\substack{(j|J), \, 0 \le j \le l-1, \\ J \Delta L = \emptyset}} 
  \pm \frac{1}{j!} \df{l}{j+\abs{J \sm L}} (a_{(j|J)}b)_{(l-1-j|N)}c,
\end{align*}
where $\pm$ denotes some sign.
For each term of this equality, the condition \eqref{eq:Li:com} yields
\[
 a_{(l|L)}(b_{(-1|N)}c), \, b_{(-1|N)}(a_{(l|L)}c), \, (a_{(l|L)}b)_{(-1|N)}c 
 \in E_{r+s+t-l},
\]
and since $l-1-j \ge 0$ in the summation, it also yields
\begin{align*}
 (a_{(j|J)}b)_{(l-1-j)}c \in E_{(r+s-j)+t-(l-1-j)} = E_{r+s+t-l+1}.
\end{align*}
Hence we have the desired equality.
\end{proof}

The definition of Li filtration for an $N_K=N$ SUSY VA is the same as 
that for $N_K=N$ case.

\begin{dfn}\label{dfn:NKLif}
For a $V$-module $M$ and $n \in \bbZ$, 
we define a linear sub-superspace $E_n(M) \subset W$ by
\[
 E_n(M) \ceq \spn\left\{ a^1_{(-1-k_1|K_1)} \dotsm a^r_{(-1-k_r|K_r)} m \, \Bigg|
 \begin{array}{l}
  r \in \bbZ_{>0}, \, a^i \in V, \, m \in M, \, k_i \in \bbN, \, K_i \subset [N] \\
  \text{satisfying $k_1+\dotsb+k_r \ge n$}
 \end{array} \right\}.
\]
\end{dfn}

The following is an $N_K=N$ analogue of \cref{lem:NW2.89}.

\begin{lem}\label{lem:NK2.89}
For a $V$-module $W$, we have the following.
\begin{enumerate}[nosep]
\item \label{i:NK2.89:1}
$E_n(M) \supset E_{n+1}(M)$ for any $n \in \bbZ$.

\item \label{i:NK2.89:2}
$E_n(M) = M$ for any $n \in \bbZ_{\le 0}$

\item \label{i:NK2.89:3}
$a_{(-1-k|K)}E_n(M) \subset E_{n+k}(M)$ 
for any $a \in V$, $k \in \bbN$, $K \subset [N]$ and $n \in \bbZ$.

\item \label{i:NK2.89:4}
For $n \ge 1$, $E_n(M)$ is equal to 
\begin{align}\label{eq:Li:2.32}
 E_n'(M) \ceq \spn\left\{ a_{(-1-k|K)}m \mid 
  \text{ $a \in V$, $k \in \bbZ_{>0}$, $K \subset [N]$, $m \in E_{n-k}(M)$} \right\}.
\end{align}

\item \label{i:NK2.89:5}
For $n \ge 1$, $E_n(M)$ is equal to 
\begin{align}\label{eq:Li:2.33}
 E_n''(M) \ceq \spn\left\{ a^1_{(-1-k_1|K_1)} \dotsm a^r_{(-1-k_r|K_r)} m \, \Bigg|
 \begin{array}{l}
  r \in \bbZ_{>0}, \, a^i \in V, \, m \in M, \, k_i \in \bbZ_{\ge1}, \, K_i \subset [N] \\
  \text{satisfying $k_1+\dotsb+k_r \ge n$}
 \end{array} \right\}.
\end{align}
\end{enumerate}
\end{lem}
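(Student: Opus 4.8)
The plan is to mirror, line for line, the proof of the $N_W=N$ analogue \cref{lem:NW2.89}, substituting the $N_K=N$ commutator formula \eqref{eq:NKcmt} for \eqref{eq:NWcmt} at the one place where operators must be reordered. Items \eqref{i:NK2.89:1}--\eqref{i:NK2.89:3} are read off directly from \cref{dfn:NKLif}: \eqref{i:NK2.89:1} holds because tightening $k_1+\dotsb+k_r\ge n$ to $\ge n+1$ shrinks the spanning set; \eqref{i:NK2.89:2} holds because for $n\le 0$ the degree condition is vacuous, so $r=1$, $k_1=0$, $K_1=[N]$, $a^1=\vac$ together with $\vac_{(-1|N)}=\id_M$ already recovers all of $M$; and \eqref{i:NK2.89:3} holds because prepending $a_{(-1-k|K)}$ lifts the total degree from $\ge n$ to $\ge n+k$. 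Next I would record the equality $E_n'(M)=E_n''(M)$ by a separate induction on $n$ that uses no commutator: the inclusion $E_n''\subseteq E_n'$ is immediate, while for $E_n'\subseteq E_n''$ one expands a generator $a_{(-1-k|K)}m'$ with $k\ge1$ and $m'\in E_{n-k}(M)=E_{n-k}''(M)$ (induction, or $m'\in M$ when $n-k\le 0$), and since every factor of $m'$ already carries a mode $\ge 1$, no middle zero-modes appear and the product lands in $E_n''(M)$. Granting this, it suffices to prove \eqref{i:NK2.89:4}.

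For \eqref{i:NK2.89:4} the inclusion $E_n'(M)\subseteq E_n(M)$ is immediate from \eqref{i:NK2.89:3}, so the content is the reverse. I would show that every spanning element $u=a^1_{(-1-k_1|K_1)}\dotsm a^r_{(-1-k_r|K_r)}m$ of $E_n(M)$ lies in $E_n'(M)$, by induction on the length $r$. The base case $r=1$ forces $k_1\ge n\ge 1$ and $m\in M=E_{n-k_1}(M)$ by \eqref{i:NK2.89:2}, so $u\in E_n'(M)$ by definition. For $r\ge2$ write $u=a^1_{(-1-k_1|K_1)}u'$; if $k_1\ge1$ then $u'\in E_{n-k_1}(M)$ and $u\in E_n'(M)$ at once. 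If $k_1=0$, then $u'\in E_n(M)$ has length $r-1$, so by the length induction and the already-established $E_n'=E_n''$ I may assume the remaining $k_2,\dotsc,k_r\ge1$, leaving the single critical configuration $u=a_{(-1|K_1)}b_{(-1-k|K_2)}u''$ with $a=a^1$, $b=a^2$, $k=k_2\ge1$, and $u''\in E_{n-k}(M)$.

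The heart of the argument is to reorder this using \eqref{eq:NKcmt}, which gives
\[
 u = \pm\, b_{(-1-k|K_2)}a_{(-1|K_1)}u''
 + \sum_{\substack{(j|J),\ j\ge 0,\\ K_2\cap(J\Delta K_1)=\emptyset}}
   \cst\cdot (a_{(j|J)}b)_{(-2-k-j-\#(J\sm K_1)\,|\,K_2\cup(J\Delta K_1))}u''.
\]
For the first term, \eqref{i:NK2.89:3} gives $a_{(-1|K_1)}u''\in E_{n-k}(M)$, and then $b_{(-1-k|K_2)}(\cdot)\in E_n'(M)$ since $k\ge1$. For each summand the even mode is $-1-k'$ with $k'\ceq 1+k+j+\#(J\sm K_1)\ge1$; because $k'>k$ we have $u''\in E_{n-k}(M)\subseteq E_{n-k'}(M)$, so the summand is of the form $c_{(-1-k'|J')}m'$ with $c\in V$, $k'\ge1$, $m'\in E_{n-k'}(M)$, i.e.\ it lies in $E_n'(M)$ by definition. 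This closes the length induction, yielding $E_n(M)\subseteq E_n'(M)$; combining with the reverse inclusion gives \eqref{i:NK2.89:4}, and \eqref{i:NK2.89:5} then follows from \eqref{i:NK2.89:4} and $E_n'=E_n''$.

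The only genuine obstacle I anticipate is bookkeeping: unlike the $N_W=N$ formula, \eqref{eq:NKcmt} shifts the resulting even mode by the extra amount $-\#(J\sm L)$ (and carries the sign $(-1)^{\alpha+\beta}$ and sorting factor $\wt\sigma$, both irrelevant to filtration membership). I must check that this shift only moves the modes further into the negative range, i.e.\ that $k'=1+k+j+\#(J\sm K_1)$ stays $\ge1$ and $\ge k$. Since $-\#(J\sm K_1)\le0$ pushes in the favorable direction, the required degree inequalities hold a fortiori, and no new difficulty arises beyond the $N_W=N$ case; I therefore expect the proof to go through exactly as in \cref{lem:NW2.89}.
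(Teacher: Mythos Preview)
Your proof is correct and follows essentially the same route as the paper's own argument: items \eqref{i:NK2.89:1}--\eqref{i:NK2.89:3} are immediate, $E_n'=E_n''$ is established by a side induction on $n$, and \eqref{i:NK2.89:4} is proved by induction on the length $r$ using the $N_K=N$ commutator formula \eqref{eq:NKcmt} to reorder the critical term $a_{(-1|K_1)}b_{(-1-k|K_2)}u''$. Your bookkeeping of the extra shift $-\#(J\sm K_1)$ is in fact slightly more accurate than the paper's, which writes $K\ceq K_1\sm J$ (a harmless slip, since either quantity is nonnegative and only pushes the mode further negative).
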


\begin{proof}
Similarly as in \cref{lem:NW2.89}, we only show \eqref{i:NK2.89:4} and \eqref{i:NK2.89:5}.
We can show by induction on $n$ that 
\begin{align}\label{eq:Lif:4=5}
 E_n'(M) = E_n''(M),
\end{align}
so that it is enough to prove \eqref{i:NK2.89:4} only. For that, we show that each element 
\[
 u = a^1_{(-1-k_1|K_1)} \dotsm a^r_{(-1-k_r|K_r)} m \in E_n(M)
\]
belongs to $E_n'(M)$ by induction on the length $r$.
The $r=1$ case is the same as in \cref{lem:NW2.89}. So let us assume $r \ge 2$. We set 
\[
 u = a^1_{(-1-k_1|K_1)}u', \quad u' \ceq a^2_{(-1-k_2|K_2)} \dotsm a^r_{(-1-k_r|K_r)} m.
\]
If $k_1 \ge 1$, then $u' \in E_{n-k_1}(M)$, and we have
$u =a^1_{(-1-k_1|K_1)}u'\in E_n'(M)$ as desired.
Hereafter we assume $k_1=0$.
Then $k_2+\dotsb+k_r \ge n$ and $u' \in E_n(M)$, 
which yields $u' \in E_n'(M)$ by induction hypothesis.
Now the equality \eqref{eq:Lif:4=5} yields $u' \in E_n''(M)$.
Hence we may assume $k_2,\dotsc,k_r \ge 1$.
Let us rewrite $a \ceq a^1$, $b \ceq a^2$ and $k \ceq k_2$, so that we have 
\[
 u = a_{(-1|K_1)}b_{(-1-k|K_2)}u'', \quad 
 u'' \ceq a^3_{(-1-k_3|K_3)} \dotsm a^r_{(-1-k_r|K_r)} m \in E_{n-k}(M). 
\]
The commutator formula \eqref{eq:NKcmt} yields
\begin{align*}
 u=\pm b_{(-1-k|K_2)}a_{(-1|K_1)}u'' 
  +\sum_{(j|J), \, j \ge 0} \cst \cdot (a_{(j|J)}b)_{(-2-k-j-\# K|K')}u'',
\end{align*}
where $\pm$ denotes a sign, $\cst$ denotes a constant, $K \ceq K_1 \sm J$ 
and $K' \ceq K_2 \cup (K_1 \Delta J)$.
Since $u'' \in E_{n-k}(M)$, we have $a_{(-1|K_1)}u'' \in E_{n-k}(M)$ by \eqref{i:NK2.89:3}, 
and the condition $k \ge 1$ yields 
\[
 b_{(-1-k|K_2)}a_{(-1|K_1)}u'' \in E_n'(M). 
\]
Also, $u'' \in E_{n-k}(M) \subset E_{n-(k+j+1+\# K))}(M)$ for $j \ge 0$ 
by \eqref{i:NK2.89:1}, and we have 
\[
 (a_{(j|J)}b)_{(-2-k-j-\# K \sm|K')}u'' \in E_n'(M).
\]
Hence we have $u \in E_n'(W)$.
\end{proof}

The next lemma is an analogue of \cref{lem:NW2.10}.

\begin{lem}\label{lem:Li:2.10}
Let $M$ be a $V$-module. For $a \in V$, $l,n \in \bbZ$ and $L \subset [N]$, we have
\begin{align}\label{eq:Li:2.34}
 a_{(l|L)}E_n(M) \subset E_{n-l-1}(M).
\end{align}
If moreover $l \ge 0$, then we have 
\begin{align}\label{eq:Li:2.35}
 a_{(l|L)}E_n(M) \subset E_{n-l}(M).
\end{align}
\end{lem}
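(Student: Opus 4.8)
The plan is to follow the proof of \cref{lem:NW2.10} essentially verbatim, substituting the $N_K=N$ commutator formula \eqref{eq:NKcmt} for the $N_W=N$ one \eqref{eq:NWcmt}. First I would treat the case $l \le -1$, where \eqref{eq:Li:2.34} is exactly the content of \cref{lem:NK2.89} \eqref{i:NK2.89:4} and nothing further is needed. For $l \ge 0$ it then suffices to establish the sharper inclusion \eqref{eq:Li:2.35}, since $E_{n-l-1}(M) \supset E_{n-l}(M)$ by \cref{lem:NK2.89} \eqref{i:NK2.89:1}.

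I would prove \eqref{eq:Li:2.35} by induction on $n$. The base case $n \le 0$ is immediate from \cref{lem:NK2.89} \eqref{i:NK2.89:2}, which gives $E_n(M)=E_{n-l}(M)=M$. For $n \ge 1$, \cref{lem:NK2.89} \eqref{i:NK2.89:4} lets me reduce to spanning elements $b_{(-1-k|K)}m$ with $b \in V$, $k \ge 1$, $K \subset [N]$ and $m \in E_{n-k}(M)$, and I would compute $a_{(l|L)}b_{(-1-k|K)}m$ by commuting $a_{(l|L)}$ past $b_{(-1-k|K)}$ using \eqref{eq:NKcmt}. This produces a leading term $\pm\, b_{(-1-k|K)}a_{(l|L)}m$ together with a finite sum of terms $\cst \cdot (a_{(j|J)}b)_{(l-1-k-j-\#(J \sm L)|K')}m$ over $(j|J)$ with $j \ge 0$, where $K'$ is the odd index produced by \eqref{eq:NKcmt}.

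The degree bookkeeping then proceeds as in the $N_W=N$ case. For the leading term, the induction hypothesis (valid since $n-k<n$ and $l \ge 0$) gives $a_{(l|L)}m \in E_{n-k-l}(M)$, after which the negative-mode inclusion \eqref{eq:Li:2.34}, already established in the first step, yields $b_{(-1-k|K)}a_{(l|L)}m \in E_{(n-k-l)-(-1-k)-1}(M)=E_{n-l}(M)$. For each summation term, applying \eqref{eq:Li:2.34} on the lower piece $E_{n-k}(M)$ — legitimate by the induction hypothesis together with the negative-mode base case, since $k \ge 1$ forces $n-k<n$ — gives $(a_{(j|J)}b)_{(l-1-k-j-\#(J \sm L)|K')}m \in E_{n-l+j+\#(J \sm L)}(M) \subset E_{n-l}(M)$, the last inclusion using $j \ge 0$, $\#(J \sm L) \ge 0$ and \cref{lem:NK2.89} \eqref{i:NK2.89:1}. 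Combining the two contributions gives $a_{(l|L)}b_{(-1-k|K)}m \in E_{n-l}(M)$, closing the induction.

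The only genuine departure from the $N_W=N$ argument, and the step I would watch most carefully, is the extra index shift $\#(J \sm L)$ appearing in the output mode of \eqref{eq:NKcmt}, which is absent in \eqref{eq:NWcmt}. I expect this to cause no difficulty: being nonnegative, it only raises the filtration degree of the summation terms, so the target inclusion into $E_{n-l}(M)$ is preserved. The precise signs, the constants $\cst$, and the exact form of $K'$ coming from \eqref{eq:NKcmt} play no role in the degree estimate, so I would not track them explicitly.
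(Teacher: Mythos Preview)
Your proposal is correct and follows essentially the same route as the paper's proof: both establish \eqref{eq:Li:2.34} for $l\le -1$ directly from \cref{lem:NK2.89}, then prove \eqref{eq:Li:2.35} for $l\ge 0$ by induction on $n$, using \cref{lem:NK2.89} \eqref{i:NK2.89:4} to reduce to elements $b_{(-1-k|K)}m$ and the $N_K=N$ commutator formula \eqref{eq:NKcmt} to split $a_{(l|L)}b_{(-1-k|K)}m$ into a leading term and a sum handled by the same degree bookkeeping. Your observation that the extra nonnegative shift $\#(J\sm L)$ only improves the filtration estimate is exactly the point, and your parenthetical justification for invoking \eqref{eq:Li:2.34} on $E_{n-k}(M)$ (via the negative-mode case plus the induction hypothesis, since $k\ge 1$) is in fact slightly more explicit than the paper's.
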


\begin{proof}
Similarly as \cref{lem:NW2.10}, we show \eqref{eq:Li:2.35} by induction on $n$.
The case $n \le 0$ is done in the same way as \cref{lem:NW2.10}. So assume $n \ge 1$.
By \cref{lem:NK2.89} \eqref{i:NK2.89:4}, $E_n(M)$ is spanned by elements
$b_{(-1-k|K)}m$ with $b \in V$, $k \ge 1$, $K \subset [N]$ and $m \in E_{n-k}(M)$.
We will show that $a_{(l|L)}b_{(-1-k|K)}m$ belongs to $E_{n-l}(M)$ 
using the commutator formula \eqref{eq:NKcmt}. If says
\[
 a_{(l|L)}b_{(-1-k|K)}m = \pm b_{(-1-k|K)}a_{(l|L)}m + 
 \sum_{(j|J), \, j \ge 0} \cst \cdot (a_{(j|J)}b)_{(l-1-k-j-\# J'|J'')}m
\]
with $J' \ceq L \sm J$ and $J'' \ceq K \cup (J \Delta L)$.
As for the first term, by $n-k<n$, $l \ge 0$ and the induction hypothesis, 
we have $a_{(l|L)}m \in E_{n-k-l}(M)$.
Then, by $-1-k<0$ and the already-proved \eqref{eq:Li:2.34}, we have
\[
 b_{(-1-k|K)}a_{(l|L)}m \in E_{(n-k-l)-(-1-k)-1}(M) = E_{n-l}(M).
\]
As for the terms in the summation, by \eqref{eq:Li:2.34} and $j+\# J' \ge 0$, we have
\begin{align*}
 (a_{(j|J)}b)_{(l-1-k-j-\# J'|J'')}m \in E_{(n-k)-(l-1-k-j-\# J')-1}(M)
 = E_{n-l+j+\# J'}(M) \subset E_{n-l}(M).
\end{align*}
Hence we have $a_{(l|L)}b_{(-1-k|K)}m \in E_{n-l}(M)$.
\end{proof}

The next lemma is an $N_K=N$ analogue of \cref{lem:NW2.11}.

\begin{lem}\label{lem:Li:2.11}
Let $M$ be a $V$-module.  
For $u \in E_r(V)$, $m \in E_s(M)$, $l \in \bbZ$ and $L \subset [N]$, we have
\begin{align}\label{eq:Li:2.36}
 u_{(l|L)}m \in E_{r+s-l-1}(M).
\end{align}
If moreover $m \ge 0$, then we have
\begin{align}\label{eq:Li:2.37}
 u_{(l|L)}m \in E_{r+s-l}(M).
\end{align}
\end{lem}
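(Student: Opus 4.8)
The plan is to follow the proof of the $N_W=N$ analogue \cref{lem:NW2.11} step for step, simply replacing each tool by its $N_K=N$ counterpart: \cref{lem:Li:2.10} in place of \cref{lem:NW2.10}, \cref{lem:NK2.89} in place of \cref{lem:NW2.89}, and the $N_K=N$ iterate formula \cref{lem:NKiter} in place of \cref{lem:NWiter}. As usual it suffices to prove the strong inclusion \eqref{eq:Li:2.37} when $l \ge 0$, since then \eqref{eq:Li:2.36} follows from $E_{r+s-l}(M) \subset E_{r+s-l-1}(M)$, while for $l<0$ the two assertions coincide. I would argue by induction on $r$, keeping $M$, $l$ and $L$ arbitrary.

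For the base case $r \le 0$ one has $u \in E_r(V)=V$ by \cref{lem:NK2.89} \eqref{i:NK2.89:2}, so \cref{lem:Li:2.10} gives $u_{(l|L)}m \in E_{s-l-1}(M) \subset E_{r+s-l-1}(M)$ for all $l$ by \eqref{eq:Li:2.34}, and $u_{(l|L)}m \in E_{s-l}(M) \subset E_{r+s-l}(M)$ for $l \ge 0$ by \eqref{eq:Li:2.35}. For the inductive step take $u \in E_{r+1}(V)$. By \cref{lem:NK2.89} \eqref{i:NK2.89:4} it suffices to treat $u = a_{(-2-i|I)}b$ with $a \in V$, $0 \le i \le r$, $I \subset [N]$ and $b \in E_{r-i}(V)$, to which the induction hypothesis applies since $r-i \le r$. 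Expanding $(a_{(-2-i|I)}b)_{(l|L)}m$ by \cref{lem:NKiter} (with $k=-2-i$ and $K=I$) yields a finite sum, indexed by pairs $(j|J)$ with $j \ge 0$ and subsets $M \subset J \cap I$, of scalar multiples of the two families
\[
 a_{(-2-i-j-p|J)}\,b_{(l+j|L')}m, \qquad b_{(l-2-i-j-p|L')}\,a_{(j|J)}m,
\]
where I write $p \ceq \#(J \sm M)$ and $L' \ceq L \cup (J \sm M) \cup (I \sm M)$.

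Each family is then estimated as in \cref{lem:NW2.11}. For the second family, $a_{(j|J)}m \in E_{s-j}(M)$ by \eqref{eq:Li:2.35} of \cref{lem:Li:2.10} (using $j \ge 0$), after which the induction hypothesis applied to $b \in E_{r-i}(V)$ gives, via \eqref{eq:Li:2.36}, the inclusion in $E_{(r-i)+(s-j)-(l-2-i-j-p)-1}(M) = E_{r+s-l+1+p}(M) \subset E_{(r+1)+s-l}(M)$ since $p \ge 0$. For the first family I would split on the sign of $l$: when $l \ge 0$ we have $l+j \ge 0$, so the strong induction hypothesis \eqref{eq:Li:2.37} gives $b_{(l+j|L')}m \in E_{(r-i)+s-l-j}(M)$, and then \eqref{eq:Li:2.34} applied to the negative mode $-2-i-j-p$ of $a$ pushes this into $E_{r+s-l+1+p}(M) \subset E_{(r+1)+s-l}(M)$; when $l<0$ we use only the weaker estimate $b_{(l+j|L')}m \in E_{(r-i)+s-l-j-1}(M)$ and land in $E_{r+s-l+p}(M) \subset E_{(r+1)+s-l-1}(M)$. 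Combining both families over the finite index set closes the induction.

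The main obstacle, and indeed the only genuine departure from the $N_W=N$ computation, is the bookkeeping of the extra index shift $p=\#(J \sm M)$ together with the additional summation over $M \subset J \cap I$, both of which are present in \cref{lem:NKiter} but absent from \cref{lem:NWiter}. The key observation that makes the argument go through unchanged is that $p \ge 0$, so this shift only raises the resulting filtration index, pushing each term into a deeper (hence smaller) piece $E_{\bullet}(M)$; because the filtration is decreasing this is entirely harmless, exactly as the $j \ge 0$ shift was in the $N_W=N$ case. I would also verify in passing that the multinomial coefficients $\binom{-2-i}{j,\,p}$ and the signs $\wt{\sigma}$ appearing in \cref{lem:NKiter} are finite scalars and that no summand carries a mode index forcing a shallower filtration grade — both being immediate from $j,p \ge 0$.
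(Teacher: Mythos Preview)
Your proposal is correct and matches the paper's proof essentially line for line: the same base case via \cref{lem:Li:2.10}, the same reduction to $u=a_{(-2-i|I)}b$ via \cref{lem:NK2.89}~\eqref{i:NK2.89:4}, the same expansion by \cref{lem:NKiter}, and the same two-family estimate with the case split on the sign of $l$. The paper uses the letter $x$ for your $p=\#(J\sm M)$ and (somewhat confusingly) reuses $K$ for the inner summation variable you call $M$, but the computations are identical, including your key observation that the extra shift $p\ge 0$ only deepens the filtration index and is therefore harmless.
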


\begin{proof}
As in \cref{lem:NW2.11}, the non-trivial case is $r \ge 0$, 
which we show by induction on $r$. 
Assume $u \in E_{r+1}(M)$. By \cref{lem:NK2.89} \eqref{i:NK2.89:4}, we can write 
$u=a_{(-2-i|I)}b$ with some $a \in V$, $0 \le i \le r$, $I \subset [N]$ and $b \in E_{r-i}(V)$.
Thus we want to calculate $(a_{(-2-i|I)}b)_{(l|L)}m$,
which is, by the iterate formula in \cref{lem:NKiter}, equal to  
\begin{align}\label{eq:2.11:iter}
\begin{split}
 (a_{(-2-i|I)}b)_{(l|L)}m
=\sum_{(j|J), \, j \ge 0} \sum_{K \subset I \cap J} 
 \bigl(&\cst \cdot a_{(-2-i-j-x|J)}b_{(l+j|J')}m \\
      +&\cst \cdot b_{(l-2-i-j-x|J')} a_{(j|J)}m \bigr) 
\end{split}
\end{align}
with $x \ceq \abs{J \sm K}$, $J' \ceq L \cup (I \sm K) \cup (J \sm K)$ 
and $\cst$ being some constant.
\begin{itemize}[nosep]
\item Assume $l \ge 0$. Then, the first term in \eqref{eq:2.11:iter} satisfies
\begin{align*}
&a_{(-2-i-j-x|J)}b_{(l+j|J')}m
 \in a_{(-2-i-j-x|J)} E_{(r-i)+s-(l+j)}(M) \\
&\subset E_{(r+s-l-i-j)-(-2-i-j-x)-1}(M) = E_{(r+1)+s-l+x}(M) 
 \subset E_{(r+1)+s-l}(M),
\end{align*}
Here, in the first line we used the induction hypothesis with $l+j \ge 0$ and $r-i \le r$,
and in the second line we used \cref{lem:Li:2.10}. 

For the second term in \eqref{eq:2.11:iter},
the induction hypothesis and \cref{lem:Li:2.10} imply 
\begin{align}\label{eq:2.11:2}
\begin{split}
&b_{(l-2-i-j-x|J')}a_{(j|J)}m
 \in b_{(l-2-i-j-x|J')} E_{s-j}(M) \\
&\subset E_{(r-i)+(s-j)-(l-2-i-j-x)-1}(M) = E_{(r+1)+s-l+x}(M) 
 \subset E_{(r+1)+s-l}(M).
\end{split}
\end{align}

Hence we have $(a_{(-2-i|I)}b)_{(l|L)}m \in E_{(r+1)+s-l}(M)$.

\item
Assume $l<0$. By the induction hypothesis and \cref{lem:Li:2.10},
the first term in \eqref{eq:2.11:iter} is
\begin{align*}
&a_{(-2-i-j-x|J)}b_{(l+j|J')}m 
 \in a_{(-2-i-j-x|J)} E_{(r-i)+s-(m+j)-1}(M) \\
&\subset E_{(r+s-l-i-j-1)-(-2-i-j-x)-1}(M) = E_{(r+1)+s-l-1+x} 
 \subset E_{(r+1)+s-l-1}(M).
\end{align*}
For the second term in \eqref{eq:2.11:iter}, the argument \eqref{eq:2.11:2} yields
\begin{align*}
 b_{(l-2-i-j-x|M')}a_{(j|J)}m \in E_{(r+1)+s-m}(M) \subset E_{(r+1)+s-m-1}.
\end{align*}
Hence we have $(a_{(-2-i|I)}b)_{(l|L)}m \in E_{(r+1)+s-l-1}(M)$.
\end{itemize}
By these arguments, the induction step works, and we have the conclusion.
\end{proof}

By \cref{prp:Li:2.6} and \cref{lem:Li:2.11}, 
we have the following $N_K=N$ analogue of \cref{thm:NW2.12}.

\begin{thm}\label{thm:NK2.12}
Let $V=(V,\vac,S_K^i,Y)$ be an $N_K=N$ SUSY VA, and $\{E_n = E_n(V) \mid n \in \bbZ\}$
be the decreasing filtration in \cref{dfn:NKLif}.
Then the associated graded space $\gr_E V = \bigoplus_{n \in \bbN} E_n/E_{n+1}$
has the following structure $(\cdot,1,\sld^i,Y_-)$ of an $N_K=N$ SUSY VPA.
Let $a \in E_r$ and $b \in E_s$.
\begin{itemize}[nosep]
\item 
The commutative multiplication $\cdot$ is 
$(a+E_{r+1}) \cdot (b+E_{s+1}) \ceq a_{(-1|N)}b+E_{r+s+1}$. 

\item
The unit is $1 \ceq \vac+E_1$.

\item
The odd operator $\sld^i$ for $i \in [N]$ is $\sld^i(a+E_{r+1}) \ceq S_K^i a + E_{r+2}$.

\item
The SUSY VLA structure $Y_-$ is 
\[
 Y_-(a+E_{r+1},Z)(b+E_{s+1}) \ceq 
 \sum_{(j|J), \, j \ge 0} Z^{-1-j|N \sm J}(a_{(j|J)}b+E_{r+s-j+1}).
\]
\end{itemize}
\end{thm}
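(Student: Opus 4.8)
The plan is to obtain \cref{thm:NK2.12} purely as an assembly of the general filtration statement \cref{prp:Li:2.6} together with the key filtration estimate \cref{lem:Li:2.11}, mirroring exactly how \cref{thm:NW2.12} is deduced from \cref{prp:NW2.6} and \cref{lem:NW2.11} in the $N_W=N$ case. The one observation that makes this work is that $V$ is a module over itself, so \cref{lem:Li:2.11} may be invoked with $M=V$.

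First I would record that the Li filtration $\{E_n=E_n(V)\}$ of \cref{dfn:NKLif} is decreasing with $E_{-1}=E_0=V$ and $\vac\in E_0$, which is immediate from \cref{lem:NK2.89}. Next I would check the two hypotheses of \cref{prp:Li:2.6}. The inclusion \eqref{eq:Li}, that $a_{(j|J)}b\in E_{r+s-j-1}$ for $a\in E_r$ and $b\in E_s$, is precisely \eqref{eq:Li:2.36} of \cref{lem:Li:2.11} read with $u=a$ and $m=b$; and the commutativity condition \eqref{eq:Li:com}, that $a_{(m|M)}b\in E_{r+s-m}$ for $m\ge0$, is precisely \eqref{eq:Li:2.37}. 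Both therefore hold.

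With these in hand, \cref{prp:Li:2.6} \eqref{i:Li:1} equips $\gr_E V$ with an $N_K=N$ SUSY VA structure whose operators are \eqref{eq:Li:op}, whose vacuum is $\vac+E_1$, and whose odd operators are $\sld^i(a+E_{r+1})=S_K^i a+E_{r+2}$; the commutativity criterion of \cref{prp:Li:2.6} makes it commutative because \eqref{eq:Li:com} holds; and \cref{prp:Li:2.6} \eqref{i:Li:2.6:3} then upgrades it to an $N_K=N$ SUSY VPA with $Y_-$ given by \eqref{eq:Li:Y-}. It remains only to specialize these formulas to the statement: the commutative product is the image of the $(-1|N)$-operator, so $(a+E_{r+1})\cdot(b+E_{s+1})=a_{(-1|N)}b+E_{r+s+1}$, and $Y_-$ collects the $j\ge0$ modes exactly as displayed.

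Since the genuine analytic content --- the interplay of the $N_K=N$ commutator formula \eqref{eq:NKcmt} and the iterate formula \cref{lem:NKiter} that controls how modes move through the filtration --- is already discharged in \cref{lem:Li:2.10,lem:Li:2.11}, I do not expect a real obstacle here beyond index bookkeeping. If anything deserves care, it is confirming that the degree shifts in \eqref{eq:Li:op} and \eqref{eq:Li:Y-} yield well-defined maps on the subquotients, which is exactly what conditions \eqref{eq:Li} and \eqref{eq:Li:com} secure; and the one conceptually substantive point, already absorbed into the proof of \cref{prp:Li:2.6} \eqref{i:Li:2.6:3}, is that the derivation axiom \eqref{eq:VPA} for $Y_-$ survives on $\gr_E V$ precisely because the correction terms of \eqref{eq:NKcmt} land one filtration step deeper and hence vanish in the associated graded.
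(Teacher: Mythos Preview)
Your proposal is correct and matches the paper's own argument exactly: the theorem is stated immediately after \cref{lem:Li:2.11} with the remark that it follows from \cref{prp:Li:2.6} and \cref{lem:Li:2.11}, which is precisely the assembly you describe. Your exposition simply makes explicit the identification of \eqref{eq:Li:2.36} with \eqref{eq:Li} and of \eqref{eq:Li:2.37} with \eqref{eq:Li:com}, and then reads off the structure from \cref{prp:Li:2.6}.
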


We also have an analogue of \cref{prp:NWgrE-mod}. The proof is omitted.

\begin{prp}\label{prp:NKgrE-mod}
Let $V$ be as in \cref{thm:NK2.12}, $M$ be a $V$-module, 
and $\{E_n (M) \mid n \in \bbZ\}$ be the decreasing filtration in \cref{dfn:NKLif}.
Then the associated graded $\gr_E M=\bigoplus_{n \in \bbN}E_n(M)/E_{n+1}(M)$
is a module over the $N_K=N$ SUSY VPA $\gr_E V$ in \cref{thm:NK2.12}
with the following structure for $a \in E_r$ and $m \in E_s(M)$:
\begin{itemize}[nosep]
\item 
The module structure over commutative superalgebra is
\[
 (a+E_{r+1}).(m+E_{s+1}(M)) \ceq a_{(-1|N)}m+E_{r+s+1}(M).
\] 
\item
The module structure over SUSY vertex Lie algebra (see \eqref{eq:VLact}) is
\[
 Y_-^M(a+E_{r+1},Z)(m+E_{s+1}(M)) \ceq
 \sum_{(j|J), \, j \ge 0} Z^{-1-j|N \sm J} \bigl(a_{(j|J)}m+E_{r+s-j+1}(M)\bigr).
\] 
\end{itemize}
\end{prp}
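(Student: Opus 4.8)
The plan is to run the proof of \cref{thm:NK2.12} again, carrying one extra ``module slot'' through every step. The two structure maps are specified on representatives, so the first task is well-definedness, and this is immediate from the filtration estimate \cref{lem:Li:2.11}. Taking $l=-1$ and $L=[N]$ in \eqref{eq:Li:2.36} gives $a_{(-1|N)}m \in E_{r+s}(M)$ for $a \in E_r$ and $m \in E_s(M)$; replacing $a$ by an element of $E_{r+1}$, or $m$ by an element of $E_{s+1}(M)$, raises this to $E_{r+s+1}(M)$, so the commutative module map is well defined. Likewise, for $j \ge 0$ the estimate \eqref{eq:Li:2.37} yields $a_{(j|J)}m \in E_{r+s-j}(M)$, and bumping $a$ or $m$ one step deeper lands the value in $E_{r+s-j+1}(M)$, so each Fourier coefficient of $Y_-^M$ is well defined.

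Next I would isolate the general mechanism as a module analogue of \cref{prp:Li:2.6}: if $M$ carries a decreasing filtration $\{F_n \mid n \in \bbZ\}$ (with $F_n \ceq M$ for $n<0$) satisfying $a_{(j|J)}m \in F_{r+s-j-1}$ for $a \in E_r$ and $m \in F_s$, then $\gr_F M \ceq \tboplus_{n \in \bbN} F_n/F_{n+1}$ is a module over the $N_K=N$ SUSY VPA $\gr_E V$, with commutative action $(a+E_{r+1}).(m+F_{s+1}) \ceq a_{(-1|N)}m+F_{r+s+1}$ and vertex Lie action built from the non-negative Fourier modes as in \eqref{eq:Li:Y-}. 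Granting this, \cref{lem:Li:2.11} supplies precisely the hypothesis for $F_n=E_n(M)$, and \cref{prp:NKgrE-mod} follows. The verification of the lemma copies \cref{prp:Li:2.6}: unitality of the commutative action is the module vacuum identity $\vac_{(-1|N)}m=m$, while associativity over $\gr_E V$ is read off from the iterate formula \cref{lem:NKiter} applied to $(a_{(-1|N)}b)_{(-1|N)}m$, whose leading term reproduces $a_{(-1|N)}(b_{(-1|N)}m)$ and whose remaining terms lie in $F_{r+s+t+1}(M)$ by the filtration hypothesis, hence vanish in $\gr_F M$.

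The remaining axioms---the SUSY vertex Lie module structure (translation invariance and the commutator relation of \cref{dfn:NKL}, now read for the module action \eqref{eq:VLact}) together with the Poisson compatibility that $Y_-^M$ act by derivations of the commutative module multiplication---are handled exactly as in \cref{prp:Li:2.6} \eqref{i:Li:2.6:3}. The vertex Lie axioms restrict from the corresponding identities in $V$ acting on $M$, with \cref{fct:NKcmt} governing the commutator and the filtration estimate discarding the higher corrections. For the derivation axiom one expands $a_{(l|L)}(b_{(-1|N)}m)$ by \eqref{eq:NKcmt}: this produces $(a_{(l|L)}b)_{(-1|N)}m$, the transposed term $\pm b_{(-1|N)}(a_{(l|L)}m)$, and a sum of terms $(a_{(j|J)}b)_{(l-1-j|N)}m$ with $0 \le j \le l-1$ which \cref{lem:Li:2.11} places in $E_{r+s+t-l+1}(M)$, so that only the derivation identity \eqref{eq:VPA} survives in $\gr_E M$.

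All the genuinely new input is contained in \cref{lem:Li:2.11}, \cref{lem:NKiter} and \cref{fct:NKcmt}, which are already established. The one real labor, and hence the main obstacle, is the $N_K=N$ sign and index bookkeeping inside the iterate and commutator formulas, which is markedly heavier than in the even case of \cite{Li}. These signs are, however, inessential to the argument: they never alter the filtration degree in which a term lands, so both well-definedness and the vanishing of the correction terms are insensitive to them, and one need only match signs for the single surviving leading term of each identity. This is precisely why, as for \cref{prp:NWgrE-mod}, the proof may be regarded as straightforward and its details omitted.
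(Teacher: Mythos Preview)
Your proposal is correct and follows exactly the approach the paper intends: the paper omits the proof entirely, indicating it is the module analogue of \cref{thm:NK2.12} (via a module version of \cref{prp:Li:2.6}) with \cref{lem:Li:2.11} supplying the filtration estimates, just as you outline. Your observation that the $N_K=N$ signs never affect filtration degrees is precisely why the paper deems the verification straightforward.
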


\section{Associated superschemes and singular supports for SUSY vertex algebras}
\label{s:ss}

In this section, $V$ denotes an $N_W=N$ or $N_K=N$ SUSY vertex algebra
over the field $k$ of characteristic $0$.

\subsection{$C_2$-Poisson superalgebra and associated superscheme}\label{ss:C2}

For the even case, Li showed in \cite[Proposition 3.8]{Li} that the degree zero subspace 
$V/E_1(V) \subset \gr_E V$ of the Li filtration is a Poisson algebra 
which coincides with the one introduced by Zhu \cite[\S 4.4]{Z}.
Following \cite[\S 2.3.1, p.11614]{A15}, we call this Poisson algebra
\emph{Zhu's $C_2$-Poisson algebra of $V$}.

There are several notions of finiteness condition on vertex algebras.
One of them is the \emph{$C_2$-cofiniteness},
which guarantees the modular property for a vertex operator algebra \cite{Z,Mi}.
Though the $C_2$-cofiniteness looks a technical condition,
Arakawa illustrated in \cite{A12} that it has a clear geometric meaning,
by introducing the notion of \emph{associated variety}.

In this subsection, we give SUSY analogue of $C_2$-Poisson algebras, 
and of the theory of associated varieties.

\subsubsection{$C_2$-Poisson superalgebra of SUSY vertex algebra}

Let $V$ be an $N_W=N$ or $N_K=N$ SUSY VA. For a $V$-module $M$, 
We denote by $\{E_n(M) \mid n \in \bbZ\}$ the Li filtration of $M$,
given in \cref{dfn:NWLif,dfn:NKLif}.
We also abbreviate $E_n \ceq E_n(V)$ as before.

\begin{dfn}\label{dfn:C2cof}
For a $V$-module $M$, we denote 
\[
 C_2(M) \ceq \spn\{a_{(-j|J)}m \mid a\in V, \, m \in M, \, \, j \ge 2, \, J \subset [N]\},
\] 
and call $M$ \emph{$C_2$-cofinite} if $\dim_k M/C_2(M) M < \infty$.
We also say \emph{$V$ is $C_2$-cofinite} if it is $C_2$-cofinite as a $V$-module.
\end{dfn}

By \cref{lem:NW2.89} \eqref{i:NW2.89:4} and \cref{lem:NK2.89} \eqref{i:NK2.89:4}, we have
\[
 C_2(M) = E_1(M),
\]
which yields the first half of \cref{prp:C2} below.

\begin{prp}\label{prp:C2}
For an $N_W=N$ or $N_K=N$ SUSY VA $V$, the quotient space
\[
 R_V \ceq V/C_2(V)
\]
is equal to the degree zero subspace $E_0(V)/E_1(V) = V/E_1(V) \subset \gr_E V$
of the Li filtration.
Moreover, it is a Poisson superalgebra of parity $N \bmod 2$ in the sense of \cref{dfn:sP}
whose commutative multiplication $\cdot$ 
and Poisson bracket $\{\cdot,\cdot\}$ are given by 
\[
 \ol{a} \cdot \ol{b} \ceq \ol{a_{(-1|N)}b}, \quad 
 \{\ol{a},\ol{b}\} \ceq \ol{a_{(0|N)}b}
\]
for $\ol{a} \ceq a+C_2(V)$, $\ol{b} \ceq b+C_2(V)$ with $a,b \in V$.
We call $R_V$ the \emph{$C_2$-Poisson superalgebra of $V$}.
\end{prp}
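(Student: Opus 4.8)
The plan is to obtain the whole statement as the degree-zero shadow of the SUSY vertex Poisson structure on $\gr_E V$ constructed in \cref{thm:NW2.12,thm:NK2.12}, specialized through the ``$\Lambda=0$ part'' mechanism of \cref{rmk:sL,rmk:sP} (ultimately \cite[Lemma 3.2.7]{HK}). The first half is immediate: the identification $C_2(M)=E_1(M)$ recorded just above, a consequence of \cref{lem:NW2.89,lem:NK2.89}, realizes $R_V=V/C_2(V)$ as $E_0/E_1$, the lowest summand of $\gr_E V$. It then remains to show that the commutative multiplication and the vertex Lie bracket of $\gr_E V$ restrict to $E_0/E_1$ and there assemble into a Poisson superalgebra of parity $N \bmod 2$.

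For the commutative product I would restrict the commutative SUSY VA structure of $\gr_E V$. Since $(a+E_{r+1})\cdot(b+E_{s+1})=a_{(-1|N)}b+E_{r+s+1}$ carries the pair of degrees $(r,s)$ to degree $r+s$, the summand $E_0/E_1$ is closed, and \cref{lem:comW,lem:comK} identify the restriction with a unital commutative superalgebra on $R_V$ having unit $\vac+E_1$ and $\ol a\cdot\ol b=\ol{a_{(-1|N)}b}$. Well-definedness modulo $C_2(V)=E_1$ is precisely the assertion that $a_{(-1|N)}b\in E_1$ whenever $a$ or $b$ lies in $E_1$, which is the $l=-1$ case of \cref{lem:NW2.11,lem:Li:2.11}.

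For the bracket I would isolate from $Y_-$ its ``$\Lambda=0$ part'', the lowest ($m=0$) Fourier mode of parity $N \bmod 2$; on the degree-zero summand this produces the Poisson bracket $\{\cdot,\cdot\}$ of the statement. The same filtration estimates of \cref{lem:NW2.11,lem:Li:2.11}, now in the range $l=0$, show that this mode sends $E_0\times E_0$ into $E_0$ and sends $E_1\times E_0$ and $E_0\times E_1$ into $E_1$, so it descends to $\{\cdot,\cdot\}\colon R_V\otimes R_V\to R_V$. The Poisson axioms are then the degree-zero residues of identities already available: the skew-symmetry of parity $N$ in \cref{dfn:sL} comes from \cref{fct:NWskew,fct:skew}, the Jacobi identity of parity $N$ from the $\Lambda$-Jacobi identity \cref{fct:NWL-Jac,fct:VL-Jac} (equivalently from the commutator formulas \cref{fct:NWcmt,fct:NKcmt}), and the Leibniz rule of \cref{dfn:sP} from the derivation property \eqref{eq:VPA} of the SUSY VPA $\gr_E V$. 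In each case the terms that would obstruct the bare identity in $V$ carry a strictly larger even index and hence lie one step deeper in the filtration, i.e.\ in $E_1$, so they vanish in $R_V$; this is the same bookkeeping already performed in \cref{prp:NW2.6,prp:Li:2.6}.

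The main obstacle will be the sign and parity accounting. One must confirm that the surviving lowest mode genuinely carries parity $N\bmod 2$, so that $R_V$ is a Poisson superalgebra of the asserted parity rather than an ordinary one, and that the Koszul signs in \cref{fct:NWcmt,fct:NKcmt} collapse, after the $E_1$-terms are dropped, to exactly the signs in the parity-$N$ skew-symmetry, Jacobi and Leibniz axioms of \cref{dfn:sL,dfn:sP}. The $N_W=N$ and $N_K=N$ cases must be run with their respective commutator and iterate formulas, but once the deeper-filtration terms are discarded the two descent arguments coincide, so I would present them uniformly.
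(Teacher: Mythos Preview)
Your proposal is correct and follows essentially the same route as the paper: the identification $C_2(V)=E_1(V)$ from \cref{lem:NW2.89,lem:NK2.89} gives the first half, and the Poisson structure on $R_V=E_0/E_1$ is extracted from the SUSY VPA structure on $\gr_E V$ established in \cref{thm:NW2.12,thm:NK2.12}, exactly as the paper's one-line proof (``shown directly using \cref{dfn:NWP,dfn:NKP}'') indicates. Your elaboration via the filtration estimates of \cref{lem:NW2.11,lem:Li:2.11} and the derivation/skew-symmetry/Jacobi identities is the natural unpacking of that sentence; the only caveat is terminological---the phrase ``$\Lambda=0$ part'' in \cref{rmk:sL,rmk:sP} and \cite[Lemma 3.2.7]{HK} refers to a slightly different quotient (by $\nabla V$), so it is better invoked as an analogy than as a direct mechanism, but your actual argument does not depend on it.
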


\begin{proof}
The first half is already explained.
The second half can be shown directly using \cref{dfn:NWP,dfn:NKP} of SUSY VPAs.
\end{proof}

\begin{eg}\label{eg:NS-RV}
Let us study the $C_2$-Poisson superalgebra of 
the Neveu-Schwarz SUSY vertex algebra $V$ (\cref{eg:NS2}).
Recall that it is an $N_K=1$ SUSY VA over $k=\bbC$, and as a linear superspace we have 
\[
 V=\bbC[S_K^n \tau \mid n \in \bbN]
\]
with $\tau=G_{-\frac{3}{2}}\vac$ being the Neveu-Schwarz element. 
Then  $E_1(V) = C_2(V) = 
 \spn\{a_{(-m|M)}b \mid a,b \in V, \, m \in \bbZ_{\ge 2}, \, M \in\{ 0,1\} \}$,
and by \eqref{eq:NK-TSa}, we have
$S_K^m \tau = (S_K^m \tau)_{(-1|1)}\vac = (S_K^{m-2} \tau)_{(-2|1)} \vac \in E_1(V)$
for $m \ge 2$. Also, by \eqref{eq:Li:2.36}, 
we have $E_1(V)_{(-1|1)}E_1(V) \subset E_2(V) \subset E_1(V)$.
These imply $E_1(V) = \bbC[S_K^m \tau \mid m \in \bbZ_{\ge 2}]$, and we have
\[
 R_V \simeq \bbC[\ol{\tau},\ol{S_K \tau}] = \bbC[\ol{\tau},\ol{\nu}]
\]
as commutative superalgebras, 
where $\nu \ceq \frac{1}{2}S_K \tau$ is the Virasoro element of $V$.
The Poisson structure is given by 
\[
 \{\ol{\tau},\ol{\tau}\} = 2 \ol{\nu}, \quad 
 \{\ol{\tau},\ol{\nu}\} = \{\ol{\nu},\ol{\nu}\} = 0.
\]
Here is the calculation:
Using \eqref{eq:tau_op}, \eqref{eq:NS} and \eqref{eq:tau-nu}, we have 
$\tau_{(0|0)}\tau=G_{-\hf}G_{-\frac{3}{2}}\vac=2L_{-2}\vac=2\nu$,
which gives the first equality.
Similarly, we have 
$\tau_{(0|1)}\nu = G_{-\hf}L_{-2}\vac = \hf G_{-\frac{5}{2}}\vac = \hf \tau_{(-2|1)}\vac$,
which belongs to $C_2(V)=E_1(V)$. Hence $\{\ol{\tau},\ol{\nu}\}=0$.
Finally, we have 
$\nu_{(0|1)}\nu = L_{-1}L_{-2}\vac = L_{-3}\vac = \hf \tau_{(-2|0)}\vac$,
which belongs to $E_1(V)$, implying $\{\ol{\nu},\ol{\nu}\}=0$.
\end{eg}

In the even case, Arakawa showed in \cite[Proposition 2.5.1]{A12} that 
the embedding $R_V = E_0/E_1 \inj \gr_E V=\bigoplus_{n \ge 0} E_n/E_{n+1}$
can be extended to a surjection $(R_V)_{\infty} \srj \gr_E V$ of vertex Poisson algebras,
where $(R_V)_{\infty}$ is the jet algebra of $R_V$ equipped with 
the level $0$ vertex Poisson structure (\cref{rmk:lv0}).
We have the following SUSY analogue of this fact.

\begin{prp}\label{prp:A12:2.5.1}
Let $V$ be a SUSY VA, and $\phi\colon R_V \inj \gr_E V$ be the embedding in \cref{prp:C2}.
\begin{enumerate}[nosep]
\item 
If $V$ is an $N_W=N$ SUSY VA, then $\phi$ extends to a surjective morphism 
\[
 \Phi\colon (R_V)^O \lsrj \gr_E V
\]
of $N_W=N$ SUSY VPAs, where $(R_V)^O$ is the $1|N$-superjet algebra of $R_V$
equipped with the level $0$ SUSY VPA structure (\cref{prp:NWlv0}).
\item
If $V$ is an $N_K=N$ SUSY VA, then $\phi$ extends to a surjective morphism 
\[
 \Phi\colon (R_V)^{\Osc} \lsrj \gr_E V
\]
of $N_K=N$ SUSY VPAs, where $(R_V)^{\Osc}$ is the $1|N$-superconformal jet algebra of $R_V$
with the level $0$ SUSY VPA structure (\cref{prp:NKlv0}).
\end{enumerate}
\end{prp}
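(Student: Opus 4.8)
The plan is to follow Arakawa's argument for the even case \cite[Proposition 2.5.1]{A12}, in three stages: first produce $\Phi$ as a morphism of commutative differential superalgebras, then promote it to a morphism of SUSY VPAs, and finally read off surjectivity.

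First I would establish that $\gr_E V$ is generated by its degree-zero part $R_V = E_0/E_1$ as a differential superalgebra. By \cref{lem:NW2.89} \eqref{i:NW2.89:5} (resp.\ \cref{lem:NK2.89} \eqref{i:NK2.89:5}) each $E_n$ is spanned by the products $a^1_{(-1-k_1|K_1)}\dotsm a^r_{(-1-k_r|K_r)}\vac$ with $\sum_i k_i = n$, and the class of such a product modulo $E_{n+1}$ is the commutative product in $\gr_E V$ of the classes of the individual factors $a^i_{(-1-k_i|K_i)}\vac$. Each single factor $\ol{a_{(-1-k|K)}\vac}$ is obtained from $\ol a \in R_V$ by applying the derivations of $\gr_E V$: by \cref{fct:NW-1} one has $\tfrac1{k!}T^k a = a_{(-1-k|N)}\vac$ and $S^i a = a_{(-1|N\sm e_i)}\vac$ (the $N_K$ analogue being \eqref{eq:NK:S}), so that $\ol{a_{(-1-k|K)}\vac}$ equals, up to sign, $\tfrac1{k!}\pd^k\bigl(\delta^{N\sm K}\ol a\bigr)$, writing $\delta^{N\sm K}$ for the product of the $\delta^i$ with $i\in N\sm K$ (resp.\ with $\sld^i$ in the $N_K$ case). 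Hence $R_V$ generates $\gr_E V$ under $\pd,\delta^i$ (resp.\ under the $\sld^i$), and \cref{lem:sj-gen} (resp.\ \cref{lem:scj-gen}), applied with $A = R_V$ and $B = \gr_E V$, yields a surjective morphism $\Phi$ of differential superalgebras extending $\phi$, namely $\Phi\colon (R_V)^O \srj \gr_E V$ in the $N_W$ case and $\Phi\colon (R_V)^{\Osc}\srj \gr_E V$ in the $N_K$ case. Surjectivity is built into these lemmas.

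It remains to check that $\Phi$ intertwines the vertex Lie structures $Y_-$. Both sides are SUSY VPAs whose $Y_-$ is, by the uniqueness clause of \cref{lem:NW3.6} \eqref{i:NW3.6:2} (resp.\ \cref{lem:NK3.6} \eqref{i:Li04:3.6:2}), the unique extension of its own restriction to the generating subspace $R_V$. Since $\Phi$ already respects the commutative product and the derivations, the identity $\Phi(Y_-(x,Z)y) = Y_-(\Phi x,Z)\Phi y$ propagates from the generators to all of $(R_V)^O$ through the translation-invariance and derivation (Leibniz) axioms of $Y_-$, so it suffices to verify it for $x,y\in R_V$. There the level $0$ structure (\cref{prp:NWlv0}, resp.\ \cref{prp:NKlv0}) gives $Y_-(x,Z)y = z^{-1}\{x,y\}$, while \cref{thm:NW2.12} (resp.\ \cref{thm:NK2.12}) gives $Y_-(\ol x,Z)\ol y = \sum_{J\subset[N]} Z^{-1|N\sm J}\,\ol{x_{(0|J)}y}$, the modes with $j\ge 1$ dropping out automatically; matching the $J=[N]$ terms recovers exactly the definition $\{x,y\} = \ol{x_{(0|N)}y}$ of the bracket on $R_V$ from \cref{prp:C2}.

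The main obstacle is the remaining comparison: one must show that the off-diagonal terms vanish, i.e.\ $\ol{x_{(0|J)}y} = 0$ in $R_V$ for every $J\subsetneq[N]$, equivalently (using $(S^i x)_{(0|N)} = x_{(0|N\sm e_i)}$ from \eqref{eq:NW-TSa}, resp.\ \eqref{eq:NK-TSa}) that each $\ol{S^i x}$ is central for the bracket $\{\cdot,\cdot\}$. I would derive this by expanding the skew-symmetry identity \cref{fct:NWskew} (resp.\ \cref{fct:skew}) for $Y_-(\ol x,Z)\ol y$ to first order in the odd variables, keeping only the $z^{-1}$ part, and combining the resulting relation with the odd-derivation formula \cref{fct:NW:Sder} (resp.\ \cref{fct:Sder}) and the parity-$N$ antisymmetry of $\{\cdot,\cdot\}$ from \cref{dfn:sL}. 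The cancellation that forces $\ol{x_{(0|J)}y}=0$ is a purely sign-theoretic computation, and tracking the Koszul signs of parity $N\bmod 2$ through the factors of $e^{Z\nabla} = e^{zT}\prod_i(1+\zeta^i S^i)$ (resp.\ its $N_K$ form) is the delicate point. Once this vanishing is in hand the base case holds, $\Phi$ is a morphism of SUSY VPAs, and the proof is complete.
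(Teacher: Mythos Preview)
Your overall strategy matches the paper's exactly: construct $\Phi$ as a surjection of differential superalgebras via the universal property of the (superconformal) jet algebra, then reduce the VPA morphism check to the generating subspace $R_V$. The paper packages the generation step as the separate \cref{lem:Li:4.2} and invokes \cref{lem:sj-gen}/\cref{lem:scj-gen}; your direct spanning argument from \cref{lem:NW2.89}\,\eqref{i:NW2.89:5} (resp.\ \cref{lem:NK2.89}\,\eqref{i:NK2.89:5}) is the same content. For the VPA step the paper simply \emph{asserts} that on $R_V=E_0/E_1$ one has $\ol a_{(m|M)}\ol b=0$ unless $(m|M)=(0|[N])$ and then quotes a SUSY analogue of \cite[Lemma~3.3]{Li04}; you correctly observe that the $m\ge 1$ terms vanish by the grading and isolate the $m=0$, $M\subsetneq[N]$ cases as the real issue.

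The gap is in your proposed resolution of that issue. Skew-symmetry in the VPA $\gr_E V$ reads
$Y_-(\ol a,Z)\ol b=\Sing\bigl((-1)^{p(a)p(b)}e^{Z\nabla'}Y_-(\ol b,-Z)\ol a\bigr)$,
where $\nabla'$ is built from $\pd,\delta^i$ (resp.\ $\sld^i$), each of which \emph{raises} the Li degree by one. Projecting both sides to degree~$0$ (i.e.\ to $R_V$) therefore kills every term involving $\delta^i$ or $\sld^i$, and what survives is only the parity-$N$ antisymmetry $\ol a_{(0|M)}\ol b=\pm\,\ol b_{(0|M)}\ol a$, not a vanishing. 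Combining this with \cref{fct:NW:Sder}/\cref{fct:Sder} does not help: unwinding via \eqref{eq:NW-TSa}/\eqref{eq:NK-TSa} one finds $a_{(0|M)}b=\pm(S^{N\sm M}a)_{(0|N)}b$, so that $\ol{a_{(0|M)}b}=\pm\{\ol{S^{N\sm M}a},\ol b\}$ in $R_V$. Your claim is thus equivalent to the assertion that $\ol{S^I a}$ is Poisson-central in $R_V$ for every nonempty $I\subset[N]$ and every $a\in V$, and no amount of sign-chasing through skew-symmetry produces this; it would require a separate argument (or a modification of the level~$0$ structure). Note that the paper's own proof is equally thin at exactly this point---it states the vanishing as a consequence of \cref{thm:NW2.12}/\cref{thm:NK2.12} without further justification---so you have not missed an argument that is present there, but you should not expect your skew-symmetry computation to close the gap either.
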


For the proof, we need some preliminary. 
The following is an analogue of \cite[Lemma 4.2, (4.7)]{Li}.

\begin{lem}\label{lem:Li:4.2}
Let $V$ be a SUSY VA. We regard the SUSY VPA $\gr_E V$ in \cref{thm:NW2.12,thm:NK2.12} 
as a commutative superalgebra, and denote it by $A$.
Hence, for an $V$-module $M$, the $\gr_E V$-module $\gr_E M$ 
in \cref{prp:NWgrE-mod,prp:NKgrE-mod} can be regarded as an $A$-module.
Then the $A$-module $\gr_E M$ is generated by $E_0(M)/E_1(M)$.
\end{lem}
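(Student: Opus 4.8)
The plan is to run the induction of Li's even-case argument \cite[Lemma 4.2]{Li} degree by degree along the Li filtration, handling the $N_W=N$ and $N_K=N$ cases uniformly. Write $A \ceq \gr_E V$ for the underlying commutative superalgebra, graded by the Li level, and recall from \cref{prp:NWgrE-mod,prp:NKgrE-mod} that $\gr_E M = \tboplus_{n} E_n(M)/E_{n+1}(M)$ is a graded $A$-module in which a class $a+E_{r+1} \in E_r/E_{r+1}$ acts on $m+E_{s+1}(M)$ by $a_{(-1|N)}m + E_{r+s+1}(M)$. Since this action respects the grading, the $A$-submodule generated by the degree-zero piece $E_0(M)/E_1(M)$ is graded, and I would show by induction on $n \in \bbN$ that its degree-$n$ component is all of $E_n(M)/E_{n+1}(M)$. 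The case $n=0$ is the definition of the generators, so the content is the inductive step.

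For $n \ge 1$, \cref{lem:NW2.89} \eqref{i:NW2.89:4} and \cref{lem:NK2.89} \eqref{i:NK2.89:4} give that $E_n(M)$ is spanned by elements $a_{(-1-k|K)}m$ with $a \in V$, $k \ge 1$, $K \subset [N]$ and $m \in E_{n-k}(M)$, so it suffices to treat the class of such an element. The key reduction is the mode identity
\[
 a_{(-1-k|K)}m = c\, b_{(-1|N)}m, \qquad b \ceq S^{i_p}\dotsm S^{i_1}T^k a \in V,
\]
valid in both cases for a nonzero scalar $c=\pm 1/k!$, where $\{i_1<\dotsb<i_p\} = [N]\sm K$. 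This is assembled from two elementary facts about Fourier modes: first $(T^k a)_{(-1|J)} = k!\,a_{(-1-k|J)}$, obtained by iterating $(Ta)_{(j|J)} = -j\,a_{(j-1|J)}$ (\cref{fct:NW-1}, \eqref{eq:NW-TSa} and \eqref{eq:NK-TSa}); and second $(S^i b)_{(-1|N)} = b_{(-1|N\sm e_i)}$, which holds in both settings because the mode formulas \eqref{eq:NW-TSa} and \eqref{eq:NK-TSa} each reduce, for the top index set $J=[N]$, to deletion of the index $i$. Iterating the second identity promotes the index set $K$ to the full $[N]$, at the cost of the sign absorbed into $c$.

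Now $b$ lies in $E_k(V)$: indeed $a \in E_0 = V$, the identity $Ta = a_{(-2|N)}\vac$ together with \cref{lem:NW2.11,lem:Li:2.11} shows $T$ raises the Li level by one, and $S^i a = a_{(-1|N\sm e_i)}\vac$ (see \eqref{eq:NK:S}) together with the same lemmas shows each $S^i$ does not lower it; hence $T^k a \in E_k$ and $b \in E_k$. Consequently $b+E_{k+1}$ is a degree-$k$ element of $A$, and the module law of \cref{prp:NWgrE-mod,prp:NKgrE-mod} identifies
\[
 \overline{a_{(-1-k|K)}m} = c\,(b+E_{k+1}).(m+E_{n-k+1}(M))
\]
in $E_n(M)/E_{n+1}(M)$. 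Since $k \ge 1$, the class $m + E_{n-k+1}(M)$ sits in degree $n-k < n$, so by the induction hypothesis it lies in $A\cdot\bigl(E_0(M)/E_1(M)\bigr)$; applying the degree-$k$ element $b+E_{k+1}$ keeps us inside this $A$-submodule. This closes the induction.

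The main obstacle is the SUSY-specific bookkeeping in the displayed mode identity: beyond Li's even-case manipulation $a_{(-1-k)}=\tfrac1{k!}(T^ka)_{(-1)}$, one must additionally promote the odd index $K$ to $[N]$ and control the Koszul signs produced by the repeated action of the $S^i$, and one must confirm that the resulting $b$ still lands in $E_k(V)$ rather than in a strictly higher or lower filtration piece — which is precisely where \cref{lem:NW2.11,lem:Li:2.11} (and hence the SUSY iterate formulas underlying them) enter. Everything else is a faithful transcription of the even-case reasoning.
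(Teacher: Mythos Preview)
Your proof is correct and follows essentially the same route as the paper's: both run the degree-by-degree induction of Li's even argument, invoke the $E_n' = E_n$ description from \cref{lem:NW2.89,lem:NK2.89} to reduce to elements $a_{(-1-k|K)}m$, and then use the mode identities \eqref{eq:NW-TSa}, \eqref{eq:NK-TSa} to rewrite this as $(T^{(k)}S^{N\sm K}a)_{(-1|N)}m$ (up to sign), which is exactly the $A$-action of a degree-$k$ element on the lower-degree class $\ol{m}$. The only cosmetic difference is that you apply $T^k$ before the $S^i$'s whereas the paper writes $T^{(j+1)}S^{N\sm J}a$, and you spell out explicitly (via \cref{lem:NW2.11,lem:Li:2.11}) why the resulting element lies in $E_k(V)$, a point the paper leaves implicit.
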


\begin{proof}
Considering the decomposition $\gr_E M = \bigoplus_{n \in \bbN}E_n(M)/E_{n+1}(M)$,
we show by induction on $n$ that every element of $E_n(M)/E_{n+1}(M)$ can be written 
in the form $u.m$ with $u \in A$ and $m \in E_0(M)/E_1(M)$.
We may assume $n \ge 1$. The rest argument is divided into $N_W=N$ case and $N_K=N$ case.
\begin{itemize}[nosep]
\item
Assume $V=(V,\vac,T,S^i,Y)$ is an $N_W=N$ SUSY VA.
Then, by \cref{lem:NW2.89} \eqref{i:NW2.89:4}, 
$E_n(M)$ is linearly spanned by the subspaces $a_{(-2-j|J)}E_{n-1-j}(M)$ 
with $a \in V$, $0 \le j \le n-1$ and $J \subset [N]$. For $m \in E_{n-1-j}(M)$, 
using \eqref{eq:NW-TSa} and $T^{(j)} \ceq \frac{1}{j!}T^j$, we have
\begin{align*}
 a_{(-2-j|J)}m + E_{n+1}(M) 
&=\pm (S^{N \sm J}a)_{(-2-j|N)}m+E_{n+1}(M) \\
&=\pm (T^{(j+1)}S^{N \sm J}a)_{(-1|N)}m+E_{n+1}(M) \\
&=\pm\bigl(T^{(j+1)}S^{N \sm J}a+E_{j+2}(V)\bigr)_{(-1|N)}\bigl(m+E_{n-j}(M)\bigr)+E_{n+1}(M).
\end{align*}
Denoting $\ol{m} \ceq m+E_n(M)$ for $m \in E_n(M)$, we have 
\[
 \ol{a_{(-2-j|J)}m} = \pm \bigl(\pd^{j+1}\delta^{N \sm J}\ol{a}\bigr).\ol{m},
\]
where $\pd$ and $\delta^i$ are the operators of the $N_W=N$ SUSY VPA $\gr_E V$
(see \cref{thm:NW2.12}).
It implies that $\ol{a_{(-2-j|J)}m}$ belongs to the subspace generated by $E_0(M)/E_1(M)$.
Thus, the induction step works, and we have the result.

\item 
A similar argument works in the case $V=(V,\vac,S_K^i,Y)$ is an $N_W=N$ SUSY VA.
By \cref{lem:NK2.89} \eqref{i:NK2.89:4}, 
$E_n(M)$ is linearly spanned by the subspaces $a_{(-2-j|J)}E_{n-1-j}(M)$ 
with $a \in V$, $0 \le j \le n-1$ and $J \subset [N]$. 
For $m \in E_{n-1-j}(M)$, using \eqref{eq:NK-TSa}, we have 
\begin{align*}
 a_{(-2-j|J)}m + E_{n+1}(M) = \pm\bigl(T^{(j+1)}S_K^{N \sm J}a+E_{j+2}(V)\bigr)_{(-1|N)}
 \bigl(m+E_{n-j}(M)\bigr)+E_{n+1}(M).
\end{align*}
Using the odd operator $\sld^i$ of the $N_K=N$ SUSY VPA $\gr_E V$ (\cref{thm:NK2.12}),
we have
\[
 \ol{a_{(-2-j|J)}m} = \pm \bigl((\sld^i)^{(2j+2)}\sld^{N \sm J}\ol{a}\bigr).\ol{m},
\]
The rest part is the same as the $N_W=N$ case.
\end{itemize}
\end{proof}

Now we start:

\begin{proof}[Proof of \cref{prp:A12:2.5.1}]
First we consider the $N_W=N$ case.
By \cref{lem:sj-gen,lem:Li:4.2}, we find that the embedding $R_V \inj \gr_E V$ 
extends to a surjection $\Phi\colon (R_V)^O \srj \gr_E V$ of differential superalgebras.
By \cref{thm:NW2.12}, the $N_W=N$ SUSY VPA structure on $\gr_E V$ is restricted to 
the sub-superspace $R_V=E_0/E_1$ as 
\[
 \ol{a}_{(m|M)}\ol{b} = 
 \begin{cases}\ol{a_{(0|N)}b} & (m = 0, M=[N]) \\ 0 & (\text{otherwise}) \end{cases}
\]
for $m \in \bbN$, $M \subset [N]$ and $\ol{a}=a+E_1$, $\ol{b}=b+E_1$ with $a,b \in E_0=V$.
It coincides with the level $0$ SUSY VPA structure on $(R_V)^O$ (\cref{prp:C2}).
Then, an obvious SUSY analogue of \cite[Lemma 3.3]{Li04} shows
that the surjection $\Phi$ is a morphism of $N_K=N$ SUSY VPAs.

The $N_K=N$ case can be treated similarly, 
using \cref{lem:scj-gen} and \cref{thm:NK2.12} instead.
\end{proof}

\subsubsection{Associated superscheme of SUSY vertex algebra}

Following \cite[\S 3.1, \S 3.2]{A12}, we give an analogue of the theory of associated varieties.
As before, $V$ denotes an $N_W=N$ or $N_K=N$ SUSY VA.

\begin{dfn}\label{dfn:X_V}
The \emph{associated superscheme} of $V$ is defined to be
\[
 X_V \ceq \Spec R_V.
\]
By \cref{prp:C2}, it is a Poisson superscheme of parity $N \bmod 2$ (see \cref{dfn:sP}).
\end{dfn}

Let us study a condition when $R_V$ is finitely generated as a superalgebra
(so that $X_V$ is of finite type).

\begin{dfn}\label{dfn:fsg}
$V$ is called \emph{finitely strongly generated} if there exists a finite subset $G \subset V$ 
such that $V$ is linearly spanned by the elements of the form
\begin{align}\label{eq:fsg}
 a^1_{(-p_1|P_1)} \dotsm a^r_{(-p_r|P_r)}\vac \quad
 (r \in \bbN, \, a^i \in G, \, p_i \in \bbZ_{>0}, \, P_i \subset [N]).
\end{align}
We call $G$ a set of \emph{strong generators} of $V$.
\end{dfn}

Below is an analogue of a half of \cite[Corollary 2.6.2]{A12}.

\begin{lem}\label{lem:fsg}
If $V$ is finitely strongly generated, 
then the $C_2$-superalgebra $R_V$ is finitely generated.
More precisely, if $G=\{a^1,\dotsc,a^r\}$ is a set of strong generators of $V$, 
then $\ol{G} \ceq \{\ol{a^1},\dotsc,\ol{a^r}\}$ with $\ol{a^i} \ceq a^i + C_2(V)$ 
generates the superalgebra $R_V = V/C_2(V)$.
\end{lem}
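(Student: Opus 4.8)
The plan is to follow Arakawa's proof of the even-case statement \cite[Corollary 2.6.2]{A12}, reducing everything to the strong-generation monomials \eqref{eq:fsg} and inducting on their length $r$. First I would record the two structural facts that drive the argument: by \cref{prp:C2} the product on $R_V = V/C_2(V)$ is $\ol{a}\cdot\ol{b} = \ol{a_{(-1|N)}b}$ with unit $\ol{\vac}$, and by \cref{lem:NW2.89}\eqref{i:NW2.89:4} (resp.\ \cref{lem:NK2.89}\eqref{i:NK2.89:4}) one has $C_2(V)=E_1(V)$, so that any mode $a_{(-p|P)}m$ with $p\ge 2$ already lies in $C_2(V)$ and hence vanishes in $R_V$. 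Writing a spanning vector as $u = a^1_{(-p_1|P_1)}w$ with $w$ a shorter monomial, the outermost mode splits into three cases: if $p_1\ge 2$ then $u\in C_2(V)$ and $\ol{u}=0$; if $p_1=1$ and $P_1=[N]$ then $\ol{u}=\ol{a^1}\cdot\ol{w}$ and the inductive hypothesis applies; the genuinely new case is $p_1=1$ with $P_1\subsetneq[N]$.

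To treat that case I would bring in the odd derivations. Specialising \cref{fct:NW:Sder} (resp.\ \cref{fct:Sder}) together with \eqref{eq:NW-TSa} (resp.\ \eqref{eq:NK-TSa}) to $J=[N]$ yields the clean identity
\[
 a_{(-1|N\sm e_i)}b = S^i\bigl(a_{(-1|N)}b\bigr) - (-1)^{p(a)}a_{(-1|N)}(S^i b),
\]
and the same computation shows $S^i(C_2(V))\subseteq C_2(V)$, so each $S^i$ (resp.\ $S_K^i$) descends to an odd derivation $\delta^i$ of the commutative superalgebra $R_V$; note also that $T$ descends to $0$ since $Ta=a_{(-2|N)}\vac\in C_2(V)$. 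Iterating the displayed identity $N-\abs{P_1}$ times to strip the missing odd indices $[N]\sm P_1$ off the top mode, I would rewrite $\ol{a^1_{(-1|P_1)}w}$ as a polynomial, for the $R_V$-product, in the $\ol{a^k}$, in the shorter images $\ol{w}$ and $\ol{S^i w}$, and in $\delta^i$ of such products. Since $S^i w$ is again a sum of monomials of the same length, closing the recursion cleanly needs a simultaneous induction on $r$ and on the total missing-index count $\sum_k(N-\abs{P_k})$, keeping track of the signs $\sigma(\cdot,\cdot)$ from \eqref{eq:sigma}.

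The step I expect to be the main obstacle is precisely the control of the odd index $P\subsetneq[N]$: the identity above produces $\delta^i$ applied to products, so closing the induction forces the generating set to be stable under the $\delta^i$. Because $\delta^i(\ol{S^I a^k})=\ol{S^i S^I a^k}$, and since $(S^i)^2=0$ with $S^iS^j=-S^jS^i$ in the $N_W$ case while $(S_K^i)^2=T$ acts as $0$ on $R_V$ in the $N_K$ case (\cref{lem:STL}), the \emph{finite} set of superpartners $\{\ol{S^I a^k}\mid I\subseteq[N],\,1\le k\le r\}$ is $\delta^i$-stable, and the induction shows it generates $R_V$; this already establishes that $R_V$ is finitely generated. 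The delicate point, which I would examine with care, is whether these superpartners $\ol{S^I a^k}$ themselves lie in the subalgebra generated by $\ol{G}$ alone, as the literal ``more precisely'' assertion demands. This is exactly where the odd structure departs from the even case (there no $S^i$ is present and $\ol{G}$ visibly suffices); the Neveu–Schwarz computation of \cref{eg:NS-RV}, where $\ol{\nu}=\tfrac12\,\ol{S_K\tau}$ appears as a new polynomial generator alongside $\ol{\tau}$, is the test case to scrutinise, and it suggests that the robust formulation is finite generation by the superpartner set rather than by $\ol{G}$ itself.
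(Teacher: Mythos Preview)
Your analysis is sharper than the paper's own argument, which is a two-line proof: it observes that $C_2(V)=E_1$ is spanned by strong-generation monomials \eqref{eq:fsg} with $p_1+\dotsb+p_r \ge r+1$, so modulo $E_1$ only those with every $p_i=1$ survive, and then simply declares that $R_V$ is generated by $\ol{G}$. The case $P_k \subsetneq [N]$ that you isolate is not addressed at all.

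Your suspicion about the literal ``more precisely'' assertion is well-founded, and the Neveu--Schwarz test case you propose is a genuine counterexample. There $G=\{\tau\}$ strongly generates (every $L_{-n}$, $G_{-r}$ is a mode $\tau_{(-p|P)}$ with $p\ge1$), and $\tau$ is odd, so the sub-superalgebra of $R_V$ generated by $\ol{\tau}$ alone is $\bbC \oplus \bbC\ol{\tau}$, which is two-dimensional; but \cref{eg:NS-RV} computes $R_V \cong \bbC[\ol{\tau},\ol{\nu}]$ with $\ol{\nu}$ even, hence infinite-dimensional. Thus $\ol{G}$ does not generate $R_V$, and the paper's stated refinement is false as written.

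Your repair is the right one. The identity you derive from \cref{fct:NW:Sder}/\cref{fct:Sder} and \eqref{eq:NW-TSa}/\eqref{eq:NK-TSa}, together with $T$ acting as zero on $R_V$, shows that each $\delta^i$ is a well-defined odd derivation of $R_V$ satisfying $(\delta^i)^2=0$ in both the $N_W$ and $N_K$ cases. Your double induction on the length $r$ and on $\sum_k(N-\abs{P_k})$ then proves that the finite superpartner set $\{\ol{S^I a^k} \mid I \subset [N],\ 1\le k\le r\}$ generates $R_V$. This establishes the headline claim that $R_V$ is finitely generated, which is all that the paper actually uses downstream (finite type of $X_V$, \cref{thm:A12:3.3.3}); only the sharper parenthetical about $\ol{G}$ itself must be weakened.
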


\begin{proof}
By \cref{dfn:NWLif,dfn:NKLif} of the Li filtration, $C_2(V)=E_1$ is linearly spanned by 
elements of the form $a^1_{(-p_1|P_1)} \dotsm a^1_{(-p_r|P_r)}b$ 
with $r \ge 1$, $a^i,b \in V$, $p_i \in \bbZ_{>0}$ and $P_i \subset [N]$ satisfying 
$p_1+\dotsb+p_r \ge r+1$. Hence, both $V$ and $E_1$ are linearly spanned by the elements 
of the form \eqref{eq:fsg}, and $R_V=V/E_1$ is generated as a superalgebra by $\ol{G}$.
\end{proof}

Next, we turn to the associated scheme of a SUSY VA module.
Let $M$ be a $V$-module. Then the $\gr_E V$-module structure on $\gr_E M$ 
(\cref{prp:NWgrE-mod,prp:NKgrE-mod}) induces a Poisson $R_V$-module structure 
on the quotient superspace
\[
 M/C_2(M) = E_0(M)/E_1(M).
\]
In other words, it has a superalgebra $R_V$-action
\begin{align}\label{eq:RV-mod:alg}
 \ol{a}.\ol{m} \ceq \ol{a_{(-1|N)}m} \quad (a \in V, \, m \in M)
\end{align}
with $\ol{a} \ceq a + C_2(V)$ and $\ol{m} \ceq m + C_2(M)$,
and has a Lie superalgebra $R_V$-action (of parity $N \bmod 2$)
\[
 \{\ol{a},\ol{m}\} \ceq \ol{a_{(0|N)}m}  \quad (a \in V, \, m \in M),
\]
which are compatible in the sense that
\begin{align}\label{eq:A12:27}
 \{\ol{a},\ol{b}.\ol{m}\} = \{\ol{a},\ol{b}\}.\ol{m} + 
 (-1)^{(p(a)+N)p(b)} \ol{b}.\{\ol{a},\ol{m}\}. 
\end{align}
(c.f.\ \cref{dfn:sP} of Poisson superalgebras.) 
The last formula \eqref{eq:A12:27} shows 
the following SUSY analogue of \cite[Lemma 3.2.1 (1)]{A12}:

\begin{lem}\label{lem:A12:3.2.1-1}
Let $M$ be a $V$-module, and $\Ann_{R_V}(M/C_2(M))$ be the annihilator of $M/C_2(M)$ 
in the superalgebra $R_V$ (see \eqref{eq:RV-mod:alg}). Then $\Ann_{R_V}(M/C_2(M))$ 
is a Poisson ideal of the Poisson superalgebra $R_V$ of parity $N \bmod 2$.
\end{lem}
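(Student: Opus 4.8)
The plan is to verify directly the two defining conditions of a Poisson ideal for $I \ceq \Ann_{R_V}(M/C_2(M))$: that $I$ is an ideal of $R_V$ for the commutative multiplication $\cdot$, and that $\{R_V,I\} \subset I$ for the Poisson bracket $\{\cdot,\cdot\}$ of parity $N \bmod 2$ supplied by \cref{prp:C2}. Since the module action \eqref{eq:RV-mod:alg} and the Lie action are both parity-preserving of definite parity, $I$ is automatically a sub-superspace, so it suffices to check these two algebraic conditions.

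First I would dispose of the multiplicative condition. Because $M/C_2(M)$ is a module over the commutative superalgebra $R_V$ via \eqref{eq:RV-mod:alg}, for $\ol{a} \in I$ and $\ol{c} \in R_V$ we have $(\ol{c} \cdot \ol{a}).\ol{m} = \ol{c}.(\ol{a}.\ol{m}) = \ol{c}.0 = 0$ for every $\ol{m} \in M/C_2(M)$, so $\ol{c} \cdot \ol{a} \in I$; by super-commutativity of $\cdot$ the same holds with the factors reversed. Hence $I$ is a (two-sided) ideal of the commutative superalgebra $R_V$.

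The substantive step is closure under the bracket, and here I would invoke the compatibility identity \eqref{eq:A12:27}, which expresses that $\{\ol{a},-\}$ acts as a derivation of parity $p(a)+N$ on the $R_V$-module $M/C_2(M)$. Taking $\ol{a} \in I$ and $\ol{c} \in R_V$, I apply \eqref{eq:A12:27} with $\ol{c}$ in the derivation slot and the product $\ol{a}.\ol{m}$ being differentiated, giving $\{\ol{c},\ol{a}.\ol{m}\} = \{\ol{c},\ol{a}\}.\ol{m} + (-1)^{(p(c)+N)p(a)}\ol{a}.\{\ol{c},\ol{m}\}$. The left-hand side vanishes since $\ol{a}.\ol{m}=0$; the second term on the right vanishes since $\{\ol{c},\ol{m}\} \in M/C_2(M)$ is again annihilated by $\ol{a} \in I$. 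What remains is $\{\ol{c},\ol{a}\}.\ol{m} = 0$ for all $\ol{m}$, so $\{\ol{c},\ol{a}\} \in I$. Finally I would use the skew-symmetry of the parity-$N$ bracket (\cref{dfn:sL}) to conclude that $\{\ol{a},\ol{c}\}$ lies in $I$ as well, yielding $\{R_V,I\} \subset I$.

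I do not expect a genuine obstacle: the argument is a formal consequence of the Leibniz-type relation \eqref{eq:A12:27} together with the fact that the Lie action preserves $M/C_2(M)$. The only points requiring care are the bookkeeping of parity signs in \eqref{eq:A12:27} and in the skew-symmetry, and confirming that the notion of a Poisson ideal of a parity-$N$ Poisson superalgebra (\cref{dfn:sP}) is exactly ``multiplicative ideal plus bracket-stable,'' so that the two verifications above indeed suffice.
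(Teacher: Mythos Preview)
Your proof is correct and follows precisely the approach the paper indicates: the paper does not spell out a proof but simply states that the Leibniz-type identity \eqref{eq:A12:27} ``shows'' the lemma, and you have written out exactly those details. The only minor remark is that the paper takes the whole argument as evident from \eqref{eq:A12:27}, so your verification of the multiplicative ideal condition and the use of skew-symmetry are just the routine steps the paper leaves implicit.
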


Now, extending \cref{dfn:X_V}, we introduce:

\begin{dfn}
For a $V$-module $M$, we define the \emph{associated superscheme} of $M$ to be 
\[
 X_M \ceq \Supp_{R_V}\bigl(M/C_2(M)\bigr) = 
 \bigl\{\frp \in \Spec R_V \mid \bigl(M/C_2(M)\bigr)_{\frp} \neq 0\bigr\}.
\]
\end{dfn}

It is natural to study the condition when $M/C_2(M)$ is finitely generated
over the superalgebra $R_V$. Following \cite[\S 3.1]{A12}, we introduce:

\begin{dfn}\label{dfn:fsgM}
A $V$-module $M$ is \emph{finitely strongly generated over $V$}
if $M/C_2(M)$ is finitely generated over $R_V$ as a superalgebra.
\end{dfn}

If $M$ is finitely strongly generated over $V$, then 
\[
 X_M = \bigl\{\frp \in \Spec R_V \mid 
       \frp \supset \Ann_{R_V}\bigl(M/C_2(M)\bigr) \bigr\},
\]
and by \cref{lem:A12:3.2.1-1}, $X_M$ is a closed Poisson subscheme of $X_V$.
We have an immediate consequence:

\begin{lem}[{\cite[Lemma 3.2.2]{A12}}]
Assume $V$ is finitely strongly generated,
so that $R_V$ is a finitely generated superalgebra by \cref{lem:fsg}.
Then, for a finitely strongly generated $V$-module $M$, we have 
\[
 \text{$M$ is $C_2$-cofinite (\cref{dfn:C2cof})} \iff
 \dim X_M = 0.
\]
\end{lem}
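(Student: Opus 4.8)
The plan is to reduce the statement to a standard fact in commutative algebra, exactly as in the even case \cite[Lemma 3.2.2]{A12}, and then to verify that the super structure causes no trouble. Set $N \ceq M/C_2(M) = E_0(M)/E_1(M)$; by \cref{dfn:fsgM} this is a finitely generated module over $R_V$, and by \cref{lem:fsg} the superalgebra $R_V$ is finitely generated over $k$. By \cref{dfn:C2cof}, $C_2$-cofiniteness of $M$ is precisely $\dim_k N < \infty$, while $X_M = \Supp_{R_V}(N)$ by definition. Thus the assertion is the equivalence
\[
 \dim_k N < \infty \iff \dim \Supp_{R_V}(N) = 0,
\]
the general statement that a finitely generated module over a finitely generated $k$-algebra is finite-dimensional over $k$ exactly when its support is zero-dimensional.

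First I would record that $R_V$ is Noetherian. Choosing pure-parity strong generators, the even part $(R_V)_{\oz}$ is a finitely generated commutative $k$-algebra (generated by the even generators together with the pairwise products of the odd ones), hence Noetherian; and $R_V$ is a finite $(R_V)_{\oz}$-module, because all odd elements square to zero, so only finitely many square-free odd monomials occur. Consequently $N$ is a Noetherian $R_V$-module and $\Ann_{R_V}(N)$ is a well-behaved ideal. Writing $\bar A \ceq R_V/\Ann_{R_V}(N)$, we have $X_M = \Supp_{R_V}(N) = \Spec \bar A$, which is a closed Poisson subscheme of $X_V$ (cf.\ \cref{lem:A12:3.2.1-1}), and $N$ is a faithful finitely generated $\bar A$-module.

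For the two directions I would argue as follows. If $\dim_k N < \infty$, the faithful action of $\bar A$ gives an injection $\bar A \inj \End_k(N)$, so $\bar A$ is finite-dimensional over $k$, hence Artinian, hence $\dim \bar A = 0$. Conversely, if $\dim \bar A = 0$ then $\bar A$ is a zero-dimensional finitely generated $k$-algebra, so by Noether normalization (equivalently, by Zariski's lemma applied to each residue field) it is Artinian and finite-dimensional over $k$; since $N$ is finitely generated over $\bar A$, this gives $\dim_k N < \infty$.

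The one point needing care — and the step I expect to be the main obstacle — is the passage from the supercommutative ring $R_V$ to ordinary commutative algebra, where Noether normalization and the dimension theory live. Here I would use that every odd element of $R_V$ is nilpotent and hence lies in every prime ideal, so that $\Spec R_V$ and the Krull dimension of $\bar A$ are computed on the purely even quotient, over which the classical theory applies verbatim; the finiteness $\dim_k N < \infty$ is insensitive to parity, and $N$ is finite over the even subalgebra, so both sides of the equivalence descend to the even reduction. (The degenerate case $N = 0$, where $M$ is trivially $C_2$-cofinite and $X_M = \emptyset$, is covered by the usual convention on the dimension of the empty scheme.)
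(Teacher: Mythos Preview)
Your argument is correct. The paper does not supply its own proof of this lemma: it is stated as ``an immediate consequence'' of the preceding discussion and simply cites \cite[Lemma 3.2.2]{A12}. Your write-up is exactly the standard reduction to commutative algebra that underlies that citation, with the added (and appropriate) care of checking that the super structure is harmless because odd elements are nilpotent and hence invisible to $\Spec$ and Krull dimension. So your proof is essentially the intended one, just spelled out in more detail than the paper chose to give.
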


\subsection{Singular support}\label{ss:ss}

In this subsection, we introduce the notion of singular supports for SUSY VAs
and study the relation to the lisse condition. 
The contents are more or less straightforward analogue of the even case \cite[\S 3.3]{A12}.
Throughout of this subsection, we take the base field $k$ to be $\bbC$.
Also, a SUSY VA means an $N_W=N$ or $N_K=N$ SUSY VA.

First, we need to restrict the class of SUSY VAs and modules.

\begin{dfn}
Let $V$ be a SUSY VA.
\begin{enumerate}[nosep]
\item
$V$ is \emph{graded} if it is equipped with 
an even semisimple operator $H$, called a \emph{Hamiltonian}, such that 
\[
 [H,a_{(j|J)}] = -(j+1)a_{(j|J)}+(H a)_{(j|J)}
\]
for any $a \in V$, $j \in \bbZ$ and $J \subset [N]$.
We denote the eigenspace decomposition by $V=\bigoplus_{\Delta \in \bbC} V_{\Delta}$, 
$V_\Delta \ceq \{a \in V \mid H a = \Delta a\}$.
An element of $V_\Delta$ for some $\Delta$ will be called 
\emph{homogeneous of weight $\Delta$}.
For $a \in V_{\Delta_a}$ and $b \in V_{\Delta_b}$,
we have $a_{(j|J)}b \in V_{\Delta_a+\Delta_b-j-1}$.
 
\item
For a subset $I \subset \bbC$, $V$ is \emph{$I$-graded} 
if it is graded and $V_\Delta=0$ for $\Delta \notin I$.

\item
Assume $V$ is graded with Hamiltonian $H$. Then a $V$-module $M$ is \emph{graded} if 
\[
 [H,a^M_{(j|J)}] = -(j+1)a^M_{(j|J)}+(H a)^M_{(j|J)}
\]
for any $a \in V$, $j \in \bbZ$ and $J \subset [N]$,
and moreover if there is a decomposition
\begin{align}\label{eq:grM}
 M = \bigoplus_{d \in \bbC} M_d, \quad M_d \ceq \{m \in M \mid H m = d m\}.
\end{align}
An element of $M_d$ for some $d$ is called \emph{homogeneous of weight $d$}.
For $a \in V_{\Delta_a}$ and $m \in M_d$, we have $a^M_{(j|J)}m \in M_{d+\Delta_a-j-1}$.
\end{enumerate}
\end{dfn}

\begin{rmk}\label{rmk:gr-sc}
An $N_W=N$ conformal SUSY VA (\cref{dfn:cNW}) 
is graded with Hamiltonian $\omega_{(1|0)}$.
An $N_K=N$ conformal SUSY VA (\cref{dfn:cNK})
is graded with Hamiltonian $\tau_{(1|0)}$.
\end{rmk}

We consider the following class of SUSY VAs and their modules.
The conditions are simple analogue of those in \cite{A12}.

\begin{asm}\label{asm:A12}
Let $V$ be a SUSY VA which is 
\begin{itemize}[nosep]
\item $\frac{1}{r_0}\bbN$-graded with some $r_0 \in \bbZ_{>0}$, and 
\item finitely strongly generated (\cref{dfn:fsg}).
\end{itemize}
Also, let $M$ be a $V$-module which is
\begin{itemize}[nosep]
\item graded and lower truncated, i.e., 
there exists a finite subset $\{d_1,\dotsc,d_s\} \subset \bbC$ such that 
$M_d = 0$ unless $d \in d_i + \frac{1}{r_0}\bbN$ in the decomposition \eqref{eq:grM}, and 
\item finitely strongly generated (\cref{dfn:fsgM}).
\end{itemize}
\end{asm}

\begin{rmk}
Strongly conformal SUSY VAs (\cref{dfn:scNW,dfn:scNK}) satisfy \cref{asm:A12}.
\end{rmk}

By \cref{lem:fsg}, the $C_2$-superalgebra $R_V$ is finitely generated.
We also have the following lemma, 
the even version of which is stated in \cite[\S3.1, p.570]{A12}.

\begin{lem}
Let $V$ be a SUSY VA and $M$ be a $V$-module, both satisfying \cref{asm:A12}.
Then the Li filtration $\{E_n(M) \mid n \in \bbZ\}$ is separated, i.e., 
$\bigcap_{n \in \bbZ} E_n(M) =0$.
\end{lem}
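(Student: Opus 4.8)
The plan is to exploit the gradings on $V$ and $M$ and reduce the statement to a weight-by-weight vanishing. First I would observe that each $E_n(M)$ is a \emph{graded} subspace of $M=\bigoplus_{d}M_d$. Indeed, given a spanning vector $a^1_{(-1-k_1|K_1)}\dotsm a^r_{(-1-k_r|K_r)}m$ of $E_n(M)$ (with $k_1+\dotsb+k_r\ge n$), one may decompose each $a^i$ and $m$ into homogeneous components; since $a\mapsto a_{(j|J)}$ is linear in $a$ and acts linearly on $M$, this writes the vector as a sum of vectors of the very same form with homogeneous $a^i\in V_{\Delta_i}$ and $m\in M_d$, each again lying in $E_n(M)$ and each homogeneous. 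Hence $E_n(M)=\bigoplus_d\bigl(E_n(M)\cap M_d\bigr)$, and an intersection of graded subspaces being graded, $\bigcap_{n}E_n(M)$ is graded as well. It therefore suffices to prove $\bigl(\bigcap_n E_n(M)\bigr)\cap M_{d'}=0$ for each fixed weight $d'\in\bbC$.

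Next I would record the weight of a homogeneous spanning vector. For $a^i\in V_{\Delta_i}$ and $m\in M_d$, the weight-shift rule $a^M_{(j|J)}M_d\subset M_{d+\Delta_a-j-1}$ from the definition of a graded module, applied with $j=-1-k_i$, shows that each operator $a^i_{(-1-k_i|K_i)}$ raises the weight by $\Delta_i+k_i$. Thus such a vector lies in $M_{d'}$ with
\[
 d' = d + \sum_{i=1}^r(\Delta_i+k_i).
\]
Since $V$ is $\tfrac{1}{r_0}\bbN$-graded we have $\Delta_i\in\tfrac{1}{r_0}\bbN$, and by hypothesis $\sum_i k_i\ge n$; writing $t\ceq\sum_i(\Delta_i+k_i)$ we obtain $t\in\tfrac{1}{r_0}\bbN$ and $t\ge n$.

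Finally I would invoke the lower truncation of $M$. As $m\in M_d$ is nonzero, $d\in d_j+\tfrac{1}{r_0}\bbN$ for some $j\in\{1,\dotsc,s\}$, so $d-d_j\in\tfrac{1}{r_0}\bbN$ and hence $d'-d_j=(d-d_j)+t\in\tfrac{1}{r_0}\bbN$ with $d'-d_j\ge t\ge n$. So a nonzero homogeneous spanning vector of $E_n(M)$ of weight $d'$ forces $d'-d_j$ to be a nonnegative real number $\ge n$ for some $j$ in the finite set $\{d_1,\dotsc,d_s\}$. For a fixed $d'$ each $d'-d_j$ is a fixed number, which can be $\ge n$ for only finitely many $n$; hence $E_n(M)\cap M_{d'}=0$ for all sufficiently large $n$. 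Since $\bigl(\bigcap_n E_n(M)\bigr)\cap M_{d'}\subset E_n(M)\cap M_{d'}$ for every $n$, this piece vanishes, and as $\bigcap_n E_n(M)$ is graded we conclude $\bigcap_n E_n(M)=0$. The argument is uniform over the $N_W=N$ and $N_K=N$ cases, as it uses only the graded-module axiom and the common form of the Li filtration in \cref{dfn:NWLif,dfn:NKLif}. The genuine content is the weight estimate $d'\ge d+n$; the only point requiring care — and the main, though mild, bookkeeping obstacle — is justifying that the defining spanning set of $E_n(M)$ may be taken homogeneous, i.e.\ the gradedness of $E_n(M)$.
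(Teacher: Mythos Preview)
Your proof is correct and follows essentially the same route as the paper's: both arguments show that $E_n(M)$ is graded, compute the weight of a homogeneous spanning vector as $d+\sum_i(\Delta_i+k_i)$, and combine $\Delta_i\ge 0$ with $\sum_i k_i\ge n$ and the lower truncation of $M$ to conclude that $E_n(M)\cap M_{d'}=0$ for $n$ large. You are slightly more explicit than the paper in justifying the gradedness of $E_n(M)$, but the substance is identical.
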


\begin{proof}
It is essentially shown in \cite[Lemma 2.14]{Li}, 
but let us write down a proof for completeness.
Let $E_n(M) = \bigoplus_d E_n(M)_d$, $E_n(M)_d \ceq E_n(M) \cap M_d$ 
be the induced decomposition. It is enough to show 
$E_n(M)_d=0$ unless $d \ge n+d_i$ for some $i \in \{1,\dotsc,s\}$.

The case $n \le 0$ holds by the condition on $M$.
For $n \ge 1$, recall \cref{dfn:NKLif}, 
and consider a basis element of $E_n(M)$: 
\[
 v \ceq a^1_{(-1-k_1|K_1)} \dotsm a^r_{(-1-k_r|K_r)} m
\]
with $r \ge 1$, $a^j \in V$, $m \in M$, $k_j \in \bbZ_{>0}$ and $K_j \subset [N]$
satisfying $k_1+\dotsb+k_r \ge n$. 
We may assume that $a^j$ and $m$ are homogeneous, say $a^j \in V_{\Delta_j}$ and $m \in M_d$.
By the assumption on $M$, we have $d \ge d_i$ for some $i \in \{1,\dotsc,s\}$.  
Then $v$ is also homogeneous of weight
\[
 (\Delta_1+k_1)+\dotsb+(\Delta_1+k_1)+d \ge k_1+\dots+k_r+d \ge n+d \ge n+d_i.
\]
Thus we have $E_n(M)_d=0$ unless $d \ge n+d_i$.
\end{proof}

The following lemma corresponds to a half of \cite[Lemma 3.1.5]{A12}.

\begin{lem}
Let $V$ be a SUSY VA and $M$ be a $V$-module, both satisfying \cref{asm:A12}.
Then $\gr_E M$ is a finitely generated module over the superalgebra $\gr_E V$.
\end{lem}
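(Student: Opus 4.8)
We must show that under Assumption~\ref{asm:A12}, the $\gr_E V$-module $\gr_E M$ is finitely generated over the commutative superalgebra underlying $\gr_E V$. Let me plan the proof.

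The plan is to reduce the finite generation of $\gr_E M$ over $\gr_E V$ to the finite generation of a single graded piece—the degree-zero component—over the $C_2$-superalgebra $R_V$, which is exactly what the hypothesis ``$M$ finitely strongly generated'' provides. First I would regard $\gr_E V$ as a commutative superalgebra $A$, as in \cref{lem:Li:4.2}, and recall from \cref{prp:C2} that its degree-zero part $E_0/E_1$ is precisely $R_V = V/C_2(V)$; since the product $(a+E_1)\cdot(b+E_1) = a_{(-1|N)}b + E_1$ stays in degree zero, $R_V = A_0$ is a sub-superalgebra of $A$. By \cref{lem:Li:4.2} the $A$-module $\gr_E M$ is generated by its degree-zero component $E_0(M)/E_1(M) = M/C_2(M)$, so it suffices to produce finitely many $A$-generators of this component.

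The next step is to identify the relevant action. For $a \in E_0 = V$ and $m \in E_0(M) = M$, the commutative-superalgebra module structure of \cref{prp:NWgrE-mod,prp:NKgrE-mod} reads $(a+E_1).(m+E_1(M)) = a_{(-1|N)}m + E_1(M)$, which is exactly the Poisson $R_V$-module action $\ol{a}.\ol{m}$ of \eqref{eq:RV-mod:alg} (using $C_2(V)=E_1$ and $C_2(M)=E_1(M)$). Thus the restriction of the $A$-action to $R_V = A_0$ on the degree-zero piece $M/C_2(M)$ coincides with its $R_V$-module structure. Now, because $M$ satisfies \cref{asm:A12} it is finitely strongly generated over $V$, so by \cref{dfn:fsgM} the $R_V$-module $M/C_2(M)$ is finitely generated; choose generators $\ol{m}_1,\dotsc,\ol{m}_t$. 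Since $R_V \subseteq A$, these already generate $M/C_2(M)$ as an $A$-module, and hence by \cref{lem:Li:4.2} they generate all of $\gr_E M$ as a $\gr_E V$-module, giving the claim.

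I do not expect a genuine obstacle: both ingredients—generation of $\gr_E M$ over $\gr_E V$ by its degree-zero piece, and finite generation of that piece over $R_V$—are already available from \cref{lem:Li:4.2} and \cref{asm:A12}, respectively. The only point needing care is the compatibility in the previous paragraph, namely that the restriction of the $\gr_E V$-action to degree zero recovers the $R_V$-module structure; this is immediate from the defining formulas and is uniform in the $N_W=N$ and $N_K=N$ cases. It is worth noting that the grading hypothesis in \cref{asm:A12} is not actually used for finite generation itself—it enters only in the separatedness statement and the later geometric consequences—so the argument goes through on the strength of finite strong generation alone.
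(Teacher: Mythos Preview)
Your proposal is correct and follows exactly the same approach as the paper: invoke \cref{lem:Li:4.2} to see that $\gr_E M$ is generated as a $\gr_E V$-module by $E_0(M)/E_1(M)=M/C_2(M)$, then use the finite strong generation hypothesis in \cref{asm:A12} to conclude that this degree-zero piece is finitely generated over $R_V \subset \gr_E V$. Your extra care in verifying the compatibility of the $R_V$-action with the restriction of the $\gr_E V$-action is a welcome detail that the paper leaves implicit.
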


\begin{proof}
By \cref{lem:Li:4.2}, the $\gr_E V$-module $\gr_E M$ is generated by $E_0(M)/E_1(M)=M/C_2(M)$.
By \cref{asm:A12}, $M/C_2(M)$ is finitely generated over 
the superalgebra $R_V \subset \gr_E V$. So we have the consequence.
\end{proof}

Let us recall \cref{prp:A12:2.5.1}, 
by which we have a surjection of commutative superalgebras
\begin{align}\label{eq:Phi}
 \Phi\colon (R_V)^O \lsrj \gr_E V
\end{align}
from the superjet algebra. On the other hand, For a $V$-module $M$,
$\gr_E M$ is an $N_K=N$ SUSY VPA module of $\gr_E V$ by \cref{prp:NKgrE-mod},
Hence, $\gr_E M$ can also be regarded as a module over the commutative superalgebra $(R_V)^O$.
Under \cref{asm:A12}, $\gr_E M$ is finitely generated over $\gr_E V$,
so that it is also finitely generated over $(R_V)^O$

Now, following \cite[\S 3.3]{A12}, we introduce:

\begin{dfn}\label{dfn:SS}
Let $V$ be a SUSY VA $V$ and a $V$-module $M$.
We define the \emph{singular support of $M$} to be
\[
 \SSup(M) \ceq \Supp_{(R_V)^O}(\gr_E M).
\]
Here we regard $(R_V)^O$ as a commutative superalgebra.
Under \cref{asm:A12}, it can be written as 
\begin{align*}
 \SSup(M) = \left\{\frp \in \Spec (R_V)^O \mid 
                   \frp \supset \Ann_{(R_V)^O}(\gr_E M)\right\}.
\end{align*}
\end{dfn}

For a superscheme $X=(\ul{X},\shO)$, its \emph{reduced part} \cite[(1.1.5)]{KV2} 
is a scheme defined by 
\[
 X_{\trd} \ceq \bigl(\ul{X},\shO_{\oz}/\sqrt{\shO_{\oz}} \bigr).
\]
Mimicking the even case \cite[\S 3.3]{A12}, let us also introduce:

\begin{dfn}\label{dfn:lisse}
Let $V$ be a SUSY VA. 
\begin{enumerate}[nosep]
\item 
A $V$-module $M$ is called \emph{lisse} if it is finitely strongly generated and 
$\SSup(M)_{\trd}$ is $0$-dimensional.
\item
$V$ itself is called \emph{lisse} if it is lisse as a $V$-module.
\end{enumerate}
\end{dfn}

We have the following analogue of \cite[Theorem 3.3.3]{A12}.

\begin{thm}\label{thm:A12:3.3.3}
Let $V$ be a SUSY VA satisfying \cref{asm:A12}. We have
\[
 \text{$V$ is $C_2$-cofinite (\cref{dfn:C2cof})} \iff \text{$V$ is lisse}.
\] 
\end{thm}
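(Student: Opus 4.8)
The plan is to reduce both the $C_2$-cofiniteness and the lisse condition to the single numerical statement $\dim (X_V)_{\trd} = 0$, and then to pass through the established equivalence in the even case \cite{A12}. Throughout, recall from the discussion after \cref{dfn:C2cof} that $C_2(V) = E_1$, so that $R_V = V/C_2(V) = E_0/E_1$, which is a finitely generated commutative superalgebra by \cref{lem:fsg}. Since all odd elements of $R_V$ are nilpotent, $R_V$ is module-finite over its finitely generated even subalgebra $R_{V,\oz}$; hence $\dim_k R_V < \infty$ if and only if $R_{V,\oz}$ is Artinian, i.e.\ if and only if $\dim (X_V)_{\trd} = 0$ (recall $(X_V)_{\trd} = \Spec(R_{V,\oz}/\sqrt{R_{V,\oz}})$). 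This already gives
\[
 \text{$V$ is $C_2$-cofinite} \iff \dim (X_V)_{\trd} = 0 .
\]
It therefore remains to show that $V$ is lisse if and only if $\dim (X_V)_{\trd} = 0$; note that under \cref{asm:A12} the finite strong generation required in \cref{dfn:lisse} is automatic, so being lisse reduces to $\dim \SSup(V)_{\trd} = 0$.

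Two observations about the surjection $\Phi\colon (R_V)^O \srj \gr_E V$ of \cref{prp:A12:2.5.1} organize the argument. First, since $\Phi$ is a surjective morphism of commutative superalgebras, $\gr_E V \cong (R_V)^O/\Ker\Phi$, and by \cref{dfn:SS} we have $\SSup(V) = \Supp_{(R_V)^O}(\gr_E V) = \Spec \gr_E V$, realized as a closed subsuperscheme of the superjet superscheme $(X_V)^O \ceq \Spec (R_V)^O$. Second, composing the degree-$0$ inclusion $R_V \inj (R_V)^O$ with $\Phi$ recovers the canonical embedding $R_V = E_0/E_1 \inj \gr_E V$ of \cref{prp:C2}; hence $R_V \to \gr_E V$ is injective, so the closure of the image of the induced projection $p\colon \SSup(V) \to X_V$ is all of $X_V$. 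Passing to underlying even parts, $p$ is dominant, whence $\dim \SSup(V)_{\trd} \ge \dim (X_V)_{\trd}$. In particular, if $V$ is lisse then $\dim (X_V)_{\trd} \le \dim \SSup(V)_{\trd} = 0$, which yields $C_2$-cofiniteness by the previous paragraph.

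For the converse direction I would show that $\dim (X_V)_{\trd} = 0$ forces $\dim \SSup(V)_{\trd} = 0$. Since $\SSup(V)$ is a closed subsuperscheme of $(X_V)^O$, it suffices to prove $\dim ((X_V)^O)_{\trd} = 0$. Here is the step I expect to be the main obstacle: one must verify that passing to the reduced part of the superjet superscheme kills all the super directions. Concretely, in $(R_V)^O = \clS^N\bigl(\HS{R_V/k}{\infty}\bigr)$ the odd generators coming from the odd part of $R_V$, together with the $N$ de Rham odd directions introduced by $\clS^N$, are all nilpotent; consequently the even reduced quotient $((R_V)^O)_{\oz}/\sqrt{\,\cdot\,}$ is a nilpotent thickening of the reduced classical arc algebra of $(X_V)_{\trd}$, so that the underlying space of $((X_V)^O)_{\trd}$ coincides with that of the reduced arc scheme $\bigl(J_\infty (X_V)_{\trd}\bigr)_{\trd}$. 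Granting this identification, the classical fact used in the even case \cite{A12} (see also \cite{EM})---that the reduced arc scheme of a $0$-dimensional scheme is again $0$-dimensional---gives $\dim ((X_V)^O)_{\trd} = 0$, hence $\dim \SSup(V)_{\trd} = 0$ and $V$ is lisse.

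Finally, the argument is uniform in the $N_W = N$ and $N_K = N$ cases: although \cref{prp:A12:2.5.1} produces the surjection from $(R_V)^O$ in the $N_W$ case and from the superconformal jet algebra $(R_V)^{\Osc}$ in the $N_K$ case, the two share the same underlying commutative superalgebra $(R_V)^O$ (they differ only in the choice of derivations), and \cref{dfn:SS} computes $\SSup(V)$ over this common commutative superalgebra. Thus all the dimension estimates above apply verbatim to both cases, and the hardest point remains the super-to-even reduction of the superjet algebra flagged above.
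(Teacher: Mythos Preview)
Your proposal is correct and follows essentially the same route as the paper. Both directions hinge on the same two facts: that $\SSup(V)$ sits inside $(X_V)^O$ (giving $C_2$-cofinite $\Rightarrow$ lisse once $((X_V)^O)_{\trd}$ is seen to be $0$-dimensional), and that the projection $\SSup(V) \to X_V$ is onto (giving lisse $\Rightarrow$ $C_2$-cofinite). The only differences are presentational: the paper packages the surjectivity of the projection as the general module statement $X_M = p_0(\SSup(M))$ (\cref{lem:A12:3.3.1}) rather than arguing dominance directly from the injection $R_V \inj \gr_E V$, and the paper asserts the step you flag as the ``main obstacle'' (that $\dim(X_V)_{\trd}=0$ forces $\dim((X_V)^O)_{\trd}=0$) without spelling out the reduction to the classical arc-scheme fact, whereas you do.
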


For the proof, we need some preliminary. 
First, recall the projection $(R_V)^{O_m} \to (R_V)^O$ of superjet algebras 
in \eqref{eq:fsj-sj}. We rewrite it as a morphism of superschemes
\begin{align}\label{eq:pm}
 p_m\colon (X_V)^O \ceq \Spec (R_V)^O \lto (X_V)^{O_m} \ceq \Spec (R_V)^{O_m}.
\end{align}
Then we have the following analogue of \cite[Lemma 3.3.1]{A12}.

\begin{lem}\label{lem:A12:3.3.1}
Let $V$ be a SUSY VA and $M$ be a $V$-module, both satisfying \cref{asm:A12}.
Then we have $X_M = p_0(\SSup(M))$ (scheme-theoretic image).
\end{lem}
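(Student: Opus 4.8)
The plan is to identify $X_M=\Supp_{R_V}(M/C_2(M))$ with the scheme-theoretic image of $\SSup(M)=\Supp_{(R_V)^O}(\gr_E M)$ under the projection $p_0$ induced by the truncation $(R_V)^{O_0}=(R_V)^O\to R_V$. Recall that $O_0=k[Z]/(z)$, so $(R_V)^{O_0}$ is the $0$-th superjet algebra, which by \cref{prp:fsj} (the case $m=0$, where $\HS{R_V/k}{0}\cong R_V$ by \cref{rmk:HS}) is $\clS^N(R_V)=\dR{R_V}{}$; the degree-zero piece recovering $R_V$ itself is what matters for $p_0$. The morphism $p_0$ corresponds to the algebra map $R_V\to(R_V)^O$ sending $\ol a\mapsto\ol a^{[0|\emptyset]}$, i.e.\ the inclusion of $R_V=E_0/E_1$ as the bottom graded piece of $\gr_E V$ via $\Phi$ of \eqref{eq:Phi}.

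First I would recall that for a finitely generated module $\gr_E M$ over the ring $(R_V)^O$, the scheme-theoretic image of its support under a ring map $R_V\to(R_V)^O$ is computed by the annihilator: $p_0(\SSup(M))$ is cut out by the kernel of $R_V\to(R_V)^O/\Ann_{(R_V)^O}(\gr_E M)$, equivalently by $\Ann_{(R_V)^O}(\gr_E M)\cap R_V=\Ann_{R_V}(\gr_E M)$ where $R_V$ acts through its image in $(R_V)^O$. So the key is to show
\begin{align*}
 \Ann_{R_V}(\gr_E M)=\Ann_{R_V}(M/C_2(M)),
\end{align*}
after which both sides define the same closed subscheme of $X_V=\Spec R_V$, namely $X_M$. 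The inclusion $\Ann_{R_V}(\gr_E M)\subset\Ann_{R_V}(M/C_2(M))$ is immediate since $M/C_2(M)=E_0(M)/E_1(M)$ is the degree-zero summand of $\gr_E M$. For the reverse inclusion I would use \cref{lem:Li:4.2}: $\gr_E M$ is generated as a $\gr_E V$-module (hence as an $(R_V)^O$-module through $\Phi$) by $E_0(M)/E_1(M)=M/C_2(M)$; therefore any $\ol a\in R_V$ killing $M/C_2(M)$ kills all of $\gr_E M$ once we account for the $(R_V)^O$-action, provided the $R_V$-action on the generators determines the annihilator. Here the compatibility \eqref{eq:A12:27} (the Poisson/Leibniz relation) is what propagates annihilation from the generators to the whole module.

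The main obstacle I expect is precisely this propagation step: an element $\ol a\in R_V$ acting as the superalgebra multiplication $\ol a_{(-1|N)}$ must be shown to annihilate not just $M/C_2(M)$ but every graded piece $E_n(M)/E_{n+1}(M)$. Because $\gr_E M$ is generated over $(R_V)^O$ by the bottom piece, a general element has the form $u.\ol m$ with $u\in(R_V)^O$ built from the derivations $T,S^i$ (resp.\ $S_K^i$) applied to elements of $R_V$, and $\ol m\in M/C_2(M)$; the issue is that $\ol a$ and $u$ do not commute in $\gr_E V$ unless one invokes the commutativity of the associated graded commutative superalgebra together with the derivation property. I would resolve this by noting that $\gr_E V$ is a \emph{commutative} superalgebra (\cref{thm:NW2.12,thm:NK2.12}), so multiplication by $\ol a\in R_V\subset\gr_E V$ commutes with multiplication by $u$; hence $\ol a.(u.\ol m)=(-1)^{p(a)p(u)}u.(\ol a.\ol m)=0$ whenever $\ol a.\ol m=0$, giving $\ol a\in\Ann_{R_V}(\gr_E M)$. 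This yields the equality of annihilators and therefore $X_M=p_0(\SSup(M))$ as schemes, completing the argument by the standard identification of scheme-theoretic image with the vanishing locus of the contracted annihilator.
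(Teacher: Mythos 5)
Your proposal is correct and follows essentially the same route as the paper: the proof there consists precisely of invoking \cref{lem:Li:4.2} to get that $\gr_E M$ is generated over $\gr_E V$ by $M/C_2(M)$, deducing the annihilator identity $R_V \cap \Ann_{\gr_E V}(\gr_E M) = \Ann_{R_V}\bigl(M/C_2(M)\bigr)$, and concluding. You merely make explicit the two steps the paper leaves implicit — that the scheme-theoretic image is cut out by the contracted annihilator, and that supercommutativity of $\gr_E V$ (not the Leibniz relation \eqref{eq:A12:27}) propagates annihilation from the generators to all of $\gr_E M$ — both of which are the intended justifications.
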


\begin{proof}
By \cref{lem:Li:4.2}, the $\gr_E V$-module $\gr_E M$ is generated by 
$E_0(M)/E_1(M)=M/C_2(M)$, which implies 
\begin{align}\label{eq:A12:30}
 R_V \cap \Ann_{\gr_E V}(\gr_E M) = \Ann_{R_V}\bigl(M/C_2(M)\bigr).
\end{align}
It yields the consequence.
\end{proof}

Now we start:

\begin{proof}[Proof of \cref{thm:A12:3.3.3}]
For the direction $\Longrightarrow$, $C_2$-cofiniteness of $V$ implies that 
the reduced part of $(X_V)^O=\Spec(R_V)^O$ is $0$-dimensional.
Since $\SSup(V) \subset (X_V)^O$ by \cref{dfn:SS}, 
we find that $\SSup(V)_{\trd}$ is $0$-dimensional. 
For the converse $\Longleftarrow$, using \eqref{eq:pm},
we find that $p_0(\SSup(V)_{\trd})$ is $0$-dimensional.
Then \cref{lem:A12:3.3.1} yields that $(X_V)_{\trd}$ is $0$-dimensional.
\end{proof}

We have a similar statement for $V$-modules as \cref{thm:A12:3.3.3}
if we assume $V$ to be strongly conformal (\cref{dfn:scNW,dfn:scNK}).
Recall that such a strongly conformal $V$ is graded 
with Hamiltonian $H=\nu_{(1|0)}$ or $\tau_{(1|0)}$ (\cref{rmk:gr-sc}).

\begin{thm}\label{thm:A12:3.3.4}
Let $V$ be a strongly conformal SUSY VA whose grading for the Hamiltonian $H$ 
satisfies \cref{asm:A12}. Also, let $M$ be a $V$-module satisfying \cref{asm:A12}.
Then, we have  
\[
 \text{$M$ is $C_2$-cofinite} \iff \text{$M$ is lisse}.
\]
\end{thm}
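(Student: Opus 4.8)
The plan is to mimic the proof of \cref{thm:A12:3.3.3} at the level of the module $M$, the two main inputs being the identity $X_M = p_0(\SSup(M))$ from \cref{lem:A12:3.3.1} and the $C_2$-cofiniteness criterion obtained just before \cref{ss:ss} (the SUSY analogue of \cite[Lemma 3.2.2]{A12}), which for a finitely strongly generated module states that $M$ is $C_2$-cofinite if and only if $\dim X_M = 0$. First I would record what the strong conformality buys us: by \cref{rmk:gr-sc} and the remark following \cref{asm:A12}, a strongly conformal $V$ carries a Hamiltonian and both $V$ and $M$ fall under \cref{asm:A12}. Consequently $\gr_E V$ is a graded finitely generated superalgebra, $\gr_E M$ is a graded finitely generated $\gr_E V$-module (hence a finitely generated module over $(R_V)^O$ through the surjection $\Phi$ of \cref{prp:A12:2.5.1}), and the homogeneity of all the ideals appearing below is guaranteed.

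For the easy direction $\Longleftarrow$, I would assume $M$ is lisse, so by \cref{dfn:lisse} it is finitely strongly generated and $\SSup(M)_{\trd}$ is $0$-dimensional. Since the projection $p_0$ of \eqref{eq:pm} cannot raise dimension and $X_M = p_0(\SSup(M))$ by \cref{lem:A12:3.3.1}, we get $\dim X_M \le \dim \SSup(M)_{\trd} = 0$. The criterion above then yields that $M$ is $C_2$-cofinite.

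For the converse $\Longrightarrow$, I would assume $M$ is $C_2$-cofinite, i.e.\ $\dim_k M/C_2(M) < \infty$. Then $M/C_2(M)$ is finitely generated over $R_V$, so $M$ is finitely strongly generated (\cref{dfn:fsgM}), and the criterion gives $\dim X_M = 0$. It remains to prove that $\SSup(M)_{\trd}$ is $0$-dimensional, and the key step is the inclusion of closed subschemes $\SSup(M) \subseteq (X_M)^O$ inside $(X_V)^O$, where $(X_M)^O = \Spec (R_V/I)^O$ for the ideal $I \ceq \Ann_{R_V}(M/C_2(M))$ (a Poisson ideal by \cref{lem:A12:3.2.1-1}). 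To see this, I would regard $\gr_E M$ as an $(R_V)^O$-module through $\Phi$; its annihilator $\Ann_{(R_V)^O}(\gr_E M)$ contains $I$ by \eqref{eq:A12:30}, and it is stable under the translation and supersymmetry operators $T$ and $S^i$ (resp.\ $S_K^i$): if $x$ annihilates $\gr_E M$, then applying the super Leibniz rule to $T(x.\bar m)=0$ and $S^i(x.\bar m)=0$ forces $(Tx).\bar m = (S^i x).\bar m = 0$, so $Tx$ and $S^i x$ again lie in $\Ann_{(R_V)^O}(\gr_E M)$. Hence this annihilator contains the differential ideal generated by $I$, which is exactly the ideal cutting out $(X_M)^O$, giving the claimed inclusion.

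To finish, I would use that $\dim X_M = 0$ makes $(X_M)_{\trd}$ a finite set of reduced points, so that the superjet construction carries it to a $0$-dimensional reduced scheme: this is the very fact ``superjet of a $0$-dimensional reduced scheme is $0$-dimensional'' already applied to $X_V$ in the proof of \cref{thm:A12:3.3.3} (all the new superjet generators being odd, hence nilpotent, they disappear on reduction). Thus $((X_M)^O)_{\trd}$ is $0$-dimensional, and from $\SSup(M) \subseteq (X_M)^O$ I conclude that $\SSup(M)_{\trd}$ is $0$-dimensional, so $M$ is lisse. The main obstacle is precisely this last direction: passing from $\dim X_M = 0$ to $\dim \SSup(M)_{\trd} = 0$ requires controlling the singular support along the formal-variable (jet) directions, and this is supplied by the differential-ideal inclusion together with the homogeneity of $\Ann_{(R_V)^O}(\gr_E M)$ coming from the Hamiltonian grading, i.e.\ from the standing strong conformality assumption.
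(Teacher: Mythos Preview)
Your proof follows the same two-step strategy as the paper: for $\Longleftarrow$ use $X_M = p_0(\SSup(M))$ from \cref{lem:A12:3.3.1}, and for $\Longrightarrow$ establish the inclusion $\SSup(M) \subset (X_M)^O$ and then use that $(X_M)^O$ has $0$-dimensional reduced part. The paper isolates the inclusion as \cref{lem:A12:3.3.1-2}, which in turn rests on \cref{lem:A12:3.2.1} (the annihilator $\Ann_{(R_V)^O}(\gr_E M)$ is a differential, in fact vertex Poisson, ideal).

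There is one point where your write-up is too quick, and it is exactly the place where strong conformality is used. When you argue that $\Ann_{(R_V)^O}(\gr_E M)$ is stable under $T$ and $S^i$ by ``applying the super Leibniz rule to $T(x.\bar m)=0$ and $S^i(x.\bar m)=0$'', you are tacitly assuming that $\gr_E M$ itself carries operators $T$ and $S^i$ (resp.\ $S_K^i$) that are compatible with those on $(R_V)^O$ via a Leibniz rule. For a module over a general SUSY VA this is not automatic; it is precisely the conformal element that supplies these operators on $M$, via $S_K^i.\bar m \ceq \ol{\tau_{(0|e_i)} m}$ in the $N_K$ case (and analogously with $\nu,\tau^i$ in the $N_W$ case). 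The paper makes this step explicit in the proof of \cref{lem:A12:3.2.1}. Your closing remark that the obstacle is handled by ``homogeneity of $\Ann_{(R_V)^O}(\gr_E M)$ coming from the Hamiltonian grading'' misidentifies the mechanism: what matters is not that the annihilator is graded, but that $\gr_E M$ is a \emph{differential} module, and that structure comes from the conformal element rather than from the Hamiltonian alone. Once you make this explicit, your argument and the paper's coincide.
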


For the proof, we need a few lemmas.

\begin{lem}[{c.f.\ \cite[Lemma 3.2.1 (ii)]{A12}}]\label{lem:A12:3.2.1}
Let $V$ and $M$ be as in \cref{thm:A12:3.3.4}. 
\begin{enumerate}[nosep]
\item
In the $N_W=N$ case, the annihilator $\Ann_{(R_V)^O}(\gr_E M)$ is a vertex Poisson ideal 
of the level $0$ $N_W=N$ SUSY VPA $\bigl((R_V)^O,T,S^i)$ 
(\cref{prp:NWlv0,prp:A12:2.5.1}).

\item 
In the $N_K=N$ case, the annihilator $\Ann_{(R_V)^O}(\gr_E M)$ is a vertex Poisson ideal 
of the level $0$ $N_K=N$ SUSY VPA $(R_V)^{\Osc}=\bigl((R_V)^O,S_K^i)$ 
(\cref{prp:NKlv0,prp:A12:2.5.1}).
\end{enumerate}
\end{lem}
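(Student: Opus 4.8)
The plan is to establish the general principle that the commutative annihilator of a module over a SUSY vertex Poisson algebra is a vertex Poisson ideal, and to apply it to $\gr_E M$. By \cref{prp:A12:2.5.1} the surjection $\Phi\colon (R_V)^O \lsrj \gr_E V$ (resp.\ $\Phi\colon (R_V)^{\Osc} \lsrj \gr_E V$) is a morphism of SUSY VPAs, and the module action of $(R_V)^O$ on $\gr_E M$ factors through $\Phi$ and the $\gr_E V$-module structure of \cref{prp:NWgrE-mod} (resp.\ \cref{prp:NKgrE-mod}). Hence $\Ann_{(R_V)^O}(\gr_E M) = \Phi^{-1}\bigl(\Ann_{\gr_E V}(\gr_E M)\bigr)$, and since $\Phi$ intertwines the commutative product, the derivations $T,S^i$ (resp.\ $S_K^i$) and every mode $a_{(j|J)}$, it suffices to prove that $I \ceq \Ann_{\gr_E V}(\gr_E M)$ is a vertex Poisson ideal of $\gr_E V$. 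First I would dispose of the routine parts: by construction $I$ is the annihilator of $\gr_E M$ for the commutative product $a \cdot m = a_{(-1|N)}m$, hence an ideal of the commutative superalgebra $\gr_E V$.

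Next I would show closedness under the SUSY vertex Lie action, i.e.\ $a_{(j|J)} I \subseteq I$ for all $a \in \gr_E V$, $j \ge 0$, $J \subset [N]$. This rests on the module analogue of the derivation axiom \eqref{eq:VPA}: extracting the coefficient of $Z^{-1-j|N \sm J}$, which involves no convolution since both terms carry the same $Z$, gives for $b \in I$ and $m \in \gr_E M$
\[
 (a_{(j|J)} b) \cdot m = a_{(j|J)}(b \cdot m) - (-1)^{(p(a)+N)p(b)}\, b \cdot (a_{(j|J)} m) = 0,
\]
because $b \cdot m = 0$ and $b$ also annihilates $a_{(j|J)} m \in \gr_E M$; thus $a_{(j|J)} b \in I$. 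The opposite-handed statement $b_{(j|J)} a \in I$ for $b \in I$ then follows from the skew-symmetry axiom of \cref{dfn:NWL} (resp.\ \cref{dfn:NKL}), which writes $b_{(j|J)} a$ as a combination of the operators $T^k(S^i)^L$ (resp.\ $T^k(S_K^i)^L$) applied to the elements $a_{(m|M)} b \in I$ produced above, provided $I$ is already known to be stable under $T$ and $S^i$.

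The remaining and, I expect, the hard part is exactly the invariance of $I$ under the derivations $T$ and $S^i$ (resp.\ $S_K^i$): granting it, closedness under the negative modes $a_{(j|J)}$, $j<0$, is automatic, since by \eqref{eq:NW-scom} (resp.\ \eqref{eq:NK-scom}) these are built from the commutative product together with $T,S^i$, and the verification that $I$ is a vertex Poisson ideal is then complete. The difficulty is that $T,S^i$-stability does \emph{not} follow from the generic filtration estimates of \cref{lem:NW2.11} (resp.\ \cref{lem:Li:2.11}), which only yield $a_{(-2|N)}m \in E_{r+s+1}(M)$ for $a \in E_r$, $m \in E_s(M)$, whereas one needs the sharper $E_{r+s+2}(M)$. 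To gain this extra level I would invoke the strongly conformal hypothesis of \cref{thm:A12:3.3.4}: there $T$ and $S^i$ are the modes $\thf\nu_{(0|0)}$ and $\tau^i_{(0|0)}$ in the $N_W=N$ case, and $T=\thf\tau_{(0|0)}$ together with $S_K^i$ proportional to $\tau_{(0|e_i)}$ in the $N_K=N$ case, so that they act on the module $M$ itself and descend to $\gr_E M$; combining the resulting module commutator relations with the derivation property \cref{fct:NW:Sder} (resp.\ \cref{fct:Sder}) should force the $T,S^i$-invariance of $I$, exactly along the lines of Arakawa's even-case argument in \cite[Lemma 3.2.1]{A12}. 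The two cases $N_W=N$ and $N_K=N$ run in parallel, the only change being the use of the commutator relations \eqref{eq:NWrec} versus \eqref{eq:NKrecS} and \eqref{eq:NKrecT}.
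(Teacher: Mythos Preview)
Your proposal is correct and follows essentially the same route as the paper: reduce via $\Phi^{-1}$ to the annihilator $I' = \Ann_{\gr_E V}(\gr_E M)$, observe it is a commutative ideal and a VLA ideal, and then use the strongly conformal hypothesis to realize the derivations $T,S^i$ (resp.\ $S_K^i$) as modes $\tau^i_{(0|0)}$, $\nu_{(0|0)}$ (resp.\ $\tau_{(0|e_i)}$) acting on $M$, so that $\gr_E M$ becomes a differential module and $I'$ a differential ideal. You are actually more careful than the paper on the VLA-ideal step, which the paper dispatches in a single clause (``Since $\Phi$ is a VLA morphism, it is also a VLA ideal'') without spelling out the derivation-axiom computation you give.
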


\begin{proof}
We give an argument only for the $N_K=N$ case, since the $N_W=N$ case is quite similar.
Recall the surjection $\Phi\colon (R_V)^{\Osc} \srj \gr_E V$ of $N_K=N$ SUSY VPAs 
(\cref{prp:A12:2.5.1}) and the argument around \eqref{eq:Phi}.
In particular, $\gr_E M$ is a module over the commutative superalgebra $(R_V)^O$.
We also have 
\[
 I \ceq \Ann_{(R_V)^O}(\gr_E M) = \Phi^{-1}\bigl(\Ann_{\gr_E V}(\gr_E M)\bigr),
\]
and it is an ideal of $(R_V)^O$. 
Since $\Phi$ is a VLA morphism, it is also a VLA ideal.
Thus, it is enough to show that $I$ is a differential ideal
over the differential algebra $(R_V)^{\Osc}=\bigl((R_V)^O,S_K^i\bigr)$.
Now, recall that $\gr_E V$ is a differential algebra 
with derivations $S_K^i$ (\cref{dfn:dsa}).
We have that $\gr_E M$ is a differential module over $\gr_E V$.
Indeed, using the conformal element $\tau \in V$, we have the action of $S_K^i$ by
$S_K^i.\ol{m} = \ol{\tau_{(0|e_i)}m}$ for $i \in [N]$.
Hence, $I' \ceq \Ann_{\gr_E V}(\gr_E M)$ is a differential ideal,
and so is $I=\Phi^{-1}(I')$.
\end{proof}

\begin{lem}[{c.f.\ \cite[Lemma 3.3.1 (ii)]{A12}}]\label{lem:A12:3.3.1-2}
Let $V$ and $M$ be as in \cref{thm:A12:3.3.4}. 
Then, we have $\SSup(M) \subset (X_M)^O$.
\end{lem}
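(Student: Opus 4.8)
The plan is to realize both $\SSup(M)$ and $(X_M)^O$ as closed subschemes of the superjet scheme $(X_V)^O = \Spec (R_V)^O$ cut out by explicit ideals of $(R_V)^O$, and then to compare those ideals. Write $J \ceq \Ann_{R_V}(M/C_2(M))$, so that $X_M = \Spec(R_V/J)$, and write $I \ceq \Ann_{(R_V)^O}(\gr_E M)$, so that by \cref{dfn:SS} the subscheme $\SSup(M)$ is the one defined by $I$.

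First I would record that $(X_M)^O$ is the closed subscheme of $(X_V)^O$ defined by the differential ideal $\langle J\rangle \subset (R_V)^O$ generated by $J$ (with respect to $T$ and the $S^i$ in the $N_W=N$ case, resp.\ the $S_K^i$ in the $N_K=N$ case). This is functoriality of the superjet (resp.\ superconformal jet) construction: the surjection $R_V \srj R_V/J$ induces a surjection $(R_V)^O \srj (R_V/J)^O$, and since $A^O$ is generated over $A$ by the $a^{[n|K]} = d_{n|K}(a)$ (\cref{lem:dnJ}), which are produced from $a$ by iterated application of the structure derivations (\cref{lem:sj-gen}, resp.\ \cref{lem:scj-gen}), the kernel is exactly $\langle J\rangle$. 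Hence $(R_V/J)^O \cong (R_V)^O/\langle J\rangle$.

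Next I would extract two facts about $I$ from the preceding lemmas. By \cref{lem:A12:3.2.1}, $I$ is a vertex Poisson ideal of the level $0$ SUSY VPA on $(R_V)^O$; in particular $I$ is a differential ideal, stable under $T$ and $S^i$ (resp.\ under $S_K^i$). Second, using that $\gr_E M$ is an $(R_V)^O$-module through the surjection $\Phi\colon (R_V)^O \srj \gr_E V$, so that $I = \Phi^{-1}\bigl(\Ann_{\gr_E V}(\gr_E M)\bigr)$, and that $\Phi$ restricts to the identity on $R_V = E_0/E_1$, the identity \eqref{eq:A12:30} used in the proof of \cref{lem:A12:3.3.1} gives $R_V \cap I = J$; in particular $J \subset I$.

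Finally, since $I$ is a differential ideal containing $J$ while $\langle J\rangle$ is the smallest differential ideal containing $J$, we get $\langle J\rangle \subset I$, which yields the closed immersion $\SSup(M) \hookrightarrow (X_M)^O$ inside $(X_V)^O$, as claimed. The step I expect to be the main obstacle is the first one: checking that the superjet (resp.\ superconformal jet) functor sends $R_V \srj R_V/J$ to a surjection whose kernel is \emph{exactly} the differential ideal generated by $J$, and not merely an ideal containing it. Once $(X_M)^O$ is identified with the closed subscheme defined by $\langle J\rangle$, the comparison of ideals is immediate from \cref{lem:A12:3.2.1,lem:A12:3.3.1}.
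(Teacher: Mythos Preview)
Your argument is correct and follows essentially the same route as the paper: both use \cref{lem:A12:3.2.1} to see that the annihilator $I$ is a differential ideal, invoke \eqref{eq:A12:30} to get $J \subset I$, and conclude that $I$ contains the minimal differential ideal generated by $J$, which cuts out $(X_M)^O$. Your write-up is in fact more careful than the paper's, since you make explicit the identification $(R_V/J)^O \cong (R_V)^O/\langle J\rangle$ via \cref{lem:sj-gen,lem:scj-gen} and clarify that $I = \Phi^{-1}(\Ann_{\gr_E V}(\gr_E M))$, whereas the paper leaves these points implicit.
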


\begin{proof}
Again, we only give a proof for $N_K=N$ case.
By \cref{lem:A12:3.2.1}, $\Ann_{\gr_E V}(\gr_E M)$ is a differential ideal
of the differential algebra $\gr_E V$ with derivations $S_K^i$.
Then \eqref{eq:A12:30} implies that $\Ann_{\gr_E V}(\gr_E M)$ contains the ideal 
$I \subset \bbC[(X_V)^O]=R_V^O$ which is minimal among the $S_K^i$-stable ideals
containing $\Ann_{R_V}\bigl(M/C_2(M)\bigr)$. 
Since $I$ is the defining ideal of $(X_M)^O$, we have the conclusion.
\end{proof}

\begin{proof}[Proof of \cref{thm:A12:3.3.4}]
The direction $\Longleftarrow$ can be shown in the same way as \cref{thm:A12:3.3.3} 
using \cref{lem:A12:3.3.1}. For the direction $\Longrightarrow$, the reduced part 
of $(X_M)^O$ is $0$-dimensional by the assumption. Then, from \cref{lem:A12:3.3.1}, 
we find that $\SSup(M)_{\trd}$ is $0$-dimensional.
\end{proof}

\subsubsection{Example: Neveu-Schwarz SUSY vertex algebra}

We continue \cref{eg:NS2,eg:NS-RV}. 
Let us denote by $\frg$ the Neveu-Schwarz Lie superalgebra 
with central extension $c \in \bbC$. We use the basis 
$\{L_n \mid n \in \bbZ\} \cup \{G_r \mid r \in \hf+\bbZ\}$ satisfying \eqref{eq:NS}.
Let us also denote by $V^c$ the Neveu-Schwarz SUSY VA with central charge $c \in \bbC$.
Recall that $V^c$ is an $N_K=1$ SUSY VA.

As a $\frg$-module, we have $V^c=M_{0,c}/M_{\hf,c}$ with $M_{h,c}$ 
the Verma module of $\frg$ generated by the highest weight vector $\ket{h,c}$ 
satisfying $L_0.\ket{h,c}=h$ and $1.\ket{h,c}=\ket{h,c}$.
$V^c$ has a unique maximal sub-$\frg$-module $N_c$.
Now let us cite:

\begin{fct}[{\cite[5.2.1. Theorem (ii),(iv)]{GK}}]\label{fct:GK}
Let $Y \ceq \{(p,q) \in \bbZ_{\ge 1}^2 \mid p-q \in 2\bbZ, (\tfrac{p-q}{2},q)=1\}$,
and for $(p,q) \in Y$ we set $c_{p,q}^S \ceq \frac{3}{2}\bigl(1-\frac{2(p-q)^2}{p q}\bigr)$.
Then the following conditions are equivalent.
\begin{itemize}[nosep]
\item
$V^c$ is not simple as a $\frg$-module.
\item
$c \neq c^S_{p,q}$ for $(p,q) \in Y$ and $p>q \ge 2$.
\item
$\dim_\bbC V^c/C_2^{\GK}$ is finite,
where $C_2^{\GK} \subset V^c$ is the linear sub-superspace given by 
\[
 C_2^{\GK} \ceq \spn \left\{L_{-n}v, G_{-r}v \mid n,r-\thf > 2, \, v \in V^c\right\}.
\]
\end{itemize}
\end{fct}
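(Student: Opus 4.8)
Since the statement is quoted directly from Gorelik--Kac, the three equivalences are \cite[5.2.1 Theorem (ii),(iv)]{GK}, and in the present context the only thing genuinely requiring argument is the compatibility of the data there with the SUSY-vertex-algebraic structure fixed in \cref{eg:NS2,eg:NS-RV}. The plan is therefore to build this dictionary rather than to reprove the representation theory of $\frg$. First I would record that the $N_K=1$ SUSY VA structure on $V^c$ is compatible with its $\frg$-module structure through the mode identifications \eqref{eq:tau_op}, namely $\tau_{(j|1)}=G_{j-\hf}$ and $\tau_{(j|0)}=2L_{j-1}$. Under these, the PBW-type spanning set of $V^c=M_{0,c}/M_{\hf,c}$ coincides with the strong-generation description of $V^c$ by the operators $\tau_{(-j|J)}$, and ``$V^c$ is not simple as a $\frg$-module'' is the same as its non-simplicity as a SUSY VA; so the first bullet is intrinsic to our setting.

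Next I would compare the finiteness in the third bullet with $C_2$-cofiniteness in the sense of \cref{dfn:C2cof}. Under the dictionary above, $\tau_{(-j|1)}$ acts as $G_{-j-\hf}$ and $\tau_{(-j|0)}$ as $2L_{-j-1}$, so that the range $j\ge 2$ defining the SUSY $C_2(V^c)$ corresponds to the modes $L_{-n}$ ($n\ge 3$) and $G_{-r}$ ($r\ge\tfrac52$), to be matched against the modes $L_{-n}$ ($n>2$), $G_{-r}$ ($r-\thf>2$) defining $C_2^{\GK}$. This is a routine (if convention-sensitive) check, and in either case the reduction is governed by the computation $R_{V^c}\cong\bbC[\ol{\tau},\ol{\nu}]$ of \cref{eg:NS-RV}; hence the finiteness of $\dim_\bbC V^c/C_2^{\GK}$ is a $C_2$-cofiniteness condition on $V^c$, which by \cref{thm:A12:3.3.3} is in turn equivalent to lisseness. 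With the three bullets thus realized inside our framework, the equivalence is inherited from \cite{GK}.

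If instead one wanted a self-contained argument, I would proceed in two steps. The equivalence of the first two bullets is the singular-vector analysis for the Neveu--Schwarz algebra: a proper $\frg$-submodule of $V^c$ exists iff the Shapovalov form of $M_{0,c}/M_{\hf,c}$ degenerates at some level, and the vanishing locus in $c$ of the Neveu--Schwarz Kac determinant is exactly $\{c^S_{p,q}\mid(p,q)\in Y\}$, which yields the central-charge condition. The equivalence with the third bullet is the structural step: a nonzero singular vector projects to a nonzero homogeneous element of $R_{V^c}=\bbC[\ol{\tau},\ol{\nu}]$, and one must show that the Poisson ideal it generates has finite codimension precisely for the relevant $c$. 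I expect this last point to be the main obstacle, since it requires identifying the leading $C_2$-symbol of the Neveu--Schwarz singular vectors and proving that finite-codimensionality of the resulting ideal in the two-variable ring is controlled by the combinatorics of $(p,q)\in Y$; this is exactly the representation-theoretic heart of the matter, which is why citing \cite{GK} and restricting oneself to the dictionary steps above is the efficient route here.
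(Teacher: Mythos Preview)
Your recognition that the statement is simply cited from \cite{GK} and requires no proof matches the paper exactly: the paper states it as a \texttt{fct} with no proof environment. The dictionary material you go on to develop (identifying $C_2^{\GK}$ with the SUSY $C_2(V^c)$ via the mode identifications $\tau_{(j|1)}=G_{j-1/2}$, $\tau_{(j|0)}=2L_{j-1}$) is not part of the argument for \cref{fct:GK} at all, but is precisely the content of the paper's \emph{next} result, the unnumbered Proposition immediately following, whose entire proof is the one-line verification $C_2^{\GK}=C_2(V^c)$; so your extra work is correct in substance but belongs one statement later.
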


We can restate this fact as follows, which is a Neveu-Schwarz analogue of 
the statement \cite[Proposition 3.4.1]{A12} for the universal Virasoro vertex algebra.

\begin{prp}
The following conditions are equivalent.
\begin{itemize}[nosep]
\item
$V^c$ is $C_2$-cofinite as a SUSY VA module over itself (\cref{dfn:C2cof}).
\item
$V^c$ is not simple as a $\frg$-module.
\item
$c \neq c^S_{p,q}$ for $(p,q) \in Y$ and $p>q \ge 2$.
\end{itemize}
\end{prp}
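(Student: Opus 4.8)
The plan is to deduce everything from \cref{fct:GK}, whose three listed conditions are the non-simplicity of $V^c$, the arithmetic condition $c \neq c^S_{p,q}$, and the finiteness of $\dim_\bbC V^c/C_2^{\GK}$. The equivalence of the last two bullets of the proposition is then nothing but the equivalence of the first two conditions in \cref{fct:GK}. So the only real content is to match the first bullet, namely $C_2$-cofiniteness in the sense of \cref{dfn:C2cof}, with the third condition of \cref{fct:GK}; concretely, I would show that the subspace $C_2(V^c) \subset V^c$ of \cref{dfn:C2cof} and the Gorelik--Kac subspace $C_2^{\GK}$ define the same finiteness, after which the claim is immediate.

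First I would reduce $C_2(V^c)$ to modes of the single generator. By \cref{prp:C2} we have $C_2(V^c)=E_1(V^c)$, and by \cref{lem:NK2.89} \eqref{i:NK2.89:4} this space is spanned by elements $a_{(-1-k|K)}m$ with $k\ge1$. Since $V^c$ is strongly generated by the Neveu--Schwarz element $\tau$ (\cref{eg:NS2}), I would use the $N_K=1$ commutator formula \cref{fct:NKcmt} and iterate formula \cref{lem:NKiter} to rewrite any such element as a combination of modes of $\tau$ acting on $V^c$. The mode dictionary $\tau_{(j|1)}=G_{j-\hf}$, $\tau_{(j|0)}=2L_{j-1}$ of \eqref{eq:tau_op}, together with \eqref{eq:NK-TSa}, then identifies the generating modes $\tau_{(-j|J)}$ ($j\ge2$) with the operators $L_{-n}$ and $G_{-r}$ that cut out $C_2^{\GK}$, so that $C_2(V^c)$ and $C_2^{\GK}$ agree up to the explicit low-index corrections coming from the threshold conventions $n>2$, $r-\thf>2$. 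Matching these thresholds --- i.e.\ checking that the finitely many extra modes, such as $G_{-\frac{5}{2}}v$, either lie in $C_2^{\GK}$ after reordering or contribute only a finite-dimensional discrepancy --- gives $\dim_\bbC V^c/C_2(V^c)<\infty \iff \dim_\bbC V^c/C_2^{\GK}<\infty$, and hence the desired equivalence with \cref{dfn:C2cof}.

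The main obstacle is this strong-generation reduction: a priori $C_2(V^c)=E_1(V^c)$ is spanned by all $a_{(-1-k|K)}b$ with $a,b\in V^c$, whereas $C_2^{\GK}$ involves only modes of $\tau$, so one must genuinely commute $\tau$-modes past one another to express a general element through generator modes. Here the non-commutativity of the $L_n$ and $G_r$, the half-integer index $r\in\hf+\bbZ$, and the odd-index signs of the $N_K=1$ formulas \cref{fct:NKcmt,lem:NKiter} all have to be tracked carefully, and this bookkeeping, rather than any conceptual difficulty, is where the work lies. Once $C_2(V^c)$ and $C_2^{\GK}$ are shown to have the same codimension, combining \cref{dfn:C2cof} with the equivalences of \cref{fct:GK} yields the three-way equivalence of the proposition.
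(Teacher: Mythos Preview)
Your plan matches the paper's: deduce the proposition from \cref{fct:GK} by identifying $C_2(V^c)$ with $C_2^{\GK}$ through the mode dictionary \eqref{eq:tau_op}. The paper does this more directly, asserting the exact equality $C_2(V^c)=C_2^{\GK}$: first $C_2(V^c)=\spn\{\tau_{(-n|J)}v:n\ge2,\,v\in V^c\}$ by strong generation together with \eqref{eq:NK-TSa}, and then the dictionary $G_r=\tau_{(r+\frac12|1)}$, $L_n=\tfrac12\tau_{(n+1|0)}$ identifies this span with $C_2^{\GK}$. Your reduction to $\tau$-modes via \cref{fct:NKcmt} and \cref{lem:NKiter} is the honest version of the paper's first step, which is really invoking the standard fact that for a strongly generated VA the $C_2$-subspace is already spanned by modes of the generators; so you are not facing a harder obstacle than the paper, just spelling out what it leaves implicit.

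The one genuine gap in your outline is the fallback to a ``finite-dimensional discrepancy'' at the index thresholds. A boundary operator such as $G_{-5/2}$ acts on all of $V^c$, and $\{G_{-5/2}v:v\in V^c\}$ is infinite-dimensional, so a threshold mismatch would \emph{not} automatically produce a finite-codimension correction, and your biconditional $\dim V^c/C_2(V^c)<\infty\iff\dim V^c/C_2^{\GK}<\infty$ would not follow from that alone. You must either check that the thresholds line up on the nose (giving $C_2(V^c)=C_2^{\GK}$ as the paper claims), or, if a boundary mode is genuinely left over, control it by an algebraic identity: for instance $G_{-5/2}^2=L_{-5}$ together with $G_{-5/2}C_2^{\GK}\subset C_2^{\GK}$ shows that $v\mapsto G_{-5/2}v$ descends to a surjection $V^c/C_2(V^c)\to C_2(V^c)/C_2^{\GK}$, so the right-hand side is finite whenever the left is. Either route closes the argument; the bare appeal to ``finitely many extra modes'' does not.
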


\begin{proof}
It is enough to show $C_2^{\GK}=C_2(V^c)$.
Recall the Neveu-Schwarz element $\tau$ and $\nu \ceq \frac{1}{2}S_K \tau$ of $V^c$.
By $V^c=\bbC[S_K^n \tau \mid n \in \bbZ]\vac$ and \eqref{eq:NK-TSa}, we have 
$C_2(V^c) = \spn\{\tau_{(-n|J)} v \mid n \in \bbZ_{\ge 2}, \, J \in \{0,1\}, \, v \in V^c\}$.
On the other hand, by \eqref{eq:tau_op} and \eqref{eq:NK-TSa}, 
the Fourier modes of $\tau$ satisfy
$G_r = \tau_{(r+\hf|1)}$ and 
$L_n = \hf \tau_{(n+1|0)}$. 
Thus, we have 
\[
 C_2^{\GK} = \spn\{\tau_{(-n|0)}v, \tau_{(-n|1)}v \mid 
                   n \in \bbZ_{\ge 2}, \, v \in V^c\} = C_2(V^c).
\]
\end{proof}

We also have a Neveu-Schwarz analogue of the statement \cite[Proposition 3.4.2]{A12} 
on the relation between $C_2$-cofiniteness of Virasoro modules and the zero singular 
support condition in the sense of Beilinson, Feigin and Mazur \cite{BFM}.

Let us denote by $\{U_p(\frg) \mid p \in \bbN\}$ the PBW filtration 
of the universal enveloping algebra $U(\frg)$.
The associated graded $\gr U(\frg)$ is a commutative superalgebra 
isomorphic to the symmetric algebra $S(\frg)$.
Let $M$ be a highest weight representation of $\frg$ with central charge $c$
and the highest weight vector $v$.
Then $\{U_p(\frg).v \mid \bbN\}$ gives a filtered module structure on $M$
over the PBW-filtered algebra $U(\frg)$.
The associated graded space, denoted by $\gr_{\PBW} M$,
is an $S(\frg)$-module generated by the image of $v$.
Now, following \cite[\S 7.1.1]{BFM} and borrowing the symbol in \cite[\S 3.4]{A12},
we define
\[
 \SBFM{M} \ceq \Supp_{S(\frg)}\bigl(\gr_{\PBW}(M)\bigr).
\]

On a highest weight representation $M$ over $\frg$,
we have a standard module structure over the Neveu-Schwarz vertex superalgebra,
which is equivalent to a module structure over the $N_K=1$ SUSY VA $V^c$.
Thus we can discuss the $C_2$-cofiniteness of $M$. 

\begin{prp}
For a highest weight representation $M$ over 
the Neveu-Schwarz Lie superalgebra $\frg$ of central charge $c$,
the following conditions are equivalent.
\begin{itemize}[nosep]
\item 
$M$ is $C_2$-cofinite as a $V^c$-module in the sense of \cref{dfn:C2cof}.
\item
$\SBFM{M} =\{0\}$.
\end{itemize}
\end{prp}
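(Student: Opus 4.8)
The plan is to translate both conditions into statements about the Neveu--Schwarz Lie superalgebra $\frg$ and its enveloping algebra, and then match them. First I would make $C_2(M)$ explicit. By \cref{lem:NK2.89} \eqref{i:NK2.89:4} we have $C_2(M)=E_1(M)$, and since $V^c$ is strongly generated by $\tau$, the same reduction as in the computation $C_2^{\GK}=C_2(V^c)$ of the preceding proposition—using the mode dictionary \eqref{eq:tau_op} together with the recursions \eqref{eq:NK-TSa}—identifies $C_2(M)$ with the ``deep mode'' subspace $\mathfrak{d}.M$, where $\mathfrak{d}\ceq\spn\{L_{-n},G_{-r}\mid n\ge 3,\ r\ge\tfrac52\}\subset\frg$. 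Thus $M$ is $C_2$-cofinite precisely when $\dim_{\bbC}M/\mathfrak{d}.M<\infty$, a condition phrased entirely in terms of the $\frg$-module $M$.

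Next, for $\SBFM{M}$: since $M=U(\frg)v$ is a highest weight module, $\frg_+$ and the Cartan part annihilate the symbol $\ol{v}$ in $\gr_{\PBW}M$, so $\gr_{\PBW}M$ is a cyclic module over $S(\frg_-)$ and $\SBFM{M}=\Supp_{S(\frg_-)}(\gr_{\PBW}M)$ is a closed subset of $\frg_-^*$. The $L_0$-grading makes this support a cone stable under the natural $\bbC^\times$-scaling, all of whose weights are positive; hence $\SBFM{M}=\{0\}$ if and only if $\dim\SBFM{M}=0$.

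The main step is then to compare the PBW filtration with the Li filtration. By \cref{prp:A12:2.5.1} and \cref{prp:NKgrE-mod}, $\gr_E M$ is a finitely generated module over the superconformal jet algebra $(R_{V^c})^{\Osc}$, whose generators are the jets of $\ol{\tau}$ and $\ol{\nu}$; under $\ol{\tau}\leftrightarrow G_{-3/2}$, $\ol{\nu}\leftrightarrow L_{-2}$ and the derivations $S_K=G_{-1/2}$, $T=L_{-1}$, these jet coordinates match the negative modes $G_{-r}$ ($r\ge\tfrac32$) and $L_{-n}$ ($n\ge 2$). I would show that this identification carries the reduced support of $\gr_{\PBW}M$ onto that of $\gr_E M$, so that $\SBFM{M}=\{0\}$ becomes equivalent to $\SSup(M)_{\trd}$ being $0$-dimensional. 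Since $V^c$ is a strongly conformal $N_K=1$ SUSY VA (its Hamiltonian $\tau_{(1|0)}=2L_0$ has integer eigenvalues) and $M$ satisfies \cref{asm:A12}, \cref{thm:A12:3.3.4} then gives ``$M$ is $C_2$-cofinite'' $\iff$ ``$M$ is lisse'' $\iff$ ``$\SSup(M)_{\trd}$ is $0$-dimensional'', closing the chain.

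The hard part will be this comparison: the PBW filtration counts the number of mode factors, whereas the Li filtration records the total order of singularity $k_1+\dotsb+k_r$, so the two associated graded modules are not literally equal and one must track how products of modes sit in both filtrations. The most delicate bookkeeping concerns the two translation modes $T=L_{-1}$ and $S_K=G_{-1/2}$, which appear as genuine coordinate directions on the $S(\frg_-)$ side but are absorbed into the jet-forming derivation structure of $(R_{V^c})^{\Osc}$, together with the half-integer weights contributed by the odd generators $G_r$. Reconciling these, and checking that no spurious components of the support survive in the extra directions, is exactly where the Neveu--Schwarz analogue of \cite[Proposition 3.4.2]{A12} needs the input of \cite{BFM}.
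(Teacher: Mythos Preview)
Your opening identification $C_2(M)=\mathfrak{d}.M$ with $\mathfrak{d}=\spn\{L_{-n},G_{-r}\mid n\ge 3,\ r\ge \tfrac52\}$ is correct and is also the starting point of the intended argument. After that, however, you take a different route from the paper and leave the essential step open.

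The paper does \emph{not} pass through the Li-filtration singular support $\SSup(M)$ or \cref{thm:A12:3.3.4}. It says to transport Arakawa's proof of \cite[Proposition~3.4.2]{A12} verbatim, replacing $L_{-1},L_{-2}$ by $G_{-1/2},G_{-3/2}$. That argument lives entirely on the PBW side: one works with $I=\Ann_{S(\frg_-)}(\gr_{\PBW}M)$ and uses that $\sqrt{I}$ is stable under $\mathrm{ad}(\frg)$, in particular under the derivation $\mathrm{ad}(G_{-1/2})$. Since iterated brackets with $G_{-1/2}$ carry $G_{-3/2}$ to all of $\frg_{\le -3/2}$, the condition $\SBFM{M}=\{0\}$ reduces to nilpotence of the two generators $G_{-1/2}$ and $G_{-3/2}$ on $\gr_{\PBW}M$, and this is what one matches with finite-dimensionality of $M/\mathfrak{d}.M$. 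No comparison of the Li and PBW filtrations is needed.

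Your route forces exactly that comparison: you must show $\SBFM{M}=\{0\}\iff \dim\SSup(M)_{\trd}=0$, i.e.\ relate $\Supp_{S(\frg_-)}(\gr_{\PBW}M)$ to $\Supp_{(R_{V^c})^O}(\gr_E M)$ for two genuinely different filtrations over two different rings. You acknowledge this is the hard part but do not resolve it, and your claim that it ``needs the input of \cite{BFM}'' is a misattribution: \cite{BFM} concerns the classification of minimal series representations (the subject of the \emph{conjecture} following this proposition), not the filtration comparison. So as written the proposal has a real gap at its central step, and the proposed fix points to the wrong source.

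If you want to salvage your route, what you would actually need is a ``good filtration'' argument showing that the PBW and Li filtrations on $M$ define the same reduced support after identifying $(R_{V^c})^{\Osc}$ with $S(\frg_-)/(G_{-1/2})$; but proving that is no easier than Arakawa's direct argument, and is not in \cite{BFM}.
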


\begin{proof}
The argument for the Virasoro algebra in \cite[Proposition 3.4.2]{A12} works 
by replacing $L_{-1},L_{-2}$ therein by $G_{-\hf},G_{-\frac{3}{2}}$.
We omit the detail.
\end{proof}

In \cite[7.3.10 Proposition]{BFM}, it is shown that the condition $\SBFM{M} =\{0\}$ 
for a highest weight module $M$ over the Virasoro Lie algebra of central charge $c$
is equivalent to the condition that $M$ belongs to the minimal series representations
(or the Belavin-Polyakov-Zamolodchikov representations).
This equivalence and \cite[Proposition 3.4.2]{A12} yields 
that a $C_2$-cofinite module over the universal Virasoro vertex algebra is nothing but
a minimal series representation of Virasoro Lie algebra \cite[Theorem 3.4.3]{A12}.

The Neveu-Schwarz Lie superalgebra $\frg$ also has 
the family of minimal series representations $M_{c,h}$ with $(c,h)=(c^S_{p,q},h_{p,q}^{r,s})$, 
where $c^S_{p,q}$ is given in \cref{fct:GK}, and $h_{p,q}^{r,s}=\frac{(sp-rq)^2-(p-q)^2}{8pq}$
for $(r,s) \in \bbZ^2$ satisfying $0<r<p$, $0<s<q$ and $r-s \in 2\bbZ$.
See \cite[\S 3]{KW} and \cite[\S 1]{Ad} for the detail.
At this moment, the author is not sure whether the argument in \cite{BFM} works 
for Neveu-Schwarz minimal representations, but believes that it should do.
Let us state it as:

\begin{cnj}
Let $M$ be a simple module over 
the Neveu-Schwarz Lie superalgebra $\frg$ with central charge $c$.
Then the following conditions are equivalent.
\begin{itemize}[nosep]
\item 
$M$ is $C_2$-cofinite as a module over the Neveu-Schwarz SUSY vertex algebra $V^c$.
\item
$M$ is a minimal series representation of $\frg$.
\end{itemize}
\end{cnj}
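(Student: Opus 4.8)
The plan is to reduce the conjecture to a purely geometric statement about the Beilinson--Feigin--Mazur singular support and then to import the structure theory of Verma modules over the Neveu--Schwarz algebra $\frg$. By the proposition immediately preceding this conjecture, for a highest weight $\frg$-module $M$ the $C_2$-cofiniteness of $M$ as a $V^c$-module (\cref{dfn:C2cof}) is equivalent to $\SBFM{M}=\{0\}$. Hence it suffices to prove: for a \emph{simple} $\frg$-module $M$ of central charge $c$, one has $\SBFM{M}=\{0\}$ if and only if $M$ is a minimal series representation. This is the precise Neveu--Schwarz analogue of \cite[7.3.10 Proposition]{BFM}, and the remaining work is to carry out the super version of that computation.

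First I would set up the PBW-graded picture. Writing $M=M_{c,h}/J$ as the simple quotient of the Neveu--Schwarz Verma module $M_{c,h}$, one has $\gr_{\PBW}M=S(\frg)/\gr_{\PBW}J$, so that $\SBFM{M}=\Supp_{S(\frg)}(\gr_{\PBW}M)=V(\gr_{\PBW}J)\subset \Spec S(\frg)$ is cut out by the leading symbols of the elements of $J$. Since $M$ is simple, $J$ is the maximal proper submodule and is $U(\frg)$-generated by the singular vectors of $M_{c,h}$; thus $\gr_{\PBW}J$ contains the ideal generated by the leading symbols of those singular vectors. The required input is therefore the Kac determinant for $\frg$ together with the classification of the embedding structure of Neveu--Schwarz Verma modules (the super-analogue of the Feigin--Fuchs description used implicitly in \cite{BFM}), which I would invoke from the work of Astashkevich on that structure.

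Second, I would split into the two directions. In the minimal series case $(c,h)=(c^S_{p,q},h_{p,q}^{r,s})$, with $c^S_{p,q}$ as in \cref{fct:GK} and the weights as recorded above (see also \cite[\S 3]{KW} and \cite[\S 1]{Ad}), the embedding diagram contains two independent primitive singular vectors, and the expectation---matching the Virasoro computation---is that the leading symbols in $S(\frg)$ of these vectors, expressed in the generators $L_{-n}$ and $G_{-r}$, generate an ideal whose radical is the augmentation ideal $\frg\cdot S(\frg)$, forcing $\SBFM{M}=\{0\}$. Conversely, for a simple $M$ that is not minimal series, the embedding diagram is thinner and its singular vectors should have leading symbols cutting out a positive-dimensional subvariety of $\Spec S(\frg)$, giving $\SBFM{M}\neq\{0\}$ and, by the preceding proposition, the failure of $C_2$-cofiniteness. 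Throughout, the odd generators $G_{-r}$ contribute nilpotents to $S(\frg)$ (since $2$ is invertible, odd squares vanish), and their effect must be tracked carefully when passing to the reduced support.

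The hard part will be the explicit determination of the leading symbols of the Neveu--Schwarz singular vectors and the verification that, in the minimal series case, they suffice to cut the support down to the origin while failing to do so off that locus. In the Virasoro setting \cite{BFM} this rests on a delicate analysis of the Feigin--Fuchs null vectors; its faithful super-analogue requires the corresponding Neveu--Schwarz data, which, as noted above, has not been recorded in a directly usable form. A clean route would be to replace the module $M$ by a BGG-type resolution by Verma modules and to pass to associated graded, so that the triviality of the higher support becomes the exactness of the resulting symbol complex; establishing this super-exactness, and confirming that it breaks precisely off the minimal series locus, is the principal obstacle and the reason the statement is stated here only as a conjecture.
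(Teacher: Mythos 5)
This statement is left as a conjecture in the paper precisely because no proof is known: the author explicitly says he is ``not sure whether the argument in \cite{BFM} works for Neveu--Schwarz minimal representations.'' Your proposal correctly identifies the intended strategy --- reduce, via the preceding proposition, the $C_2$-cofiniteness of $M$ over $V^c$ to the condition $\SBFM{M}=\{0\}$, and then establish the Neveu--Schwarz analogue of \cite[7.3.10 Proposition]{BFM} --- and this is the same route the paper envisions. But the proposal does not carry out that second step. Everything after the reduction is phrased in the conditional (``the expectation is,'' ``should have leading symbols,'' ``the principal obstacle''): the explicit leading symbols of the Neveu--Schwarz singular vectors are not computed, the claim that in the minimal series case they generate an ideal with radical the augmentation ideal is not verified, and the exactness of the proposed symbol complex coming from a BGG-type resolution is not established. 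Since the equivalence $\SBFM{M}=\{0\}\Leftrightarrow M$ minimal series \emph{is} the entire content of the conjecture once the reduction is made, what you have written is a restatement of the problem together with a plausible program, not a proof. You acknowledge this yourself in the final sentence, which is honest, but it means the proposal cannot be accepted as closing the conjecture.

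One further point worth flagging: the preceding proposition in the paper (the equivalence of $C_2$-cofiniteness with $\SBFM{M}=\{0\}$) is stated for \emph{highest weight} representations of $\frg$, whereas the conjecture is stated for \emph{simple} modules of central charge $c$. Your setup silently writes $M=M_{c,h}/J$ as a quotient of a Verma module, i.e.\ assumes $M$ is highest weight. If the conjecture is read literally for arbitrary simple $\frg$-modules, you would either need to justify that the relevant simple modules are highest weight (e.g.\ because they carry a $V^c$-module structure compatible with \cref{dfn:C2cof} and the grading assumptions), or restrict the claim accordingly. This is a minor issue compared with the main gap, but it should be addressed in any eventual proof.
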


\section{Concluding comments}\label{s:con}

As mentioned in \cref{s:intro}, this note is written as a first step toward 
a semi-infinite Poisson-geometric study of SUSY vertex algebras.
There are already several geometric studies of SUSY vertex algebras,
and our next step would be to relate this note and those studies.
Here is a list of possible directions.
\begin{enumerate}
\item \label{i:con:1} 
The $C_2$-cofiniteness guarantees the modular property of conformal blocks 
in the even case (see, e.g., \cite{Z,Mi}), as mentioned in \cref{ss:C2}.
Mathematical study of conformal blocks of a SUSY vertex algebra 
seems to be still in infancy. 
One remarkable study in this direction is, as mentioned in \cref{s:intro},
the work of Heluani and Van Ekeren \cite{HV}, which shows that 
the normalized character of a positively graded module over a charge cofinite $N=2$ 
topological vertex algebra can be regarded as a solution of a flat connection 
over the moduli space of elliptic supercurves, and actually are Jacobi forms.  
An $N=2$ topological vertex algebra has a structure of 
a strongly conformal $N_W=1$ SUSY vertex algebra by the result of Heluani and Kac \cite{HK},
and by the work of Heluani \cite{H1}, we have the vector bundle with flat connection 
of conformal blocks on the moduli space.
It is shown in \cite[A.2.\ Lemma]{HV} that the charge cofiniteness is equivalent to 
the $C_2$-cofiniteness for the underlying vertex superalgebra.
We can show that it is also equivalent to the $C_2$-cofiniteness of the associated $N_W=1$ 
SUSY vertex algebra in the sense of \cref{dfn:C2cof}.
Thus, the result of \cite{HV} implies that modules over a $C_2$-cofinite $N_W=1$ 
SUSY vertex algebra satisfying \cref{asm:A12} would enjoy super-modularity.

\item \label{i:con:2}
We also expect an analogous result of the item \eqref{i:con:1} for the $N_W=N>1$ case.
The argument of \cite{HV} is based on a SUSY analogue \cite[Theorem 3.4]{H1} of Huang's 
coordinate change formula, applied to a natural $N_W=1$ SUSY analogue 
$(z,\zeta) \mapsto (e^{2\pi i z}-1,e^{2\pi i z}\zeta)$ of Zhu's coordinate change 
$z \mapsto e^{2\pi i z}-1$, and based on $N=1$ super analogue of Weierstrass elliptic functions.
Both have straightforward $N>1$ analogue, but the analysis gets much complicated,
and at this moment the author has no clear picture.

\item
Along the line of \eqref{i:con:1} and \eqref{i:con:2}, 
the $N_K=N$ case is mysterious for the author.
The $N_K=N$ SUSY analogue of Huang's formula is given in \cite[Theorem 3.6]{H1}.
Mimicking the argument in \cite{HK}, the author tried to find a clean coordinate change 
which preserves the superconformal structure (also called SUSY structure or 
super Riemann surface structure), but has failed until this moment.

\item
Also related to super-modularity, let us recall the notion of 
\emph{quasi-lisse vertex algebras} introduced in the work of Arakawa and Kawasetsu \cite{AK}.
It is a finitely strongly generated vertex algebra whose associated variety
(the reduced part of the associated scheme in our terminology) 
has only finitely many symplectic leaves.
In \cite[Theorem 5.1]{AK}, it is shown that the normalized character of a module 
over a quasi-lisse vertex operator algebra enjoys modularity.
An $N_W=1$ SUSY analogue of this result might be obtained 
by borrowing some arguments in \cite{HV}, but one must take care to establish
a super (non-reduced) analogue of the results of Poisson geometry used in \cite{AK}.
The other $N_W=N>1$ and $N_K=N$ cases are much more unclear at this moment.
\end{enumerate}




\begin{thebibliography}{MMMN}

\bibitem[Ad97]{Ad}
D.~Adamovi\'{c},
\emph{Rationality of Neveu-Schwarz vertex operator superalgebras}, 
Int.\ Math.\ Res.\ Not.\ \textbf{1997} (1997), No.\ 17, 865--874.

\bibitem[Ar12]{A12}
T.~Arakawa,
\emph{A remark on the $C_2$ cofiniteness condition on vertex algebras}, 
Math.\ Z.\ \textbf{270} (2012), no.\ 1--2, 559--575,

\bibitem[Ar15]{A15}
T.~Arakawa, 
\emph{Associated varieties of modules over Kac-Moody algebras and 
$C_2$-cofiniteness of $W$-algebras},
Int.\ Math.\ Res.\ Not.\ \textbf{2015} (2015), No.\ 22, 11605--11666.

\bibitem[AK18]{AK}
T.~Arakawa, K.~Kawasetsu,
\emph{Quasi-lisse vertex algebras and modular linear differential equations}, in
\emph{Lie Groups, Geometry, and Representation Theory, 
A Tribute to the Life and Work of Bertram Kostant},
V.~G.~Kac, V.~L.~Popov (eds.),  Progr.\ Math.\ \textbf{326}, Birkh\"{a}user, 2018;
arXiv:1610.05865.

\bibitem[AM21]{AM}
T.~Arakawa, A.~Moreau,
\emph{Arc spaces and chiral symplectic cores},
Publ.\ RIMS Kyoto Univ.\ \textbf{57} (2021), 795--829.

\bibitem[BD04]{BD}
A.~Beilinson, V.~Drinfeld,
\emph{Chiral algebras}, 
Amer.\ Math.\ Soc.\ Colloquium Publ.\ \textbf{51}, 
Amer.\ Math.\ Soc., Providence, RI, 2004.

\bibitem[BD]{BD2}
A.~Beilinson, V.~Drinfeld, 
\emph{Quantization of Hitchin's integrable system and Hecke eigensheaves}, 
preprint.

\bibitem[BFM]{BFM}
A.~Beilinson, B.~Feigin, B.~Mazur, 
\emph{Introduction to algebraic field theory on curves},
preprint. 

\bibitem[B86]{B}
R.~Borcherds, \emph{Vertex algebras, Kac-Moody algebras and the monster}, 
Proc.\ Natl.\ Acad.\ Sci.\ USA \textbf{83} (1986), 3068--3071

\bibitem[EM09]{EM}
L.~Ein, M.~Musta\c{t}\u{a},
\emph{Jet schemes and singularities}, 
in \emph{Algebraic geometry-Seattle 2005}, Part 2, \textbf{80} 
Proc.\ Sympos.\ Pure Math., 505--546. 
Amer.\ Math.\ Soc., Providence, RI, 2009.

\bibitem[FBZ04]{FBZ}
E.~Frenkel, D.~Ben-Zvi, \emph{Vertex algebras and algebraic curves}, 2nd ed., 
Math.\ Surv.\ Monog.\ \textbf{88}, Amer.\ Math.\ Soc., Providence, RI,2004.

\bibitem[GK07]{GK}
M.~Gorelik, V.~G.~Kac,
\emph{On simplicity of vacuum modules}, 
Adv.\ Math.\ \textbf{211} (2007), 621--677.

\bibitem[H07]{H1}
R.~Heluani,
\emph{SUSY Vertex Algebras and Supercurves},
Commun.\ Math.\ Phys.\ \textbf{275} (2007), 607--658.

\bibitem[H09]{H2}
R.~Heluani,
\emph{Supersymmetry of the Chiral de Rham Complex 2: Commuting Sectors},
Int.\ Math.\ Res.\ Not.\ \textbf{2009} (2009), no.\ 6, 953--987.

\bibitem[HK07]{HK}
R.~Heluani, V.~G.~Kac, 
\emph{Supersymmetric Vertex Algebras},
Commun.\ Math.\ Phys.\ \textbf{271} (2007), 103--178.

\bibitem[HV16]{HV}
R.~Heluani, J.~Van Ekeren,
\emph{Characters of topological $N = 2$ vertex algebras are
Jacobi forms on the moduli space of elliptic supercurves},
Adv.\ Math.\ \textbf{302} (2016), 551--627.

\bibitem[K98]{K}
V.~G.~Kac,
\emph{Vertex algebras for beginners}, 2nd ed., 
Univ.\ Lect.\ Ser.\ \textbf{10}, Amer.\ Math.\ Soc., Province, RI, 1998.

\bibitem[KW94]{KW}
V.~G.~Kac, W.~Q.~Wang,
\emph{Vertex operator superalgebras and their representations}, in 
\emph{Mathematical Aspects of Conformal and Topological Field Theories and Quantum Groups}
(South Hadley, MA, 1992), Contemp.\ Math.\ \textbf{175}, 
Amer.\ Math.\ Soc., Providence, 1994, 161--191.

\bibitem[KV11]{KV2}
M.~Kapranov, E.~Vasserot, 
\emph{Supersymmetry and the formal loop space}, 
Adv.\ Math.\ \textbf{227} (2011), 1078--1128.

\bibitem[L04]{Li04}
H.~Li,
\emph{Vertex algebras and vertex Poisson algebras}, 
Commun.\ Contemp.\ Math.\ \textbf{6} (2004), no.\ 1, 61--110.

\bibitem[L05]{Li}
H.~Li,
\emph{Abelianizing Vertex Algebras},
Commun.\ Math.\ Phys.\ \textbf{259}, (2005) 391--411.

\bibitem[Mal16]{Mal}
F.~Malikov,
\emph{Vertex algebroids \`a la Beilinson-Drinfeld},
Ann.\ Fac.\ Sci.\ Toulouse Math.\ (6) \textbf{25} (2016), no.\ 2--3, 205--234.

\bibitem[Man02]{Man2}
Y.~I.~Manin, \emph{Gauge Fields and Complex Geometry}, 2nd ed.,
Grundlehren math.\ Weiss.\ \textbf{289}, Springer-Verlag, Berlin, 2002.

\bibitem[Mi04]{Mi}
M.~Miyamoto, 
\emph{Modular invariance of vertex operator algebras satisfying $C_2$-cofiniteness}, 
Duke Math.\ J.\ \textbf{122} (2004), no.\ 1, 51--91.

\bibitem[SP]{SP}
\emph{Stacks Project}, 
an open source textbook and reference work on algebraic geometry, 
\url{https://stacks.math.columbia.edu}

\bibitem[V07]{V}
P.~Vojta,
\emph{Jets via Hasse-Schmidt derivations}, 
in \emph{Diophantine geometry}, 335--361,
CRM Series, \textbf{4}, Ed.\ Norm., Pisa, 2007;
arXiv:math/0407113.

\bibitem[Z96]{Z}
Y.~Zhu, \emph{Modular invariance of characters of vertex operator algebras}, 
J.\ Amer.\ Math.\ Soc.\ \textbf{9} (1996), no.\ 1, 237--302.
\end{thebibliography}
\end{document}